\documentclass[11pt,letterpaper,reqno]{amsart}
\usepackage{libertine}

\usepackage{amsmath,amsthm,amssymb,amsfonts,epsfig,color,graphicx,enumerate,enumitem,accents}
\usepackage{amsbsy,setspace,yfonts} 
\usepackage{rotating,multirow,color,url,mathrsfs}
\usepackage{arydshln}
\usepackage{bbm}
\usepackage{cases}
\usepackage{esint}
\usepackage{gensymb}
\usepackage{epstopdf}
\usepackage{comment}
\usepackage{leftidx}
\usepackage{graphicx}
\usepackage[normalem]{ulem}
\usepackage{stmaryrd}
\usepackage{mathtools}
\usepackage{subfig}
\usepackage{amsaddr}
\usepackage[rightcaption]{sidecap}

\usepackage[utf8]{inputenc}
\usepackage[T1]{fontenc}

\usepackage{scalerel}
\usepackage{bbm}
\usepackage{tikz}
\usepackage{IEEEtrantools}
\usepackage[hidelinks]{hyperref}

\usepackage{thmtools}
\usepackage{thm-restate}
\usepackage[dvipsnames]{xcolor}

\newtheorem{theorem}{Theorem}[section]
\newtheorem{corollary}[theorem]{Corollary}
\newtheorem{lemma}[theorem]{Lemma}
\newtheorem{proposition}[theorem]{Proposition}

\theoremstyle{definition}
\newtheorem{definition}[theorem]{Definition}
\newtheorem{remark}[theorem]{Remark}
\newtheorem{example}[theorem]{Example}

\theoremstyle{definition}

\newtheorem*{remark*}{Remark}
\newtheorem*{theorem*}{Theorem}
\newtheorem*{example*}{Example}

\numberwithin{equation}{section}

\newcommand{\weakstar}{\overset{*}\rightharpoonup}

\definecolor{ARbrown}{RGB}{120,60,10}	 

\def\R{{\mathbb R}}
\def\NN{{\mathbb N}}
\def\Rd{\R^d}
\def\N{{\mathcal N}} 
\def\V{{\mathcal V}} 
\def\W{{\mathcal W}}
\def\F{{\mathcal F}}
 
\def\A{{\mathcal A}}
\def\G{{\mathcal G}}

\def\Ld{{\mathcal L}^d}
\def\M{{\mathcal M}} 
\def\Md{\M(\Rd;\Rd)}				 
\newcommand{\Mk}{\M(\Rd)^k}	 
\newcommand{\Grass}{\mathrm{Gr}(d)} 
 
\def\H{{\mathcal H}} 
\def\L{{\mathcal L}} 
\def\B{{\mathcal B}}
\def\PP{{\mathcal P}}
\def\D{{\mathcal D}}
\def\S{{\mathcal S}}

\def\T{{\mathcal T}}

\def\res{\mathop{\hbox{\vrule height 7pt width.5pt depth 0pt\vrule height .5pt width 6pt depth 0pt}}\nolimits}
\def\div{\mathop {\rm div}\nolimits}
\def\spt{\mathop {\rm spt}\nolimits}
\def\eps{\varepsilon}

\def\f{\varphi}

\newcommand{\sig}{\sigma}
\newcommand{\dist}{\mathrm{dist}}

\newcommand{\ov}{\overline}

\newcommand{\la}{\lambda}
\newcommand{\AEd}{\text{\bf\AE}(\Rd)}

\newcommand{\Mod}{\M_{0,1}(\Rd)}
\newcommand{\Fdp}{\mathbf {F^\perp_1}(\Rd)}
\newcommand{\Fd}{\mathbf{F_1}(\Rd)}
\newcommand{\Nd}{\mathbf {N_1}(\Rd)}
\newcommand{\set}[2]{\left\{\, #1 \ \textup{\textbf{:}}\ #2 \,\right\}}
\newcommand{\Sdd}{{\mathcal{S}^{d \times d}}}
\newcommand{\KR}{\mathrm{KR}}
\newcommand{\st}{\ : \ }
\newcommand{\diffquot}[2]{D_{#2}^{#1}\mu}

\newcommand{\pairing}[1]{{\left \langle #1 \right \rangle}}
\newcommand{\one}{{{\bf 1}
		\kern-0,28em \rm l}} 

\newcommand{\Lip}{{\mathrm{Lip}}}

\def\x0s{{\bf X}_0^\sharp (\Omega)}

\def\ds{\displaystyle}
\def\med{\medskip\noindent}

\definecolor{projmag}{RGB}{190,0,120}

\setlist[itemize,1]{label=$-$}
\setlist[itemize,2]{label=$-$}
\setlist[itemize,3]{label=$-$}

\setlist[enumerate]{label={(\roman*)},labelsep=6pt}

\title[{The Monge-Kantorovich tangent bundle}]{The Monge-Kantorovich tangent bundle to a measure}

\author[A. Arroyo-Rabasa, A. Aussedat, G. Bouchitt\'e]{%
	Adolfo Arroyo-Rabasa\textsuperscript{\dag}, \
	Averil Aussedat\textsuperscript{\dag}, \
	Guy Bouchitt\'e\textsuperscript{*}}

\address{\textsuperscript{\dag}\,Dipartimento di Matematica, Universit\`a di Pisa, Largo Pontecorvo 5, 56127 Pisa, Italy}
\email{adolfo.rabasa@unipi.it}
\email{averil.aussedat@dm.unipi.it}

\address{\textsuperscript{*}\,Laboratoire IMATH, Universit\'e de Toulon, BP 20132, 83957 La Garde Cedex, France}
\email{bouchitte@univ-tln.fr}
\date{\today}
\subjclass{}

\begin{document}
	
\begin{abstract}
	We study the tangent bundle $T_\mu$ to a Radon measure $\mu$ in Euclidean space, introduced by Bouchitt\'e, Champion and Jimenez (2005). Its construction, inspired by Monge-Kantorovich optimal transport theory, involves the duality between Lipschitz functions and the so-called Arens-Eells space, a Banach subspace of distributions obtained by completing the set of balanced signed measures.
	Precisely, a velocity field $\sigma$ is $\mu$-tangent when the divergence of $\sigma \mu$ lies in the Arens-Eells space. The tangent bundle $T_\mu$ defined $\mu$-almost everywhere provides a local projection that allows to construct a $\mu$-tangential gradient operator on Lipschitz functions that is weakly continuous and enjoys integration by parts.
	In this paper, we introduce a new quantitative estimate involving the tangential and normal components of a given velocity vector field $\sig \in L^1_\mu(\Rd)$. Specifically, we show that the tangential condition $\sig \in T_\mu$ holding $\mu$ a.e. is equivalent to each of the following two conditions: the convergence in the Arens-Eells space of $h^{-1} \left((id+h \sig)_{\#} \mu - \mu\right)$ as $h \to 0$, and the differentiability of Lipschitz functions along $\sig$.
	Moreover, given a field of non-tangent directions, we construct a Lipschitz function that is not differentiable on a large set, confirming that $T_{\mu}$ agrees with the decomposability bundle of Alberti and Marchese. We finally relate $T_\mu$ to the tangent measures of Preiss and survey further properties of the tangential differential calculus.
	
	\bigskip
	\begin{flushleft}
	{\bf Keywords.\,} Optimal transport, Kantorovich-Rubinstein distance, Arens-Eells, tangent bundle, differentiability, Lipschitz functions, flat chains, Smirnov decomposition 
	
	\bigskip
	{\bf MSC 2020.\,} 49Q15, 49Q22, 26B05, 28A75, 49J45
	
	\bigskip
	\textbf{AI statement.\,} AI tools were used exclusively for minor language assistance. No AI tools were used for the derivation, proof, calculation, or logical reasoning of the mathematical results presented in this paper.
\end{flushleft}
\end{abstract}

\maketitle


\tableofcontents

\section{Introduction}

The study of calculus on singular or nonsmooth structures is a central theme at the intersection of real analysis, geometric measure theory, and partial differential equations. When working with a smooth manifold, the tangent bundle provides a local frame for defining gradient vector fields and integration by parts. However, when the underlying domain is represented by a general Radon measure $\mu$ in $\mathbb{R}^d$, establishing an intrinsic differential calculus becomes significantly more delicate. A fundamental difficulty in this setting is to extend classical differential operators, such as the gradient or divergence, to $L^p_\mu$ spaces. In standard Sobolev spaces, the underlying measure $\mu$ is the Lebesgue measure and integration by parts ensures that smooth derivations are closable, thus enabling the definition of weak derivatives. This is not the case for a singular measure $\mu$, for which closability can only be restored by pointwise projecting the classical gradient onto a suitable tangent bundle associated with $\mu$.
This pointwise tangent space that cannot be recovered from Preiss' tangent measures: in general the tangent cone $\mathrm{Tan}(\mu,x)$ may contain non unique tangent planes or none at all, and still contain arbitrary sets of measures (see \cite{Preiss87,Neil}).

A natural alternative for constructing such a bundle comes from the calculus of variations: it consists in applying the \emph{De Giorgi} relaxation procedure to integral functionals of the form $\int |\nabla u|^p \, d\mu$. In this manner a tangent bundle was introduced in \cite{BBS} and further developed in a series of papers \cite{B-B-S, BF, rev} (see also \cite{FraMant,Zhikov} and the connection with Sobolev spaces in metric measure spaces in \cite{lucicCharacterisationUpperGradients2021}). It is defined $\mu$ a.e., with a dimension depending on the local geometry of $\mu$ as well as on the exponent $p\in [1,\infty)$. Its good behavior with respect to duality and integration by parts has allowed to treat successfully various applications in applied physics, such as the homogenization of thin structures \cite{BF-2scale}, dimension reduction, and optimal design problems in which the measure $\mu$ itself is an unknown minimizing a certain energy (see for example \cite{B-B, bouchitte2007, BFS} and \cite{KBGB}).

Despite their generality and flexibility, these variational constructions have had little impact on geometric measure theory.
In this paper, we show that the variant based on optimal transport yields a tangent bundle $T_\mu(x)$ which retains the variational and duality properties mentioned above and, in addition, is intimately connected with the theory of one-dimensional flat chains and with the differentiability of Lipschitz functions. First introduced in 2005 (see \cite{BCJ2005}), the bundle $T_\mu(x)$ ultimately agrees with the decomposability bundle $V_\mu(x)$ of Alberti and Marchese \cite{AM}, who proved that $V_\mu$ is also the largest \emph{differentiability bundle}: for any $u \in \Lip(\R^d)$, there exists a linear function $d_V u(x): V_\mu(x) \to \R$ at $\mu$-almost every point $x$ for which
\begin{equation*}
	 u(x + v) = u(x) + d_Vu(x)v + o(|v|) \qquad \text{for $v \in V_\mu(x)$}.
\end{equation*}

\paragraph{\bf Construction of the bundle $T_\mu(x)$ } Given a nonnegative Radon measure $\mu$ on $\Rd$, we begin by defining
a subspace of vector fields $\sig\in (L^1_\mu)^d$ which satisfy a suitable tangency condition. As will become clear, this condition encodes the first-order stability of the Monge-Kantorovich distance between $\mu$ and its infinitesimal pushforward along the velocity field $\sig$:
\begin{align*}
	\mu_h^\sig := (\mathrm{id} + h\sig)_\sharp \mu.
\end{align*}
In turn, the tangent velocity fields form a subspace $\T_\mu$ of $(L^1_\mu)^d$ characterized by a divergence condition: $\sig$ belongs to $\T_\mu$ if and only if, in the sense of distributions,
\begin{equation}\label{eq:regularity}
	\mathrm{div} (\sigma \mu) \in \AEd.
\end{equation}
Here $\AEd$, the Arens-Eells space, denotes the completion, with respect to the Kantorovich-Rubinstein norm \cite{CR} (denoted $\Vert \cdot\Vert_{\KR}$), of the space of signed measures with finite first moment and zero average. This Banach space, known as the canonical pre-dual of $\mathrm{Lip}_0(\mathbb{R}^d)$, has been rediscovered as many times as there are viewpoints on it: introduced by Arens and Eells \cite{AE} in 1956, it reappears as a distinguished space of first-order distributions in the completion problem for the Kantorovich norm \cite{Hanin1997,BCJ2005,BBDP2010} (see its characterization in \eqref{criterion}), while in non linear Banach space geometry, it becomes the Lipschitz-free space \cite{GK03, WeaverBook}.

The interest of allowing the right hand-side of \eqref{eq:regularity} to be a first-order distribution in $\AEd$, instead of a measure, is that the resulting subspace $\T_\mu$ is closed in $(L^1_\mu)^d$ and enjoys the key locality property that, if $\sigma\in \T_\mu$, then $ \theta \sigma\in \T_\mu $ for every scalar function $\theta\in L_\mu^\infty$. These two properties imply the existence of a $\mu$-measurable map $T_\mu: \Rd \to \mathcal{G}_d$ (the Grassmannian of linear subspaces of $\Rd$) such that
\begin{equation}\label{Tmu-local}
	\sig\in \T_\mu \iff \sig(x)\in T_\mu(x) \ \mu\ {\rm a.e.}
\end{equation}
Summarizing the Monge-Kantorovich (MK) tangent bundle $T_\mu(x)$ is defined through the PDE condition \eqref{eq:regularity}, which allows the local property \eqref{Tmu-local}. An immediate consequence of this construction is that 
any Borel vector measure $\la\in \M(\Rd;\Rd)$ can be decomposed as the sum $\la=\la_T + \la_N$ of its tangential part $\la_T = (P_\mu \sig) \, \mu$ and of its normal part $\la_N = (P_\mu^\perp \sig) \, \mu$, where $\mu=|\la|$, $\sig= d\la/d|\la|$ and $P_\mu(x), P_\mu^\perp$ denote the orthogonal projections onto $T_\mu(x)$ and $T_\mu^\perp(x)$ respectively.

At this level, we can foresee an important connection with the geometric measure theory: the divergence condition \eqref{eq:regularity} is equivalent to the fact that $\sigma \mu$ being a flat chain of finite mass in the classical sense of Whitney and Federer \cite{Fed,FF60}: 
\[	\T_\mu = \set{ \sigma\in (L^1_\mu)^d }{\sigma \mu \in \Fd} \ , \]
an equivalence which has been used recently to obtain a simple proof of the Ambrosio-Kirchheim Flat Chain Conjecture \cite{AK00} in the context of one dimensional metric currents \cite{ARGB}. 
A consequence of the equality \eqref{Tmu-local} is that the set of flat chains $\Fd$ forms a complemented subspace of $\Md$, a fact that we have not found stated in the literature (see Theorem \ref{projthm}). More precisely, it holds a direct topological sum
$$ \Md = \Fd \oplus \ker {\mathcal P}_{\mathbf F} \ ,$$
where the linear projection map ${\mathcal P}_{\mathbf F}: \la \in \Md \to \la_T \in \Fd$ is defined such that $\la_T$ uniquely solves the minimal distance problem
\[
	\dist(\la, \Fd) = \inf \set{|\la - \eta|(\Rd)}{\eta \in \Fd} \ .
\]
Two features of this statement go beyond the pointwise characterisation recalled above: it is metric, in the sense that $\la_T$ is the \emph{nearest} flat chain and that the distance is computed explicitly as $|\la_N|(\Rd)$; and it is linear, the map $\la \mapsto \la_T$ being a $1$-Lipschitz projection.

Before presenting our main results, we point out that the MK-bundle $T_\mu$ defined above is local (see Subsection \ref{sec:locality}) and enjoys all the properties one would expect: decomposition of $\mu$ into pieces of constant dimension, chain and product rules, stability under Lipschitz push-forward, and the definition of a generalized varifold and of a mean curvature of $\mu$ as an element of $(\AEd)^d$. Some of these properties already appear in \cite{BCJ2005} or are adapted from \cite{BBF1,BBF2}. They are summarized in Section~\ref{sec:further}, together with the equivalence $\mu \ll \L^d$ if and only if $T_\mu(x)=\Rd$, $\mu$ a.e. which we obtain by extending to flat chains the structure theorem of De Philippis and Rindler \cite{PR2016,ADHR}.

\medskip
\paragraph{\bf Main results}

Our main contributions are presented in Section \ref{tangential calculus}. First, following the closability argument of \cite{BCJ2005}, we recall the construction of the tangential gradient operator $u\in \Lip(\Rd)\to \nabla_\mu u \in (L^\infty_\mu(\Rd))^d$ which is a weak-star continuous operator and satisfies the integration by parts formula:
$$ \pairing {-\div \sig\, \mu, u} \ = \ \int \pairing {\sig, \nabla_\mu u}\, d\mu \quad \forall (\sig,u) \in \T_\mu\times \Lip(\Rd).$$
Then we show that, for every pair $(\sig,u) \in \T_\mu\times \Lip(\Rd)$, we have:
\begin{enumerate}[label={(\roman*)}, labelsep=10pt]
	\item\label{item:limIs0} $\ds \lim_{h\to 0^+} \Vert \diffquot{\sig}{h} + \div(\sig\, \mu) \Vert_{\KR} = 0$,  where we set $\diffquot{\sig}{h} := \frac{\mu_h^\sig - \mu}{h}$;
	\item\label{item:formul} $\pairing{\ds \nabla_\mu u(x),\sigma(x)} = \lim_{h\to 0} \frac{u(x+ h\sig(x))- u(x)}{h}$ \ $\mu$ a.e.
\end{enumerate}

Assertion \ref{item:limIs0} is a stability result: for $\sig\in\T_\mu$, the difference quotients $\diffquot{\sig}{h}$ converge in $\AEd$, and their limit is the distributional limit $-\div(\sig\mu)$. The assertion \ref{item:formul} identifies the weak tangential gradient $\nabla_\mu u$ of any Lipschitz function $u\in \Lip(\Rd)$ as the pointwise directional derivative of $u$ in the direction $\sig$.

The two assertions above, contained in our main result (Theorem \ref{MK-infinitesimal}), are proved by starting with measures $\mu$ associated with Lipschitz curves, then by extending to pairs $(\mu,\sigma)$ with $\sig\in \T_\mu$, by using the Smirnov decomposition of normal currents \cite{Smi93} combined with an approximation argument allowing to pass from $\sig\in \T_\mu$ to $\sig \mu$ being a normal current.
Since this limit is linear in $\sig(x)$ and $T_\mu(x)$ is already a linear subspace, we can lift~\ref{item:formul} to differentiability with respect to the whole subspace $T_\mu(x)$:
\begin{equation}\label{eq:fulldiff}
	u(x+v) \ =\ u(x) + \pairing{\nabla_\mu u(x),\, v} + o(|v|)
	\qquad \text{as } T_\mu(x)\ni v \to 0 .
\end{equation}

Next we improve upon the pointwise view of differentiability by testing the previous convergence on vector fields $\sig$ which belong to $(L^1_\mu)^d$ and do not belong to $\T_\mu$. The convergence stated in \ref{item:limIs0} extends as follows: 
\begin{equation}\label{eq:equi}
	\lim_{h \to 0^+} F_h(\mu,\sig) = 2 \int |\sigma - P_\mu \sigma| \, d\mu,
\end{equation}
where $ F_h(\mu,\sig)$, the defect of uniform differentiability over $C^1$ $1$-Lipschitz maps, is given by
$$ F_h(\mu,\sig) \ \coloneqq \sup_{u \in \mathrm{Lip}_1\cap C^1} \int \left( \frac{u(x + h \sigma(x)) - u(x)}{h} - \pairing{\sigma,\nabla u} \right)\, d\mu(x).$$

Since this limit vanishes exactly when $\sig$ has no component in $\N_\mu$, the bundle $\T_\mu$ is precisely the set of velocity fields along which the Kantorovich-Rubinstein norm is stable -- equivalently, along which the difference quotients $\diffquot{\sig}{h_n}$ form a Cauchy sequence in $\AEd$ for every $h_n\to 0^+$. Corollary~\ref{cauchy} quantifies the case when $\sig\notin\T_\mu$: the difference quotients remain asymptotically at distance exactly $\int|\sig_N|\,d\mu$ from $-\div(\sig_T\mu)$. Formula \eqref{eq:equi} moreover quantifies the defect created by the non-tangential part of the velocity.  

In Section~\ref{sec:otherbundles} we compare $T_\mu$ with other bundles. We first show that $T_\mu$ coincides with the decomposability bundle $V_\mu$ of \cite{AM} (assertion \ref{item:VT} of Proposition~\ref{AM=MK}); this proof is simple and measure-theoretic. The differentiability results described above provide a second, independent route to the same identification, based on optimal transport and duality. 
Exploiting the dual formulation of the normal bundle $\N_\mu=(\T_\mu)^\perp$ (see \eqref{orthocrit}), we provide
a simple construction (Proposition~\ref{nodiff}) of a Lipschitz function whose directional derivatives fail to exist
a.e. along any vector field which is not tangent to $T_\mu$, thereby recovering the converse of Rademacher's
Theorem obtained in \cite{AM}. Combined with \eqref{eq:fulldiff}, this identifies $T_\mu$ as the maximal differentiability bundle for Lipschitz maps (Theorem~\ref{maximal}), independently of the results of \cite{AM}.
We finally relate $T_\mu$ to the Preiss tangent measures of $\mu$ (Proposition~\ref{MK-Preiss}): at $\mu$-a.e. point $x$, every tangent measure is invariant under translations along $T_\mu(x)$, so that the MK bundle is always contained in the Preiss tangent plane of $\mu$ when the latter exists.

\section{Preliminaries}

Let $(X,d_X)$ and $(Y,d_Y)$ be metric spaces. The complement of a set $A \subset X$ is denoted $A^c$, and its indicator by $\one_A$, i.e. $\one_A(x) = 1$ if $x \in A$, and 0 otherwise. $C_0(X;Y)$ is the set of continuous functions from $X$ to $Y$. The set of Lipschitz functions from $X$ to $Y$ is denoted $\Lip(X;Y)$, with $\Lip(u)$ being the Lipschitz constant of an element $u \in \Lip(X,Y)$. If $X$ is a Banach space, we define the following subsets of $\Lip(X;Y)$:
\begin{align*}
	\Lip_0(X;Y) := \left\{ u \ : \ u(0) = 0 \right\}, \qquad
	\Lip_1(X;Y) := \left\{ u \ : \ \Lip(u) \leqslant 1 \right\}.
\end{align*}
The support of a function $f$ is denoted $\spt f$. The set of compactly supported functions of class $C^{\infty}$ is $\D(\Rd)$, with its dual $\D'$ being the set of distributions.

Let $\B(\Rd)$ be the Borel $\sigma$-algebra of $\Rd$. The Lebesgue measure is denoted $\Ld$, the Hausdorff measure of dimension $\alpha$ by $\mathcal{H}^{\alpha}$, and the Dirac mass at a point $x \in \Rd$ by $\delta_x$. If $A \subset \Rd$ is measurable, the restriction of a measure $\mu$ to $A$ is given by $\mu \res A$, with $(\mu \res A)(B) \coloneqq \mu (A \cap B)$. Two measures are mutually singular, which we write as $\mu \perp \nu$, if there exists $A \in \B(\Rd)$ such that $\mu(A^c) = \nu(A) = 0$. At the opposite, the notation $\mu \ll \nu$ means that $\mu$ is absolutely continuous with respect to $\nu$, i.e. $\nu(A) = 0$ implies $\mu(A) = 0$ for $A \in \B$.

The set $\M(\Rd)$ collects all Borel signed measures, with $\M_+(\Rd)$ being the subset of nonnegative measures.
We also write $\Mk = \M(\R^d;\R^k)$ to denote the set of $\R^k$-valued measures with $\|\la\|_{\Mk} < +\infty$, where the total variation norm is defined as
\begin{align*}
	\|\la\|_{\Mk} \coloneqq \sup \{ \set{ \pairing{\la, \f}}{\f \in C_0(\Rd;\R^k),\ |\f(x)| \le 1}.
\end{align*}
Any $\la \in \Mk$ writes as $\la = \sigma |\la|$, where $|\la| \in \M_+(\Rd)$ is a nonnegative measure with $\|\la \res A\|_{\Mk} = |\la|(A)$ for any $A \in \B(\Rd)$, and $\sigma \in L^1_{|\la|}(\Rd;\R^k)$ valued in the unit ball of $\R^k$. We refer to $\sigma$ as the polar of $\la$. 

\subsection{Convex analysis}

Let $X$ be a normed space and let $h: X\to \R\cup \{+\infty\}$ be a convex function. Recall that the Moreau-Fenchel conjugate of $h$ is defined on the dual space $X^*$ by 
$$ h^{*} (x^*) := \sup_{x\in X} \big\{ \pairing {x,x^*} - h(x) \big\} \qquad \forall\, x^*\in X^*.$$ 
Clearly, $h^*$ is convex and lower semi-continuous with respect to the weak-$*$ topology on $X^*$. The existence of $x_0^* \in X^*$ such that $h^*(x_0^*)<+\infty $ ($h^*$ is said to be \emph{proper}) means that $h$ admits an affine minorant. In that case, the supremum of all affine continuous minorants is given by the biconjugate of $h$ defined on $X$ by
$$ h^{**} (x) := \sup_{x^*\in X^*} \big\{ \pairing {x,x^*} - h^*(x^*) \big\}\qquad \forall \,x\in X.$$
Besides we define the lower semicontinuous envelope of $h$ at $x\in X$ by $$ \ov h(x) =\inf_{x_n\to x} \liminf_n h(x_n).$$

We will use two classical results. The first one is a consequence of Hahn-Banach theorem and can be found in \cite{EK76}; the second one is due to J.J. Moreau \cite{Moreau1966} in the infinite dimensional case, see also \cite{ency}. 
\begin{proposition}\label{relax}
	Assume that $h^*$ is proper. Then $h^{**} =\ov h$.
\end{proposition}

\begin{proposition}\label{duality_classical}
	Assume that there exists $r>0$ such that $\sup \{ h(x) \, :\, \|x\| \le r \} < \infty$. Then:
	\begin{enumerate}
		\item $h$ is continuous at $0$, while $h^*$ is coercive and attains its minimum on $X^*$;
		\item we have the equalities: $ h(0) = h^{**}(0) = - \min h^{*}.$
	\end{enumerate}
\end{proposition}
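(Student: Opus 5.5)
The statement to prove is Proposition \ref{duality_classical}. The plan is to go from the local boundedness hypothesis to continuity at $0$ by a convexity argument, and then apply the Hahn--Banach separation theorem to the epigraph of $h$.

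\textbf{Continuity at $0$.} Set $M:=\sup\{h(x):\|x\|\le r\}<\infty$. Since $h(0)\le M$ is finite, we may take $x$ with $\|x\|\le \delta r$ and $\delta\in(0,1)$. Convexity along the segment from $0$ to $x/\delta$ gives
$$h(x)-h(0)\le \delta\,(M-h(0)).$$
Writing $0=\frac{1}{1+\delta}x+\frac{\delta}{1+\delta}(-x/\delta)$ gives the reverse inequality
$$h(0)-h(x)\le \delta\,(M-h(0)).$$
Hence $|h(x)-h(0)|\le \delta(M-h(0))$, which is continuity at $0$.

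\textbf{Existence of a continuous affine minorant.} Because $h$ is continuous at $0$, the epigraph of $h$ has nonempty interior, namely points $(x,t)$ with $\|x\|<r$ and $t>M$. The point $(0,h(0)-\varepsilon)$ lies outside the epigraph. By Hahn--Banach, a closed hyperplane separates it from the epigraph, and the hyperplane is non-vertical because it must also separate from interior points above $0$. This yields $x_0^*$ with
$$h(x)\ge h(0)-\varepsilon+\langle x,x_0^*\rangle \quad\text{for all } x.$$
So $h^*(x_0^*)\le -h(0)+\varepsilon$, and $h^*$ is proper.

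\textbf{Coercivity of $h^*$.} For any $x^*$, taking the supremum over the ball gives
$$h^*(x^*)\ge \sup_{\|x\|\le r}\{\langle x,x^*\rangle - M\}= r\|x^*\|-M.$$
So the sublevel sets of $h^*$ are bounded. They are also weak-$*$ closed, since $h^*$ is weak-$*$ lower semicontinuous, hence weak-$*$ compact by Banach--Alaoglu. Therefore $h^*$ attains its minimum.

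\textbf{The equalities.} By definition $h^{**}(0)=\sup_{x^*}\{-h^*(x^*)\}=-\min h^*$, and the supremum is attained. Since $h^*$ is proper, Proposition~\ref{relax} gives $h^{**}=\ov h$. Continuity at $0$ gives $\ov h(0)=h(0)$. Combining these yields $h(0)=h^{**}(0)=-\min h^*$.

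\textbf{Main obstacle.} The delicate step is the separation argument, specifically checking that the separating functional is non-vertical, which is exactly where continuity at $0$ is used. An alternative is to bypass Proposition~\ref{relax}: letting $\varepsilon\to0$ in the minorant bound directly gives $\inf h^*\le -h(0)$. The reverse inequality $-h^*(x^*)\le h(0)$ is immediate from the definition of $h^*$ at $x=0$.
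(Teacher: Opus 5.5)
Your proof is correct. The paper gives no proof of its own here; it only cites Moreau and Ekeland--Temam. Your argument (local boundedness $\Rightarrow$ continuity at $0$ $\Rightarrow$ a non-vertical supporting hyperplane to $\mathrm{epi}\,h$, together with the bound $h^*(x^*)\ge r\|x^*\|-M$ and Banach--Alaoglu) is the classical proof behind those references.

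One simplification is available. Separate $(0,h(0))$ itself, which is not in the open convex set $\mathrm{int}(\mathrm{epi}\,h)$. This gives an exact subgradient $x_0^*$ with $h^*(x_0^*)=-h(0)\le h^*(x^*)$ for all $x^*$, so you get the minimizer directly and need neither Banach--Alaoglu nor Proposition~\ref{relax}.
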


\subsection{Measurable multifunctions}\label{prel:multif}

Fix a Radon measure $\mu\in \M_+(\Rd)$. To every family $\F$ of Borel functions from $\Rd$ to a separable Banach space $Y$, we may associate a closed valued and $\mu$-measurable multifunction $\Gamma: \Rd \to 2^Y$ such that:
\begin{enumerate}
	\item\label{appx:G} $u(x) \in \Gamma(x) \ \mu$-a.e. for any $u\in \F$,
	\item Any $\mu$-measurable multifunction $\Gamma'$ satisfying \ref{appx:G} is such that $\Gamma(x)\subset\Gamma'(x) \ \mu$ a.e..
\end{enumerate}
The existence of such $\Gamma$ can be proved in an even more general context (see \cite{Bou-Val}) and it is unique up to the equality $\mu$-a.e.. The usual notation used for $\Gamma$ is $\mu-\mathrm{ess} \; \bigcup \{u, u\in \F \}$. In the case where $\F $ is a subset of $L^1_\mu(\Rd, Y)$ (being $Y$ for instance a finite dimensional normed space), a very simple characterization of $\Gamma=\mu-\mathrm{ess} \; \bigcup \{u, u\in \F \}$ is obtained by considering a countable dense subset $\{u_n, n \in \mathbb{N}\}$ of $\F$ and then clearly $\Gamma(x) = \overline{\{ u_n(x), n \in \mathbb{N}\}}$. Another straightforward observation is that:
\begin{itemize}
	\item if $\overline{\F}$ denotes the closure of $\F$ in $L^1_\mu$, then
	$$ \mu-\mathrm{ess} \; \bigcup \{u ;\ u\in \F \} = \mu-\mathrm{ess} \; \bigcup \{u ;\ u\in \overline{\F} \}.$$
	\item If $\F$ is a vector subspace, then so is $\mu-\mathrm{ess} \; \bigcup \{u, u\in \F \}$.
\end{itemize} 

\medskip
We now particularize to $\F$ being a $\D$-stable vector subspace of $L^1_\mu(\Rd; Y)$ in the following sense:
\begin{equation}\label{Dstable}
	u \in \F, \qquad \f \in \D(\Rd) \quad \implies \ \f \, u \in \F.
\end{equation}

\begin{proposition}\label{local}
	Let $\F$ be a $\D$-stable vector subspace of $L^1_\mu(\Rd;Y)$ and let $F_\mu := \mu-\mathrm{ess} \; \bigcup \{u ;\ u\in \F \}$ be the associate bundle. Then we have the equality
	\begin{equation}\label{HU} 
		\overline{\F} = \{ u\in L^1_\mu(\Rd;Y) \ :\ u(x) \in F_\mu(x) \ \mu-\text{a.e.} \}.
	\end{equation}
\end{proposition}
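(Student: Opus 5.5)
We prove the two inclusions in \eqref{HU} separately. Write $\G := \{ u\in L^1_\mu(\Rd;Y) : u(x)\in F_\mu(x)\ \mu\text{-a.e.}\}$.

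For $\overline{\F}\subset \G$, recall from the observations above that $F_\mu$ is also the essential union over $\overline{\F}$. By property (i) of the essential union, every $u\in\overline{\F}$ then satisfies $u(x)\in F_\mu(x)$ $\mu$-a.e. This inclusion needs nothing beyond the definition and does not use $\D$-stability.

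The reverse inclusion $\G\subset\overline{\F}$ carries all the content, and we argue by duality. Since $\overline{\F}$ is a closed subspace, Hahn--Banach reduces the claim to the following: every $g\in L^\infty_\mu(\Rd;Y^*)$ that annihilates $\F$ also annihilates $\G$. (For general separable $Y$ one uses the dual $L^\infty_{w^*}(\mu;Y^*)$; in the main case of interest $Y$ is finite dimensional and this subtlety disappears.)

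So fix such a $g$, with $\int \pairing{g,u}\,d\mu=0$ for all $u\in\F$. By $\D$-stability, $\int \f\,\pairing{g,u}\,d\mu = 0$ for every $\f\in\D(\Rd)$. Since $\pairing{g,u}\in L^1_\mu$ and $\D$ is dense enough to separate $L^1_\mu$ functions (a Radon measure is determined by its action on $C_c$, and $\D$ is uniformly dense in $C_c$), we get $\pairing{g(x),u(x)}=0$ $\mu$-a.e. for each $u\in\F$.

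Next take a countable dense family $\{u_n\}\subset\F$, for which $F_\mu(x)=\overline{\{u_n(x)\}}$. Outside a single null set, $g(x)$ then vanishes on every $u_n(x)$, hence on their closed linear span. This span is exactly $F_\mu(x)$: the essential union of a vector space is a vector space, as noted above, and closedness gives the rest. So $g(x)\in F_\mu(x)^\perp$ $\mu$-a.e., and for any $v\in\G$ the integrand $\pairing{g,v}$ vanishes a.e. Thus $g$ annihilates $\G$, which closes the argument.

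The main technical point is the localization step. Passing from annihilation of $\F$ to pointwise orthogonality requires exactly the multiplier stability \eqref{Dstable}, together with the countable dense subfamily so that all the null sets can be taken uniformly. An alternative direct route would approximate $v\in\G$ by measurable selections $\sum_i \one_{A_i}\sum_n c_{i,n}u_n$ and then replace the indicators $\one_{A_i}$ by smooth cutoffs using $\D$-stability. That route is messier, so the duality argument is the one I would write.
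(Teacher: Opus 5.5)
Your proof is correct and follows essentially the same route as the paper: Hahn--Banach reduces $\G\subset\overline{\F}$ to showing that every annihilator $g$ of $\F$ satisfies $g(x)\in F_\mu(x)^\perp$ $\mu$-a.e. Both arguments obtain this by localizing $\pairing{g,u_n}=0$ along a countable dense family $(u_n)\subset\F$ with $F_\mu(x)=\overline{\{u_n(x)\}}$. The only difference is cosmetic: the paper first upgrades $\D$-stability to $u_n\one_A\in\overline{\F}$ for Borel sets $A$ and uses $\int_A\pairing{g,u_n}\,d\mu=0$, whereas you test directly against $\f\in\D(\Rd)$ and conclude by density of $\D$ in $C_c$ and uniqueness in the Riesz representation, which is equivalent.
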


Note that, the inclusion $\subset$ holds true without assuming \eqref{Dstable}. 

\begin{proof} 
	By approximation, the condition \eqref{Dstable} implies that $u \, \one_A \in \overline{\mathcal{F}}$ whenever $u \in \overline{\mathcal{F}}$ and $A$ is $\mu$-measurable. The wished equality then follows from Hiai-Umegaki \cite[Th.~3.1 p.158]{Hiai}. For the sake of completeness, we give a short proof of the non trivial inclusion $\mathcal{G} \subset \overline{\mathcal{F}}$ being $\mathcal{G}$ the right hand side of \eqref{HU} assuming that $Y$ is separable. Since $\mathcal{G}$ is closed, by passing to the orthogonal in $(L^\infty_\mu)^d$, the wished inclusion is equivalent to $\mathcal{F}^\perp \subset \mathcal{G}^\perp = \set{v\in (L^\infty_\mu)^d}{v(x) \in F_\mu^\perp(x) \ \mu-\text{a.e.}}.$
	Let $v\in \mathcal{F}^\perp$ and $\{u_n, n \in \mathbb{N}\}$ be a countable dense subset of $\mathcal{F}$ so that $F_\mu(x) = \overline{\{ u_n(x), n \in \mathbb{N}\}}$. Then, for every $n$ and any Borel subset $A$, we have that $u_n \one_A$ belongs to $\mathcal{F}$, hence $ \int_A \pairing{v, u_n}\, d\mu=0$. By localizing, we infer that $\pairing{v, u_n}= 0$ holds $\mu$ a.e, hence $v(x) \in F_\mu^\perp(x)$ as wished.
\end{proof}

\subsection{Monge-Kantorovich distance}

The Monge-Kantorovich distance between two probability measures $\mu, \nu$ with finite first moment, i.e. $\int |x| d\mu < \infty$, is given by 
\begin{equation}\label{def:W1}
	W_1(\mu,\nu) \coloneqq \inf_{\gamma} \int_{\Rd\times\Rd}|x-y|\,d\gamma(x,y),
\end{equation}
where the infimum is taken among all admissible transport plans $\gamma$ such that $\pi^\sharp_1 \gamma = \mu$ and $\pi^\sharp_2 \gamma = \nu$. Here $\pi^\sharp_1$ and $\pi^\sharp_2$ are the usual push-forward operators associated to the projections $\pi_1$ and $\pi_2$ from $\Rd\times\Rd$ on the first and second factors respectively. By homogeneity the definition above extends naturally to finite measures $\mu, \nu \in \M_+(\Rd)$ such that $\int \mu=\int \nu$. The core of this theory is the Kantorovich-Rubinstein dual representation
\begin{align*}
	W_1(\mu,\nu) = \sup \left\{\int u \, d\nu - \int u \, d\mu\ :\ u \in \Lip_1(\Rd) \right\},
\end{align*}
where $\Lip_1(\Rd)$ stands for the set of Lipschitz functions whose Lipschitz constant $\Lip(u)$ is smaller than $1$. It is easy to check that the supremum above is achieved provided $\mu$ and $\nu$ have a finite first order moment and share the same mass. On the other hand, it is clear from the dual formulation that $W_1(\mu,\nu)$ only depends on the difference $f = \nu-\mu$.
Accordingly we define the following vector subspace of signed measures on $\Rd$:
\begin{equation}\label{def:Mod}
	\Mod(\Rd) :=\left\{ f\in \M(\Rd) \ :\ \int f =0 \quad,\quad \int |x| \, d|f|(x) < +\infty \right\} 
\end{equation}
endowed with the Kantorovich-Rubinstein (KR) norm:
\begin{align*}
	\| f\|_{\KR} \ = W_1(f_+,f_-) := \sup \left\{\int u\, df\ :\ u \in \Lip_1(\Rd) \right\}.
\end{align*}
In particular, $\delta_x - \delta_0 \in \Mod$ for any $x \in \Rd$. In turn $ \Mod(\Rd)$ is not complete as a normed space. Indeed there exists Cauchy sequences $(f_n)$ which are not bounded in total variation, as for instance:
\begin{align*}
	f_n = \sum_{j=1}^n (\delta_{b_j} -\delta_{a_j}) 
	\quad \text{where} \quad
	\sum_j |b_j-a_j| < +\infty,
\end{align*}
which converges in the sense of $\D'$ to a first order distribution $f$ given by the sum of the series.

\subsection{Completion of $\Mod$}

Owing to its relation with the Arens-Eells (free Lipschitz fields) theory, in which more general metric spaces than $\Rd$ are considered, we will denote by $\AEd$ the completion of $\Mod$ with respect to the KR norm. In our context, $\AEd$ can be identified as a subset of distributions $f \in \D_1'(\Rd)$ of order $\le 1$ (see for instance \cite{BCJ2005}). 

\begin{definition} 
	A distribution $f\in \D'(\Rd)$ belongs to $\AEd$ if and only if, for every $(\f_n)$ in $\D(\Rd)$, the following implication holds:
	\begin{equation}\label{criterion}
		\f_n \to c \in \R, \quad \sup |\nabla \f_n| \le C \ \implies \ \pairing{f,\f_n} \to 0 \ . 
	\end{equation}
\end{definition} 

\begin{proposition}\label{completion} 
	\begin{enumerate}[label={(\roman*)}]
		\item Every $f\in \AEd$ can be extended in a unique way into a linear form on $\Lip(\R^d)$ (still denoted $f$) such that
		$$ \pairing{f, u} \le C \, \Lip(u) \quad,\quad \pairing{ f, 1}=0 \ .$$
		The best constant $C$ is given by $C = \| f\|_{\KR}$ where we have set
		$$ \| f\|_{\KR} := \sup \set{\pairing{ f, u}}{u \in \Lip_1(\R^d)}.$$
		\item For every $f\in \AEd$, there exists a potential $u\in \Lip_1(\R^d)$ such that $$ \pairing{ f, u} = \| f\|_{\KR}.$$
		\item Equipped with the KR-norm, $\AEd$ is a Banach space which contains $\Mod$ as a dense subspace.
	\end{enumerate}
\end{proposition}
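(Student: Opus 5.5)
The plan is to work throughout with the criterion \eqref{criterion}, regarded as the definition of $\AEd$, and to obtain everything from one a priori estimate. Put $N(f):=\sup\{\pairing{f,\f} : \f\in\D(\Rd),\ \Lip(\f)\le 1\}$. I first show that $N(f)<\infty$ for every $f\in\AEd$. Suppose instead that there are $\f_n\in\D$ with $\Lip(\f_n)\le 1$ and $\pairing{f,\f_n}\ge n^2$. Adding a constant $\chi_R$, i.e. a test function equal to $1$ on a large ball with small gradient, changes $\pairing{f,\cdot}$ by an amount that tends to $0$ by \eqref{criterion}. This lets me normalize $\f_n(0)=0$. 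Then $\psi_n=\f_n/n$ has $\Lip(\psi_n)\le 1/n$ and converges locally uniformly to $0$, while $\pairing{f,\psi_n}\ge n\to\infty$. To contradict \eqref{criterion} I also need uniform convergence on all of $\Rd$. I will get it by truncating, $\psi_n\mapsto \max(\min(\psi_n,\eps_n),-\eps_n)$ followed by mollification. The difference $\psi_n$ minus its truncation is supported where $|\psi_n|>\eps_n$, which lies outside a large ball, so its pairing with $f$ must be controlled by a second application of \eqref{criterion}. This normalization and truncation step is the main obstacle: \eqref{criterion} gives only sequential information, with no constants, and the argument must turn that into the bound $N(f)<\infty$. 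If the direct contradiction fails, the fallback is to write $f=\mathrm{div}\, G$ locally and show $G$ can be chosen of finite mass, or to use a diagonal argument with gliding humps.

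Once $N(f)<\infty$, I extend $f$ to $\Lip(\Rd)$. For $u\in\Lip(\Rd)$, let $u_k=(\chi_k u)*\rho_k$, where $\chi_k$ are cutoffs with $|\nabla\chi_k|\le 1/k$ and $\chi_k=1$ on $B_k$. Since $u$ may grow linearly, $\Lip(u_k)$ is not automatically close to $\Lip(u)$, so I subtract $u(0)$ first and cut off more slowly, for instance with $\chi_k=\theta(\log|x|/\log k)$. Then any two approximants $u_k,u_j$ differ by a $\D$ function whose gradient is uniformly bounded and which tends locally to a constant. By \eqref{criterion}, $\pairing{f,u_k}$ is Cauchy. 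The limit is independent of the approximating sequence, gives $\pairing{f,1}=0$, and satisfies $|\pairing{f,u}|\le N(f)\Lip(u)$. Hence the best constant is $C=\|f\|_{\KR}=N(f)$, and uniqueness follows from the same density argument. This proves (i).

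For (ii), take $u_n\in\Lip_1$ with $u_n(0)=0$ and $\pairing{f,u_n}\to\|f\|_{\KR}$. By Arzelà–Ascoli, a subsequence converges locally uniformly to some $u\in\Lip_1$. The differences $u_n-u$ are Lipschitz with constant at most $2$ and tend locally to $0$, so the extended criterion obtained in the previous step gives $\pairing{f,u_n-u}\to 0$. Therefore $\pairing{f,u}=\|f\|_{\KR}$.

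For (iii), the inclusion $\Mod\subset\AEd$ holds because for $f\in\Mod$, dominated convergence against the bound $C(1+|x|)$ verifies \eqref{criterion}. The KR norm is a norm: $\|f\|_{\KR}=0$ forces $\pairing{f,\f}=0$ for all $\f\in\D$. Completeness: a Cauchy sequence $f_n$ converges uniformly on $\Lip_1$ functions, so its limit $f$ is a distribution. The uniform-in-$n$ Cauchy bound then lets \eqref{criterion} pass to $f$, by an $\eps/3$ argument. Density: regularize, $f*\rho_\eps\to f$ in KR, since $\|f-f*\rho_\eps\|_{\KR}\le \sup_{\Lip_1}|\pairing{f,u-u*\rho_\eps}|\to 0$ by the criterion. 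Then truncate the smooth densities and rebalance the mass, for example by adding a multiple of $\delta_0$. Both errors go to $0$ in KR, again by \eqref{criterion} applied to the cutoffs.
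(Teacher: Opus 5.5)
Your argument is correct, and it takes a genuinely different route from the paper. The paper proves only two pieces itself and refers (i), (ii) and completeness to the literature:
\begin{itemize}
\item the inclusion $\Mod\subset\AEd$, by essentially the same dominated-convergence argument as yours;
\item the density of $\Mod$, which it deduces in two lines from the Arens--Eells identification $\AEd^*\sim\Lip_0(\Rd)$ together with Hahn--Banach.
\end{itemize}
You work only from \eqref{criterion}. One recurring device carries the whole proof: by contradiction, the sequential criterion yields a bound that is uniform over a Lipschitz-bounded family. From it you obtain, in turn:
\begin{itemize}
\item $N(f)<\infty$;
\item the extension, where the logarithmic cutoff is the right choice because it gives $\limsup_k\Lip(u_k)\le\Lip(u)$, which is exactly what makes the best constant equal to $\|f\|_{\KR}$;
\item the potential, via Arzel\`a--Ascoli;
\item completeness, by an $\eps/3$ argument;
\item density, constructively, by mollification, truncation and rebalancing with a multiple of $\delta_0$.
\end{itemize}
What this buys you is self-containedness. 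For the space cut out by \eqref{criterion}, knowing that every element of its dual is represented by some $u\in\Lip_0(\Rd)$ carries essentially the same content as the density itself. So the paper's short argument really rests on the cited theory, whereas your construction proves density directly; the cost is length.

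Some corrections to your write-up.
\begin{itemize}
\item The ``main obstacle'' is illusory. \eqref{criterion} asks only for pointwise (equivalently, locally uniform) convergence to a constant with bounded gradients. Uniform convergence on $\Rd$ cannot be meant, since then $\delta_0$ would satisfy the criterion. Your normalized, rescaled $\psi_n$ satisfy $\psi_n(0)=0$, $\Lip(\psi_n)=O(1/n)$ and $\pairing{f,\psi_n}\to\infty$, so they already contradict \eqref{criterion}.
\item Drop the truncation step. Its remainder need not tend to $0$ uniformly either, so controlling it already uses the pointwise reading, under which no truncation is needed.
\item Drop the fallback. Representing $f$ as the divergence of a finite measure is Proposition~\ref{fundamental}, whose proof uses the very density you are establishing.
\item Uniqueness in (i) holds only among weak-star sequentially continuous extensions (cf.\ Remark~\ref{Lipdual}); state that class explicitly.
\item In (ii) and in the truncation estimate for density, you apply \eqref{criterion} to Lipschitz sequences rather than to sequences in $\D$. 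This extended criterion does follow from your extension step by a diagonal approximation, but you should write that argument out.
\end{itemize}
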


The proof of Proposition~\ref{completion} can be found in \cite{BCJ2005, BBDP2010} where the analysis is restricted to measures compactly supported in a convex compact subset of $\Rd$, further generalized in metric spaces in \cite{ARGB}. Actually, in light of the Arens-Eells theory (see \cite{AE, Hanin1997, Olu2004, WeaverBook}), it is well known that the dual of $\AEd$ can be identified as
\begin{align*}
	\AEd^* \sim \Lip_0(\Rd):=\{ u \in \Lip(\Rd) : u(0)=0 \}
\end{align*}
by considering the canonical map $\Psi : u\in \Lip_0(\Rd) \mapsto \pairing{\cdot,u} \in \AEd^*$. This map is an isometry, i.e. $\Vert \Psi\Vert_{\AEd^*} =\Lip(u)$, and its inverse is given by $\Psi^{-1}: f\in \AEd \mapsto u$ where $u(x) = \pairing{f, \delta_x-\delta_0}$.
As a consequence, the weak-star convergence of an equi-Lipschitz sequence in $\Lip(\Rd)$ is equivalent to its pointwise convergence.
\begin{definition}\label{weakstarLip} A sequence $(u_n)$ in $\Lip(\Rd)$ is said to be weakly star convergent to $u$ if 
	$$ \sup_n \Lip(u_n) <+\infty \quad \text{and}\quad u_n(x) \to u(x) \quad \text{for every $x\in \Rd$}. $$
	In this case, we shall write shortly $u_n \weakstar u$.
\end{definition}

Note that the convergence above implies the uniform convergence of $(u_n)$ on compact subsets of $\Rd$. Moreover,
in virtue to Banach-Steinhaus theorem, it holds:
\begin{equation}\label{weak-star.crit}
	u_n\weakstar u \iff \pairing{f,u_n} \to \pairing{f,u} \quad \text{for every $f\in \AEd$}.
\end{equation}

Therefore every equi-Lipschitz sequence $(u_n)$ in $\Lip_0(\Rd)$ admits a weak-star converging subsequence.
For the reader's convenience, we give a short proof of the dense inclusion $\Mod \subset \AEd$
stated in assertion (iii) of Proposition~\ref{completion}. 

\begin{proof} 
	Let $f\in \Mod$ and let us check that it satisfies \eqref{criterion}. As $f$ has integral 0, we can assume that $c=u_n(0)=0$.
	Therefore $|u_n|(x)\le |x|$ and, since $u_n\to 0$ uniformly on compact subsets, we get that for every $R > 0$,
	\begin{align*}
		\limsup_{n\to \infty} |\pairing{f,u_n}| 
		\le \limsup_{n\to \infty} \int |u_n|\, d|f|(x) 
		\le \int_{\{|x|\ge R\}} |x|\, d|f|(x).
	\end{align*}
	The conclusion follows by letting $R\to \infty$ in virtue of the integrability condition in \eqref{def:Mod}.
	The density is deduced from $\AEd^* \sim\Lip_0(\Rd)$: indeed, a linear form in $\AEd^*$ vanishing on $\Mod$
	is represented by a Lipschitz function $u$ with $u(0)=0$, which satisfies in particular $\pairing{u,\delta_x-\delta_0} = 0$ for any $x \in \Rd$. 
	It follows that $u \equiv 0$, and $\Mod$ is dense in $\AEd$.
\end{proof}

\begin{remark}\label{Lipdual}
	In view of Definition~\ref{weakstarLip}, by using convolution by smooth kernels and truncations arguments, it is easy to check that $\D(\Rd)$ is weakly-star dense
	in $\Lip(\Rd)$. Accordingly, we have:
	\begin{equation}\label{smoothing}
		\|f\|_{\KR} = \sup \set{\pairing{f,\f}}{\f\in \D\cap \Lip_1(\Rd)} \quad \text{for every } f\in{\AEd}. 
	\end{equation}
	It is tempting to use this equality to extend the definition of the KR norm to all elements of the dual of $\Lip(\Rd)$.
	However, as a linear form $T \in (\Lip(\Rd))'$ is not uniquely determined by its restriction to smooth functions 
	(the closure of $\D(\Rd)$ is a subspace of $C^1$), to be consistent, we would have to chose specific Hahn-Banach extensions $T$ of elements $f\in \AEd$ (which are not unique).
\end{remark}

\subsection{Transport measures versus currents}

A transport measure associated with a source term $f$ is a vector measure $\la \in \Md$ such that $- \div \la = f$ in the sense of distributions. We are interested in two cases: $f\in \Mod$ or $f\in \AEd$. By identifying such a $\la$ to a one dimensional currents with finite mass $\|\la\|$, we fall into the terminology of the Federer-Flemming theory of currents \cite{Fed,Morgan} from which we borrow the notation.

\begin{definition} 
	We will denote
	\begin{align}
		\Nd &:= \set {\la \in \Md} {-\div \la \in \Mod} &&\text{(normal 1-currents),} \label{def:Nd} \\
		\Fd &:= \set {\la \in \Md} {-\div \la \in \AEd} && \text{(flat chains of finite mass).} \label{def:Fd}
	\end{align} 
\end{definition}

In the language of currents, $-\div \la$ represents the boundary $\partial T$ of the $1$-current $T$ associated to $\la$ by $\left<T,\f\right> = \int \f \, d\lambda$. A key result is the following: 
\begin{proposition}\label{fundamental}
	\begin{enumerate}
		\item\label{fundamental:surj} The linear map $ \la \in \Fd\subset \Md \mapsto -\div \la \in \AEd$ is surjective and satisfies:
		\begin{equation}\label{divlip} 
			\| -\div \la \|_{\KR} \ \le \ \|\la\|_{\Md} \qquad \text{for every } \la\in \Fd.
		\end{equation}
		Moreover, for every $f\in \AEd$, we have the equality
		\begin{equation}\label{Beck}
			\|f\|_{\KR} \ =\ \min \set{\|\la\|_{\Md}}{ -\div\la = f,\ \la\in \Fd}.
		\end{equation}
		\item\label{fundamental:clos} $\Fd$ is the closure of $\Nd$ in the Banach space $\Md$;
		\item\label{fundamental:locp} $\Fd$ has the following local property:
		\begin{equation}\label{Fdlocal}
			\la \in \Fd \ \implies\ \theta \, \la \in \Fd \quad \text{for any } \theta \in L^1_{|\la|}(\Rd). 
		\end{equation}
	\end{enumerate}
\end{proposition}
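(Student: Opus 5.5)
Part (i), estimate \eqref{divlip}: take $\la\in\Fd$ and a test function $\f\in\D(\Rd)\cap\Lip_1(\Rd)$. Then
\[
\pairing{-\div\la,\f}=\int\pairing{\nabla\f,d\la}\le\|\la\|_{\Md}.
\]
Taking the supremum over such $\f$ and using \eqref{smoothing} gives $\|-\div\la\|_{\KR}\le\|\la\|_{\Md}$. This needs the extension of $-\div\la$ to $\Lip(\Rd)$ to coincide with $u\mapsto\int\pairing{\nabla u,d\la}$. I would check this by approximating a Lipschitz $u$ by mollifications $u_\eps\weakstar u$: then $\nabla u_\eps\to\nabla u$ only $\Ld$-a.e., but \eqref{smoothing} already bypasses this issue, since the KR norm is computed over smooth test functions.

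Part (i), surjectivity and \eqref{Beck}: this is a Beckmann-type duality, which I would obtain from Proposition~\ref{duality_classical}. Fix $f\in\AEd$ and set $X=C_0(\Rd;\Rd)$. On the dense subset $\{\nabla\f:\f\in\D\}$ define
\[
h(\nabla\f)=-\pairing{f,\f},
\]
and put $h=+\infty$ elsewhere. Better, use the perturbation function
\[
h(p)=\inf\set{-\pairing{f,\f}}{\f\in\D,\ \|\nabla\f+p\|_\infty\le1}.
\]
This $h$ is convex. The criterion \eqref{criterion} together with Proposition~\ref{completion} gives $-\pairing{f,\f}\ge-\|f\|_{\KR}\|\nabla\f\|_\infty$, so $h$ is bounded above near $0$: indeed $\f=0$ is admissible whenever $\|p\|\le1$. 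We also have $h(0)=-\|f\|_{\KR}$. Next compute $h^*$ on $X^*=\Md$. For $\la\in\Md$,
\[
h^*(\la)=\sup_{p,\f}\Bigl\{\pairing{\la,p}+\pairing{f,\f}\Bigr\}.
\]
Substituting $q=\nabla\f+p$, this becomes $\|\la\|_{\Md}$ if $\pairing{\la,\nabla\f}=\pairing{f,\f}$ for all $\f$, that is if $-\div\la=f$, and $+\infty$ otherwise. Proposition~\ref{duality_classical} then yields a minimizer $\la$ with $-\div\la=f$ and $\|\la\|_{\Md}=\|f\|_{\KR}$. This gives surjectivity and \eqref{Beck} together, with the reverse inequality supplied by \eqref{divlip}. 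The main obstacle is the non-compactness of $\Rd$: one has to check that $C_0$ is the right space, so that the dual is finite-mass measures, and that $\D$-test functions suffice. If needed, I would work on balls and pass to a limit using tightness from the uniform mass bound.

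Part (ii): $\Nd\subset\Fd$ because $\Mod\subset\AEd$. For closedness, take $\la_n\to\la$ in $\Md$ with $\la_n\in\Fd$. By \eqref{divlip}, the sequence $-\div\la_n$ is Cauchy in $\AEd$, so it converges to some $g\in\AEd$. Since $-\div\la_n\to-\div\la$ in $\D'$, we get $-\div\la=g\in\AEd$. For density, take $\la\in\Fd$ and pick $f_n\in\Mod$ with $\|f_n-(-\div\la)\|_{\KR}\to0$. By \eqref{Beck} choose $\eta_n$ with $-\div\eta_n=-\div\la-f_n$ and $\|\eta_n\|\to0$. Then $\la-\eta_n$ has divergence $-f_n\in\Mod$, so $\la-\eta_n\in\Nd$, and $\la-\eta_n\to\la$.

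Part (iii): by density of smooth functions in $L^1_{|\la|}$ and closedness of $\Fd$ from (ii), it suffices to treat $\theta=\f\in\D(\Rd)$. By (ii) we may also assume $\la\in\Nd$, again by approximation, since multiplication by $\f$ is continuous on $\Md$. Then
\[
-\div(\f\la)=\f\,(-\div\la)-\pairing{\nabla\f,\la}.
\]
Both terms are compactly supported finite measures. Their total integral is $\pairing{-\div(\f\la),1}=0$, so $-\div(\f\la)\in\Mod$, hence $\f\la\in\Nd\subset\Fd$.
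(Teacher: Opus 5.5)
Your proof is correct, but part (i) takes a genuinely different route from the paper.

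\textbf{Part (i).} The paper proves \eqref{Beck} in two stages:
\begin{itemize}
\item[(a)] For data $f\in\Mod$ it quotes the classical Beckmann duality, in particular the existence of an optimal flow once $W_1(f_+,f_-)<\infty$.
\item[(b)] For $f\in\AEd$ it approximates $f$ by $f_n\in\Mod$, extracts a weak-$*$ cluster point of the optimal flows $\la_n$, and uses lower semicontinuity of the total variation to get $\|\la\|\le\|f\|_{\KR}$; the reverse inequality comes from \eqref{divlip}.
\end{itemize}
You instead run a single Fenchel--Rockafellar argument on $C_0(\Rd;\Rd)$ through Proposition~\ref{duality_classical}, with $h(0)=-\|f\|_{\KR}$ supplied by \eqref{smoothing}. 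This is exactly the computation of Lemma~\ref{dual-sigma} at $\sigma=0$.

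What your route buys:
\begin{itemize}
\item[--] It is self-contained and needs no optimal-transport existence theory.
\item[--] Equality and attainment come out together from $\partial h(0)\neq\emptyset$, so \eqref{divlip} is not actually needed for \eqref{Beck}. It also makes superfluous the appeal to surjectivity in the proof of Lemma~\ref{dual-sigma}.
\item[--] Your worry about non-compactness is unfounded. Since $\nabla\f\in C_c\subset C_0$ and $(C_0(\Rd;\Rd))^*=\Md$, no localisation on balls is needed.
\end{itemize}
What the paper's route buys: it exhibits the optimal flow for $\AEd$ data as a weak-$*$ limit of classical Beckmann flows, which ties $\Fd$ directly to transport.

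A small expository slip: the bound $-\pairing{f,\f}\ge-\|f\|_{\KR}\|\nabla\f\|_\infty$ is a lower bound, ensuring $h>-\infty$. Boundedness of $h$ from above near $0$ comes from the admissibility of $\f=0$, as you then say.

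\textbf{Part (ii).} Your density argument corrects $\la$ by a flow $\eta_n$ that is optimal for the $\AEd$-error $-\div\la-f_n$. This is shorter than the paper's telescoping series. The price is that you use \eqref{Beck} for general $\AEd$ data, whereas the series only needs Beckmann for the increments $g_k\in\Mod$.

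\textbf{Part (iii).} You reduce to $\la\in\Nd$ and compute $-\div(\f\la)$ as an explicit balanced, compactly supported measure. The paper instead checks the criterion \eqref{criterion} for $\f\la$ directly when $\la\in\Fd$. Both arguments then conclude by the closedness proved in (ii).
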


The first assertion already appeared in \cite{BCJ2005}. The density property stated in assertion \ref{fundamental:clos} plays a central role in the proof of the Flat Chain Conjecture in general 1 metric currents (see for instance \cite{ARGB}).

\begin{proof} 
	\ref{fundamental:surj}\quad By \eqref{smoothing} applied to $f = -\div \la$, recalling that $\pairing{f,\f}=\pairing{\la,\nabla\f}$, 
	\begin{align*}
		\|-\div \la\|_{\KR} 
		= \sup \set{\pairing{\la,\nabla\f}}{\f\in \D(\Rd), |\nabla \f|\le 1}
		\le \|\la\|_{\Md}.
	\end{align*}
	If $f\in \Mod$, the equality \eqref{Beck} is the classical Beckmann dual representation of the Monge-Kantorovich distance $W_1(f_+,f_-)$. The existence of an admissible competitor $\la$ and the fact that the infimum is attained is ensured once $W_1(f_+,f_-)=\|f\|_{\KR}<+\infty $. This is always the case for $f\in \Mod$ since, by the triangular inequality, $W_1(f_+,f_-)\le \int |x| |f|(dx)$. To extend \eqref{Beck} to general source terms $f\in \AEd$, consider an approximating sequence $(f_n)$ in $\Mod$ such that $f_n\to f$ in $\AEd$. Then there exists $\la_n$ a minimizer of \eqref{Beck} for $f_n$, such that $\int |\la_n| = \|f_n\|_{\KR}$ and $- \div \la_n = f_n$. Then the sequence $(\la_n)$ is bounded in $\Md$ and admits at least one weak-star cluster point $\la$, which satisfies:
	$$ \int |\la| \le \| f\|_{\KR} \quad,\quad -\div \la = f.$$
	Here we used the lower semicontinuity of the total variation in $\Md$ with respect to the weak-star convergence (which implies the convergence in the sense of distributions) and also the fact that $\|f_n- f\|_{\KR} \to 0$ implies that $f_n\to f$ in $\D'(\Rd)$. Then, by applying \eqref{divlip}, we deduce that $\int |\la| =\| f\|_{\KR}$, whence the optimality of $\la$ in \eqref{Beck}.
	
	\med
	\ref{fundamental:clos}\quad We prove that $\Fd$ is closed in $\Md$. Let $(\la_n)$ be a sequence in $\Fd$ and $\la\in \Md$ such that $\Vert \la_n-\la\Vert_{\Md} \to 0$. Then, from \eqref{divlip}, we deduce that $f_n: =-\div\la_n$ is a Cauchy sequence in $\AEd$; hence there exists $f\in \AEd$ such that $\|f_n- f\|_{\KR} \to 0$ and passing to the limit in $\D'(\Rd)$, we get $f=-\div \la$ thus $\la\in \Fd$.
	
	\medskip
	Next we consider an element $\la\in \Fd$ and prove the existence of a sequence $(\la_n)$ in $\Nd$ such that $\|\la_n-\la\|_{\Md}\to 0$. Denoting $f:=-\div\la$ (then $f\in\AEd$), we observe that it is enough to find a sequence $(\ov\la_n)$ in $\Nd$ such that $\|\ov\la_n-\ov\la\|_{\Md}$ where $\ov \la$ is any element $\ov\la \in\Md$ such that $-\div \ov\la=f$. Indeed, if it is the case, then $\la-\ov\la$ is divergence free and therefore belongs to $\Nd$. Then the sequence $\la_n =\ov{\la_n} + (\la-\ov \la)$ will satisfy the desired requirement.
	
	\med
	\emph{Construction of $\ov\la_n\to \ov \la$:}\ Since $f\in \AEd$, there exists a sequence $(f_n)$ in $\Mod$ such that $\|f_n- f\|_{\KR} \to 0$. Possibly after extracting a subsequence, we can assume that $\eps_n:= \|f_{n+1}-f_n\|_{\KR}$ satisfies $\sum_1^\infty \eps_n<+\infty$. Setting $f_0=0$ and $g_k:=f_{k+1} - f_k$ we have $f_n= \sum_{k=0}^n g_k$ and we can express $f$ as the sum of the absolutely convergent series:
	$$ f = \sum_{k=0}^\infty g_k \quad,\quad g_k \in \Mod \quad,\quad \| g_k\|_{\KR} = \eps_k.$$
	Then letting $\chi_k\in \Md$ be optimal in \eqref{Beck} for $g_k$, the equality $\|\chi_k\|= \eps_k$ allows to construct a series converging absolutely in total variation. We set:
	$$ \ov\la_n:= \sum_{k=0}^{n-1} \chi_k \quad, \quad \ov\la\ :=\ \sum_{k=0}^\infty \chi_k.$$ 
	Since the convergences $\ov\la_n \to \ov\la$ in $\Md$ and $f_n\to f$ in $\AEd$ imply the convergence in the sense of distributions, by passing to the limit in the equation $-\div \ov\la_n = f_n$, we deduce that $\ov \la$ satisfies $-\div \ov \la = f$ in $\D'(\Rd)$. 
	
	\med
	\ref{fundamental:locp}\quad Let $\la \in \Fd$ and consider a smooth scalar function $\theta\in \D(\Rd)$. Then, since for every $u\in\D(\Rd)$, we have 
	$ \pairing{ - \div(\theta\, \la), u} = \pairing{\la, \nabla (\theta u)- u \nabla \theta}, $
	the criterion \eqref{criterion} under which $\div(\theta \la)$ belongs to $\AEd$ is clearly satisfied. Accordingly we have $\theta \, \la \in \Fd$. Given now $\theta \in L^1_{|\la|}(\Rd)$, we can take a sequence of smooth functions $(\theta_n)$ such that $\theta_n\to \theta$ in $L^1_{|\la|}(\Rd)$. Then $\lambda_n=\theta_n \, \la$ converges to $\theta\, \la$ in $\Md$. Hence we can conclude that $\theta\la \in \Fd$ since, by the assertion \ref{fundamental:clos}, we know that $\Fd$ is a closed subspace of $\Md$.
\end{proof}

\begin{remark}\label{localFd} 
	The locality property of assertion \ref{fundamental:locp} turns out to be crucial, although it a straightforward consequence of \ref{fundamental:clos}. We stress that the implication \eqref{Fdlocal} is false if we replace $\Fd$ by $\Nd$. Indeed, $\div(\theta \la)$ may not have finite total variation if the scalar function $\theta$ is discontinuous (take for instance $\theta = \one_B$ where $B$ has infinite perimeter).
\end{remark}

\begin{remark}\label{tangential} 
	In \cite{BCJ2005}, $\Fd$ was introduced as the set of \emph{tangential measures}. The reason behind this name is that in smooth cases, such measures $\la$ are tangential to the geometric support of $|\la|$. The forthcoming construction of the MK tangent bundle will result in a precise formulation of this property.
\end{remark}

\section{The MK tangent bundle to a measure}

\subsection{Construction and locality}\label{sec:locality}

Let $\mu \in \M_+(\Rd)$ be a Radon measure. We consider the following subspace of $(L^1_{\mu}(\Rd))^d$:
\begin{align}\label{def:Tmu}
	\T_\mu \coloneqq \left\{ \sigma \in (L^1_\mu(\Rd))^d \st - \div(\sigma \mu) \in \AEd \right\}
\end{align}
By \eqref{def:Fd}, $\T_\mu$ coincides with the subspace $ \set{ \sigma \in (L^1_\mu(\Rd))^d }{\sigma \, \mu \in \Fd}$. Therefore
it inherits the properties of $\Fd$ given in the assertions \ref{fundamental:clos} and \ref{fundamental:locp} of Proposition~\ref{fundamental}, namely:
\begin{itemize}
	\item $\T_\mu$ is a closed subspace of $(L^1_\mu(\Rd))^d$;
	\item If $\sigma\in \T_\mu$, we have $\sigma\, \theta \in \T_\mu$ for every $\theta\in L^\infty_\mu$.
\end{itemize}
It follows that $\T_\mu$ is determined by a local multivalued application $T_{\mu}$. 

\begin{proposition}\label{res:firstprop}
	There exists a $\mu$-measurable multivalued application $T_\mu : \Rd \rightrightarrows \Rd$ such that $T_\mu(x)$ is a vector subspace of $\Rd$, possibly reduced to $\{0\}$, and 
	\begin{align}\label{Tmu=}
		\sigma \in \T_\mu 
		\quad \iff \quad
		\sigma(x) \in T_\mu(x) \quad \mu\text{-a.e.} \qquad (\iff \sigma\, \mu \in \Fd).
	\end{align}
\end{proposition}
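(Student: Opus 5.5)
We will apply Proposition~\ref{local} with $Y=\Rd$ and $\F=\T_\mu$, after checking its hypotheses, and then define $T_\mu := \mu-\mathrm{ess}\bigcup\{\sig \,;\, \sig\in\T_\mu\}$.

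\emph{Hypotheses of Proposition~\ref{local}.} First, $\T_\mu$ is a vector subspace of $(L^1_\mu)^d$, because $\sig\mapsto -\div(\sig\mu)$ is linear and $\AEd$ is a vector space. Next, $\T_\mu$ is $\D$-stable in the sense of \eqref{Dstable}. Indeed, for $\sig\in\T_\mu$ we have $\sig\mu\in\Fd$. For $\f\in\D(\Rd)$, the locality property \eqref{Fdlocal} of Proposition~\ref{fundamental}, with $\theta=\f\in L^1_{|\sig\mu|}$, gives $\f\,\sig\mu\in\Fd$, that is, $\f\sig\in\T_\mu$. A small point arises here: \eqref{Fdlocal} is stated for densities relative to $|\la|=|\sig|\mu$, and $\f$ is bounded, so it is integrable against this measure. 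Finally, $\T_\mu$ is closed in $(L^1_\mu)^d$. If $\sig_n\to\sig$ in $L^1_\mu$, then $\sig_n\mu\to\sig\mu$ in total variation, and $\Fd$ is closed in $\Md$ by assertion \ref{fundamental:clos} of Proposition~\ref{fundamental}. Hence $\overline{\T_\mu}=\T_\mu$.

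\emph{Conclusion.} Proposition~\ref{local} now yields
\[
\T_\mu=\set{\sig\in(L^1_\mu)^d}{\sig(x)\in T_\mu(x)\ \mu\text{-a.e.}},
\]
which is exactly \eqref{Tmu=}. The last equivalence in \eqref{Tmu=}, namely $\sig\in\T_\mu\iff\sig\mu\in\Fd$, is simply the definition \eqref{def:Fd}. The remaining properties of $T_\mu$ come from the general facts on essential unions recalled in Subsection~\ref{prel:multif}. The multifunction $T_\mu$ is closed-valued and $\mu$-measurable. Its values are vector subspaces, because $\T_\mu$ is a vector space. Concretely, one takes a countable dense subset $\{\sig_n\}$ of the $\Q$-linear span of a dense family, so that $T_\mu(x)=\overline{\{\sig_n(x)\}}$ is an additive, $\Q$-homogeneous closed set, hence a linear subspace of $\Rd$. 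It may reduce to $\{0\}$, for instance when $\T_\mu=\{0\}$.

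\emph{Main obstacle.} No step here is a serious obstacle. The only delicate point is checking that the essential union of a vector space gives linear-subspace values pointwise, and this was already asserted among the observations in Subsection~\ref{prel:multif}.
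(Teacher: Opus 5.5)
Your proof is correct and follows essentially the paper's route: the paper likewise notes that $\T_\mu$ is a closed subspace of $(L^1_\mu)^d$, stable under multiplication by bounded scalar functions (inherited from assertions \ref{fundamental:clos} and \ref{fundamental:locp} of Proposition~\ref{fundamental}), and then invokes Hiai--Umegaki exactly as in Proposition~\ref{local}. Your explicit checks of $\D$-stability and of the pointwise linearity of the values simply spell out what the paper leaves implicit; for the latter you should add that one first discards the countably many $\mu$-null sets on which the $\mathbb{Q}$-linear relations among the chosen representatives fail.
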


\begin{proof}
	As in Proposition~\ref{local}, the existence of $T_\mu(\cdot)$ follows from \cite[Th.~3.1 p.158]{Hiai}.
\end{proof}

\begin{definition}[MK-tangent bundle]
	The multivalued application $T_{\mu}$ characterized in \eqref{def:Tmu} is the MK-tangent bundle to the measure $\mu$. Its orthogonal bundle is denoted
	\begin{align*}
		N_{\mu}(x) \coloneqq (T_{\mu}(x))^{\perp}.
	\end{align*}
\end{definition}

A main interest of the construction of $\T_\mu$ lies in the following property.

\begin{lemma}[Locality]
	Let $\mu, \nu$ be two Radon measures and $B\in \B(\Rd)$. Then
	\begin{align}\label{locality}
		\mu\res B = \nu \res B\ \implies \ T_\mu(x)= T_\nu(x) \quad \text{a.e. on } B. 
	\end{align}	
\end{lemma}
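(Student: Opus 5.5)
The plan is to reduce everything to the locality property \eqref{Fdlocal} of flat chains in Proposition~\ref{fundamental}, via the characterization \eqref{Tmu=} $\sig\in\T_\mu \iff \sig\mu\in\Fd$. By symmetry in $\mu$ and $\nu$, it suffices to show $T_\mu(x)\subset T_\nu(x)$ for $\mu$-a.e. $x\in B$. (On $B$ the two measures agree, so the null sets match.)

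\emph{Step 1: transfer of tangent fields.} Take $\sig\in\T_\mu$ and set $\tilde\sig := \one_B\,\sig$. Since $\sig\mu\in\Fd$, \eqref{Fdlocal} with $\theta=\one_B$ gives $\one_B\,\sig\mu\in\Fd$. Because $\mu\res B=\nu\res B$, we have $\one_B\sig\,\mu = \tilde\sig\,\nu$ as vector measures, and $\tilde\sig\in (L^1_\nu)^d$. Hence $\tilde\sig\nu\in\Fd$, which means $\tilde\sig\in\T_\nu$. Applying \eqref{Tmu=} for $\nu$ yields $\sig(x)=\tilde\sig(x)\in T_\nu(x)$ for $\nu$-a.e. $x\in B$, equivalently $\mu$-a.e. on $B$.

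\emph{Step 2: passing from fields to the bundle.} Take a countable family $\{\sig_n\}$ dense in $\T_\mu$. As recalled in Subsection~\ref{prel:multif}, this gives $T_\mu(x)=\overline{\{\sig_n(x)\}}$ $\mu$-a.e., since $T_\mu=\mu\text{-ess}\bigcup\T_\mu$ by Proposition~\ref{local} and Proposition~\ref{res:firstprop}. Step 1 applied to each $\sig_n$ gives a single $\mu$-null set outside of which $\sig_n(x)\in T_\nu(x)$ for all $n$ and all $x\in B$. Since $T_\nu(x)$ is a closed subspace, we get $T_\mu(x)\subset T_\nu(x)$ $\mu$-a.e. on $B$. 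Exchanging the roles of $\mu$ and $\nu$ gives equality.

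\emph{Main obstacle.} The only delicate point is the measure-theoretic bookkeeping. We must make sure that the description of $T_\mu$ as the essential union of a countable dense family is legitimate. We must also check that $\mu$-null and $\nu$-null subsets of $B$ coincide, which is immediate from $\mu\res B=\nu\res B$. If one prefers to avoid the dense family, the alternative is to argue via Proposition~\ref{local}: the field $\one_B P_\mu e_i$ lies in $\T_\nu$ by Step 1, which gives $P_\mu(x)e_i\in T_\nu(x)$ for each $i$, $\mu$-a.e. on $B$.
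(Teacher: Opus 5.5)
Your proof is correct and is essentially the paper's argument. The paper compares $T_\mu$ and $T_\nu$ through the intermediate measure $\mu\res B=\nu\res B$: it uses extension by zero for one inclusion and the locality \eqref{Fdlocal} with $\theta=\one_B$ for the other, and composing these two steps gives exactly your Step~1, while your Step~2 spells out the passage from fields to the bundle (via a countable dense family) that the paper leaves implicit. One minor caveat concerns your alternative route: $\one_B P_\mu e_i$ lies in $(L^1_\mu)^d$ only after intersecting $B$ with bounded sets, since $\mu(B)$ may be infinite.
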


\begin{proof}
	Denoting $\mu_B = \mu\res B$, the implication above is clear if we can show that $T_\mu(x) = T_{\mu_B}(x)$ holds $\mu$ a.e. in $B$. Let $\sigma \in \T_{\mu_B}$, then $\sigma \mu_B= \widetilde{\sigma} \mu$ belongs to $\Fd$, where $\widetilde{\sigma}$ is the extension of $\sigma$ by 0 outside of $B$. Therefore $\widetilde{\sigma}(x) \in \T_{\mu}$. Accordingly we have $T_{\mu_B}(x) \subset T_{\mu}(x)$ for $\mu$-a.e. $x \in B$. In the opposite direction, if $\sig\in \T_\mu$, then $\sig \, \one_B \mu= \sig\, \mu_B$ belongs to $\Fd$ by the locality property \eqref{local}. It follows that $\sig \in T_{\mu_B}$, whence $\sig(x) \in T_{\mu_B}(x)$ for $\mu$-a.e. $x \in B$.
\end{proof}

\medskip 
As a corollary of the property \eqref{locality}, we get the following.
\begin{lemma}\label{varieloc}
	The following implications hold:
	\begin{align}\label{disjunction}
		& \mu = \mu_1 + \mu_2 \ \text{ and}\ \mu_1 \perp \mu_2 \implies
		T_\mu(x)= 
		\begin{cases} 
			T_{\mu_1} (x) & \mu_1 \text{ a.e.},\\ 
			T_{\mu_2} (x) & \mu_2 \text{ a.e.}.
		\end{cases} \\
		& \nu \ll \mu \implies T_\nu(x) = T_\mu(x) \quad \nu\ \text{a.e.} \label{restriction}
	\end{align} 
\end{lemma}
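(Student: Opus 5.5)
Both implications will be derived from the locality property \eqref{locality}, applied to suitable Borel sets on which two measures agree.

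For \eqref{disjunction}, since $\mu_1\perp\mu_2$ there is a Borel set $A$ with $\mu_1(A^c)=0$ and $\mu_2(A)=0$. Then $\mu\res A=\mu_1\res A=\mu_1$ and $\mu\res A^c=\mu_2$. Applying \eqref{locality} with $B=A$ to the pair $(\mu,\mu_1)$ gives $T_\mu=T_{\mu_1}$ $\mu$-a.e. on $A$, which is $\mu_1$-a.e. because $\mu_1$ is concentrated on $A$. Symmetrically, the pair $(\mu,\mu_2)$ with $B=A^c$ gives $T_\mu=T_{\mu_2}$ $\mu_2$-a.e.

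For \eqref{restriction}, write $\nu=g\mu$ with $g\in L^1_{loc,\mu}$, $g\ge0$, by Radon--Nikodym. The measures $\nu$ and $\mu$ do not coincide on any set in general, so \eqref{locality} cannot be used directly. Instead I will argue directly at the level of $\T$, in the same way as in the proof of locality, using \eqref{Fdlocal}.

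Set $B=\{g>0\}$. First I show $\T_\nu\supseteq\{\sig: \sig\one_B\in \T_\mu\}$. If $\sig\in(L^1_\nu)^d$ satisfies $\sig(x)\in T_\mu(x)$ $\nu$-a.e., then truncating, $\sig_k:=\sig\one_{\{|\sig|\le k,\ g\le k\}}$ lies in $(L^1_\mu)^d$ restricted to $B$. Hence $\sig_k\in\T_\mu$, so $\sig_k\mu\in\Fd$, and by \eqref{Fdlocal} $g\sig_k\mu=\sig_k\nu\in\Fd$. Letting $k\to\infty$ and using the closedness of $\Fd$ (the convergence holds in total variation by dominated convergence in $L^1_\nu$), we get $\sig\nu\in\Fd$. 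This gives $T_\mu\subset T_\nu$ $\nu$-a.e.

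Conversely, take $\sig\in\T_\nu$ and consider $\sig\nu\in\Fd$. Multiplying by $\theta=g^{-1}\one_{\{1/k\le g\le k\}}\in L^1_{|\sig\nu|}$ and applying \eqref{Fdlocal} gives $\sig\one_{\{1/k\le g\le k\}}\mu\in\Fd$. So $\sig\one_{\{1/k\le g\le k\}}\in\T_\mu$, whence $\sig(x)\in T_\mu(x)$ $\mu$-a.e. on $\{1/k\le g\le k\}$ for every $k$. The union of these sets covers $\{0<g<\infty\}$, which carries $\nu$. Running over a countable dense family of $\T_\nu$, as in the definition of the essential union, yields $T_\nu\subset T_\mu$ $\nu$-a.e.

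The only delicate point is integrability: one must make sure the multipliers $\theta$ lie in $L^1_{|\la|}$ and the truncated fields in $L^1_\mu$. The level-set truncations of $g$ handle this, so I expect no real obstacle.
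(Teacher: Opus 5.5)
Your proof of \eqref{disjunction} is exactly the paper's. Your inclusion $T_\nu\subset T_\mu$ is also correct. The paper gets it a bit more directly: $\sig\in\T_\nu$ means $(g\sig)\mu\in\Fd$, i.e.\ $g\sig\in\T_\mu$, and since $T_\mu(x)$ is a linear subspace this already gives $\sig(x)\in T_\mu(x)$ on $\{g>0\}$, with no need for \eqref{Fdlocal} or cut-offs.

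The step that fails as written is the truncation in the inclusion $T_\mu\subset T_\nu$. The field $\sig_k=\sig\one_{\{|\sig|\le k,\ g\le k\}}$ need not belong to $(L^1_\mu)^d$, nor vanish off $B=\{g>0\}$. For instance, take $\mu=\L^1$ on $\R$, $g(x)=e^{-x^2}$ and $\sig\equiv1\in L^1_\nu$. Then $\sig_k\equiv1$ for $k\ge1$, so $\sig_k\mu=\L^1$ has infinite total variation and ``$\sig_k\mu\in\Fd$'' is meaningless. Moreover, $\sig$ is only determined $\nu$-a.e., so nothing forces $\sig_k(x)\in T_\mu(x)$ on $\{g=0\}$, which may carry $\mu$-mass.

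What is needed is a \emph{lower} cut-off of $g$. With $\sig_k:=\sig\one_{\{g\ge1/k\}}$ one has $\int|\sig_k|\,d\mu\le k\int|\sig|\,d\nu<\infty$ and $\sig_k=0$ off $B$, so $\sig_k\in\T_\mu$. Then \eqref{Fdlocal} with multiplier $g$ gives $\sig_k\nu\in\Fd$; here $g$ lies in $L^1_{|\sig_k|\mu}$ precisely because $\sig\in L^1_\nu$. Finally $\sig_k\to\sig$ in $L^1_\nu$ because $\nu(\{g=0\})=0$. The upper cut-offs $|\sig|\le k$ and $g\le k$ play no role here. Applying this to the fields $P_\mu e_i\,\one_{\{|x|\le R\}}$, which are $\nu$-integrable since $\nu$ is Radon, yields $T_\mu(x)\subset T_\nu(x)$ $\nu$-a.e.

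The slip is understandable, because the upper cut-off is what the other formulation requires. The paper argues from $\sig\in\T_\mu$ and invokes \eqref{Fdlocal} with $\theta=g$. That tacitly needs $\int g|\sig|\,d\mu<\infty$, i.e.\ $\sig\in L^1_\nu$, which is not automatic; there one would insert $\one_{\{g\le k\}}$ and exhaust. For the same reason, the inclusion you announce first, $\T_\nu\supseteq\{\sig:\sig\one_B\in\T_\mu\}$, is false as stated: $\sig\one_B\in L^1_\mu$ does not give $\sig\in L^1_\nu$. The statement you then actually argue, for $\sig\in(L^1_\nu)^d$ with values in $T_\mu$ $\nu$-a.e., is the right one.
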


\begin{proof} 
	\eqref{disjunction} follows from \eqref{locality} by considering a Borel subset $B$ such that $\mu\res B=\mu_1$ and $\mu\res B^c= \mu_2$. To show \eqref{restriction}, we write $\nu = \theta \mu$ where $\theta$ is the Radon-Nikodym derivative of $\nu$ with respect to $\mu$. Then $\sigma \in \T_\nu$ implies that $-\div(\sigma \theta \mu) \in \AEd$, hence $\sigma(x)\in T_\mu(x)$ holds $\mu$ a.e. on $\{\theta > 0\}$. This means that $T_\nu(x) \subset T_\mu(x)$ holds $\nu$ almost everywhere. In the opposite direction, assume that $\sig$ belongs to $\T_\mu$. Then, since $\sig\, \mu \in \Fd$, we also have that $\sig\, \nu= \theta \sig \mu \in \Fd$ by the locality property \eqref{Fdlocal}. Therefore $\sig\in \T_\nu$ and $\sig(x)\in T_\nu(x)$ holds $\nu$ a.e..
\end{proof}

\subsection{Projection theorem and orthogonal bundle}

The next result extends estimates obtained in \cite{BBDP2010} allowing us to show that $\Fd$ is a complemented space of $\Md$. To the best of our knowledge, this seems to be a new result. We refer to~\cite[Section 3]{MM} for a similar idea of decomposition to study purely non-flat currents. 
\begin{theorem}\label{projthm}
	Let $\la\in \Md$. Then there exists a unique solution $\la_T \in \Fd$ to the minimal distance problem
	\begin{align*}
		\|\la-\la_T\|_{\Md} \ =\ \inf \set{\|\eta-\la\|_{\Md}}{ \eta\in \Fd} \ =: \dist(\la, \Fd).
	\end{align*}
	Moreover the projection map ${\mathcal P}_{\mathbf F}: \la\in \Md \mapsto \la_T\in \Fd$ is linear and 1-Lipschitz. Accordingly, we have the topological direct sum
	\begin{equation*}\label{directsum}
		\Md \ =\ \Fd \oplus \Fdp \ , 
	\end{equation*}
	where $\Fdp := \ker {\mathcal P}_{\mathbf F} = \set{\la\in\Md}{\frac{d\lambda}{d|\lambda|} \in N_{|\lambda|} \ \ |\lambda|-a.e.}$.
\end{theorem}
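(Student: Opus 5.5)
The natural candidate is the pointwise projection. Given $\la\in\Md$, write $\mu:=|\la|$ and $\sig:=d\la/d|\la|$, and set
\[
\la_T := (P_\mu\sig)\,\mu,\qquad \la_N:=(P_\mu^\perp\sig)\,\mu ,
\]
where $P_\mu(x)$ is the orthogonal projection onto $T_\mu(x)$ given by Proposition~\ref{res:firstprop}. Measurability of $P_\mu$ follows from the measurability of the multifunction $T_\mu$. Since $P_\mu\sig(x)\in T_\mu(x)$ $\mu$-a.e., the characterization \eqref{Tmu=} gives $\la_T\in\Fd$. Moreover
\[
\|\la-\la_T\|_{\Md}=\int|P_\mu^\perp\sig|\,d\mu .
\]

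\emph{Minimality and uniqueness.} Take any $\eta\in\Fd$ and form $\nu:=\mu+|\eta|$. Write $\la=\sig'\nu$ and $\eta=\tau\nu$. By \eqref{Tmu=} applied to $\nu$, we have $\tau(x)\in T_\nu(x)$ $\nu$-a.e. By \eqref{restriction}, $T_\nu=T_\mu$ holds $\mu$-a.e.; on the $\nu$-null-for-$\mu$ part, $\sig'=0$. The key pointwise fact is that for a linear subspace $V\subset\Rd$ and $a\in\Rd$, the Euclidean nearest point of $V$ to $a$ is the orthogonal projection $P_Va$, and it is unique. Therefore
\[
|\sig'-\tau|\ \ge\ |P^\perp_{T_\nu}\sig'|\quad \nu\text{-a.e.},
\]
with equality iff $\tau=P_{T_\nu}\sig'$. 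Integrating in $\nu$ and noting that $\sig'=\sig\,\frac{d\mu}{d\nu}$ gives $\|\eta-\la\|\ge\int|P_\mu^\perp\sig|\,d\mu$. Equality forces $\tau=P_{T_\nu}\sig'$ $\nu$-a.e., i.e. $\eta=\la_T$. Here it matters that the norm on $\Rd$ is Euclidean, so that $|\cdot|$ is strictly convex and uniqueness of the nearest point holds. This step is the heart of the argument: comparing measures with different supports requires passing to the common dominating measure $\nu$ and using the locality results of Lemma~\ref{varieloc}.

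\emph{Linearity and Lipschitz bound.} Let $\la^1,\la^2\in\Md$ and $\alpha,\beta\in\R$. Write everything with respect to a common measure $\nu=|\la^1|+|\la^2|$, with densities $\sig_i$. Then $\la^i_T=(P_{T_\nu}\sig_i)\nu$, because the construction is independent of the dominating measure by \eqref{restriction} and the homogeneity of $P$. Linearity of $P_{T_\nu(x)}$ pointwise yields $\mathcal P_{\mathbf F}(\alpha\la^1+\beta\la^2)=\alpha\la^1_T+\beta\la^2_T$. Since the projection satisfies $|P_{T_\nu}(\sig_1-\sig_2)|\le|\sig_1-\sig_2|$ pointwise, we get $\|\la^1_T-\la^2_T\|\le\|\la^1-\la^2\|$.

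\emph{Direct sum.} $\mathcal P_{\mathbf F}$ is a bounded linear idempotent: it is the identity on $\Fd$, since $\sig\in T_\mu$ a.e. there. Hence $\Md=\Fd\oplus\ker\mathcal P_{\mathbf F}$ topologically. The kernel consists of those $\la$ with $P_{|\la|}\sig=0$ $|\la|$-a.e., i.e. $\sig\in N_{|\la|}$ $|\la|$-a.e., which is the stated description of $\Fdp$.
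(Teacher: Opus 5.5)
Your proof is correct and follows essentially the same route as the paper. Both take the pointwise projection $(P_\mu\sigma)\,\mu$ as the minimizer, bound an arbitrary $\eta\in\Fd$ from below by the pointwise Pythagorean estimate using the locality of $T_\mu$, and obtain linearity through the common dominating measure $|\la_1|+|\la_2|$. The only cosmetic difference is that you compare with $\eta$ through the dominating measure $\mu+|\eta|$, whereas the paper uses the Lebesgue decomposition $\eta=w\mu+\eta_s$ and reads off $w=\sigma_T$, $\eta_s=0$ from the equality case.
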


\begin{proof} 
	Let $\lambda = \sigma \mu$ be the polar decomposition of $\lambda$, with $|\sigma| = 1$ $\mu$ a.e.. Denoting $P_{\mu}$ the orthogonal projection on $T_{\mu}$ and $P_\mu^\perp$ the orthogonal projection on $T_\mu^\perp$, we can decompose $\sigma$ as $\sig=\sigma_T +\sigma_N$, where
	\begin{align*}
		\sigma_T(x):= P_\mu(x)(\sigma(x)), \qquad \sigma_N (x):= P_\mu^\perp(x) (\sigma(x)).
	\end{align*}
	By \eqref{Tmu=}, $\la_T:= \sigma_T\, \mu$ belongs to $\Fd$ and therefore:
	$$d(\la, \Fd)\ \le\ \|\la-\la_T\|_{\Md}\ = \ \int |\sigma_N|\, d\mu.$$
	On the other hand, any $\eta\in \Fd$ admits a Lebesgue-Nikodym decomposition $\eta = w\, \mu + \eta_s$ where $|\eta_s| \perp \mu$. Using \eqref{disjunction}, we infer that $w(x) \in T_\mu(x)$ at $\mu$ a.e. $x \in \Rd$. It follows that
	\begin{align*}
		\|\la- \eta\|_{\Md} \ =\ \int \sqrt{ |\sigma_T - w|^2+ |\sigma_N|^2}\, d\mu + \int |\eta_s|\ \ge\ \int |\sigma_N|\, d\mu.
	\end{align*}
	Hence the minimal distance of $\la$ to $\Fd$ is reached at $\lambda_T = \eta$ if and only if $w=\sigma_T$ and $\eta^s=0$, that is $\lambda_T = P_{\mu}(\sigma) \mu$. 
	
	\medskip

	Next, for every $\eta = \xi \, m$ with $m\in \M_+(\Rd)$ and $\xi \in (L^1_m)^d$, we set 
	\begin{equation}\label{def:PF} 
		{\mathcal P}_{\mathbf F} (\eta) := P_m(\xi)\, m. 
	\end{equation}
	Thanks to the homogeneity and local property (see \eqref{restriction}) of the MK tangent bundle, it is easy to check that the latter definition does not depend on the decomposition used for $\eta$. In particular, using the polar decomposition of a given $\la\in \Md$ as before, we obtain the relation $\la_T= {\mathcal P}_{\mathbf F}(\la)$. Now, let us demonstrate that ${\mathcal P}_{\mathbf F}$ is a linear operator. The homogeneity follows directly from \eqref{def:PF}. To check the additivity, we consider arbitrary $\la_1,\la_2\in\Md$ and set $m := |\la_1|+|\la_2|$. Then $\la_1,\la_2$ and $\la_1+\la_2$ are absolutely continuous with respect to $m$. Using \eqref{def:PF} and exploiting the linearity of the local projection $P_m$, we find that
	\begin{align*}{\mathcal P}_{\mathbf F}(\la_1+\la_2) & = P_m \left(\frac{d \la_1}{dm} +\frac{d \la_2}{dm} \right) \, m \\
	& = P_m \left(\frac{d \la_1}{dm}\right) \, m + P_m \left(\frac{d \la_2}{dm}\right) m = {\mathcal P}_{\mathbf F}(\la_1) + {\mathcal P}_{\mathbf F}(\la_2).
	\end{align*} Finally, the inequality $ \| {\mathcal P}_{\mathbf F}(\la)\|_{\Md} \le \|\la\|_{\Md}$ entails that ${\mathcal P}_{\mathbf F}$ is 1-Lipschitz.
\end{proof}

\begin{example}
	Let $\la =\sigma \, \mu$ in $\R^2$, where $\mu = \H^1\res A$ with $A := [0,1]\times \{0\}$, and $\sigma(x,0) = (h(x),k(x))$. 
	Assuming that $h,k$ are smooth functions on $[0,1]$, it is easy to check that $f=-\div \la$ is given by
	\begin{align*}
		f = h(1) \delta_{(1,0)}- h(0) \delta_{(0,0)} - h'(x) \H^1\res A - \div \left((0,k(x)) \H^1\res A\right).
	\end{align*}
	Accordingly $\la\in \mathbf{N_1}(\R^2)$ if and only if $k\equiv 0$. Indeed, by testing with $\f_n(x,y)= k(x) \theta (ny)$ 
	where $\theta\in \D(\R, [-1,1]) $ is such that $\theta(0)=0$ and $\theta'(0)=1$, we infer that the total variation of $f$ is minorized by $n \int_0^1 k^2(x) \ dx$, and therefore blows up to infinity as $n\to \infty$ unless $k$ vanishes. Therefore, the projection of $\la$ onto $\mathbf{F_1}(\R^2)$ is given by $\la_T=(h(x),0) \mu$, achieving the distance $\int |\la_N| = \int_0^1 |k(x)|\, dx$. 
	Eventually the reader can check easily that, for general $(h,k)\in (L^1_\mu)^2$, we have $ \la \in \mathbf{F_1}(\R^2) $ if and only if $k=0$ holds $\mu$-a.e., while requiring that $ \la \in \mathbf{N_1}$ implies the additional conditions that $h$ and $k$ belong to $\mathrm{BV}(0,1)$. 
\end{example}

A consequence of Theorem \ref{projthm} is that a vector field $\sigma\in (L^1_\mu)^d$ satisfies $\sigma(x)\in T_\mu^\perp(x) \ \text{ $\mu$ a.e}$ if and only if $\sigma\, \mu \in \Fdp$, or equivalently if $\dist(\sigma\, \mu, \Fd)= \int |\sigma|\, d\mu$. Actually it is possible to give a direct dual characterization of such elements $\sigma$ that are in $(L^\infty_\mu)^d$, that is of the subspace
\begin{equation}\label{def:Nmu}
	\N_\mu := \T_\mu^\perp = \set{\sigma\in (L^\infty_\mu)^d}{\sigma(x)\in T_\mu^\perp(x) \ \mu \text{ a.e.}},
\end{equation}
where the second identity is a direct consequence of Proposition~\ref{res:firstprop}.

As we will show in Proposition~\ref{Nmu} below, one has
\begin{equation} \label{orthocrit}
	\sigma\in \N_\mu\quad \iff\quad \exists u_n\in \D(\Rd)\ \ :\ u_n\weakstar 0 \;\;\text{in $\Lip(\R^d)$} \ , \ \nabla u_n \to \sigma \;\; \mu \text{ a.e..}
\end{equation} 
This characterization follows from the computation of the Moreau-Fenchel conjugate of the convex functional $G_{\sigma} : \Lip(\mathbb{R}^d) \to \mathbb{R} \cup \{+\infty\}$ defined by
\begin{equation}\label{def:Gsigma}
	G_\sigma(u) := 
	\begin{cases}
		\int \pairing{\sigma,\nabla u} \, d \mu & \mbox{if} \ u \in (\D \cap \Lip_1)(\Rd), \\
		+\infty & \mbox{otherwise}.
	\end{cases}
\end{equation}

Note that $G_{\sigma}$ is invariant by addition of a constant, so that it can be identified with a functional on $\Lip_0(\mathbb{R}^d)$.

\begin{lemma}\label{dual-sigma} 
	Let $\sigma\in (L^1_\mu)^d$. Then, for every $g\in \AEd$, the conjugate of $G_\sigma$ in the duality between $\Lip_0(\Rd)$ and $\AEd$ is given by
	\begin{equation}\label{Gsigma*}
		G_\sigma^* (g) = \min_{\la\in \M^d} \left\{\int_{\Rd} |\la-\sigma\, \mu | \ : \ -\div \la = g \right\}.
	\end{equation}
\end{lemma}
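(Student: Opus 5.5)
We compute $G_\sigma^*(g) = \sup\{\pairing{g,u} - \int\pairing{\sigma,\nabla u}\,d\mu : u\in \D\cap\Lip_1\}$ by recognizing it as a convex minimization problem in disguise. We prove the two inequalities separately and obtain attainment of the minimum from a compactness argument.

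\emph{Easy inequality ($\le$).} Let $\la\in\Md$ with $-\div\la = g$. For $u\in\D\cap\Lip_1$ we have $\pairing{g,u} = \pairing{\la,\nabla u}$. Hence
\[
\pairing{g,u}-\int\pairing{\sigma,\nabla u}\,d\mu = \int \nabla u\cdot d(\la-\sigma\mu) \le \int|\la-\sigma\mu|,
\]
because $|\nabla u|\le 1$. Taking the supremum over $u$ and the infimum over $\la$ gives $G_\sigma^*(g)\le \inf\{\cdots\}$. Note that the admissible set is nonempty by Proposition~\ref{fundamental}\ref{fundamental:surj}.

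\emph{Attainment.} The infimum is attained by the direct method. Take a minimizing sequence $(\la_n)$. It is bounded in total variation, since $\int|\la_n|\le \int|\la_n-\sigma\mu| + \int|\sigma|\,d\mu$. Extract a weak-star limit $\la$. The constraint $-\div\la=g$ passes to the limit in $\D'$. The map $\la\mapsto\int|\la-\sigma\mu|$ is weak-star lower semicontinuous.

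\emph{Reverse inequality ($\ge$), the main step.} We use Proposition~\ref{duality_classical} via a perturbation argument. Work in $X=C_0(\Rd;\Rd)$, or more conveniently in the closure of $\{\nabla u : u\in\D\}$ in $C_0$. Define the convex function on $C_0(\Rd;\Rd)$
\[
h(p) := \sup\Big\{\pairing{g,u} - \int\pairing{\sigma,\nabla u}\,d\mu \ : \ u\in\D,\ \|\nabla u + p\|_\infty\le 1\Big\},
\]
so that $h(0) = G_\sigma^*(g)$. Alternatively, and more cleanly, apply Fenchel--Rockafellar duality with
\[
A:\D\to C_0(\Rd;\Rd),\quad Au=\nabla u,\qquad F(u) = -\pairing{g,u},\qquad H(q) = \int\pairing{\sigma,q}\,d\mu + \chi_{\{\|q\|_\infty\le1\}}.
\]
$H$ is bounded on the ball of radius $r<1$... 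Here care is needed: $\chi$ is $+\infty$ outside the unit ball, so continuity holds at $q=0$ with $r=1/2$, which is exactly the hypothesis of Proposition~\ref{duality_classical}. The dual then reads $\min_{\la\in\Md}\{F^*(-A^*\la) + H^*(\la)\}$. Compute the two conjugates:
\begin{itemize}
\item $H^*(\la) = \int|\la-\sigma\mu|$, as the conjugate of the indicator of the sup-norm ball in the pairing $C_0$/$\M$ is the total variation, shifted by $\sigma\mu$;
\item $F^*(-A^*\la) = 0$ if $\pairing{\la,\nabla u} = \pairing{g,u}$ for all $u\in\D$, i.e.\ $-\div\la=g$, and $+\infty$ otherwise (up to sign conventions on $\la$).
\end{itemize}
The no-gap equality then gives $G_\sigma^*(g) = \min\{\int|\la-\sigma\mu| : -\div\la=g\}$. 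If one prefers to invoke Proposition~\ref{duality_classical} literally, set $\phi(q) := \sup_{u\in\D}\{\pairing{g,u} - H(\nabla u+q)\}$. This is convex, and $\phi(0)=G_\sigma^*(g)$. Boundedness of $\phi$ near $0$ follows from $\pairing{g,u}\le\|g\|_{\KR}\,\Lip(u)\le \|g\|_{\KR}(1+r)$ together with the bound $|\int\pairing{\sigma,\nabla u+q}\,d\mu|\le(1+r)\|\sigma\|_{L^1_\mu}$; here we use Proposition~\ref{completion}(i) and $g\in\AEd$. Then $\phi^*(\la)$ equals the right-hand side of \eqref{Gsigma*}, and $\phi(0)=-\min\phi^*$ after the sign change $\la\to-\la$.

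\emph{Expected obstacle.} The delicate point is the identification of $\phi^*$. Computing $\sup_{q,u}$ forces the divergence constraint: the supremum over $u$ of $\pairing{g,u} + \pairing{\la,\nabla u}$ is finite only if it vanishes identically. The remaining supremum over $q$ yields the total variation of $\la-\sigma\mu$. Carrying this out requires checking that the pairing between $C_0$ and $\Md$ is the correct one, and that $\pairing{g,u}$ is well-defined for $u\in\D$ with only $\Lip(u)$ controlled. The latter is guaranteed by $g\in\AEd$ through the criterion \eqref{criterion}, so this is where the hypothesis $g\in\AEd$ is used.
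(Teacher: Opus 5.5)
Your proposal is correct and follows the paper's proof. The paper's perturbation functional $h(p)=\inf\{\int\langle\sigma,\nabla u+p\rangle\,d\mu-\langle g,u\rangle : u\in\D,\ |\nabla u+p|\le 1\}$ is exactly $-\phi$, which is also the value function of your Fenchel--Rockafellar setup. Its conjugate is computed exactly as you do, and Proposition~\ref{duality_classical} then gives both the equality and the attainment at once, so your separate weak-duality and direct-method steps are redundant but harmless.

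There is one slip, in your ``literal'' variant. Your $\phi$ (like your first $h$) is concave, not convex, so Proposition~\ref{duality_classical} must be applied to $-\phi$. The bound this requires is the trivial one obtained from $u=0$, namely $\phi(q)\ge-\int\langle\sigma,q\rangle\,d\mu$ for $\|q\|_\infty\le1$, not your upper bound via $\|g\|_{\KR}$.
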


\begin{proof} 
	We consider the perturbation functional $h : (C_0(\Rd))^d \to \R$ defined by:
	$$ h(p) := \inf \set{ \int \pairing{\sigma, \nabla u + p} \, d\mu - \pairing{g,u}}{ u \in \D(\Rd),\ |\nabla u + p| \le 1}.$$
	One checks that
	\begin{enumerate}
		\item $h(0)= - G_\sigma^*(g)$;
		\item $h$ is convex as it can be written as $h(p)= \inf \set{ H(u,p)}{u\in \Lip(\Rd)}$ where $H$ is convex in both variables on $\Lip(\Rd) \times (C_0(\Rd))^d$ (see \cite[Lemma~2.1 p.50]{EK76}); 
		\item $h(p) \le \int \pairing{\sigma, p}\, d\mu$ if $|p|\le 1$ (simply taking $u=0$ as a competitor).
	\end{enumerate}
	Therefore $h$ is continuous at $p=0$ and, by Proposition~\ref{duality_classical}, its Fenchel conjugate $h^*: \Md \to \R \cup \{+\infty\}$ is coercive, achieves its minimum and satisfies the equalities:
	\begin{equation}\label{h-duality}
		h(0)= h^{**}(0) = - \min \set{h^* (\la)}{\la \in \Md}.
	\end{equation}
	We deduce that
	\begin{align*} 
		h^*(\la) 
		&= \sup \set{\pairing{\la, p} - h(p)}{p\in (C_0(\Rd))^d} \\
		&= \sup \set{ \pairing{\la, p} +\pairing{g, u} - \int \pairing{\sigma, \nabla u + p} \, d\mu}
		{(u,p)\in \D \times (C_0)^d,\ |\nabla u+p| \le 1 }\\
		&= \sup \set{ \pairing{\la -\sigma\, \mu, q} - \pairing{\la, \nabla u} +\pairing{g, u}}{(u,q) \in \D \times (C_0)^d, \ |q| \le 1 } \\
		&= \sup \set { \pairing{\la -\sigma\, \mu, q}}{q \in (C_0)^d, |q|\le 1} + \sup \set{-\pairing{\la, \nabla u} +\pairing{g, u}}{u\in \D}\\
		&= 
		\begin{cases} 
			\int |\la-\sigma\mu | & \text{if } -\div \la = g,\\
			+\infty & \text{otherwise}.
		\end{cases}
	\end{align*}
	Note that, by Proposition \ref{fundamental}, the domain of $h^*$ is not empty since the map $\la\in \Fd \to -\div\la \in \AEd$ is surjective (in particular, we have $h^*(\la) \le \|\la\|_{\Md} + \|\sigma\|_{L^1_\mu}$ for every $\la \in \Fd$).
	The equality \eqref{Gsigma*} and the existence of a minimal $\la$ then follow from \eqref{h-duality}.
\end{proof}

\begin{proposition}\label{Nmu} 
	Let $\sigma\in (L^1_\mu)^d$ and denote $(\sigma_T, \sigma_N):= (P_\mu \sigma, P_\mu^\perp \sigma)$. Then there exists a sequence $(u_n) \subset \D\cap \Lip_1 (\Rd)$ such that:
	\begin{equation}\label{normal-approx}
		u_n \weakstar 0 \; \; \text{in $\Lip(\R^d)$} \ ,\qquad \nabla u_n\to \frac{\sigma_N}{|\sigma_N|} \, \one_{\{\sigma_N \neq 0\}} \quad \mu \text{ a.e. }
	\end{equation}
	As a consequence, the equivalence \eqref{orthocrit} holds and also the equality: 
	\begin{equation}\label{Nmu=}
		\N_\mu = \set{\sigma\in (L^\infty_\mu)^d}{ \exists u_n \in \D(\Rd) : u_n \weakstar 0 \ \text{in } \Lip(\Rd),
			\ \nabla u_n\weakstar \sigma\ \text{in } (L^\infty_\mu)^d}
	\end{equation}
\end{proposition}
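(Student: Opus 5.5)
Set $\tau := \frac{\sigma_N}{|\sigma_N|}\one_{\{\sigma_N\neq 0\}}$; it lies in $(L^\infty_\mu)^d$, takes values in $N_\mu(x)$ and satisfies $|\tau|\le 1$. We apply Lemma~\ref{dual-sigma} with $\sigma$ replaced by $\tau$ (which is in $L^1_\mu$ after localizing; if $\mu$ is not finite, first work with $\tau\one_{B_R}$ and use a diagonal argument, or reweight by a positive integrable density, which is harmless because we only want $\mu$-a.e. convergence). The target is
\[
\sup\set{\textstyle\int \pairing{\tau,\nabla u}\,d\mu}{u\in\D\cap\Lip_1,\ u\weakstar\text{-close to }0} = \int|\tau|\,d\mu .
\]
Take $g=0$ in \eqref{Gsigma*}. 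Then $G_\tau^*(0)=\min\{\int|\la-\tau\mu| : \div\la=0\}$. Every divergence-free $\la$ lies in $\Fd$. Hence by Theorem~\ref{projthm},
\[
\int|\la-\tau\mu|\ \ge\ \dist(\tau\mu,\Fd)=\int|P^\perp_\mu\tau|\,d\mu=\int|\tau|\,d\mu ,
\]
with equality at $\la=0$. So $G_\tau^*(0)=\int|\tau|\,d\mu$. We need the supremum of $G_\tau$ over $\Lip_0$, and this equals $G_\tau^*(0)$ by definition.

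Next we localize near $0$ in the weak-star topology. For $g\in\AEd$, the triangle inequality gives $G_\tau^*(g)\ge \int|\tau|-\|g\|_{\KR}$: by \eqref{Beck} choose $\eta$ with $-\div\eta=g$ and $\|\eta\|=\|g\|_{\KR}$, compare $\la$ with $\la-\eta$ (which is divergence free), and note that $G_\tau^*(g)\le\int|\tau|+\|g\|_{\KR}$ follows the same way. Thus $G_\tau^*$ is finite and continuous at $0$ in $\AEd$. Now consider $K:=G_\tau+\chi_{\{|\pairing{f_i,u}|\le\eps,\ i\le k\}}$ for finitely many $f_i\in\AEd$. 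Its conjugate is an inf-convolution $\inf_{\sum|t_i|\le\dots}G_\tau^*(g-\sum t_i f_i)+\eps\sum|t_i|$. Evaluating at $g=0$ and using the lower bound above, we get $K^*(0)\ge \int|\tau|-\sup_i\|f_i\|\sum|t_i|+\eps\sum|t_i|$. This is not immediately conclusive. The cleaner route is Proposition~\ref{relax}: the $\D$-restricted $G_\tau$ has weak-star lsc envelope $\overline G_\tau=G_\tau^{**}$. Since $G_\tau^{*}\ge\int|\tau|-\|\cdot\|_{\KR}$, we have $G_\tau^{**}(0)\le\dots$. What we actually want is: for every weak-star neighbourhood $V$ of $0$ in $\Lip_1$, $\sup_{V\cap\D}\int\pairing{\tau,\nabla u}\,d\mu=\int|\tau|$. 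I would obtain this by applying the previous duality to the concave problem $-G_\tau$ on the weak-star compact convex set $\overline{\D\cap\Lip_1}\cap\Lip_0$. There $-G_\tau$ extends (via the closed-graph identification of $\int\pairing{\tau,\nabla u}$ as $\pairing{-\div(\tau\mu),u}$, which is only formal) and the minimum value $-\int|\tau|$ is attained by some sequence converging to $0$.

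\textbf{This localization is the main obstacle.} The functional $u\mapsto\int\pairing{\tau,\nabla u}d\mu$ is not weak-star continuous precisely because $\tau\notin\T_\mu$. I would handle it by the contradiction argument: suppose that on some neighbourhood $V=\{|\pairing{f_i,u}|<\eps\}$ the supremum is $\le\int|\tau|-\delta$. Then the set $\{u\in \D\cap\Lip_1: \int\pairing{\tau,\nabla u}\ge \int|\tau|-\delta/2\}$ is convex and disjoint from $V$. Hahn–Banach in the $\AEd$–$\Lip_0$ duality produces a combination $g=\sum t_if_i$ such that $G_\tau^*(g)$ is too small, contradicting $G_\tau^*(g)\ge\int|\tau|-\|g\|_{\KR}$ once the scaling of $t_i$ is tracked. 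Since $\AEd$ is separable, weak-star neighbourhoods of $0$ in $\Lip_1\cap\Lip_0$ have a countable base. This yields $u_n\in\D\cap\Lip_1$, $u_n\weakstar0$ (after subtracting $u_n(0)$), with $\int\pairing{\tau,\nabla u_n}d\mu\to\int|\tau|d\mu$.

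Finally, $|\nabla u_n|\le1$ and $|\tau|\le 1$ give $\int(|\tau|-\pairing{\tau,\nabla u_n})\ge \frac12\int|\tau||\nabla u_n-\tau|^2$, which forces $\nabla u_n\to\tau$ in $L^2_\mu$ on $\{\tau\ne0\}$. On $\{\tau=0\}=\{\sigma_N=0\}$, we additionally need $\nabla u_n\to0$. To get this, I would run the argument with $\tau$ replaced by $\tau+\sigma'$, where $\sigma'$ ranges over a countable dense set of unit normal fields, or simply combine with a second sequence built for the zero target using the already established case. A subsequence then converges $\mu$-a.e., which proves \eqref{normal-approx}. The $\Rightarrow$ direction of \eqref{orthocrit} follows by rescaling by $|\sigma|$ via truncation and approximation. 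For the $\Leftarrow$ direction, for $\eta\in\T_\mu$ bounded we have $\int\pairing{\eta,\nabla u_n}d\mu=\pairing{-\div(\eta\mu),u_n}\to0$ by \eqref{weak-star.crit}, while dominated convergence gives $\int\pairing{\eta,\sigma}\,d\mu$; hence $\sigma\perp\T_\mu$. The same computation, using the weak-star convergence $\nabla u_n\weakstar\sigma$ in $L^\infty_\mu$, gives \eqref{Nmu=}.
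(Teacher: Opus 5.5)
You have the right ingredients: Lemma~\ref{dual-sigma}, Theorem~\ref{projthm}, Proposition~\ref{relax}, the bound $1-\pairing{\tau,\nabla u}\ge\frac12|\nabla u-\tau|^2$, and the computation giving $\widetilde\A\subset\N_\mu$. But the decisive step is not established: producing $u_n\in\D\cap\Lip_1$ with $u_n\weakstar 0$ and $\int\pairing{\tau,\nabla u_n}\,d\mu\to\int|\tau|\,d\mu$. Your Hahn--Banach contradiction only uses two facts:
\begin{itemize}
\item $G_\tau^*(0)=\int|\tau|\,d\mu$;
\item $G_\tau^*(g)\ge\int|\tau|\,d\mu-\|g\|_{\KR}$, which is nothing more than the $1$-Lipschitz continuity of $G_\tau^*$.
\end{itemize}
Separation gives no control on $\|g\|_{\KR}$, so tracking the scaling of the $t_i$ cannot close the argument. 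In fact no argument based on these two facts alone can work. Both hold for the \emph{tangential} field $\tau=e_1\one_B$ with $\mu=\L^d\res B$: test $G_\tau^*(0)$ with a smooth compactly supported $1$-Lipschitz function equal to $x_1+c$ near $B$. Yet there every $u_n\weakstar 0$ gives $\int\pairing{\tau,\nabla u_n}\,d\mu=\pairing{-\div(\tau\mu),u_n}\to 0$. The formal extension of $-G_\tau$ does not help either, for the reason you yourself give: the functional is not weak-star continuous.

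The missing observation is one you made only at $g=0$, but it holds for every $g\in\AEd$. If $-\div\la=g\in\AEd$, then $\la\in\Fd$ by \eqref{def:Fd}, hence $\int|\la-\tau\mu|\ge\dist(\tau\mu,\Fd)=\int|\tau|\,d\mu$. So $\inf_{\AEd}G_\tau^*=\int|\tau|\,d\mu$. Proposition~\ref{relax} then gives $\ov G_\tau(0)=G_\tau^{**}(0)=-\inf G_\tau^*=-\int|\tau|\,d\mu$. By \eqref{def:relax}, this is exactly a sequence $v_n\weakstar 0$ in $\D\cap\Lip_1$ with $G_\tau(v_n)\to-\int|\tau|\,d\mu$, and no separation argument is needed. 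This is the paper's proof, run directly with $\sigma\in(L^1_\mu)^d$ (so no reweighting), with the tangential part discarded via \eqref{criterion}.

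Two lesser points remain.
\begin{itemize}
\item Your remedies for $\nabla u_n\to 0$ on $\{\sigma_N=0\}$ are too vague to check. Note that the paper's argument itself only controls $\{\sigma_N\neq 0\}$, which is all that later proofs use. A workable fix: let $Z'=\{\sigma_N=0\}\cap\{N_\mu\neq\{0\}\}$ and $e$ a unit normal selection on $Z'$. Average the sequences obtained for $\tau\pm e\one_{Z'}$, and treat $\{\sigma_N=0\}\cap\{T_\mu=\Rd\}$ with Lemma~\ref{closable} plus Mazur.
\item Your rescaling proof of the forward implication in \eqref{orthocrit} glues sequences attached to level sets of $|\sigma|$. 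That requires exactly the convergence to $0$ off each piece. The paper avoids gluing: $\A=\widetilde\A$ is weak-star closed by Mazur and $\A^\perp\subset\T_\mu$, hence $\N_\mu=\T_\mu^\perp\subset\A^{\perp\perp}=\A$.
\end{itemize}
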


\begin{remark}\label{normaltrick} 
	In \cite{BCJ2005}, the construction of the tangent bundle $T_\mu$ was obtained by orthogonality, starting from the condition \eqref{orthocrit} to define the normal space $\N_\mu$ and proving that $\N_\mu^\perp$ satisfies the conditions of Proposition~\ref{local}. 
\end{remark}

\begin{proof}
	By Proposition \ref{relax}, since $G_\sigma$ is convex and proper on $\Lip(\Rd)$, its lower semicontinuous envelope with respect to the weak-star topology given by 
	\begin{equation}\label{def:relax}
		\ov G_\sigma (v):= \inf_{v_n\weakstar v} \liminf_n G_\sigma (v_n)
	\end{equation}
	coincides with the biconjugate $G_\sigma^{**}$, namely:
	$$ \ov G_\sigma(v)= G_\sigma^{**}(v) := \sup \set{\pairing{g,v} - G_\sigma^*(g)}{g\in \AEd}.$$
	In particular, by taking $v=0$, we obtain 
	$$ - \ov G_\sigma(0) = \inf \set{G_\sigma^*(g)}{g\in \AEd}= d(\sigma \mu, \Fd),$$
	where the last equality follows from \eqref{Gsigma*}. In view of Theorem \ref{projthm}, we know that 
	$d(\sigma \mu, \Fd)= \int |\sigma_N| \, d\mu$.
	Therefore, by applying \eqref{def:relax} to $v=0$, we obtain a sequence $(v_n)$ in $\D\cap \Lip_1$ such that $v_n\weakstar 0 $ and 
	\begin{align*}
		\int \pairing {\sigma, \nabla v_n} \, d\mu
		= G_{\sigma}(v_n)
		\underset{n \to \infty}{\longrightarrow}
		\ov G_{\sigma}(0)
		= - \int |\sigma_N| \, d\mu.
	\end{align*}
	Setting $u_n:=-v_n$, we claim that the sequence $(u_n)$ satisfies \eqref{normal-approx}. Indeed, since $-\div(\sigma_T \mu)$ belongs to $\AEd$ and $u_n\weakstar 0$, \eqref{criterion} implies that $\int \pairing {\sigma_T, \nabla u_n} \, d\mu \to 0$. It follows that $|\sigma_N|- \pairing {\sigma_N, \nabla u_n}$, which is non negative, converges strongly to $0$ in $L^1_\mu$.
	Possibly after extracting a subsequence, we can then assume that $\pairing {\sigma_N, \nabla u_n} \to |\sigma_N|$ at $\mu$ a.e. point. Since $|\nabla u_n|\le 1$, this entails that 
	$\nabla u_n\to \frac{\sigma_N}{|\sigma_N|} \, \one_{\{ \sigma_N \not=0\}}$ as claimed in
	\eqref{normal-approx}.
	
	\medskip
	We now show the equality \eqref{Nmu=} and the equivalence \eqref{orthocrit}. Let us denote
	\begin{align*}
		\A:= &\set{\sigma\in (L^\infty_\mu)^d}{\exists u_n \in \D(\Rd) : u_n \weakstar 0 \;\; \text{in $\Lip(\Rd)$},\ \nabla u_n\to \sigma \;\; 
			\text{$\mu$ a.e.}} \\ 
		\widetilde{\A} :=& \set{ \sigma\in (L^\infty_\mu)^d }{ \exists u_n \in \D(\Rd) : u_n \weakstar 0 \;\; \mathrm{in}\ \Lip(\Rd),\ \nabla u_n\weakstar \sigma \;\; \mathrm{in}\ (L^\infty_\mu)^d}.
	\end{align*}
	Clearly $\A \subset \widetilde{\A}$. On the other hand, one checks that $\widetilde{\A} \subset \N_\mu$. Indeed let $(u_n)$ be a sequence in $\D(\Rd)$ such that $u_n \weakstar 0$ and $\nabla u_n \weakstar \sigma$ in $(L^\infty_\mu)^d$. Then, $-\div(\sig_T\mu)$ belongs to $\AEd$, by applying \eqref{weak-star.crit}, we get $\int \pairing{ \nabla u_n, \sigma_T}\, d\mu = \pairing{- \div (\sigma_T \mu), u_n} \to 0$. It follows that
	$\int |\sigma_T|^2\, d\mu= \int \pairing{ \sigma, \sigma_T}\, d\mu =\lim_{n\to\infty} \int \pairing{ \nabla u_n, \sigma_T}\, d\mu = 0$, hence $\sigma_T=0$ holds $\mu$ a.e. and therefore $\sigma\in \N_\mu$.
	
	\medskip 
	The converse inclusion $\N_\mu \subset \A$ needed for \eqref{orthocrit} is more involved. In order to conclude, we are reduced to show the following claims:
	\begin{align}
		& \A^\perp \subset \T_\mu, \label{claim1} \\
		& \A = \widetilde{\A} \quad \text{and}\quad \widetilde{\A} \ \text{is a weakly-star closed subspace of $(L^\infty_\mu)^d$.} \label{claim2}
	\end{align}
	Indeed, by passing to the orthogonal in \eqref{claim1}, we get the inclusion $\N_\mu = \T_\mu^\perp \subset \A^{\perp\perp}$ while, by \eqref{claim2}, the identities $ \A= \widetilde{\A}= \A^{\perp\perp}\ $ all hold true.
	
	\subsubsection*{Proof of \eqref{claim1}} 
	Let $\zeta\in (L^1_\mu)^d$ such that $\zeta\in \A^\perp$. Denoting $\zeta_N= P_\mu^\perp(\zeta)$, we consider the normal vector function of $(L^\infty_\mu)^d$ defined by 
	$$\sigma:= \frac{\zeta_N}{|\zeta_N|}\, \one_{{\zeta_N\not=0}}\ ,$$
	for which we may retrieve a sequence satisfying \eqref{normal-approx}: there exists $(u_n)_n$ in $\Lip_1(\Rd)$ such that $u_n\weakstar 0$ and (since $\mathcal A \subset \mathcal A$) $\nabla u_n \weakstar \sigma$ in $(L^\infty_\mu)^d$.
	Therefore $\sig$ belongs to $\A$ and in particular $\int |\zeta_N| \, d\mu = \pairing{\zeta, \sigma} =0$. This proves $\zeta\in \T_\mu$, as desired.
	
	\subsubsection*{Proof of \eqref{claim2}}
	
	Consider the vector space $V := \set { (u, \nabla u)}{u \in \D(\Rd)\ ,\ u(0)=0}$, which we identify isometrically with a subspace of $ \Lip_0(\Rd) \times (L^\infty_\mu)^d$, the dual of $\AEd\times (L^1_\mu)^d $. Denoting by $ {\ov V}^{*\sigma}$ the weak-star closure of $V$, we may look at $\widetilde{ \A}$ as the intersection of ${\ov V}^{*\sigma}$ with the closed subspace $\{0\} \times (L^\infty_\mu)^d$. It follows that $\widetilde{\A}$ is weakly-star closed. It remains to show the equality $\A= \widetilde{\A}$. The inclusion $\A\subset \widetilde{ \A}$ is true since the convergence $\mu$ a.e. of a uniformly bounded approximating sequence $(\nabla u_n)$ implies its weak-star convergence in $(L^\infty_\mu)^d$. 
	The other set contention $\widetilde{\mathcal A} \subset \mathcal A$ is a consequence of Mazur's Lemma: there exists a convex combination $v_j$ of the elements of $(u_n)$ satisfying $v_j \weakstar 0$ in $\Lip(\R^d)$ and $\nabla v_j \to \sigma$ in $(L^1_\mu)^d$. By convexity of the Lipschitz seminorm also $\Lip(v_j) \le C$ where $C$ is the uniform Lipschitz constant of $(u_n)_n$. Moreover, up to extracting a subsequence, we may assume that $\nabla v_j \to \sigma \ \text{$\mu$ a.e.}$. This shows that $\sigma \in \mathcal A$.
\end{proof}

\subsection{Closability, weak tangential gradient, integration by parts}\label{sec:closability}

In this section, $\mu$ is a Radon measure in $\M_+(\Rd)$. Based on the construction of the tangent bundle $T_\mu$, we obtain a notion of tangential derivation which allows to extend classical operations of differential calculus to Lipschitz functions. 

Let us consider the operator $A_\mu: \Lip(\Rd) \to (L^\infty_\mu)^d$ with domain $D(A)= \D(\Rd)$ obtained by setting
\begin{align}\label{def:Amu}
	(A_\mu u)(x)\ =\ P_\mu(x) (\nabla u(x))\quad\text{for every $x\in \Rd$}.	
\end{align}
Note that $D(A)$ is weakly-star dense in $\Lip(\Rd)$ (see Remark \ref{Lipdual}). 
\begin{lemma}\label{closable} 
	$A_\mu$ is weakly-star closable in the sense that:
	\begin{align*}
		u_n \in \D(\Rd),\quad 
		u_n \weakstar 0 \ \text{in } \Lip(\Rd),\quad 
		P_\mu(\nabla u_n) \weakstar \xi \ \text{in } (L^\infty_\mu)^d
		\ \implies \ 
		\xi = 0.
	\end{align*}
\end{lemma}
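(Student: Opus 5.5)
The plan is to test $\xi$ against tangent fields and use the integration-by-parts identity coming from the definition of $\T_\mu$. First note that $\xi(x)\in T_\mu(x)$ for $\mu$-a.e.\ $x$. Indeed, for every $\zeta\in \N_\mu$ we have $\pairing{P_\mu(\nabla u_n),\zeta}=0$ pointwise $\mu$ a.e. The weak-star limit therefore satisfies $\int\pairing{\xi,\zeta}\,d\mu=0$ for all $\zeta$ in $(L^1_\mu)^d$ with values in $N_\mu$. Localizing with $\zeta$ multiplied by indicators of Borel sets then gives $P_\mu^\perp\xi=0$ $\mu$ a.e.

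Next, take an arbitrary $\sigma\in\T_\mu$, which lies in $(L^1_\mu)^d$. Since $\sigma(x)\in T_\mu(x)$ and $P_\mu(x)$ is an orthogonal (hence self-adjoint) projection, we have pointwise $\pairing{P_\mu\nabla u_n,\sigma}=\pairing{\nabla u_n,\sigma}$. Hence
\begin{equation*}
\int\pairing{\xi,\sigma}\,d\mu=\lim_n\int\pairing{P_\mu\nabla u_n,\sigma}\,d\mu=\lim_n\int\pairing{\nabla u_n,\sigma}\,d\mu=\lim_n\pairing{-\div(\sigma\mu),u_n}.
\end{equation*}
The first equality is the weak-star convergence in $(L^\infty_\mu)^d$ tested against $\sigma\in(L^1_\mu)^d$. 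The last equality is the definition of the distributional divergence, valid since $u_n\in\D(\Rd)$. Because $-\div(\sigma\mu)\in\AEd$ and $u_n\weakstar0$ in $\Lip(\Rd)$, the criterion \eqref{weak-star.crit} (equivalently \eqref{criterion}, with $c=0$ and $\sup|\nabla u_n|$ bounded) yields $\pairing{-\div(\sigma\mu),u_n}\to0$. Thus $\int\pairing{\xi,\sigma}\,d\mu=0$ for every $\sigma\in\T_\mu$.

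Finally, we choose $\sigma=\xi\,\one_K$ with $K$ compact, or any truncation making it $\mu$-integrable. We have $\xi\in(L^\infty_\mu)^d$ and $\xi\in T_\mu$ a.e., and $\mu$ is Radon, so $\xi\one_K\in(L^1_\mu)^d$. By \eqref{Tmu=} this field belongs to $\T_\mu$. The previous step then gives $\int_K|\xi|^2\,d\mu=0$, and letting $K$ exhaust $\Rd$ gives $\xi=0$ $\mu$ a.e.

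The only delicate point is the identification $\pairing{-\div(\sigma\mu),u_n}\to 0$. It requires that the pairing of an element of $\AEd$ with a smooth function agrees with the distributional pairing, and that weak-star convergence in $\Lip$ suffices. Both are provided by the criterion \eqref{criterion} together with \eqref{weak-star.crit}. One small subtlety is that $u_n$ is not normalized at $0$. This is harmless because elements of $\AEd$ annihilate constants and $u_n(0)\to0$ anyway.
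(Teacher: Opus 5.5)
Your proof is correct and follows the paper's argument: test the weak-star limit against fields, move the projection onto the test field by self-adjointness of $P_\mu$, and use that $-\div$ of a tangent field lies in $\AEd$, so the pairing with $u_n\weakstar 0$ vanishes. The paper is a bit more economical. It tests directly against $P_\mu\sigma$ for an arbitrary $\sigma\in(L^1_\mu)^d$, which gives $\int\pairing{\xi,\sigma}\,d\mu=0$ for all such $\sigma$ at once, so your preliminary step showing $\xi(x)\in T_\mu(x)$ and the final choice $\sigma=\xi\one_K$ are not needed.
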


\begin{proof} 
	For every $\sigma\in (L^1_\mu)^d$, we have $P_\mu \sigma \in \T_\mu$, hence $f:=- \div((P_\mu \sigma)\, \mu)$ belongs to $\AEd$. By applying \eqref{weak-star.crit}, it follows that:
	\begin{align*} 
		\int \pairing{\xi, \sigma}\, d\mu 
		=\lim_n \int \pairing{P_\mu(\nabla u_n), \sigma}\, d\mu 
		= \lim_n \int\pairing{\nabla u_n, P_\mu \sigma}\, d\mu 
		= \lim_n \pairing{ u_n, f} = 0.
	\end{align*} 
	Since $\sigma$ is arbitrary in $(L^1_\mu)^d$, we conclude that $\xi =0$ as wished.
\end{proof}

In virtue of Lemma \ref{closable}, we can extend $A_\mu$ in a unique way to all $u\in \Lip(\Rd)$, defining a weakly-star linear continuous operator
\begin{equation}\label{def:nabla_mu}
	\nabla_\mu: \ \Lip(\Rd) \mapsto (L^\infty_\mu)^d.
\end{equation}
Section~\ref{sec:weaver} below shows that $\nabla_{\mu}$ is a derivation in the sense of Weaver.
More precisely, we have the following characterization: 
\begin{theorem}\label{weakgrad}
	\begin{enumerate}
		\item\label{weakgrad:xi} Let $u\in \Lip(\Rd)$ and $(u_n)$ be any sequence in $C^1\cap\Lip(\Rd)$ such that
		$u_n \weakstar u$. Then the sequence $ (P_\mu (\nabla u_n))$ admits a unique weak-star cluster point $\xi\in (L^\infty_\mu)^d$. Moreover $\xi$ is independent of the sequence $(u_n)$ and, by setting $\nabla_\mu u=\xi$ in \eqref{def:nabla_mu}, we define a linear weakly-star continuous operator. 
		
		\item\label{weakgrad:un} Conversely, for any pair $(u,\zeta)\in \Lip(\Rd)\times\N_\mu$, there exists a sequence $(u_n)$ in $\D(\Rd)$ such that $ u_n \weakstar u$ while the convergence $\nabla u_n \to \nabla_\mu u + \zeta$ holds $\mu$ a.e. and weakly-star in $(L^\infty_\mu)^d$. In particular, we have the convergences 
		$\mu$ a.e.: 
		$$P_\mu(\nabla u_n) \to \nabla_\mu u\quad,\quad P_\mu^\perp (\nabla u_n) \to \zeta.$$ 
	\end{enumerate}
\end{theorem}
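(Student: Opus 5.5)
For part (i), the plan is to reduce uniqueness of the cluster point to the closability Lemma~\ref{closable}, first extended from $\D(\Rd)$ to $C^1\cap\Lip(\Rd)$. Let $(u_n)\subset C^1\cap\Lip$ with $u_n\weakstar u$. The sequence $P_\mu(\nabla u_n)$ is bounded in $(L^\infty_\mu)^d$ by $\sup_n\Lip(u_n)$, so it has weak-star cluster points. Suppose $(v_n)$ is another such sequence, or a different subsequence, with cluster point $\xi'$. Then $w_n:=u_n-v_n\weakstar 0$ and $P_\mu(\nabla w_n)\weakstar \xi-\xi'$ along a common subsequence. It therefore suffices to show that closability holds for $C^1$ sequences. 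The proof of Lemma~\ref{closable} works verbatim here. For any $\sigma\in(L^1_\mu)^d$, the distribution $f=-\div(P_\mu\sigma\,\mu)$ lies in $\AEd$. By Proposition~\ref{completion} it extends to $\Lip(\Rd)$ and is weak-star continuous, by \eqref{weak-star.crit}. So it remains to check $\pairing{f,w}=\int\pairing{\nabla w,P_\mu\sigma}\,d\mu$ for $w\in C^1\cap\Lip$. I would obtain this identity by mollifying and truncating: take $w_k\in\D$ with $w_k\weakstar w$ and $\nabla w_k\to\nabla w$ pointwise, with uniform bounds. Dominated convergence handles the right side and \eqref{weak-star.crit} handles the left. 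This yields $\int\pairing{\xi-\xi',\sigma}\,d\mu=0$ for all $\sigma$, hence uniqueness. Independence of the sequence follows by the same comparison, and linearity is immediate. For weak-star continuity, given $u_k\weakstar u$ in $\Lip$, I would pick $C^1$ approximants $u_{k,n}$ and run a diagonal argument. Equivalently, I would use the identity $\int\pairing{\nabla_\mu u,\sigma}\,d\mu=\pairing{-\div(P_\mu\sigma\,\mu),u}$, which passes to the limit and characterizes $\nabla_\mu u$. This identity is the key tool, and it also gives the integration by parts formula.

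For part (ii), fix $u\in\Lip$ and $\zeta\in\N_\mu$. By \eqref{orthocrit} (Proposition~\ref{Nmu}), there are $z_n\in\D$ with $z_n\weakstar0$ and $\nabla z_n\to\zeta$ $\mu$-a.e. Separately, choose $v_n\in\D$ with $v_n\weakstar u$ (Remark~\ref{Lipdual}). By (i), $P_\mu\nabla v_n\weakstar\nabla_\mu u$. However, we need a.e. convergence of the full gradient to $\nabla_\mu u$ plus a normal part, and $P_\mu^\perp\nabla v_n$ may oscillate. The idea is to correct it with a Mazur argument as in the proof of \eqref{claim2}. Consider the set $\widetilde{\mathcal B}$ of weak-star limits of $(v_n,\nabla v_n)$ in $\Lip_0\times(L^\infty_\mu)^d$, where $v_n\in\D$ and $v_n\weakstar u$. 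Up to subsequences, $\nabla v_n\weakstar \nabla_\mu u+\eta$. Applying (i) to the tangential part shows that $\eta$ is normal, and indeed $\eta\in\widetilde{\A}=\N_\mu$, since $\nabla v_n-\nabla v_m$ generates elements of $\widetilde\A$. Adding $z_n$ built from $\zeta-\eta\in\N_\mu$ gives a sequence with $\nabla(v_n+z_n)\weakstar\nabla_\mu u+\zeta$ and $v_n+z_n\weakstar u$. Note that $\nabla z_n\to\zeta-\eta$ holds $\mu$-a.e.

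Finally, I would upgrade weak-star convergence to $\mu$-a.e. convergence by Mazur's lemma. Convex combinations of the tails converge strongly in $L^1_\mu$ (for $\mu$ finite, or locally otherwise; the latter needs a localization or weighted-$L^1$ argument). They remain in $\D$, stay equi-Lipschitz, and still converge weak-star to $u$ in $\Lip$, since convex combinations of tails of a pointwise convergent sequence converge pointwise. A further subsequence then converges $\mu$-a.e. The final statements follow by applying $P_\mu$ and $P_\mu^\perp$ pointwise. I expect the main obstacle to be precisely this step: making Mazur's lemma apply when $\mu$ is only Radon, not finite. I would replace $\mu$ by an equivalent finite measure $\tilde\mu=\rho\mu$ with $\rho>0$. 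The bundle is unchanged by \eqref{restriction}, $\N_\mu$ is unchanged, and bounded weak-star convergence in $L^\infty_\mu$ coincides with that in $L^\infty_{\tilde\mu}$.
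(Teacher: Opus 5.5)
Your proposal is correct and follows essentially the paper's route. Part (i) reduces to the closability Lemma~\ref{closable}, which you usefully extend from $\D(\Rd)$ to $C^1\cap\Lip(\Rd)$ sequences, a point the paper passes over. Part (ii) combines an approximating sequence in $\D(\Rd)$, a normal corrector from Proposition~\ref{Nmu} and Mazur's lemma, as the paper does. The differences are minor:
\begin{itemize}
\item You add the corrector before applying Mazur, so only its weak-star convergence is needed.
\item You handle a non-finite $\mu$ through an equivalent finite measure instead of $L^2_{\mu,\mathrm{loc}}$.
\item $\eta\in\N_\mu$ holds directly by \eqref{def:Nmu}, so the detour through $\widetilde{\A}$ is unnecessary.
\end{itemize}
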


\begin{proof} 
	\ref{weakgrad:xi} The sequence $(P_\mu (\nabla u_n))$ is bounded in $ (L^\infty_\mu)^d$ and admits at least one weak-star cluster point $\xi$. By Lemma~\ref{closable}, $\xi$ is unique and does not depend of the approximating sequence $(u_n)$. Note that such a sequence exists since $\D(\Rd)$ is weakly-star dense in $\Lip(\Rd)$ (see Remark~\ref{Lipdual}). The fact that $\nabla_\mu$ is linear, weakly-star continuous and coincides with $P_\mu \circ \nabla$ on smooth functions is straightforward.
	
	\med \ref{weakgrad:un} Take any sequence $(\f_n)$ in $\D(\Rd)$ such that $\f_n\weakstar u$. Then, $(\nabla \f_n)$ is bounded in $(L^\infty_\mu)^d$ and, up to extracting a subsequence, converges weakly-star to some $\eta$. Observing that the latter convergence holds also weakly in $(L^2_{{\rm loc},\mu})^d$, by Mazur's lemma, we can pick $v_n$ in the convex hull of $\{\f_m, m\ge n\}$ such that $P_\mu (\nabla v_n)\to \eta$ strongly in $(L^2_{\mu,{\rm loc}})^d$. Clearly the Lipschitz constant of $v_n$ remains smaller than the uniform Lipschitz bound of $(\f_n)$ and $v_n\to u$ pointwise as well. Therefore $v_n \weakstar u$ and, possibly after the extraction of a new subsequence, we arrive to the convergence $\nabla v_n\to \eta $ holding $\mu$ a.e..
	From the assertion \ref{weakgrad:xi}, we know that $P_\mu (\nabla v_n) \weakstar \nabla_\mu u$, hence $P_\mu (\eta)=\nabla_\mu u$ so that $\nabla v_n \to \nabla_\mu u + P_\mu^\perp(\eta)$. Then, observing that $\sigma := \zeta- P_\mu^\perp (\eta)$ belongs to $\N_\mu$, it is enough to consider $u_n:= v_n + w_n$, where $(w_n)$ is the sequence in $\D(\Rd)$ obtained by applying \eqref{orthocrit}. Indeed this sequence satisfies $w_n\weakstar 0$ and $\nabla w_n \weakstar \sigma$ pointwise. Accordingly $ u_n \weakstar u$ and $\nabla u_n \to \nabla_\mu u + \zeta$ pointwise as wished.
\end{proof}

An important feature of the \emph{weak tangential gradient operator} $\nabla_\mu$ is that it satisfies the following integration by parts formula:
\begin{equation}\label{byparts}
	\int \pairing{\sigma, \nabla_\mu u}\, d\mu = \pairing{- \div(\sigma\mu), u} \quad \text{for every } (u,\sigma) \in \Lip(\Rd) \times\T_\mu 
\end{equation}
where the duality bracket is intended in the sense of the duality between $\AEd$ and $\Lip(\Rd)$. 
This follows from the next lemma where we show that \eqref{byparts} actually characterizes $\nabla_\mu u$. 
\begin{lemma}\label{parts} 
	Let $u \in \Lip(\Rd)$ and $\xi \in (L^\infty_\mu)^d$ be such that $\xi(x)\in T_\mu(x)$, $\mu$ a.e.. Then
	\begin{equation}\label{xi=grad}
		\xi = \nabla_\mu u \ \iff\ \int \pairing{\sigma, \xi}\, d\mu = \pairing{- \div(\sigma\mu), u} \quad \text{for every } \sigma\in \T_\mu. 
	\end{equation}
\end{lemma}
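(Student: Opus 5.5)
We first prove the direct implication, namely that $\nabla_\mu u$ satisfies \eqref{byparts}, and then deduce uniqueness from it. Fix $u\in\Lip(\Rd)$ and $\sigma\in\T_\mu$. By Remark~\ref{Lipdual} we can pick a sequence $(u_n)$ in $\D(\Rd)$ with $u_n\weakstar u$. By Theorem~\ref{weakgrad}\ref{weakgrad:xi}, $P_\mu(\nabla u_n)\weakstar \nabla_\mu u$ in $(L^\infty_\mu)^d$. For smooth $u_n$ we have $\pairing{-\div(\sigma\mu),u_n}=\int\pairing{\sigma,\nabla u_n}\,d\mu$. Since $\sigma(x)\in T_\mu(x)$ $\mu$-a.e. and $P_\mu$ is a pointwise symmetric projection, this equals $\int\pairing{\sigma,P_\mu(\nabla u_n)}\,d\mu$.

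We now let $n\to\infty$. On the right we use the weak-star convergence in $(L^\infty_\mu)^d$ tested against $\sigma\in(L^1_\mu)^d$. On the left, $-\div(\sigma\mu)\in\AEd$, so \eqref{weak-star.crit} gives $\pairing{-\div(\sigma\mu),u_n}\to\pairing{-\div(\sigma\mu),u}$, where the bracket is the extension to Lipschitz functions from Proposition~\ref{completion}. This yields \eqref{byparts}, and hence the implication ``$\Rightarrow$'' in \eqref{xi=grad}.

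For ``$\Leftarrow$'', let $\xi$ be tangent and satisfy the identity. Subtracting \eqref{byparts} gives $\int\pairing{\sigma,\xi-\nabla_\mu u}\,d\mu=0$ for every $\sigma\in\T_\mu$. Note that $\nabla_\mu u$ is itself tangent, since it is a weak-star limit of $P_\mu(\nabla u_n)$ and the set of tangent $L^\infty$ fields is weak-star closed (it is the orthogonal of $\N_\mu$-type fields in $L^1$). Hence $w:=\xi-\nabla_\mu u$ lies in $(L^\infty_\mu)^d$ with $w(x)\in T_\mu(x)$ $\mu$-a.e.

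We then choose $\sigma:=w\,\one_{B}$ with $B$ a bounded Borel set. This $\sigma$ belongs to $(L^1_\mu)^d$ because $\mu$ is Radon, and it is tangent, so $\sigma\in\T_\mu$ by \eqref{Tmu=}. The identity gives $\int_B|w|^2\,d\mu=0$ for every such $B$, so $w=0$ $\mu$-a.e. The only mildly delicate points are the tangency of $\nabla_\mu u$ and the passage to the limit in the duality bracket, both of which are handled above.
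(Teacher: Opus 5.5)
Your proposal is correct and follows essentially the same route as the paper. Both first derive the integration by parts identity for $\nabla_\mu u$ by smooth approximation, using \eqref{weak-star.crit} together with $\pairing{\sigma,\nabla u_n}=\pairing{\sigma,P_\mu\nabla u_n}$ for $\sigma\in\T_\mu$, and then conclude that $\xi-\nabla_\mu u$ is a tangent field orthogonal to $\T_\mu$, hence zero. You also spell out two points the paper leaves implicit: why $\nabla_\mu u(x)\in T_\mu(x)$ holds $\mu$-a.e., and the localization with bounded Borel sets.
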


\begin{proof} 
	Let $\sigma\in \T_\mu$. Then, by \eqref{def:Fd} and \eqref{criterion}, the distribution $- \div(\sigma\mu)$ belongs to $\AEd$. By Theorem~\ref{weakgrad}, for any smooth sequence in $\D(\Rd)$ such that $u_n\weakstar u$, we have $\nabla_\mu u_n \weakstar \nabla_\mu u$ in $(L^\infty_\mu)^d$. Therefore:
	\begin{align*} 
		\pairing{- \div(\sigma\mu), u} 
		&= \lim_{n\to \infty} \pairing{- \div(\sigma\mu), u_n} 
		= \lim_{n\to \infty} \int \pairing{\sigma, \nabla u_n}\, d\mu \\
		&= \lim_{n\to \infty} \int \pairing{\sigma, \nabla_\mu u_n}\, d\mu = \int \pairing{\sigma, \nabla_\mu u}\, d\mu \ ,
	\end{align*} 
	where, in the last line, we used the fact that $\sigma \in \T_\mu$. It follows that the right hand side of \eqref{xi=grad} holds if and only if $\xi - \nabla_\mu u$ belongs to $\N_\mu = \T_\mu^\perp$. As $\xi(x) -\nabla_\mu u(x)\in T_\mu(x)$, this implies that $\xi = \nabla_\mu u$.
\end{proof}

As a consequence of Lemma \ref{parts}, we get a useful localization property of the tangential gradient which supplements the locality property \eqref{restriction}.
\begin{lemma}[Locality of the tangential gradient]\label{localgrad} 
	Let $u,v \in \Lip(\Rd)$ and $\mu, \nu$ two Radon measures in $\Rd$. Then
	\begin{align}
		u = v \quad \mu\text{ a.e.} \qquad &\implies \qquad \nabla_\mu u = \nabla_\mu v \quad \mu\text{ a.e.,} \label{localgrad:statementu} \\
		\nu \ll \mu \qquad &\implies \qquad \nabla_\mu u = \nabla_\nu u \quad \nu \text{ a.e..} \label{localgrad:statement}
	\end{align}
\end{lemma}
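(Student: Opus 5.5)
Both implications will follow from the characterization of $\nabla_\mu u$ by integration by parts in Lemma~\ref{parts}, combined with the locality of the bundle in Lemma~\ref{varieloc}.

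\emph{Proof of \eqref{localgrad:statementu}.} I read the hypothesis as $u=v$ on a Borel set $B$ of full $\mu$-measure. Setting $w:=u-v$, linearity of $\nabla_\mu$ reduces the claim to showing $\nabla_\mu w=0$ $\mu$-a.e. Since $\nabla_\mu w(x)\in T_\mu(x)$, Lemma~\ref{parts} with $\xi=0$ reduces this further to
\[
\pairing{-\div(\sigma\mu),w}=0 \quad\text{for every } \sigma\in\T_\mu .
\]
By Proposition~\ref{fundamental}\ref{fundamental:clos} and the density argument underlying it, together with the continuity $\|-\div\la\|_{\KR}\le\|\la\|_{\Md}$ from \eqref{divlip}, it suffices to prove this identity for $\sigma\mu$ in a dense class.

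The main obstacle is exactly here: a normal current $\la\in\Nd$ approximating $\sigma\mu$ need not be supported where $w=0$. I would handle it with the Smirnov decomposition into Lipschitz curves, which the introduction announces for later use. Along a curve $\gamma$ carrying $\sigma\mu$, the pairing is $\int_\gamma w\,\partial$, i.e.\ the integral of $(w\circ\gamma)'$. This derivative vanishes for a.e.\ parameter $t$ with $\gamma(t)\in\{w=0\}$, and the pushforward of the curve measure is absolutely continuous with respect to $\mu$. Hence the whole pairing is controlled by the integral over $\{w=0\}^c$, which is $\mu$-null.

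A simpler alternative that I would try first avoids curves altogether. Since $w$ vanishes $\mu$-a.e., $w\sigma\in\T_\mu$ by locality, and by Theorem~\ref{weakgrad}\ref{weakgrad:xi} one can compute via approximants in a product-rule spirit. I expect, however, that some form of the chain rule or curve argument is unavoidable here.

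\emph{Proof of \eqref{localgrad:statement}.} Write $\nu=\theta\mu$. By \eqref{restriction}, $T_\nu=T_\mu$ $\nu$-a.e., so $\xi:=\nabla_\mu u$ restricted to $\{\theta>0\}$ is $\nu$-tangent. Given $\sigma\in\T_\nu$, the locality \eqref{Fdlocal} gives $\theta\sigma\in\T_\mu$, after truncating $\theta$ if needed so that $\theta\sigma\in L^1_\mu$. Applying \eqref{byparts} for $\mu$ yields
\[
\int\pairing{\sigma,\xi}\,d\nu=\int\pairing{\theta\sigma,\nabla_\mu u}\,d\mu=\pairing{-\div(\sigma\nu),u}.
\]
Lemma~\ref{parts} applied to $\nu$ then gives $\xi=\nabla_\nu u$ $\nu$-a.e.

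The truncation is handled by working with $\sigma\one_{\{\theta\le k\}}$, which lies in $\T_\nu$ by locality, and letting $k\to\infty$. Passing to the limit uses the closedness of $\T_\nu$ and continuity of the pairing in $\AEd$ via \eqref{divlip}.
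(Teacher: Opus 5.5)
Your argument for \eqref{localgrad:statement} is correct and is the paper's argument. The truncation of $\theta$ is superfluous: $\sig\in L^1_\nu$ already gives $\theta\sig\in L^1_\mu$, and $\theta\sig\,\mu=\sig\,\nu\in\Fd$ by definition of $\T_\nu$. Your reduction of \eqref{localgrad:statementu}, via Lemma~\ref{parts}, to $\pairing{-\div(\sig\mu),w}=0$ for every $\sig\in\T_\mu$ is also fine.

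\textbf{The gap.} It sits exactly at the obstacle you flag. Smirnov's decomposition applies only to normal currents, and $\sig\mu$ is in general only a flat chain. Its normal approximants $\la_n=\tau_n\mu+\la_n^s$ have a singular part. So the curve measures are \emph{not} absolutely continuous with respect to $\mu$: they may run through the open set $\{w\neq0\}$, and $\pairing{-\div\la_n,w}$ need not vanish. Nor can you restrict to fields with $\sig\mu\in\Nd$, because $\S_\mu$ need not be dense in $\T_\mu$.

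Concretely, take $\mu=\L^1\res C$ with $C$ a fat Cantor set, as in Example~\ref{fat}. Then $\S_\mu=\{0\}$ while $\T_\mu=L^1_\mu$. For $\sig\equiv1$ and $w=\dist(\cdot,C)$, every normal current $g\,\L^1$ (with $g\in BV$) close to $\sig\mu$ has singular part $g\,\L^1\res C^c$, living where $w>0$. Moreover $\pairing{-\div(g\L^1),w}=\int_{C^c}g\,w'\,dx$ is in general nonzero. So ``it suffices to prove this identity for $\sig\mu$ in a dense class'' fails as stated. The exact identity is false on the dense class $\Nd$, and the class on which your curve argument gives exact vanishing is not dense.

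\textbf{The repair.} The missing idea is to make the curve argument quantitative rather than exact. For $\la\in\Nd$, write $\la=\int\la_\gamma\,d\rho(\gamma)$ as in Theorem~\ref{Smirnov}. Parametrize by arc length, so that $|\la_\gamma|=\gamma_\sharp(\L^1\res[0,L(\gamma)])$. Since $(w\circ\gamma)'=0$ a.e.\ on $\{w\circ\gamma=0\}$,
\[
|\pairing{-\div\la_\gamma,w}|=\Bigl|\int_0^{L(\gamma)}(w\circ\gamma)'\,dt\Bigr|\le\Lip(w)\,|\la_\gamma|(\{w\neq0\}).
\]
Now integrate in $\rho$, passing from test functions to $w$ by weak-$*$ approximation and dominated convergence (using $\int L(\gamma)\,d\rho=\|\la\|$). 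Mass preservation then gives $|\pairing{-\div\la,w}|\le\Lip(w)\,|\la|(\{w\neq0\})$. If $\la_n\to\sig\mu$ in $\Md$, then $|\la_n|(\{w\neq0\})\le\|\la_n-\sig\mu\|$, because $|\sig|\mu(\{w\neq0\})=0$. Combined with \eqref{divlip}, this yields $|\pairing{-\div(\sig\mu),w}|\le2\Lip(w)\|\la_n-\sig\mu\|\to0$.

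\textbf{Comparison with the paper.} Once repaired, your route is valid but differs from the paper's. The paper reduces to $u\ge0$ vanishing $\mu$-a.e.\ and applies the a.e.\ directional differentiability of Theorem~\ref{MK-infinitesimal}\ref{MKi:diff} at these minimum points: $\pairing{\nabla_\mu u(x),\pm\sig(x)}=\lim_{h\to0^+}u(x\pm h\sig(x))/h\ge0$. That proof is two lines, but it relies on a forward reference to the main theorem. That theorem is itself built on Smirnov and on \eqref{localgrad:statement}, so there is no circularity. Your route uses Smirnov directly and even gives the bound $|\pairing{-\div\la,w}|\le\Lip(w)\,|\la|(\{w\neq0\})$ for every $\la\in\Fd$. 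Your product-rule alternative is not a proof and should be dropped; as you suspected, a curve or differentiability argument is needed.
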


\begin{proof} 
	Since $\nabla_\mu$ is linear, does not see constants and $u = u^+ - u^-$ with $u^\pm\ge0$ Lipschitz and vanishing $\mu$-a.e., we may assume $u\ge0$. The proof of \eqref{localgrad:statementu} uses the pointwise differentiability along tangent fields established in the next section (assertion \ref{MKi:diff} of Theorem~\ref{MK-infinitesimal}).
	Let $\sig\in\T_\mu$. At $\mu$-a.e. $x$ we have $u(x) = 0$, hence
	\begin{align*}
		\pairing{\nabla_\mu u(x),\sig(x)}
		= \lim_{h\to0^+}\frac{u(x+h\sig(x))}{h} \ \ge\ 0 .
	\end{align*}
	Applying this to $-\sig\in\T_\mu$ gives the reverse inequality, so that $\pairing{\nabla_\mu u,\sig} = 0$ $\mu$-a.e. for every $\sig\in\T_\mu$, i.e. $\nabla_\mu u\in\N_\mu$. Since $\nabla_\mu u(x)\in T_\mu(x)$, \eqref{localgrad:statementu} follows.

	In order to prove \eqref{localgrad:statement}, consider $\sig \in \T_\nu$. Then $\tilde \sig \coloneqq \sigma \frac{d\nu}{d\mu}$ belongs to $\T_\mu$ by the local tangency criterion~\eqref{Tmu=}. Since ${\tilde\sig} \,\mu = \sigma\, \nu$, by using twice \eqref{byparts}, we get
	\begin{align*}
		\int \pairing{\sig, \nabla_\mu u}\, d\nu 
		= \int \pairing{\tilde{\sig}, \nabla_\mu u}\, d\mu 
		= \pairing{- \div(\tilde{\sig}\mu), u} 
		= \pairing{- \div(\sig\nu), u}.
	\end{align*}
	The equality \eqref{localgrad:statement} follows by applying \eqref{xi=grad} to the measure $\nu$ and $\xi= \nabla_\mu u$.
\end{proof}

\begin{remark}\label{rem:eqpp}
	Identity \eqref{localgrad:statementu} extends the classical locality of the weak derivative for Sobolev functions with $\mu = \mathcal L^d$: if $u,v \in W^{1,p}(\R^d)$, then $\nabla u \equiv \nabla v$ $\mathcal L^d$ almost everywhere on $\set{x \in \Rd}{u(x) = v(x)}$. As an example, one can consider $\mu$ as the 1-Hausdorff measure on the segment $[0,1] \times \{0\} \subset \R^2$, with $u(x,y) = y$. Then $u = 0$ at $\mu$ a.e. point, and $\nabla_\mu u = 0$, even though $\nabla u \neq 0$.
\end{remark}

\begin{remark}\label{rem:closability}
	The closability result appearing in Lemma \ref{closable} (see also in \cite{BCJ2005}) has been recently extended in \cite{ABM} to more general differential operators and function spaces. In particular, Theorem 1.3 and Section 1.7 therein yield another proof of the fact that $T_{\mu} = V_{\mu}$, relying on~\cite{AM}.
\end{remark}

\section{Stability of the MK distance}\label{tangential calculus}

In this section, we investigate the first-order stability of the Monge-Kantorovich distance under infinitesimal displacements of a general measure $\mu$ along velocity fields $\sigma \in (L^1_\mu)^d$, given by $\mu_h^\sigma = (\text{id} + h\sigma)_\sharp \mu$. Our main result shows that the Monge-Kantorovich tangent bundle $\T_\mu$ characterizes the subset of stable directions. As an application, we provide a proof that Lipschitz functions are differentiable almost everywhere in the directions of the MK tangent bundle, recovering a result previously established in \cite{AM}.

\subsection{Statements}

Let $\mu \in \M_+(\Rd)$ be a Radon measure and $h > 0$. For any velocity field $\sigma\in (L^1_\mu)^d$, and any $u\in C^1$, we set:
\begin{align}
	\Delta_h^\sigma(u)(x) &:= \frac{u(x+h \sig(x))-u(x)}{h} - \pairing{\nabla u(x),\sig(x)}, \label{Delta-hsig} \\
	F_h(\mu,\sigma) &:= \sup_{ u\in \Lip_1 \cap C^1 (\Rd)} \left\{ \int \Delta_h^\sig(u) (x)\, d\mu \right\}, \label{def:Fh}
\end{align}
and 
\begin{equation}\label{def:F0}
	F_0^+(\mu,\sigma) := \limsup_{h\to 0} F_h(\mu,\sigma),\qquad 
	F_0^-(\mu,\sigma) := \liminf_{h\to 0} F_h(\mu,\sigma).
\end{equation}
If $\sigma \in \T_\mu$, then according to \eqref{def:nabla_mu}, the expression \eqref{Delta-hsig} is extended to all $u\in \Lip(\Rd)$ by 
\begin{equation} \label{Delta-hsig*}
	\Delta_h^\sigma(u)(x) := \frac{u(x+h \sig(x))-u(x)}{h} - \pairing{\nabla_\mu u(x),\sig(x)}.
\end{equation}
In this case, by Remark~\ref{Lipdual}, the supremum in \eqref{def:Fh} is the KR norm of an element of $\AEd$, which is expressed through the difference quotients $\diffquot{\sig}{h} = \frac{\mu_h^\sig-\mu}{h}$ with $\mu_h^{\sig} \coloneqq (\mathrm{id}+h\sigma)_{\sharp} \mu$: 
\begin{equation}\label{defKR:Fh}
	F_h(\mu,\sigma) = \left\Vert \diffquot{\sig}{h} + \div (\sigma\, \mu) \right\Vert_{\KR} \qquad \text{for } \sig\in\T_\mu.
\end{equation}
Note that $\diffquot{\sig}{h}$ belongs to $\AEd$ for every $\sig\in(L^1_\mu)^d$ since it is a balanced measure, with $\|\diffquot{\sig}{h}\|_{\KR} \le \int|\sig|\,d\mu$.
The main result of this section is the following stability result. 

\begin{theorem}\label{MK-infinitesimal} 
	Let $\mu \in \M_+(\Rd)$ be a Radon measure and let $T_\mu$ be its MK-tangent bundle. Then:
	\begin{enumerate}
		\item\label{MKi:form} for any vector field $\sigma\in (L^1_\mu(\Rd))^d$, we have the equalities
		\begin{equation}\label{CVFh}
			F_0^+(\mu, \sigma)\, =\, F_0^-(\mu,\sigma) \, = \, F(\mu,\sigma) := \ 2\int |\sigma_N|\, d\mu,
		\end{equation}
		where $\sigma_N = P_\mu^\perp \sigma$ denotes the normal component of $\sigma$. As a consequence, we have the following equivalence:
		\begin{equation}\label{main-crit}
			\sig \in \T_\mu \ \iff\ \lim_{h\to 0} F_h(\mu,\sigma)=0.
		\end{equation}
		
		\med
		\item\label{MKi:diff} Let $\sig\in \T_\mu$. Then $\Delta_h^\sig(u)\to 0$ in $L^1_\mu$, for every $u\in\Lip(\Rd)$. Consequently, any Lipschitz function $u$ is differentiable in the direction $\sig$, namely:
		\begin{align*}
			\frac{u(x+h \sig(x))-u(x)}{h}
			\to \pairing{\nabla_\mu u(x),\sig(x)} \quad
			\mu \text{ a.e. and in } L^1_\mu(\Rd).
		\end{align*}

		\med
		\item\label{MKi:full} Every $u\in\Lip(\Rd)$ is differentiable at $\mu$-a.e. $x$ with respect to the whole subspace $T_\mu(x)$, with differential $\pairing{\nabla_\mu u(x),\cdot}$:
		\begin{equation}\label{eq:MKifull}
			u(x+v) \ =\ u(x) + \pairing{\nabla_\mu u(x),\, v} + o(|v|) \qquad \text{as } T_\mu(x)\ni v\to 0 .
		\end{equation}
	\end{enumerate}
\end{theorem}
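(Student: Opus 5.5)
The plan is to prove \ref{MKi:form} in two stages, first for tangent fields and then for general $\sig$, and to derive \ref{MKi:diff} and \ref{MKi:full} from the tangent case by localisation and a Rademacher-type argument.

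\emph{Tangent case of \ref{MKi:form}.} For $\sig\in\T_\mu$ we show $F_h(\mu,\sig)\to0$ using \eqref{defKR:Fh}, in three steps.
\begin{enumerate}
\item First take $\sig\mu=\dot\gamma\,\H^1\res\gamma$ for a Lipschitz curve $\gamma$ (or a superposition of such curves). Then $\diffquot{\sig}{h}+\div(\sig\mu)$ can be transported explicitly along $\gamma$. Its KR norm is bounded by the $L^1$ modulus of continuity of $\dot\gamma\circ\gamma^{-1}$ at scale $h$, and this tends to $0$.
\item Next take $\sig\mu\in\Nd$. Smirnov's decomposition \cite{Smi93} writes $\sig\mu=\int\llbracket\gamma\rrbracket\,d\eta(\gamma)$ with $\int|\sig|\,d\mu=\int\mathrm{length}(\gamma)\,d\eta$. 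The curve estimate, integrated in $\eta$ with dominated convergence, gives the claim. The bound $\|\cdot\|_{\KR}\le 2\int|\sig|\,d\mu$ holds uniformly in $h$ and serves as the dominating function.
\item Finally take $\sig\mu\in\Fd$. By Proposition~\ref{fundamental}\ref{fundamental:clos} we approximate $\sig\mu$ in total variation by normal currents. The difficulty is that these need not be of the form $\tilde\sig\mu$. I would handle this with the uniform bound $F_h(\mu,\sig)-F_h(\mu',\sig')\lesssim\|\sig\mu-\sig'\mu'\|$. One route is to pass through a common dominating measure $\mu+\mu'$ together with the locality Lemma~\ref{varieloc}. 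Another is to approximate $\sig$ directly in $L^1_\mu$ by fields $\sig_n$ with $\sig_n\mu\in\Nd$, after restricting to pieces of $\mu$.
\end{enumerate}

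\emph{General $\sig$ in \ref{MKi:form}.} For the upper bound, write $\sig=\sig_T+\sig_N$. For $u\in\Lip_1\cap C^1$ we split
\[
\Delta_h^\sig(u)=\tfrac{u(x+h\sig)-u(x+h\sig_T)}{h}+\Delta_h^{\sig_T}(u)-\pairing{\nabla u,\sig_N},
\]
where in the middle term the $C^1$ gradient is used. The first and last terms are each bounded by $|\sig_N|$. By \eqref{byparts}, $\int\pairing{\nabla u,\sig_T}\,d\mu=\int\pairing{\nabla_\mu u,\sig_T}\,d\mu$, so the middle integral is at most $F_h(\mu,\sig_T)\to0$. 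Hence $F_0^+\le2\int|\sig_N|\,d\mu$.

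For the lower bound I would use the duality of Lemma~\ref{dual-sigma}. Since $\sup_u\{\pairing{\diffquot{\sig}{h},u}-G_\sig(u)\}=G_\sig^*(\diffquot{\sig}{h})$, we get
\[
F_h(\mu,\sig)=\min\Big\{\int|\la-\sig\mu|\ :\ -\div\la=\diffquot{\sig}{h}\Big\}.
\]
Split a competitor as $\la=w\mu+\la_s$. By Theorem~\ref{projthm}, $\int|w-\sig|\,d\mu\ge\int|\sig_N|\,d\mu$ up to the part of $w$ that is not tangent. The transport of the mass $\diffquot{\sig}{h}$ must leave $\mu$ in the normal direction, so it should cost at least another $\int|\sig_N|\,d\mu-o(1)$ of mass either in $\la_s$ or in the normal part of $w$.

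On the primal side, the same bound would come from combining two ingredients. One is the oscillating sequence of Proposition~\ref{Nmu}, with $\nabla u_n\to-\sig_N/|\sig_N|$ and $u_n\weakstar0$; choosing the oscillation scale $\ll h$ makes the first difference term negligible, and the term $-\pairing{\nabla u_n,\sig}$ contributes $\int|\sig_N|\,d\mu$. The other is a $1$-Lipschitz function that grows like $\mathrm{dist}$ from the support of $\mu$ in the direction $\sig_N$, which would add a second $\int|\sig_N|\,d\mu$ through the difference quotient. I expect this lower bound, and in particular obtaining the factor $2$ with both pieces compatible under a single $\Lip_1$ constraint, to be the main obstacle. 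I would first try to make it work via the dual (Beckmann) formulation above, arguing by contradiction with a weak-star cluster point of near-optimal $\la_h$. The equivalence \eqref{main-crit} is then immediate.

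\emph{Items \ref{MKi:diff} and \ref{MKi:full}.} For \ref{MKi:diff}, apply the tangent case to $\sig\one_A$ with $A=\{\Delta_h^\sig(u)>0\}$ and with $A=\{\Delta_h^\sig(u)<0\}$, noting that $\Delta^{\sig\one_A}_h=\Delta^\sig_h$ on $A$. This requires the convergence in the first two steps above to be uniform over restrictions $\sig\one_A$. That uniformity is plausible because the curve estimates are controlled by $\int|\sig|\,d\mu$ and moduli of continuity, which only improve under restriction. The same procedure handles general Lipschitz $u$ by weak-star approximation using Theorem~\ref{weakgrad}. This gives $L^1_\mu$ convergence. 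To pass to $\mu$-a.e. convergence, take a sequence $h_k\downarrow0$ along which the convergence is a.e., and interpolate: $h\mapsto u(x+h\sig(x))$ is $|\sig(x)|$-Lipschitz, so it suffices to use a geometric sequence $h_k=(1+\eps)^{-k}$ and then let $\eps\to0$ along rationals. For \ref{MKi:full}, take a countable family $\{\sig_n\}$ in $\T_\mu$, dense in the sense of Proposition~\ref{local}, so that rational combinations of the values $\sig_n(x)$ are dense in $T_\mu(x)$. Item \ref{MKi:diff} gives a.e. directional differentiability along every such direction, with derivative linear in the direction. The Lipschitz bound on $u$ then upgrades this to the uniform $o(|v|)$ expansion on $T_\mu(x)$ by the standard compactness argument from Rademacher's theorem.
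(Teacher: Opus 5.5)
Your outline for the tangent case of \ref{MKi:form}, the upper bound $F_0^+\le2\int|\sig_N|\,d\mu$ and \ref{MKi:full} follows the paper, but there are two genuine gaps.

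\textbf{The lower bound in \ref{MKi:form}.} You leave $F_0^-(\mu,\sig)\ge2\int|\sig_N|\,d\mu$ open, and neither suggested route works as stated. A $1$-Lipschitz function growing like the distance to $\spt\mu$ is useless when $\spt\mu$ has interior, e.g.\ when $\mu$ is carried by horizontal segments at a dense set of heights. A weak-star cluster point $\la_0$ of near-optimal $\la_h$ only satisfies $-\div\la_0=-\div(\sig\mu)$. The choice $\la_0=\sig\mu$ meets this constraint at zero cost, so lower semicontinuity gives nothing. Your heuristic that the normal flux must be carried by $\la^s$ is right, but it has to be made quantitative at fixed $h$.

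The observation you miss is that every competitor $\la=w\mu+\la^s$ satisfies $-\div\la=\diffquot{\sig}{h}\in\AEd$. Hence $\la\in\Fd$ and, by \eqref{Fdlocal}, $w\in\T_\mu$ exactly, not ``up to a non-tangent part''. Moreover $\la^s\in\Fd$ with $-\div\la^s=\diffquot{\sig}{h}+\div(w\mu)$.
\begin{itemize}
\item Testing this against a \emph{fixed} $u\in\D\cap\Lip_1(\Rd)$ via \eqref{Beck} gives $\|\la^s\|\ge\int\pairing{\nabla u,\sig-w}\,d\mu-\omega_u(h)$, with $\omega_u(h)\to0$ independently of $w$.
\item Hence $F_0^-(\mu,\sig)\ge\inf_{w\in\T_\mu}\int\big(|w-\sig|+\pairing{\nabla u,\sig-w}\big)\,d\mu$.
\item Pointwise minimization over $w(x)\in T_\mu(x)$ bounds this below by $\int\big(|\sig_N|\sqrt{1-|P_\mu\nabla u|^2}+\pairing{\nabla u,\sig_N}\big)\,d\mu$.
\item Inserting the sequence of Proposition~\ref{Nmu} with $\nabla u_n\to+\sig_N/|\sig_N|$, so that $P_\mu\nabla u_n\to0$, yields $2\int|\sig_N|\,d\mu$.
\end{itemize}
Here $h\to0$ first with $u$ fixed, so the oscillation scale is much larger than $h$, the opposite of your choice. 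The two copies of $\int|\sig_N|\,d\mu$ come from $|w-\sig|$ and from $\|\la^s\|$, not from two pieces of a single test function.

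\textbf{Item \ref{MKi:diff}.} Since $A=\{\Delta^\sig_h(u)>0\}$ depends on $h$, you rely on uniformity of $F_h(\mu,\sig\one_A)\to0$ over all restrictions. That is, you need $\sup_{v\in\Lip_1\cap C^1}\int(\Delta^\sig_hv)^+\,d\mu\to0$, and this is false. Take $\mu=\L^1\res[0,1]$, $\sig\equiv1$ and $v(x)=\frac{h}{2\pi}\sin(2\pi x/h)$, the example after Remark~\ref{uniform-diff}. Then $\Delta^\sig_hv=-\cos(2\pi x/h)$, so the supremum stays above $1/\pi-o(1)$. Restricting to $A$ destroys the telescoping cancellation on which the curve estimate rests. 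Two further steps also fail:
\begin{itemize}
\item Approximating a Lipschitz $u$ by $C^1$ functions $u_k$ does not transfer the limit $h\to0$. The error $\int|\Delta^\sig_h(u-u_k)|\,d\mu$ is only bounded by $2\Lip(u-u_k)\int|\sig|\,d\mu$, which does not vanish.
\item $L^1_\mu$ convergence gives a.e.\ convergence only along some subsequence of $h_k=(1+\eps)^{-k}$. That subsequence may be too sparse for your interpolation between consecutive scales.
\end{itemize}
The paper avoids all three issues by fixing $u\in\Lip(\Rd)$ from the start and proving $\int\sup_{s\in(0,h]}|\Delta^\sig_s(u)|\,d\mu\to0$. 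For fixed $u$ this functional has four properties:
\begin{itemize}
\item it is Lipschitz in the vector measure, through the locality of $\nabla_\mu$;
\item it is additive over the Smirnov decomposition;
\item it tends to $0$ on each curve, by the a.e.\ differentiability of $u\circ\gamma$ together with $\pairing{\nabla_{\mu_\gamma}u(\gamma),\dot\gamma}=(u\circ\gamma)'$;
\item it is monotone in $h$, which turns $L^1$ convergence directly into a.e.\ convergence.
\end{itemize}

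\textbf{A smaller point in your tangent case.} $F_h(\mu,\sig)$ is not a function of $\sig\mu$ at fixed $h$. For example $F_h(\L^1\res[0,1],2)=4h$ while $F_h(2\L^1\res[0,1],1)=2h$. So your Lipschitz bound only holds for the unit-speed (polar) normalization. It must be combined with $F_h(\mu,t\sig)=tF_{th}(\mu,\sig)$ and with fields of piecewise-constant modulus. Your alternative of approximating $\sig$ by $\sig_n$ with $\sig_n\mu\in\Nd$ fails in general (Example~\ref{fat}).
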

The proof of Theorem~\ref{MK-infinitesimal} is contained in the subsection~\ref{proofmain}, after some comments and preliminary results.

\begin{remark}\label{Gamm-conv} 
	Since the family of functionals $(F_h(\mu,\cdot))$ is equi-Lipschitz in $(L^1_\mu)^d$ (see Lemma~\ref{pFh} below), the pointwise convergence \eqref{CVFh} implies that $F_h(\mu,\cdot)$ $\Gamma$-converges to $F(\mu,\cdot)$
	as $h\to 0$ in the Banach space $(L^1_\mu)^d$. 
\end{remark}

\begin{remark}\label{1-2} 
	The factor $2$ appearing in the limit $F(\mu,\sigma)$ might look counterintuitive since we expect the two terms 
	$\int \left(\frac{u(x+h\sigma)-u(x)}{h}\right) \, d\mu$ and $\int\pairing{\nabla u,\sigma}\, d\mu$
	in \eqref{def:Fh} to share the same sign while both of them have absolute values not larger than $\int |\sigma|\, d\mu$.
	The simple example below illustrates that it is not the case as far as one considers the supremum over $\Lip_1(\Rd)$.
\end{remark}

\begin{example*}
	Let $\mu = \H^1\res([0,1]\times \{0\})$ in $\R^2$ and $\sigma = (0,1)$. Then $\int |\sigma_N|\, d\mu =1$ and by taking the 1-Lipschitz function $u_h(x_1,x_2) := |x_2-h^2|$ for $h\le1$ (which we may mollify near the line $\{x_2=h^2\}$, at no cost for the values used below, so as to meet the requirement $u\in C^1$ of \eqref{def:Fh}), we obtain the lower bound:
	\begin{align*} 
		F_h(\mu,\sigma) &\ge \int_0^1 \left( \frac{u_h(x_1,h) -u_h(x_1,0)}{h} - \pairing{\nabla u_h (x_1,0), (0,1)}\right)\, dx_1 \\
		&= \int_0^1 \left( \frac{h-h^2-h^2}{h} +1\right) \, dx_1 = 2-2h.
	\end{align*}
\end{example*}

\begin{remark}\label{uniform-diff} 
	The convergence $F_h(\mu,\sigma)\to 0$ obtained when $\sigma\in \T_\mu$ cannot be applied if alternatively, we consider $\widetilde{F_h}$ defined by:
	$$\widetilde{F_h} (\mu,\sigma):= \sup_{ u\in \Lip_1 \cap C^1 (\Rd)} \left\{ \int \left|\Delta_h^\sig(u)\right|\, d\mu
	\right\}. $$
	Despite the fact that, for any Lipschitz function $u$, the integral above vanishes as $h\to 0$ thanks to the $\mu$-a.e. differentiability in the direction $\sigma$ (see assertion \ref{MKi:diff}), we cannot expect this convergence to be uniform with respect to $u\in \Lip_1(\Rd)$. This is confirmed by the example below.
\end{remark}

\begin{example*}\label{nonuniform} 
	Let $d=1$ and consider $\mu = \mathcal{L}_{[0,1]}$, and $\sigma(x) = 1$ for any $x \in [0,1]$. Then $\sigma \in L^1_{\mu}$ belongs to $\T_{\mu}$. However, we claim that
	\begin{align}\label{lim>0}
		\liminf_{h \searrow 0} \sup_{u \in \Lip_1} \int_{x \in [0,1]} \left|\frac{u(x + h) - u(x)}{h} - u'(x)\right| dx \ \ge \frac{2}{\pi}.
	\end{align}
	Indeed, for each $h > 0$, the function $u : x \mapsto \frac{h}{2\pi} \sin(2 \pi \frac{x}{h} )$ is 1-Lipschitz and vanishes at $x = 0$. There holds
	\begin{align*}
		\left|\frac{u(x + h) - u(x)}{h} - u'(x)\right|
		= \left|\frac{ \sin(2 \pi \frac{x}{h} + 2 \pi) - \sin(2 \pi \frac{x}{h})}{2\pi} - \cos(2 \pi x/h)\right|
		= |\cos(2 \pi x/h)|.
	\end{align*}
	Since the function $|\cos(2 \pi y)|$ is $1$-periodic, the integral $\int_{[0,1]} |\cos(2 \pi \frac{x}{h})| dx$ converges to the mean value $\int_{[0,1]} |\cos(2 \pi y)| dy= \frac{2}{\pi}$, hence the lower bound \eqref{lim>0}.
\end{example*}

\begin{remark}
	A natural variant of $F_h(\mu,\sigma)$ is obtained by averaging two opposite velocity fields.
	Setting $\widehat{\mu^{\sig}_h} := \frac12\big(\mu^{\sig}_h + \mu^{-\sig}_h\big)$, one has
	$$ \sup_{u\in \Lip_1\cap C^1(\Rd)} \int \frac{\Delta_h^\sig(u)+ \Delta_h^{-\sig}(u)}{2}\, d\mu
	\ =\ \left\Vert\frac{\widehat{\mu^{\sig}_h} -\mu}{h}\right\Vert _{\KR}
	\ =\ \frac1{h}\, W_1\big(\widehat{\mu^{\sig}_h},\mu\big) , $$
	which, in contrast with \eqref{defKR:Fh}, involves no divergence term. As this quantity is bounded by
	$\frac12\big(F_h(\mu,\sig)+F_h(\mu,-\sig)\big)$, the criterion \eqref{main-crit} gives
	$\T_\mu\subset\T_\mu^{sym}$, where $\T_\mu^{sym}$ denotes the set of those $\sig\in(L^1_\mu)^d$ for
	which $h^{-1}W_1(\widehat{\mu^{\sig}_h},\mu)\to0$. The inclusion is strict in general. This is demonstrated in \cite{A26}, where the analogous $W_2$ question is studied for $\mu\in\PP_2(\Rd)$, provides uniform
	measures on Cantor subsets of $[0,1]$ with $\T_\mu=\{0\}$ for which $\sig\equiv1$ belongs to
	$\T_\mu^{sym}$. A characterization of $\T_\mu^{sym}$ seems to be still out of reach.
\end{remark}
Next we derive a relation involving a given velocity field $\sig\in (L^1_\mu)^d$ and the asymptotic behavior as $h \to 0_+$ of the sequence ${\diffquot{\sig}{h}}$. Although the total variation of this sequence blows up in general, it admits an uniform upper bound in the Banach space $\AEd$, namely $\|{\diffquot{\sig}{h}}\|_{\KR} \le \int |\sig|\, d\mu$.
Then, even if ${\diffquot{\sig}{h}}$ fails to converge in $\AEd$, we can define its \emph{asymptotic radius} $\rho:= \inf_{f\in \AEd} H(\sig,f)$ where
\begin{equation}\label{def:asymptotic}
	H(\sig, g) := \limsup_{h\to 0_+} \| {\diffquot{\sig}{h}} - g\|_{\KR}.
\end{equation} 

Accordingly the \emph{asymptotic center} of the sequence $\{\diffquot{\sig}{h}\}$ is the (possibly empty) set of minimizers $\set{g\in \AEd}{H(\sig,g)=\rho}$. Clearly $\{\diffquot{\sig}{h}\}$ is a Cauchy sequence in $\AEd$ if and only if $\rho=0$. In this case, the asymptotic center reduces to the singleton $\{f\}$, where $f$ is the limit of $\{\diffquot{\sig}{h}\}$, which exists since $\AEd$ is complete. If $\rho > 0$, despite the fact that the functional $H(\sig, \cdot): \AEd \to \R_+$ defined in \eqref{def:asymptotic} is convex Lipschitz and coercive, it is not clear that the asymptotic center is non-empty since $\AEd$ is not reflexive.

\begin{corollary}\label{cauchy} 
	Let $\sig\in (L^1_\mu)^d$ be a velocity field and denote $\sig_T = P_\mu \sig$ and $\sig_N = P_\mu^\perp \sig$ its tangential and normal components respectively; then we define
	\begin{equation}\label{def:rovf}
		f_T :=- \div (\sig_T \mu)\quad,\ r:= \int |\sig_N|\, d\mu \ .
	\end{equation}
	\begin{enumerate}[label={(\roman*)}]
		\item\label{cauchy:estim} $f_T \in \AEd$ and the functional $H(\sig,\cdot)$ defined in \eqref{def:asymptotic} satisfies the inequalities
		\begin{align}\label{centerfh}
			r \ \le\ H(\sig, g) \ \le\ r + \|g-f_T\|_{\KR} \qquad \text{for every $g\in\AEd$}.
		\end{align}
		\item\label{cauchy:iffseq} the sequence $\left\{ \diffquot{\sig}{h}\right\}$ is a Cauchy sequence in $\AEd$ if and only if $\sig\in\T_\mu$. In this case, we have
		\begin{align}\label{MK:AEconv}
			\lim_{h\to 0} \left\Vert \diffquot{\sig}{h} + \div (\sigma\, \mu) \right\Vert_{\KR} = 0.
		\end{align}
		\item\label{cauchy:redsing} If $\sig\notin\T_\mu$, then the sequence $\left\{ \diffquot{\sig}{h}\right\}$ has positive asymptotic radius $\rho = r = \int |\sig_N| \, d\mu$ and asymptotic center reducing to the singleton $\{f_T\}$. In particular we have
		\begin{align}\label{defect}
			\lim_{h\to 0} \left\Vert \diffquot{\sig}{h} + \div (\sigma_T\, \mu) \right\Vert_{\KR} = \int |\sig_N| \, d\mu.
		\end{align}
	\end{enumerate}
\end{corollary}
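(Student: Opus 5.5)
The plan is to derive everything from Theorem~\ref{MK-infinitesimal}\ref{MKi:form} applied to $\sig$ and to $\sig_T$, using linearity of the difference quotient in the integrand. First, $f_T\in\AEd$ because $\sig_T=P_\mu\sig\in\T_\mu$ by \eqref{Tmu=}. The key identity, valid for $u\in\Lip_1\cap C^1$, is
\begin{equation*}
\pairing{\diffquot{\sig}{h}-f_T,u}=\int \frac{u(x+h\sig)-u(x)}{h}\,d\mu-\int\pairing{\nabla u,\sig_T}\,d\mu
=\int \Delta_h^\sig(u)\,d\mu+\int\pairing{\nabla u,\sig_N}\,d\mu .
\end{equation*}
Taking the supremum over $u$ and using \eqref{smoothing} gives the upper bound
\[
\|\diffquot{\sig}{h}-f_T\|_{\KR}\le F_h(\mu,\sig)-\inf_u\int\pairing{\nabla u,\sig_N}d\mu .
\]
This alone gives $2r+r$, which is too crude. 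So for the upper bound I would instead compare $\mu^\sig_h$ with $\mu^{\sig_T}_h$ directly. The plan couples $x+h\sig(x)$ with $x+h\sig_T(x)$, which gives $W_1$-type control:
\[
\|\diffquot{\sig}{h}-\diffquot{\sig_T}{h}\|_{\KR}\le \tfrac1h\int h|\sig_N|\,d\mu=r .
\]
Since $\sig_T\in\T_\mu$, \eqref{defKR:Fh} and \eqref{main-crit} give $\|\diffquot{\sig_T}{h}-f_T\|_{\KR}=F_h(\mu,\sig_T)\to0$. Hence $\limsup_h\|\diffquot{\sig}{h}-f_T\|_{\KR}\le r$, and the triangle inequality yields the right inequality in \eqref{centerfh}.

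For the lower bound $H(\sig,g)\ge r$ for every $g$, I would use Proposition~\ref{Nmu}. It provides $u_n\in\D\cap\Lip_1$ with $u_n\weakstar0$ and $\nabla u_n\to \sig_N/|\sig_N|$ $\mu$-a.e. on $\{\sig_N\ne0\}$. For fixed $n$, smoothness gives $\pairing{\diffquot{\sig}{h},-u_n}\to-\int\pairing{\nabla u_n,\sig}\,d\mu$ as $h\to0$. This holds by dominated convergence, since $u_n\in C^1$ with bounded gradient and $\sig\in L^1_\mu$. Therefore
\[
H(\sig,g)\ge \limsup_h\pairing{\diffquot{\sig}{h}-g,-u_n}=-\int\pairing{\nabla u_n,\sig}\,d\mu+\pairing{g,u_n}.
\]
Letting $n\to\infty$, we have $\pairing{g,u_n}\to0$ by \eqref{weak-star.crit}, and $\int\pairing{\nabla u_n,\sig_T}\to0$ since $-\div(\sig_T\mu)\in\AEd$. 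Also $\int\pairing{\nabla u_n,\sig_N}\to r$ by dominated convergence. The sign must be arranged: replace $u_n$ by $-u_n$ so that the limit is $+r$. This gives $H(\sig,g)\ge r$.

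Parts \ref{cauchy:iffseq} and \ref{cauchy:redsing} then follow. From \eqref{centerfh}, $\rho=r$, and $f_T$ is a minimizer. For uniqueness of the center when $r>0$, I expect the main obstacle: I must show $H(\sig,g)>r$ for $g\ne f_T$. I would try to refine the lower bound by adding to $u_n$ a fixed near-optimal potential $w$ for $g-f_T$ (Proposition~\ref{completion}), testing with $(u_n+w)/(1+\Lip)$-type combinations. A cleaner option is to use locality and place $w$'s gradient on the tangential part and $u_n$'s on the normal part, via Theorem~\ref{weakgrad}\ref{weakgrad:un} with the pair $(w,\sig_N/|\sig_N|)$. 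That theorem gives $v_n\weakstar w$ with $\nabla v_n\to\nabla_\mu w+\sig_N/|\sig_N|$; this is not $1$-Lipschitz in general, so the normalization needs care, possibly by working with a small multiple $\varepsilon w$.

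Granting the uniqueness step, $\rho=0$ iff $r=0$ iff $\sig\in\T_\mu$, which gives the Cauchy criterion. In that case $\sig_T=\sig$, and \eqref{MK:AEconv} is \eqref{defKR:Fh} combined with \eqref{main-crit}. Finally, \eqref{defect} follows by combining the upper estimate $\limsup\le r$ with the lower bound $\liminf\ge r$. The latter comes from the argument above applied along any sequence $h_k\to0$, which controls the $\liminf$ as well.
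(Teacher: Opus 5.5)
Your proof of \ref{cauchy:estim} is essentially the paper's: you couple $x+h\sig$ with $x+h\sig_T$ for the upper bound, and use the sequence of Proposition~\ref{Nmu} for the lower bound once the sign slip is fixed. Parts \ref{cauchy:iffseq} and \eqref{defect} then follow as you say, and neither needs uniqueness of the center, since $\rho=r$ already comes from \eqref{centerfh}.

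The genuine gap is the claim in \ref{cauchy:redsing} that the asymptotic center reduces to $\{f_T\}$. You leave it open, and your sketches do not close it. The quantity $\Theta(u):=\int\pairing{\sig,\nabla u}\,d\mu-\pairing{g,u}$ is linear in $u$. So testing with $(1-\eps)u_n+\eps w$ gives in the limit $(1-\eps)r+\eps\,\Theta(w)$. This beats $r$ only if some $w\in\D\cap\Lip_1$ already satisfies $\Theta(w)>r$, which is the claim itself, so the argument is circular. The route through Theorem~\ref{weakgrad}\ref{weakgrad:un} fails for a different reason: it gives no bound $\Lip(v_n)\le 1+o(\eps)$. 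Any first-order excess in the Lipschitz constant costs a term of order $\eps r$ after normalization, which competes with the first-order gain. What you would actually need is a $1$-Lipschitz function whose gradient on $\mu$ is close to $\eps\nabla_\mu w+\sqrt{1-\eps^2|\nabla_\mu w|^2}\,\sig_N/|\sig_N|$, and your proposal does not construct one.

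The missing idea is to replace test-function constructions by duality. Take the supremum over $u\in\D\cap\Lip_1$ (and $-u$) in your lower bound. This gives $H(\sig,g)\ge G_\sig^*(g)$, and Lemma~\ref{dual-sigma} identifies $G_\sig^*(g)=\min\{\int|\la-\sig\mu| : -\div\la=g\}$. Write a minimizer as $\ov\la=w\mu+\la^s$ with $\la^s\perp\mu$. Then $\ov\la\in\Fd$ because $g\in\AEd$, hence $w\in\T_\mu$ by locality \eqref{Fdlocal}. Therefore
\[
H(\sig,g)\ \ge\ \int\sqrt{|w-\sig_T|^2+|\sig_N|^2}\,d\mu+\int|\la^s|\ \ge\ r .
\]
If $H(\sig,g)=r$, both inequalities are equalities. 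This forces $w=\sig_T$ $\mu$-a.e.\ and $\la^s=0$, so $g=-\div(\sig_T\mu)=f_T$. This is the same nearest-point computation as in Theorem~\ref{projthm}. The duality does the $1$-Lipschitz gluing of tangential and normal gradients that your construction would have had to carry out by hand.
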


\begin{proof} 
	By \eqref{def:Tmu}-\eqref{Tmu=}, we already know that $f_T$ belongs to $\AEd$. Since $\sig_T\in\T_\mu$, it follows from \eqref{defKR:Fh} and \eqref{main-crit} that 
	$$ \lim_{h\to 0_+} \|\diffquot{\sig_T}{h} - f_T\|_{\KR} \ =\ 0.$$
	On the other hand, we have the upper bound:
	$$ \|\diffquot{\sigma_T}{h} - \diffquot{\sig}{h}\|_{\KR} \le\ \int |\sig - \sig_T| \, d\mu = \int |\sig_N| \, d\mu = r.$$ 
	Therefore, for every $g\in \AEd$, we infer that:
	\begin{align*} 
		H(\sig, g) &= \limsup_{h\to 0_+} \| g- \diffquot{\sig}{h}\|_{\KR} \\
		&\le \| g - f_T\|_{\KR} + \limsup_{h\to 0_+} \|f_T - \diffquot{\sig_T}{h} \|_{\KR} + \limsup_{h\to 0_+} \| \diffquot{\sig_T}{h} - \diffquot{\sig}{h} \|_{\KR} \\
		&\le \| g - f_T\|_{\KR} + 0 + r.
	\end{align*} 
	Let us now establish the lower bound $H (\sig, g) \ge r=\int |\sig_N|\, d\mu$. For every $u\in \D\cap \Lip_1(\Rd)$, one has 
	\begin{align}\label{H>G*}
		H(\sig, g) \ge \liminf_{h\to 0_+} \| \diffquot{\sig}{h} -g\|_{\KR} \ge \liminf_{h\to 0_+} \pairing{\diffquot{\sig}{h}-g,u} = \int \pairing{\sigma,\nabla u}\, d\mu - \pairing {g,u}
	\end{align}
	where to pass to the last equality, we used the the distributional convergence $\diffquot{\sig}{h} \to - \div(\sig\, \mu)$. Let us apply the inequality above replacing $u$ by the sequence $(u_n)$ in $\D\cap \Lip_1(\Rd)$ given in \eqref{normal-approx} (see Proposition~\ref{Nmu}) (i.e. $ u_n \weakstar 0$ and $ \nabla u_n \to \frac{\sigma_N}{|\sigma_N|} \, \one_{\{\sigma_N \neq 0\}}$ \ $\mu$ a.e.), we conclude that 
	$$ H (\sig, g) \ge \liminf_n \int \pairing{\sigma,\nabla u_n}\, d\mu - \pairing {g,u_n} \ \ge \int |\sig_N|\, d\mu,$$
	whence \eqref{centerfh} and the assertion \ref{cauchy:estim}. 
	
	\med \ref{cauchy:iffseq}\ Assume that $\diffquot{\sig}{h}$ is a Cauchy sequence in $\AEd$. Then, since $\AEd$ is complete, we have that $\diffquot{\sig}{h} \to g$ for a suitable $g\in \AEd$. Since this convergence holds also in the distributional sense, the only possibility is that $g= -\div (\sig\, \mu)$. Therefore, by \eqref{def:Tmu}, $\sig\in \T_\mu$. Conversely, if $\sig\in \T_\mu$, then $\sig=\sig_T$, $\sig_N = 0$ and by applying \eqref{centerfh} to $g= -\div (\sig\, \mu)$, we get $H(\sig,g)=0 $, hence the strong convergence $\diffquot{\sig}{h}\to g$ in $\AEd$.
	
	\med \ref{cauchy:redsing}\ The equality $\rho=r$ and \eqref{defect} follow by applying \eqref{centerfh} to $g=f_T$. There remains to prove that no other element of $\AEd$ is an asymptotic center. Assume that $H(\sig,g) = r$ for some $g\in\AEd$. By taking the supremum with respect to $u$ (or $-u$) in $\D\cap\Lip_1(\Rd)$ in the right hand side of \eqref{H>G*}, we obtain 
	\begin{align*}
		H(\sig,g) \ge G_\sig^*(g) = \min_{\la\in \M^d} \left\{\int_{\Rd} |\la-\sigma\, \mu | \ : \ -\div \la = g \right\},
	\end{align*}
	where $G_\sig$ is defined in \eqref{def:Gsigma} while the second equality follows from \eqref{Gsigma*}. Let $\ov \la= w\, \mu +\la^s$ be optimal in the minimum above, being $w \mu$ the absolutely continuous part and $\la^s\perp \mu$. Since $g \in \AEd$ and $T_{|\la|}=\T_\mu$ holds $\mu$ a.e. (in virtue of \eqref{restriction}), we infer that $w\in \T_\mu$. Therefore 
	\begin{align*}
		H(\sig,g) \ge G_\sig^*(g) = \int \sqrt{ |w-\sig_T|^2 + |\sig_N|^2}\, d\mu + \int |\la^s|.
	\end{align*}
	Then we observe that the equality $H(\sig,g)=r= \int |\sig_N|\, d\mu$ implies that
	\begin{align*}
		\int \left(\sqrt{ |w-\sig_T|^2 + |\sig_N|^2}- |\sig_N|\right)\, d\mu = \int |\la^s| = 0,
	\end{align*}
	hence $\ov\la= \sig_T\, \mu$ and $g= -\div (\sig_T\, \mu)$, as wished.
\end{proof}

\begin{remark}
	The right hand-side of \eqref{defect} has a coefficient 1, instead of the coefficient 2 in \eqref{CVFh}. This comes from the fact that \eqref{CVFh} compares the differential quotient with $\div(\sigma \mu) = \div(\sigma_N \mu) + \div(\sigma_T \mu)$, while \eqref{defect} only contains the latter term $\div(\sigma_T \mu)$. The term $\div(\sigma_N \mu)$ accounts for the distance between $\diffquot{\sigma}{0}$ and $\AEd$ in the bidual of $\AEd$, measured with a suitable extension of the Kantorovich-Rubinstein norm.
\end{remark}

\subsection{Preliminary results}

In the proof of the assertion \ref{MKi:form} of Theorem \ref{MK-infinitesimal}, we will need some properties of 
$F_h(\mu,\cdot)$ and $F_0^\pm(\mu,\cdot)$. For convenience, we define counterparts on every vector measure $\la\in \Md$ by 
\begin{equation}\label{def:Phih}
	\Phi_h(\la) := F_h(\mu,\sigma), \quad \text{where}\quad \mu= |\la|,\ \sigma= \frac{d\la}{d |\la|}.
\end{equation}
Accordingly we let $\Phi_0^\pm(\la)= F_0^\pm(\mu,\sigma)$. 
Similarly, in the proof of the assertion \ref{MKi:diff} of Theorem \ref{MK-infinitesimal}, we need to associate the following functional on $\Lip(\Rd)$:
\begin{equation}\label{def:Dhu}
	\Psi_h(\la)(u) \coloneqq \int \widehat{\Delta^{\sigma}_h} (u) \ d\mu \qquad \text{where} \quad \mu= |\la|,\ \sigma= \frac{d\la}{d |\la|},
\end{equation}
where the function $\widehat{\Delta^{\sigma}_h} (u)$ is defined by
\begin{equation}\label{Delta-hsup} 
	\widehat{\Delta^{\sigma}_h} (u)\coloneqq \sup_{s \in (0,h]} |\Delta^{\sigma}_s (u)|.
\end{equation}
Note that the condition $\la\in\Fd$ implies that $\sig\in \T_\mu$ which is necessary in order to extend the definition of $\Delta_s^\sig(u)$ given by \eqref{Delta-hsig*} to Lipschitz functions.
	
\begin{lemma}\label{pFh} 
	Let $\mu\in \M_+(\Rd)$. Then: 
	\begin{enumerate}
		\item\label{pFh:resB} For every $\sigma\in (L^1_\mu)^d$ and every Borel subset $B\in \B(\Rd)$, we have
		$$F_h(\mu, \sigma \one_B) = F_h(\mu\res B, \sigma).$$
		
		\item\label{pFh:subadd} The map $B \mapsto F_h(\mu,\sigma \one_B)$ is subadditive on disjoint Borel subsets. By taking the limit sup, one deduces that $F_0^+$ is also subadditive.
		
		\item\label{pFh:posh} $\Phi_h$ and $F_h(\mu,\cdot)$ are positively homogeneous, in that the equalities
		\begin{align*}
			\Phi_h(t \lambda) = t \Phi_h(\lambda), \quad
			F_h(t \mu, \sigma) = t F_h(\mu,\sigma)
		\end{align*}
		hold for every $t \in \R_+$. Hence $\Phi_0^{\pm}$ and $F_0^{\pm}$ are positively homogeneous. Moreover $F_h(\mu,t\sig) = t\,F_{th}(\mu,\sig)$, so that $F_0^\pm(\mu,\cdot)$ is positively homogeneous in $\sig$ as well.
		
		\item\label{pFh:lip} The functionals $F_h(\mu,\cdot)$, $F_0^\pm(\mu,\cdot)$ are $2$-Lipschitz in $(L^1_\mu)^d$, while $\Phi_h$, $\Phi_0^{\pm}$ and $\Psi_h$ are $3$-Lipschitz in $\Md$. Specifically, for any $\lambda_1,\lambda_2 \in \M^d$ and $u \in \Lip_1$,
		\begin{align}
			\left|\Phi_h(\lambda_2) - \Phi_h(\lambda_1)\right| \ &\le \ 3 \|\lambda_1 - \lambda_2\|, \label{PhiLip} \\
			\left|\Psi_h(\lambda_2)(u) - \Psi_h(\lambda_1)(u)\right| \ &\le \ 3 \|\lambda_1 - \lambda_2\|. \label{PsiLip}
		\end{align}
	\end{enumerate}
\end{lemma}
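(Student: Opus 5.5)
The four assertions all follow directly from the definition \eqref{def:Fh} of $F_h$ as a supremum over $u\in\Lip_1\cap C^1$ of $\int\Delta_h^\sig(u)\,d\mu$. The plan is to treat each one at the level of the integrand, for fixed $u$, and then pass to the supremum.

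\emph{Assertion \ref{pFh:resB}.} On $B^c$ the field $\sig\one_B$ vanishes, so $\Delta_h^{\sig\one_B}(u)(x)=0$ there. On $B$ it coincides with $\Delta_h^\sig(u)(x)$. Hence $\int\Delta_h^{\sig\one_B}(u)\,d\mu=\int\Delta_h^\sig(u)\,d(\mu\res B)$ for every $u$, and taking the supremum gives the identity.

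\emph{Assertion \ref{pFh:subadd}.} For disjoint $B_1,B_2$ we have $\Delta_h^{\sig\one_{B_1\cup B_2}}(u)=\Delta_h^{\sig\one_{B_1}}(u)+\Delta_h^{\sig\one_{B_2}}(u)$ pointwise. A supremum of a sum is at most the sum of the suprema, which gives subadditivity. Taking $\limsup_h$ preserves this inequality, so $F_0^+$ is subadditive as well.

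\emph{Assertion \ref{pFh:posh}.} Homogeneity in $\mu$ is immediate from linearity of the integral. For the scaling in $\sig$, note that $\frac{u(x+ht\sig)-u(x)}{h}-\pairing{\nabla u,t\sig}=t\,\Delta^\sig_{th}(u)$, which gives $F_h(\mu,t\sig)=tF_{th}(\mu,\sig)$ for $t>0$; the case $t=0$ is trivial. As $h\to0$ we also have $th\to0$, so the $\limsup$ and $\liminf$ scale correctly and $F_0^\pm(\mu,\cdot)$ is positively homogeneous in $\sig$. For $\Phi_h$, observe that $|t\la|=t|\la|$ and the polar of $t\la$ is unchanged, so $\Phi_h(t\la)=tF_h(|\la|,\sig)$.

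\emph{Assertion \ref{pFh:lip}.} Fix $u\in\Lip_1$. First I would show $|\Delta_h^{\sig_1}(u)-\Delta_h^{\sig_2}(u)|\le 2|\sig_1-\sig_2|$ pointwise: the difference quotient part contributes at most $|\sig_1-\sig_2|$ and the gradient part also at most $|\sig_1-\sig_2|$. Integrating and taking suprema gives that $F_h(\mu,\cdot)$ is $2$-Lipschitz, and this passes to $F_0^\pm$. For $\Psi_h$ with $\sig\in\T_\mu$, the same pointwise bound holds uniformly in $s\in(0,h]$, with $\nabla_\mu u$ in place of $\nabla u$ (still $|\nabla_\mu u|\le1$ since $\nabla_\mu$ is a weak-star limit of projected gradients of equi-$1$-Lipschitz approximants), so it survives the supremum defining $\widehat{\Delta_h^\sig}$.

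The \textbf{main obstacle} is that $\la_1,\la_2$ have different total variations $\mu_1,\mu_2$. I would handle it by introducing the common measure $m=\mu_1+\mu_2$ and writing $\la_i=\xi_i m$. The claim needed is that $\Phi_h(\la_i)=F_h(m,\xi_i)$ (resp. the analogue for $\Psi_h$). This holds because the integrand is unchanged under reparametrization: writing $\la=\sig\mu=\xi m$ with $\xi=\sig\theta$ and $\theta=d\mu/dm$, one checks that $\theta\,\Delta_h^\sig(u)$ and $\Delta_h^{\xi}(u)$ do not agree pointwise, since the difference quotient is not homogeneous in the velocity. So the reparametrization claim is false as stated, and this is where the constant $3$ must come from.

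To get around this, I would estimate $\Phi_h(\la_1)-\Phi_h(\la_2)$ as follows. Write $\mu_2=\theta\mu_1+\mu_2^s$ and $\sig_2$ for the polar of $\la_2$, and split the integral against $\mu_2$ into its $\mu_1$-absolutely continuous part and its singular part.
\begin{itemize}
\item On the singular part, $|\Delta_h^{\sig}(u)|\le2$, which costs at most $|\la_2|(\text{sing})\le\|\la_1-\la_2\|$.
\item On the absolutely continuous part, compare $\theta\Delta^{\sig_2}_h$ with $\Delta^{\sig_1}_h$ pointwise. Using $|\sig_i|=1$, this is bounded by $|\theta-1|+2|\sig_1-\sig_2|$ (up to one unit-size term).
\end{itemize}
Finally, $\int\bigl(|\theta-1|+|\sig_1-\sig_2|\bigr)\,d\mu_1$ together with the singular masses is controlled by $\|\la_1-\la_2\|$, because $|\theta\sig_2-\sig_1|\ge\max(|\theta-1|,\ \cdot\ )$. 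Bookkeeping the constants in these bounds should give $3$. The same scheme applies verbatim to $\Psi_h$, using the uniform-in-$s$ pointwise bounds.
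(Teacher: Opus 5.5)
You handle (i)--(iii) and the fixed-measure $2$-Lipschitz bound as the paper does. For $\Phi_h$ you also end up with the paper's scheme: compare, against a common reference measure, the unit-velocity integrands weighted by the densities. You get there after first asserting and then correctly refuting the rescaling identity $\Phi_h(\lambda_i)=F_h(m,\xi_i)$.

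\textbf{The gap: the constant $3$.} The step that produces the constant is left as ``$|\theta\sigma_2-\sigma_1|\ge\max(|\theta-1|,\ \cdot\ )$'' and ``bookkeeping should give $3$'', and the intermediate bound you do state cannot give $3$.
\begin{itemize}
\item Since $|\Delta_h^{\sigma}(u)|$ can be as large as $2$, the coefficient of $|\theta-1|$ must be $2$, not $1$.
\item More seriously, no bound of the form $c\,|\theta-1|+2|\sigma_1-\sigma_2|$ with $c\ge0$ is dominated by $3|\theta\sigma_2-\sigma_1|$. For $\theta\to0$ and $\sigma_2=-\sigma_1$ the left side tends to $c+4$, while $|\theta\sigma_2-\sigma_1|\to1$.
\item Integration does not recover the loss: take $\lambda_1=e\,\delta_0$ and $\lambda_2=-\varepsilon e\,\delta_0$ for a unit vector $e$.
\end{itemize}

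Two ingredients are missing.
\begin{itemize}
\item First, split according to which density is smaller: $\theta V_2-V_1=\theta(V_2-V_1)+(\theta-1)V_1$ if $\theta\le1$, and $\theta V_2-V_1=(V_2-V_1)+(\theta-1)V_2$ if $\theta\ge1$. This gives $|\theta V_2-V_1|\le2\bigl(\min\{1,\theta\}|\sigma_1-\sigma_2|+|\theta-1|\bigr)$.
\item Second, use the identity, valid for unit vectors,
\[
|\theta\sigma_2-\sigma_1|^2=(\theta-1)^2+\theta\,|\sigma_1-\sigma_2|^2 ,
\]
together with $\min\{1,\theta\}^2\le\theta$ and $x+y\le\sqrt2\,(x^2+y^2)^{1/2}$.
\end{itemize}
Together these yield $|\theta V_2-V_1|\le2\sqrt2\,|\theta\sigma_2-\sigma_1|\le3\,|\theta\sigma_2-\sigma_1|$. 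This is exactly the paper's pointwise estimate, written there with $m=|\lambda_1|+|\lambda_2|$, $a=|p_1|$, $b=|p_2|$. Bounding the two terms separately by $|\theta\sigma_2-\sigma_1|$, as your ``max'' suggests, gives only $4$, even after the first step.

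Your singular part is fine, although it costs $2\mu_2^s(\Rd)$ rather than $\mu_2^s(\Rd)$. Any finite constant would serve the later approximation arguments in the proof of Theorem~\ref{MK-infinitesimal}. So the defect concerns the statement as written, not its uses.

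\textbf{A missing justification for $\Psi_h$.} Saying the scheme applies ``verbatim'' skips a point. The integrands attached to $\lambda_1$ and $\lambda_2$ contain $\nabla_{|\lambda_1|}u$ and $\nabla_{|\lambda_2|}u$ respectively. The pointwise bound $|\Delta_s^{\sigma_1}(u)-\Delta_s^{\sigma_2}(u)|\le2|\sigma_1-\sigma_2|$ on the common part therefore requires these two tangential gradients to agree there. This is the locality of the tangential gradient, \eqref{localgrad:statement} in Lemma~\ref{localgrad}, applied to $|\lambda_i|\ll m$. The paper invokes it explicitly, and you need it too.
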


\begin{proof}
	The assertion \ref{pFh:resB} follows from $\Delta_h^{\sig\one_B} = \one_B \Delta_h^\sig$. The subadditivity \ref{pFh:subadd} is direct from the definition as a supremum of integrals. The assertion \ref{pFh:posh} is straightforward once one notices that $\frac{d \lambda}{d|\lambda|} = \frac{d(t \lambda)}{d|t\lambda|}$ for any $t > 0$. For the assertion \ref{pFh:lip}, let $u\in \Lip_1(\Rd)$. For any $\sig_1,\sig_2\in (L^1_\mu)^d$,
	\begin{equation}\label{pFh:ineq2}
		\left|\Delta_h^{\sig_2}(u)-\Delta_h^{\sig_1}(u)\right|
		\ \le\ \frac{\left|u(x+h\sig_2)-u(x+h\sig_1)\right|}{h} + \left|\pairing{\nabla u,\sig_2-\sig_1}\right|
		\ \le\ 2 \left|\sig_2-\sig_1\right| ,
	\end{equation}
	so that, for every $\mu$, $F_h(\mu,\cdot)$ and $F_0^\pm(\mu,\cdot)$ are $2$-Lipschitz. For $\Phi_h$ and $\Psi_h$ the measure varies as well, and we compare the two integrands on $m:=|\la_1|+|\la_2|$. Set $p_i:=\frac{d\la_i}{dm}$, $a:=|p_1|$, $b:=|p_2|$, $\omega_i:=p_i/|p_i|$, $\theta:=|\omega_1-\omega_2|$ and $V_i:=\Delta_h^{\omega_i}(u)(x)$, so that $\int\Delta_h^{\sig_i}(u)\,d|\la_i| = \int |p_i|\,V_i\,dm$ while $|V_i|\le2$ and, by \eqref{pFh:ineq2}, $|V_1-V_2|\le2\theta$. Decomposing $aV_1-bV_2$ as $a(V_1-V_2)+(a-b)V_2$ and as $b(V_1-V_2)+(a-b)V_1$, and using $|p_1-p_2|^2=(a-b)^2+ab\,\theta^2$ together with $ab\ge\min\{a,b\}^2$, we get the pointwise bound
	\begin{equation}\label{jLip}
		\left|a V_1 - b V_2\right| \ \le\ 2\big(\min\{a,b\}\,\theta + |a-b|\big) \ \le\ 2\sqrt2\,|p_1-p_2| \ \le\ 3\,|p_1-p_2|.
	\end{equation}
	Integrating \eqref{jLip} against $m$ and taking the supremum over $u$ gives \eqref{PhiLip}, and \eqref{PsiLip} follows from the same computation with $V_i:=\widehat{\Delta_h^{\omega_i}}(u)(x)$. The tangential gradients adapt well to the argument, since $|\la_i|\ll m$ forces $\nabla_{|\la_i|}u=\nabla_m u$ at $|\la_i|$-a.e. point by \eqref{localgrad:statement}, and taking absolute values and the supremum over $s\in(0,h]$ are $1$-Lipschitz operations.
\end{proof}

Next we consider a measure $Q$ on the Banach space $\Md$, endowed with the Borel $\sigma$-algebra of the weak-$*$ topology, 
such that $\int \Vert\la\Vert \, dQ <+\infty$. Then we denote $\la_Q = \int \la\, dQ(\la)$ where the integral is intended in the weak sense as follows:
\begin{align*}
	\pairing{\la_Q,\psi}\, =\, \int \pairing{ \la, \psi}\, dQ(\la) \quad \text{for every } \psi\in (C_0(\Rd))^d.
\end{align*}
It is not restrictive for us to assume that $Q$ is supported on a closed ball $B_R:= \{\la \in \Md : \|\la\|\le R\}$
(for $R$ suitably large) on which the weak-$*$ topology is metrizable and compact.
\begin{lemma}\label{propQ} 
	Let us assume that the measure $Q$ above satisfies the condition:
	\begin{equation}\label{Qcomplete}
		\Vert \la_Q\Vert = \int \Vert\la\Vert \, dQ(\la) <+\infty. 
	\end{equation}
	Then:
	\begin{enumerate}
		\item\label{propQ:loc} $|\la_Q| = \int |\la| \, dQ(\la)$ and, for $Q$ almost all $\la$, it holds
		$\frac{d\la_Q}{d|\la_Q|} = \frac{d\la}{d|\la|}$, $|\la|$-a.e.;
		\item\label{propQ:sub} For every $h>0$, the Lipschitz functionals $\Phi_h$ satisfies the subadditivity property:\footnote{That \eqref{subPhih} is well defined deserves an argument, that is, why $\Phi_h$ is $Q$-measurable. The supremum of $\Delta_h^{\sig_\la}(u)$ in $\Phi_h(\lambda)$ runs over $\Lip_1\cap C^1$, which is separable for the topology of local uniform convergence of the function and of its gradient. Along a sequence $u_n\to u$ in that topology the integrands $\Delta^{\sig_\la}_h$ converge pointwise while staying bounded by $2$, so that the supremum may be restricted to a countable dense family. This reduces the question to the $Q$-measurability of $\lambda \mapsto \Delta_h^{\sigma_\lambda}(u)$. This is where the assertion \ref{propQ:loc} is needed: it gives $\sig_\la(x)=\sig_Q(x)$ at $|\la|$ a.e. point $x$, for $Q$ a.e. $\la$. This yields for each $u \in \Lip(\R^d)\cap C^1$ that 
		\begin{align*}
			\int \Delta^{\sigma_\lambda}_h(u) \, d|\lambda| & = \int \Delta^{\sigma_Q}_h(u) \, d|\lambda| \qquad \text{$Q$ a.e..}
		\end{align*}
		The measurability of the right-hand side is then handled by the other half of \ref{propQ:loc}: the identity $|\la_Q|=\int|\la|\,dQ$ makes $\la\mapsto\int f\,d|\la|$ a $Q$-measurable map for every bounded Borel $f$, which we apply to $f=\Delta_h^{\sig_Q}(u)$. Thus $\Phi_h$ is a countable supremum of $Q$-measurable maps.} 
		\begin{equation}\label{subPhih}
			\Phi_h( \la_Q) \ \le\ \int \Phi_h(\lambda) \, dQ(\lambda).
		\end{equation}
		\item\label{propQ:Psi} Assume in addition that $\la \in \Fd$ for $Q$ almost all $\la$. 
		Then $\la_Q\in \Fd$ and for every $h>0$, we have the equality:
		\begin{equation} \label{subPsih}
			\Psi_h(\la_Q)(u) \ =\ \int \Psi_h(\lambda)(u) \, dQ(\lambda) \quad\text{for every } u\in \Lip(\Rd).
		\end{equation}
	\end{enumerate}
\end{lemma}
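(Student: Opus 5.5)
For assertion (i), I will first show $|\la_Q| \le \int|\la|\,dQ$ and then use \eqref{Qcomplete} to force equality. For any Borel set $B$ and any $\psi\in (C_0)^d$ with $|\psi|\le1$, the definition of $\la_Q$ gives $|\pairing{\la_Q,\psi}|\le\int\|\la\|\,dQ$. Localizing this bound yields the inequality of measures
\[
|\la_Q| \le \nu := \int|\la|\,dQ(\la).
\]
Here $\nu$ is a well-defined measure, since $\la\mapsto|\la|(B)$ is Borel for the weak-$*$ topology: it is lower semicontinuous for open $B$, and we conclude by a monotone class argument. The total masses satisfy $\|\la_Q\| = \nu(\Rd)$ by \eqref{Qcomplete}, so $|\la_Q| = \nu$.

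Next set $\sig_Q := d\la_Q/d|\la_Q|$. Each $\la$ is $\ll \nu$, and we can write
\[
\int|\la|\,dQ = |\la_Q| = \int \pairing{\sig_Q, \tfrac{d\la}{d|\la|}}\,d|\la|\,dQ .
\]
Testing against $\sig_Q$ in this way is justified by approximating $\sig_Q$ in $L^1_\nu$ by continuous fields. Since $\pairing{\sig_Q,\sig_\la}\le 1$, the equality forces $\pairing{\sig_Q,\sig_\la}=1$ $|\la|$-a.e. for $Q$-a.e. $\la$. As both vectors are unit vectors, this gives $\sig_\la=\sig_Q$.

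For assertion (ii), I fix $u\in \Lip_1\cap C^1$. By (i), $\sig_\la=\sig_Q$ holds $|\la|$-a.e., so
\[
\int \Delta_h^{\sig_Q}(u)\,d|\la_Q| = \int\Big(\int \Delta_h^{\sig_\la}(u)\,d|\la|\Big)dQ \le \int\Phi_h(\la)\,dQ .
\]
Taking the supremum over $u$ gives \eqref{subPhih}. The measurability of the integrands is handled exactly as in the footnote.

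For assertion (iii), I first show $\la_Q\in\Fd$. I will check the criterion \eqref{criterion} for $-\div\la_Q$, using $\pairing{-\div\la_Q,\f_n}=\int \pairing{-\div\la,\f_n}\,dQ$ and dominated convergence. The integrand is bounded by $C\|\la\|$ by \eqref{divlip}, and it tends to $0$ for each $\la\in\Fd$ because $\nabla\f_n$ is bounded and $\f_n\to c$. A cleaner alternative is to observe directly that $\sig_Q\in\T_{|\la_Q|}$. By the locality \eqref{restriction} applied to $|\la|\ll|\la_Q|$, we have $T_{|\la|}=T_{|\la_Q|}$ $|\la|$-a.e. and $\sig_\la(x)\in T_{|\la|}(x)$. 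Since $|\la_Q|=\int|\la|\,dQ$, the set of points where $\sig_Q\notin T_{|\la_Q|}$ is $|\la_Q|$-null; this last step is where the joint measurability has to be checked. Either route gives $\la_Q\in\Fd$.

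Then I use \eqref{localgrad:statement}, which gives $\nabla_{|\la|}u=\nabla_{|\la_Q|}u$ $|\la|$-a.e. Together with $\sig_\la=\sig_Q$, this shows that $\widehat{\Delta^{\sig_\la}_h}(u)=\widehat{\Delta^{\sig_Q}_h}(u)$ $|\la|$-a.e., where both are computed with the tangential gradient of $|\la_Q|$. Integrating this fixed Borel function against $|\la_Q|=\int|\la|\,dQ$ yields the equality \eqref{subPsih}; no supremum over $u$ is involved here, so we get an equality. The main obstacle is the measure-theoretic bookkeeping: the Borel measurability of $\la\mapsto|\la|(B)$ and of the fibered statements that hold "$Q$-a.e., $|\la|$-a.e.". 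I will handle these by working on the metrizable compact ball $B_R$ and using the disintegration identity $|\la_Q|=\int|\la|\,dQ$ to convert them into $|\la_Q|$-a.e. statements.
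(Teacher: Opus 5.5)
Your treatment of (i), of (ii), and your first route to $\la_Q\in\Fd$ (dominated convergence in the criterion \eqref{criterion}) coincide with the paper's proof. The gap is in the last step of (iii). You invoke \eqref{localgrad:statement} to get $\nabla_{|\la|}u=\nabla_{|\la_Q|}u$ $|\la|$-a.e. This requires $|\la|\ll|\la_Q|$, which you also assert in (i) (``each $\la$ is $\ll\nu$''). That absolute continuity fails in general, and precisely in the Smirnov setting where the lemma is applied.

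In $\R^2$, let $\la_y:=e_1\,\mathcal H^1\res([0,1]\times\{y\})\in\Nd$ and let $Q$ be the image of $\mathcal L^1\res[0,1]$ under $y\mapsto\la_y$. Then $\la_Q=e_1\,\mathcal L^2\res[0,1]^2$ and \eqref{Qcomplete} holds, yet $|\la_y|\perp|\la_Q|$ for every $y$. The conclusion itself is false there. For $u(x_1,x_2)=x_2$ one has $\nabla_{|\la_Q|}u=e_2$, because $T_{|\la_Q|}=\R^2$. On the other hand $\nabla_{|\la_y|}u=P_{|\la_y|}e_2=0$, because $T_{|\la_y|}=\R e_1$. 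Changing the representative of $\nabla_{|\la_Q|}u$ affects only a null set of $y$'s, so it cannot repair this.

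The paper's proof makes the same appeal to Lemma~\ref{localgrad} (``applied to $\mu_\la\ll\mu$''), so you are following it faithfully, but the step is not valid as written. For the same reason your ``cleaner alternative'' for $\la_Q\in\Fd$ must be discarded, since $T_{|\la_y|}\neq T_{|\la_Q|}$ on the segments. What (i) actually provides is weaker than absolute continuity: since $|\la_Q|=\int|\la|\,dQ$, every fixed $|\la_Q|$-null Borel set is $|\la|$-null for $Q$-a.e. $\la$. That suffices for your $L^1_\nu$-approximation argument in (i), but not to transfer a full tangential gradient.

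The missing idea is that only the component along $\sig_Q$ has to be transferred. That is, you need $\pairing{\nabla_{|\la|}u,\sig_\la}=\pairing{\nabla_{|\la_Q|}u,\sig_Q}$ $|\la|$-a.e. for $Q$-a.e. $\la$; in the example both sides equal $\partial_1u$. Here is one way to get it.

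1. By assertion (ii) of Theorem~\ref{weakgrad} with $\zeta=0$, pick $u_n\in\D(\Rd)$ with $u_n\weakstar u$ and $\nabla u_n\to\nabla_{|\la_Q|}u$ outside a $|\la_Q|$-null Borel set $N$.
2. For $Q$-a.e. $\la$ we have $|\la|(N)=0$, $\la\in\Fd$, and $\sig_\la=\sig_Q$ $|\la|$-a.e. Hence $\pairing{\nabla u_n,\sig_\la}\to\pairing{\nabla_{|\la_Q|}u,\sig_Q}$ boundedly $|\la|$-a.e.
3. Since $\sig_\la\in T_{|\la|}$, we have $\pairing{\nabla u_n,\sig_\la}=\pairing{\nabla_{|\la|}u_n,\sig_\la}$. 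By assertion (i) of Theorem~\ref{weakgrad}, this converges weakly-$*$ in $L^\infty_{|\la|}$ to $\pairing{\nabla_{|\la|}u,\sig_\la}$.
4. Identifying the two limits gives $\widehat{\Delta^{\sig_\la}_h}(u)=\widehat{\Delta^{\sig_Q}_h}(u)$ $|\la|$-a.e. Integrating against $|\la_Q|=\int|\la|\,dQ$ then gives \eqref{subPsih}.
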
 

\begin{proof}
	\ref{propQ:loc} For any open subset $A\subset \Rd$ and any $\f\in (C_0(\Rd))^d$ such that $\spt(\f)\subset A$ and $|\f|\le 1$, it holds 
	$$\pairing{\f, \la_Q} = \int \pairing{\f, \la}\, dQ(\la) \le \int |\la|(A) \, dQ(\la).$$
	By passing to the supremum over such $\f$, we get the inequality $|\la_Q| \le \int |\la| \, dQ(\la)$ as measures in $\M_+(\Rd)$.
	The assumption implies that these two measures have the same total mass, hence coincide.
	Setting $\sigma_Q:=\frac{d\la_Q}{d|\la_Q|}$ and $\sigma_\la:= \frac{d\la}{d|\la|}$, which are unit vectors, we deduce that:
	$$ |\la_Q| = \pairing{\sigma_Q, \la_Q} = \int \pairing{\sigma_Q, \sigma_\la} |\la| \, dQ(\la) \le \int |\la| \, dQ(\la)= |\la_Q|,$$
	whence the equalities $\pairing{\sigma_Q,\sigma_\la}=1$ and $\sigma_Q=\sigma_\la$ holding $|\la|$-a.e. and for $Q$-almost all $\la \in \Md$.
	
	\med
	\ref{propQ:sub} Denote $\mu:=|\la_Q|$ and $\mu_\la:= |\la|$. Since $\la_Q= \sigma_Q\, \mu$ is the polar decomposition of $\la_Q$, in virtue of \eqref{def:Fh}, \eqref{def:Phih} and
	recalling the notation \eqref{Delta-hsig}, we have:
	\begin{equation}\label{PhilaQ}
		\Phi_h(\la_Q) = F_h(\mu, \sigma_Q) = \sup_{u\in \Lip_1\cap C^1} \int \Delta_h^{\sigma_Q}(u)\, d\mu.
	\end{equation}
	
	By the assertion \ref{propQ:loc}, we have that $\mu= \int \mu_\la \; dQ(\lambda)$ while the unit vectors $\sigma_Q$ and $\sigma_\la$ agree $\mu_\la$ -a.e.
	Accordingly, for every $u\in \Lip_1\cap C^1$, one has:
	\begin{align*}
		\int \Delta_h^{\sigma_Q}(u)\, d\mu &= \int \left( \int \Delta_h^{\sigma_\la}(u)\, d\mu_\la \right) \, dQ(\lambda)\\
		&\le \int F_h (\mu_\la, \sigma_\la)\, dQ(\la) = \int \Phi_h (\la)\, dQ(\la).
	\end{align*}
	In virtue of \eqref{PhilaQ}, the inequality \eqref{subPhih} of the assertion \ref{propQ:sub} follows by taking the supremum with respect to $u$ in the left hand side above.
	
	\med \ref{propQ:Psi} We show first that $\la_Q= \int \la\, dQ(\la)$ belongs to $\Fd$. Let $(\f_n)$ be a sequence in $\D(\Rd)$ such that $\f_n \weakstar 0$. By testing the former integral against $\nabla\f_n$, we are led to 
	$$ \pairing{- \div \la_Q, \f_n} = \pairing{\la_Q, \nabla\f_n}= \int \pairing{\la, \nabla\f_n}\, dQ(\la).$$
	Since $Q$-almost every $\la$ belongs to $\Fd$, we have that $\pairing{\la, \nabla\f_n}$ converges pointwise to zero while $|\pairing{\la, \nabla\f_n}|\le C \| \la\|$, where $C= \sup \Lip(\f_n)$. 
	The dominated convergence can be then applied since $\int \|\la\| \, dQ(\la) < +\infty$ and we conclude
	that $ \pairing{- \div \la_Q, \f_n} \to 0$, whence $\la_Q\in \Fd$ in virtue of \eqref{criterion}. 
	
	\medskip Next, let us fix $u\in \Lip(\Rd)$. We know that, for $Q$ almost all $\la$, 
	the equality $\sig_\la= \sig_Q$ is valid $\mu_\la$ a.e. while, in virtue of Lemma \ref{localgrad} that we apply to $\mu_\la\ll \mu$, we also have $ \nabla_\mu u = \nabla_{\mu_\la} u$. 
	Accordingly the equality $\widehat{\Delta_h^{\sigma_Q}}(u)=\widehat{ \Delta_h^{\sigma_\la}}(u)$ holds $\mu_\la$ a.e.
	and we conclude that:
	\begin{align*}
		\Psi_h(\la_Q)(u) =\int \widehat{\Delta_h^{\sigma_Q}}(u) \, d\mu = \int \left( \int \widehat{\Delta_h^{\sigma_\la}}(u)\, d\mu_\la \right) \, dQ(\lambda)
		= \int \Psi_h(\la)(u)\, dQ(\la). &\qedhere
	\end{align*}
\end{proof} 

We will be using Lemma~\ref{propQ} when the measure $Q$ is concentrated on vector measures $\la \in \Nd$ which are associated to a family of curves. More precisely, being $T>0$, we associate to any Lipschitz curve $\gamma\in \Lip_1([0, T];\Rd)$ the vector measure $\la_\gamma$ defined by
\begin{equation}\label{def:gala}
	\pairing {\la_\gamma, \f} := \int_0^T \pairing {\f(\gamma(t)), \dot \gamma(t)}\, dt 
\end{equation}
where the derivative $ \dot \gamma$ is Lebesgue a.e. defined in $[0,T]$. The length of the curve $L(\gamma):=\int_0^T |\dot \gamma (t)| dt $ is such that: $ \| \la_\gamma \| \le L(\gamma) \le T$ with possibly a strict inequality; an equality prevents $\gamma(t)$ to go back and forth on a part of $\gamma([0,T])$. Besides we note that the distributional equality $-\div \la_\gamma= \delta_{\gamma(T)} - \delta_{\gamma(0)}$ ensures that $\la_\gamma\in \Nd$. 

\medskip
Next we consider the map $\Lambda: \gamma\in \Lip_1([0,T];\Rd) \mapsto \la_\gamma \in \Md$. It is continuous when $\Lip_1([0,T];\Rd)$ is equipped with the uniform norm of $(C_0(\Rd))^d$ and $\Md$ with the weak-$*$ topology. However it is not injective due to the fact that the curve integral \eqref{def:gala} is invariant under reparametrization. Then we will consider measures $Q$ of the form $Q = \Lambda^\sharp (\nu)$, where $\nu$ is a suitable Borel measure on $\Lip_1([0,T];\Rd)$ (a closed convex subset of the Banach space $C_0([0,T];\Rd)$).

The possibility of decomposing a general $\la\in \Nd$ as $\la= \int \la_\gamma d\nu(\gamma)$, in which $\nu$ fulfils the mass preserving condition \eqref{Qcomplete} of Lemma \ref{propQ}, is given by the celebrated Smirnov decomposition theorem \cite[Thm C]{Smi93}. Here we use the precise formulation of \cite{Smi24}:

\begin{theorem}[Smirnov decomposition]\label{Smirnov}
	Let $T>0$ and $\la\in \Nd$ be given. Then, there exists a Borel measure $\nu\in \M_+(\Lip_1([0,T];\Rd))$ such that:
	\begin{enumerate}
		\item\label{Smirnov:inc} Inclusion: $ \|\la_\gamma\|= L(\gamma) $ and $\gamma([0,T])\subset \spt(\la)$ for $\nu$-almost every $\gamma$;
		\item\label{Smirnov:dec} Decomposition: $\la= \int \lambda_\gamma \, d\nu(\gamma)$, where the integral is intended in the weak sense $ \pairing{\la, \psi}= \int \pairing{\la_\gamma, \psi}\, d\nu(\gamma), \ \forall \psi\in C_0(\Rd)^d$;
		\item\label{Smirnov:mas} Mass preserving: $|\la| = \int |\lambda_\gamma| \, d\nu(\gamma)$ as elements of $\M_+(\Rd)$.
	\end{enumerate}
\end{theorem}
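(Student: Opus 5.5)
The plan is to prove Theorem~\ref{Smirnov} by discrete approximation followed by a compactness argument. Mass preservation will then come from lower semicontinuity, exactly as in the proof of Lemma~\ref{propQ}.

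\emph{Step 1: approximation.} Write $f=-\div\la\in\Mod$. I would build polyhedral normal currents $\la_n\in\Nd$ (finite sums of weighted oriented segments) such that:
\begin{itemize}
\item $\la_n\weakstar\la$ in $\Md$, with $\|\la_n\|\to\|\la\|$ (strict convergence);
\item $-\div\la_n=f_n$ is a finite atomic balanced measure with $\|f_n\|_{\M}\le\|f\|_{\M}+1/n$;
\item $\spt\la_n$ lies in the $1/n$-neighbourhood of $\spt\la$.
\end{itemize}
To get this, I would first mollify $\la$ at scale $1/n$, which preserves strict convergence and keeps the support close. Then I would apply the Federer--Fleming polyhedral deformation on a fine cubical grid, adding small correcting segments so that the boundary stays close to $f$. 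The mass error of the deformation is controlled by the grid size times $\|\la\|+\|f\|$.

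\emph{Step 2: discrete decomposition.} A polyhedral $\la_n$ is a flow on a finite directed graph. The classical flow decomposition writes it, without cancellation, as a positive combination of simple paths from sinks of $f_n^-$ to sources of $f_n^+$, plus simple cycles. The mass of $\la_n$ equals the sum of the weighted lengths of these pieces.

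Each path or cycle is parametrized by arc length. Curves of length at most $T$ are extended as constants, and longer ones are cut into consecutive arcs of length at most $T$. A closed cycle can be traversed repeatedly and cut the same way. This gives a finite measure $\nu_n$ on $\Lip_1([0,T];\Rd)$ with
\[
\la_n=\int\la_\gamma\,d\nu_n,\qquad |\la_n|=\int|\la_\gamma|\,d\nu_n,\qquad \|\la_\gamma\|=L(\gamma)\ \ \nu_n\text{-a.e.}
\]
The total mass satisfies $\nu_n(\Lip_1)\lesssim \|\la_n\|/T+\|f_n\|_{\M}$, since each path yields at most $L/T+1$ pieces. So the masses are uniformly bounded.

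\emph{Step 3: compactness and passage to the limit.} Tightness of $(\nu_n)$ on $C([0,T];\Rd)$ comes from Arzel\`a--Ascoli, because the curves are $1$-Lipschitz. What must be checked is that the initial points $\gamma(0)$ do not escape to infinity. The starting points are distributed according to a measure dominated by $|f_n|+|\la_n|/T$, which is tight because $\la_n$ converges strictly and $f_n$ converges. Hence a subsequence gives $\nu_n\weakstar\nu$.

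\emph{Step 4: identifying the limit.} Since $\Lambda$ is weak-$*$ continuous and $\|\la_\gamma\|\le T$, the identity $\la_n=\int\la_\gamma\,d\nu_n$ passes to the limit, giving item~\ref{Smirnov:dec}. For item~\ref{Smirnov:mas}, the length functional $L$ is lower semicontinuous under uniform convergence, so
\[
\|\la\|\le\int\|\la_\gamma\|\,d\nu\le\int L\,d\nu\le\liminf_n\int L\,d\nu_n=\lim_n\|\la_n\|=\|\la\|.
\]
Hence equality holds throughout, which forces $\|\la_\gamma\|=L(\gamma)$ for $\nu$-a.e.\ $\gamma$. Repeating the argument of Lemma~\ref{propQ}\ref{propQ:loc}, the inequality $|\la|\le\int|\la_\gamma|\,d\nu$ combined with equality of total masses gives item~\ref{Smirnov:mas}. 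The inclusion $\gamma([0,T])\subset\spt\la$ follows because the supports of $\la_n$ converge into $\spt\la$ and the curves converge uniformly, so item~\ref{Smirnov:inc} holds.

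\emph{Main obstacle.} I expect the hard part to be Step~1: obtaining strict mass convergence together with control of the boundary mass and of the first moment of $f_n$. The polyhedral deformation tends to create extra boundary mass, and preventing any mass loss in the limit is exactly what makes the decomposition cancellation-free. If the grid deformation proves too lossy, I would instead decompose the mollified field directly, via the flow of a smooth vector field in the style of Smirnov's original argument, and handle the non-smooth zero set by a perturbation.
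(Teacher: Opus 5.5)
The paper does not prove this statement; it imports Smirnov's Theorem~C in a later formulation. Your discrete route is therefore genuinely different, and its architecture is sound:
\begin{itemize}
\item the flow decomposition of a polyhedral chain is cancellation-free;
\item traversing short cycles repeatedly keeps $\nu_n(\Lip_1)$ bounded;
\item in Step~4, lower semicontinuity of $L$ turns $\|\la_n\|\to\|\la\|$ into $\|\la_\gamma\|=L(\gamma)$ for $\nu$-a.e.\ $\gamma$ and into $\|\la\|=\int\|\la_\gamma\|\,d\nu$, after which the argument of Lemma~\ref{propQ} gives mass preservation.
\end{itemize}
However, everything hinges on that strict convergence, and the mechanism you propose in Step~1 does not give it. The mass error of a deformation onto a cubical grid is multiplicative, not of the order of the grid size. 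A chain carried by the $1$-skeleton of an axis-parallel grid has density parallel to the axes, so its Euclidean mass equals its mass computed with the norm $\sum_i|\sig_i|$. That mass is weak-$*$ lower semicontinuous, so $\liminf_n\|\la_n\|\ge\int\sum_i|\sig_i|\,d|\la|$ with $\sig=d\la/d|\la|$. For a segment in direction $(1,\dots,1)/\sqrt d$ this lower bound equals $\sqrt d\,\|\la\|$, and correcting segments of vanishing total mass do not change it. With such approximants Step~4 only gives $\int L\,d\nu\le c(d)\|\la\|$. This rules out neither back-and-forth curves nor cancellation between different curves, so items (i) and (iii) are lost. Your fallback (integrating the mollified field and perturbing near its zeros) is not an argument as it stands; controlling those trajectories is exactly the delicate part of Smirnov's original proof.

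What Step~1 needs is Federer's polyhedral approximation theorem for real normal currents (4.1.23 in his treatise). It says that a compactly supported normal $1$-current $\la$ is a flat limit of real polyhedral chains $P$ with $\|P\|\le\|\la\|+\eps$, $\|\div P\|\le\|\div\la\|+\eps$, and $\spt P$ in the $\eps$-neighbourhood of $\spt\la$.

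You must also reduce to compact support, which mollification does not do. Replace $\la$ by $\chi_R\la$, with $\chi_R=1$ on $B_R$, $\spt\chi_R\subset B_{2R}$ and $|\nabla\chi_R|\le 2/R$. Then $\|\div(\chi_R\la)\|\le\|f\|+\frac2R\|\la\|$ and $\chi_R\la\to\la$ in total variation. With these approximants, $f_n\weakstar f$ together with $\limsup_n\|f_n\|\le\|f\|$ forces strict convergence of both $f_n$ and $\la_n$. This gives the tightness you need, and Steps~2--4 then go through.

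Two minor corrections remain.
\begin{itemize}
\item The compactness step must use narrow convergence on $C([0,T];\Rd)$ via Prokhorov, not weak-$*$ convergence. The functions $L$ and $\gamma\mapsto\pairing{\la_\gamma,\psi}$ are bounded but do not vanish at infinity on path space.
\item Cutting at fixed arc-length positions creates atoms, so the initial points are not literally dominated by $|f_n|+|\la_n|/T$. Either average over the cut positions, or use that a full-length piece starting outside $B_R$ stays outside $B_{R-T}$.
\end{itemize}
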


\medskip
The following result is true for the above-mentioned subfamily of curves:
\begin{lemma}\label{Phigala} 
	Let $\gamma\in \Lip_1([0, T];\Rd)$ be such that $ \| \la_\gamma \| = L(\gamma)$.
	Then we have
	\begin{equation}\label{Phigala=0}
		\Phi_h(\la_\gamma) \le 2\, L(\gamma)\quad,\quad \Phi_0^+(\la_\gamma)=\lim_{h\to 0} \Phi_h(\la_\gamma)=0.
	\end{equation}
	Furthermore, for every $u\in \Lip_1(\Rd)$, it holds
	\begin{equation}\label{Psigala=0}
		\Psi_h(\la_\gamma)(u) \le 2\, L(\gamma)\quad,\quad \lim_{h\to 0} \Psi_h(\la_\gamma)(u)= 0.
	\end{equation}
\end{lemma}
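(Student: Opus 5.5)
The plan is to reduce everything to a computation on the parameter interval by pulling $\mu:=|\la_\gamma|$ back along $\gamma$. Since $\la_\gamma$ is invariant under reparametrization, I first reparametrize by arclength and assume $|\dot\gamma|=1$ a.e. on $[0,L]$ with $L=L(\gamma)$; constant pieces carry no mass and can be discarded.

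\emph{Step 1: identify the polar of $\la_\gamma$.} We have $|\la_\gamma|\le \gamma_\sharp(\L^1\res[0,L])$ as measures. Both sides have total mass $L$ by the hypothesis $\|\la_\gamma\|=L(\gamma)$, so they coincide: $\mu=\gamma_\sharp(\L^1\res[0,L])$. Repeating the argument of Lemma~\ref{propQ}\ref{propQ:loc} (equality in $\pairing{\sig,\dot\gamma}\le 1$) gives, with $\sig=d\la_\gamma/d\mu$,
$$\sig(\gamma(t))=\dot\gamma(t)\quad \text{for a.e. } t .$$
Two consequences follow. First, $\mu$-null sets pull back to $\L^1$-null sets. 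Second, for every bounded Borel $g$ we have $\int g\,d\mu=\int_0^L g(\gamma(t))\,dt$. Hence
$$\int \Delta_h^\sig(u)\,d\mu=\int_0^L\Big[\tfrac{u(\gamma(t)+h\dot\gamma(t))-u(\gamma(t))}{h}-\pairing{\nabla u(\gamma(t)),\dot\gamma(t)}\Big]dt .$$
The bounds $\Phi_h\le 2L$ and $\Psi_h\le 2L$ are then immediate from $|\Delta_h^\sig(u)|\le 2|\sig|$ (see \eqref{pFh:ineq2}), once we know $|\nabla_\mu u|\le 1$ for $u\in\Lip_1$. This last fact holds because it is a weak-star limit of $P_\mu\nabla u_n$ with $|\nabla u_n|\le 1$.

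\emph{Step 2: $\Phi_h\to 0$ uniformly in $u\in\Lip_1\cap C^1$.} Extend $\gamma$ constantly past $L$. I replace $u(\gamma(t)+h\dot\gamma(t))$ by $u(\gamma(t+h))$. The resulting error is at most
$$\int_0^L\Big|\tfrac{\gamma(t+h)-\gamma(t)}{h}-\dot\gamma(t)\Big|\,dt ,$$
which tends to $0$ independently of $u$. Indeed $\frac{\gamma(t+h)-\gamma(t)}{h}$ is the average of $\dot\gamma$ over $[t,t+h]$, which converges to $\dot\gamma$ in $L^1$ by continuity of translations; the boundary strip $[L-h,L]$ contributes $O(h)$. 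What remains is
$$\frac1h\int_0^L\big(u(\gamma(t+h))-u(\gamma(t))\big)\,dt-\int_0^L (u\circ\gamma)'\,dt ,$$
which telescopes to
$$\frac1h\int_0^h\big(u(\gamma(L))-u(\gamma(s))\big)\,ds\;-\;\big(u(\gamma(L))-u(\gamma(0))\big).$$
After extension, this combination is bounded by $h$ because $u\circ\gamma$ is $1$-Lipschitz. Taking the supremum over $u$ gives $\Phi_h(\la_\gamma)\to0$.

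\emph{Step 3: $\Psi_h$ for $u\in\Lip_1$.} The main obstacle is identifying $\nabla_\mu u$ along the curve, i.e. proving
$$\pairing{\nabla_\mu u(\gamma(t)),\dot\gamma(t)}=(u\circ\gamma)'(t)\quad\text{for a.e. } t .$$
I take $u_n\in\D$ with $u_n\weakstar u$ (Theorem~\ref{weakgrad}). Since $\sig\in\T_\mu$, we have $\pairing{\nabla u_n,\sig}=\pairing{P_\mu\nabla u_n,\sig}$. For any $\theta\in L^\infty(0,L)$ I would use the test field $\theta\circ\gamma^{-1}$ only through $\mu$-integrals; more simply, for $\theta\in L^\infty_\mu$:
\begin{itemize}
\item $\int\theta\pairing{\nabla_\mu u,\sig}\,d\mu=\lim_n\int_0^L\theta(\gamma)(u_n\circ\gamma)'\,dt$;
\item $(u_n\circ\gamma)'\weakstar(u\circ\gamma)'$ in $L^\infty(0,L)$, since $u_n\circ\gamma\to u\circ\gamma$ uniformly with bounded derivatives.
\end{itemize}
This gives the identity tested against functions of the form $\theta\circ\gamma$. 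If $\gamma$ is not injective, the identity is still what is needed: pairs of times mapped to the same point carry the same value $\sig=\dot\gamma$, and averaging over fibers suffices. I would check this via disintegration of $\L^1$ along $\gamma$ if necessary.

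With the identity in hand, for a.e. $t$ and every $s\le h$,
$$|\Delta_s^\sig(u)(\gamma(t))|\le\Big|\tfrac{\gamma(t+s)-\gamma(t)}{s}-\dot\gamma(t)\Big|+\Big|\tfrac{u(\gamma(t+s))-u(\gamma(t))}{s}-(u\circ\gamma)'(t)\Big| .$$
The supremum over $s\in(0,h]$ of the right-hand side tends to $0$ at every $t$ that is both a differentiability point of $\gamma$ and of $u\circ\gamma$, hence for a.e. $t$. The integrand is dominated by $2$, so dominated convergence yields $\Psi_h(\la_\gamma)(u)\to0$.
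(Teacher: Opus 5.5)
Your proof follows the paper's argument essentially step by step. You reparametrize by arclength and identify $|\la_\gamma|=\gamma_\sharp(\L^1\res[0,L])$ with $\sig(\gamma(t))=\dot\gamma(t)$. You replace $u(\gamma(t)+h\dot\gamma(t))$ by $u(\gamma(t+h))$ at a cost independent of $u$, and then telescope. For $\Psi_h$ you identify $\pairing{\nabla_\mu u,\sig}$ along the curve and conclude by dominated convergence. Steps 1 and 2 are fine; your bound $h$ instead of the paper's $h/2$ is immaterial. The one step that does not work as written is the non-injective case in Step 3.

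Testing against $\theta\circ\gamma$ only gives
\[
\int_0^L \theta(\gamma(t))\,\big[\pairing{\nabla_\mu u(\gamma(t)),\dot\gamma(t)}-(u\circ\gamma)'(t)\big]\,dt=0 \quad\text{for all } \theta\in L^\infty_\mu .
\]
This says only that $\pairing{\nabla_\mu u,\sig}\circ\gamma$ equals the average of $(u\circ\gamma)'$ over the fibres of $\gamma$. ``Averaging over fibres'' or a disintegration cannot give more than that. Your final estimate, however, needs the identity pointwise for a.e.\ $t$. Fibres of positive measure genuinely occur under the hypothesis $\|\la_\gamma\|=L(\gamma)$, for example a circle traversed twice in the same direction.

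The missing observation is that $(u\circ\gamma)'(t)$ is itself a function of $\gamma(t)$; this is how the paper closes the step. Take a.e.\ $t$ at which $\gamma$ and $u\circ\gamma$ are differentiable and $\dot\gamma(t)=\sig(\gamma(t))$. The Lipschitz bound $|u(\gamma(t+s))-u(\gamma(t)+s\dot\gamma(t))|=o(s)$ then gives
\[
(u\circ\gamma)'(t)=D_\sig u(\gamma(t)),\qquad D_\sig u(y):=\lim_{s\to0^+}\frac{u(y+s\sig(y))-u(y)}{s},
\]
where $D_\sig u$ is a Borel function defined at $\mu$-a.e.\ point. The tested identity then reads $\int\theta\,\big(\pairing{\nabla_\mu u,\sig}-D_\sig u\big)\,d\mu=0$ for every $\theta\in L^\infty_\mu$. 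Hence $\pairing{\nabla_\mu u,\sig}=D_\sig u$ holds $\mu$-a.e., and pulling back via Step 1 gives $\pairing{\nabla_\mu u(\gamma(t)),\dot\gamma(t)}=(u\circ\gamma)'(t)$ for a.e.\ $t$.

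You already have both ingredients: the constancy of $\dot\gamma$ on fibres and the comparison used in your last display. They need to be applied to $(u\circ\gamma)'$ before testing rather than after. With this repair the rest of your Step 3 goes through unchanged.
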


\begin{proof}
	After reparametrization, we may assume that $|\dot\gamma(t)|=1$ a.e. $t\in [0, L(\gamma)]$
	while $\gamma(t)=\gamma(T)$ for $t\ge L(\gamma)$. The condition $ \|\la_\gamma \| = L(\gamma)$ implies that 
	$|\la_\gamma|= \gamma^\sharp \L_{[0,L(\gamma)]}$ while $\la_\gamma = \sigma_\gamma |\la_\gamma|$ where $\sigma_\gamma (\gamma(t))= \dot\gamma(t)$. Accordingly, we have
	\begin{equation}\label{Philag}
		\Phi_h(\la_\gamma) = F_h(|\la_\gamma|, \sigma_\gamma)= \sup_{u\in \Lip_1\cap C^1} 
		\int_0^T \Delta_h^\gamma(u)(t) \, dt, 
	\end{equation}
	where 
	\begin{equation}\label{Delta-h-gamma}
		\Delta_h^\gamma(u)(t) := \frac{u(\gamma(t)+h \dot\gamma(t)) -u(\gamma(t))}{h}- \pairing{\nabla u(\gamma(t)), \dot\gamma(t)}.
	\end{equation}
	The first inequality in \eqref{Phigala=0} follows since $|\Delta_h^\gamma(u)(t)| \le 2$. To prove the convergence of $\Phi_h(\la_\gamma)$ to zero, we decompose
	\begin{equation*}\label{splitAh}
		\Delta_h^\gamma(u)(t) = A_h^\gamma(u)(t) + R_h^\gamma(u)(t) 
	\end{equation*}
	where we have set:
	\begin{align*} 
		A_h^\gamma(u)(t) &:= \frac{u(\gamma(t+h)) - u(\gamma(t))}{h} - \pairing{\nabla u(\gamma(t)),\dot\gamma(t)} \\
		R_h^\gamma(u)(t) &:=\frac{u (\gamma(t) +h \dot\gamma(t)) - u(\gamma(t+h))}{h} 
	\end{align*}
	Then, since $u$ is $1$-Lipschitz and recalling that $\gamma(t+h)=\gamma(T)$ whenever $t+h>T$, we get:
	$$ \int_0^T \Delta_h^\gamma(u)\, dt \le \int_0^T A_h^\gamma(u)\, dt + \int_0^T \left|\frac{\gamma(t+h)-\gamma(t)}{h} - \dot\gamma(t)\right|\, dt.$$
	Since $\gamma$ is Lipschitz and a.e. differentiable, the second integral vanishes by dominated convergence and we are reduced to show that $ \int_0^T A_h^\gamma(u)\, dt \to 0$. Note that this second integral does not depend on $u$, so that the estimate obtained below is uniform with respect to $u$. We notice that 
	\begin{align*} 
		\int_0^T \pairing{\nabla u(\gamma(t)), \dot\gamma(t)}\, dt &= u(\gamma(T)) - u(\gamma(0)) \ ,\\
		\int_0^T \frac{u(\gamma(t+h))-u(\gamma(t))}{h}\, dt&= u(\gamma(T)) - \frac1{h} \int_0^h u(\gamma(t))\, dt.
	\end{align*}
	Thus, since $\gamma$ is also $1$-Lipschitz, we get:
	$$ \int_0 ^T A_h^\gamma(u)(t) \, dt = u(\gamma(0)) - \frac1{h} \int_0^h u(\gamma(t))\, dt \le \frac1{h} \int_0^h t\, dt=\frac{h}{2},$$
	whence \eqref{Phigala=0} by sending $h\to 0$.
	
	\medskip Let us show \eqref{Psigala=0}, keeping the same parametrization for the curve $\gamma$. Then, for every $u\in \Lip_1(\Rd)$, the tangential gradient of $u$ with respect to $\mu_\gamma:= |\la_\gamma|$ satisfies, for a.e. $t\in [0, L(\gamma)]$,
	\begin{equation}\label{gradcurve}
		\pairing{\nabla_{\mu_\gamma} u (\gamma(t)), \dot\gamma(t)} \ = (u\circ \gamma)'(t).
	\end{equation}
	Indeed, at a.e. $t$ both $\gamma$ and $u\circ\gamma$ are differentiable and, $u$ being Lipschitz, $|u(\gamma(t+h))-u(\gamma(t)+h\dot\gamma(t))|=o(h)$; hence $(u\circ\gamma)'(t)=D_{\sig_\gamma}u(\gamma(t))$, where $D_{\sig_\gamma}u(y):=\lim_{h\to0}\frac{u(y+h\sig_\gamma(y))-u(y)}{h}$ depends on the point $y=\gamma(t)$ only and is defined $\mu_\gamma$-a.e.. On the other hand, for $\theta\in L^\infty_{\mu_\gamma}$, $\theta\sig_\gamma\in\T_{\mu_\gamma}$ by locality, and \eqref{byparts} gives $\int\theta\pairing{\sig_\gamma,\nabla_{\mu_\gamma}u}\,d\mu_\gamma = \pairing{-\div(\theta\la_\gamma),u}$, whose right-hand side equals $\int_0^T\theta(\gamma(t))\,(u\circ\gamma)'(t)\,dt=\int\theta\,D_{\sig_\gamma}u\,d\mu_\gamma$ for smooth $u$ by \eqref{def:gala}. 
	Since $\theta$ is arbitrary, $\pairing{\sig_\gamma,\nabla_{\mu_\gamma}u}=D_{\sig_\gamma}u$ $\mu_\gamma$-a.e., which is \eqref{gradcurve}. Accordingly \eqref{Delta-h-gamma} can be written as
	$$ \Delta_h^\gamma(u)(t) = \frac{u(\gamma(t) + h \dot\gamma(t)) -u(\gamma(t))}{h}- (u\circ \gamma)'(t).$$
	Next, according to \eqref{Delta-hsup}, we set: 
	$$\widehat{\Delta_h^\gamma}(u)(t):= \sup_{s\in(0,h]} |\Delta_s^\gamma(u)(t)|.$$
	Clearly $\widehat{\Delta_h^\gamma}(u) \le 2$ and, by the a.e. differentiability of the Lipschitz function
	$u\circ \gamma$, we have the pointwise convergence 
	$ \widehat{\Delta_h^\gamma}(u)(t) \to 0$ as $h\to 0$ holding a.e. 
	Accordingly we conclude by dominated convergence that 
	$\Psi_h(\la_\gamma)(u)=\int \widehat{\Delta_h^\gamma}(u)(t) \, dt\to 0$.	
\end{proof}

\subsection{Proof of Theorem \ref{MK-infinitesimal}}\label{proofmain}

\begin{proof}[Proof of the assertion \ref{MKi:form} of Theorem \ref{MK-infinitesimal}] 
	We proceed in three steps, successively showing that:
	\begin{enumerate}[label={(\arabic*)}]
		\item $\Phi_0^+(\la)=0$ when $\lambda\in \mathbf{F}_1(\Rd)$;
		\item $F_0^+( \mu,\sigma) \le 2 \int |\sigma_N|\, d\mu$ for any pair $(\mu,\sigma)\in \M_+ \times (L^1_\mu)^d$;
		\item $F_0^-( \mu,\sigma) \ge 2\int |\sigma_N|\, d\mu$.
	\end{enumerate}
	
	\med {\bf Step 1} \ We use Smirnov and Lebesgue's dominated convergence. Let $\lambda$ be an element of $\mathbf{F}_1(\Rd)$. Then we know from Proposition~\ref{fundamental} (assertion~\ref{fundamental:clos}) that there exists a sequence $\la_n \in \mathbf{N}_1(\Rd)$
	such that $\| \la_n-\la\| \to 0$. Using the Lipschitz property of the assertion~\ref{pFh:lip} of Lemma~\ref{pFh}, we have that $\Phi_0^+(\lambda) \le \Phi_0^+(\la_n)+ 3 \| \la_n-\la\|$. Thus we only need to show that $\Phi_0^+(\la)=0$ for every $\la\in \mathbf{N}_1(\Rd)$.
	Since the distributional divergence of such a vector measure $\la$ is a signed measure of bounded total variation, we can use the Smirnov decomposition (Theorem~\ref{Smirnov}). We obtain a decomposition of the form $\la= \int \la_\gamma \, d\nu(\gamma)$ where $\nu \in \M_+(\Lip_1([0,T];\Rd))$.
	Let $Q= \Lambda^\sharp (\nu)$ where the map $\Lambda: \gamma\in \Lip_1([0,T];\Rd) \mapsto \la_\gamma \in \Md$
	is continuous when $\Lip_1([0,T];\Rd)$ is equipped with the uniform norm of $(C_0(\Rd))^d$ and $\Md$ with the weak-$*$ topology.
	
	From the mass preserving condition \ref{Smirnov:mas} of Theorem \ref{Smirnov}, we infer that $Q$ fulfils the condition \eqref{Qcomplete} of Lemma \ref{propQ}, while $\la= \int \zeta\, dQ(\zeta)$ with $\int \|\zeta\|\, dQ(\zeta) = \int L(\gamma)\, d\nu(\gamma) <+\infty$.
	Rewriting the subadditivity property \eqref{subPhih} in terms of $\nu$ yields
	\begin{align*}
		\Phi_h (\la)\ \le \ \int \Phi_h(\la_\gamma) \, d\nu(\gamma).
	\end{align*}
	Since the equality $ \| \la_\gamma \| = L(\gamma)$ holds for $\nu$-a.e. $\gamma$, we can apply Lemma \ref{Phigala}. Then \eqref{Phigala=0} implies the pointwise convergence $\Phi_h(\la_\gamma) \to 0$\ $\nu$-a.e. whereas $\Phi_h(\la_\gamma) \le 2 L(\gamma)$. Recalling that $\int L(\gamma)\, d\nu(\gamma) <+\infty$, we conclude that $\Phi_h (\la) \to 0$ by using dominated convergence. Therefore $\Phi_0^+(\la)=0$ for every $\la\in \Fd$ and the proof of Step 1 is complete.

	\med {\bf Step 2} \ Let $\mu\in\M_+(\Rd)$ be given and denote again
	$$\T_\mu:= \{ \sigma\in (L^1_\mu)^d \, :\, \sigma(x)\in T_\mu(x) \ \mu\ \text{a.e}\}.$$
	We are going to show the inclusion
	\begin{equation}\label{TmuFmu}
		\T_\mu \subset \F_\mu:=\{ \sigma\in (L^1_{\mu})^d : F_0^+(\mu,\sigma)=0\}.
	\end{equation}
	Since $F_0^+(\mu,\cdot)$ is Lipschitz continuous (see Lemma \ref{pFh}.\ref{pFh:lip}), $\F_\mu$ is a closed subset of $(L^1_\mu)^d$.
	From Step 1, $\F_\mu$ contains every $\sig\in\T_\mu$ such that $|\sig| \equiv t > 0$ is constant. Indeed $\la := \sig\mu\in\Fd$ has polar decomposition $\la = \frac{\sig}{t}\,(t\mu)$, so that, by the positive homogeneity of Lemma~\ref{pFh}.\ref{pFh:posh} (in both arguments),
	$0 = \Phi_0^+(\la) = F_0^+(t\mu,\sig/t) = F_0^+(\mu,\sig)$.
	Next, let $\sig\in\T_\mu$ be such that $|\sig|$ takes finitely many values $t_1,\dots,t_m$, and set $B_j := \{|\sig| = t_j\}$. By the locality property \eqref{Tmu=}, $\sig\one_{B_j}\in\T_\mu$, hence $\sig\in\T_{\mu\res B_j}$ with $|\sig| \equiv t_j$ $\mu\res B_j$-a.e. (if $t_j=0$, then $F^+_0(\mu\res B_j,\sig)=0$ trivially). Using the subadditivity and the localization properties \ref{pFh:subadd} and \ref{pFh:resB} of Lemma~\ref{pFh}, we get
	\begin{align*}
		F_0^+(\mu,\sigma)
		\le \sum_j F_0^+(\mu,\sigma \, \one_{B_j})
		= \sum_j F_0^+(\mu\res B_j,\sigma) = 0.
	\end{align*}
	Finally, an arbitrary $\sig\in\T_\mu$ is the $L^1_\mu$-limit of such fields: for $n\in\NN$, let $\theta_n := \frac{1}{|\sig|}\min\big\{n, \tfrac{1}{n}\lfloor n|\sig|\rfloor\big\}\one_{\{\sig\neq0\}}$. Then $0\le\theta_n\le 1$, so that $\sig_n := \theta_n\sig\in\T_\mu$ by \eqref{Tmu=}, $|\sig_n|$ takes values in $\{0,\frac1n,\frac2n,\dots,n\}$, and $\sig_n\to\sig$ in $L^1_\mu$ by dominated convergence. Since $\F_\mu$ is closed, \eqref{TmuFmu} follows.

	Eventually let us consider a general vector field $\sigma\in (L^1_\mu)^d$ and its decomposition $\sigma= \sigma_T + \sigma_N$ into its tangential part $\sigma_T= P_\mu \sigma$ and its orthogonal part $\sigma_N = P_\mu^\perp \sigma$. 
	Then, recalling that $F_0^+(\mu,\cdot)$ is 2-Lipschitz, we deduce that $ F_0^+(\mu,\sigma)\le F_0^+(\mu,\sigma_T) + 2 \int |\sigma_N| \,d\mu $, whence 
	$F_0^+(\mu,\sigma)\ \le\ 2 \int |\sigma_N| \, d\mu \ ,$
	since we already proved that $F_0^+(\mu,\sigma_T)=0$.
	
	\med
	{\bf Step 3} \ We now show that $ F_0^-(\mu,\sigma) \ge 2 \int |\sigma_N| \,d\mu$.
	To that aim, we use the variational representation \eqref{Gsigma*} of the conjugate of the functional $G_\sigma$ (defined in \eqref{def:Gsigma})
	that we apply to the measure $\diffquot{\sig}{h} = \frac{\mu_h^\sigma- \mu}{h}$. Noticing that
	\begin{align*}
		F_h(\mu,\sigma)
		= \sup_{u \in C^1 \cap \Lip_1} \int \Delta_h^\sig(u)\, d\mu 
		\ \ge\ \sup_{u \in \D\cap\Lip_1} \left\{\pairing{\diffquot{\sig}{h}, u} - G_{\sigma}(u)\right\}\ =\ G_{\sigma}^{*}(\diffquot{\sig}{h}).
	\end{align*}
	We deduce that
	\begin{align} 
		\nonumber F_h(\mu,\sigma)\ &\ge\ \min \left\{\int_{\Rd} |\la-\sigma\, \mu | \ :\ -\div \la = \diffquot{\sig}{h} \right\} \\ 
		\label{FhD} & =\ \min \left\{\int |w-\sigma| \, d \mu + \int|\la^s |\ : \ (w, \la^s)\in \S_h \right\},
	\end{align}
	where $\S_h:= \set{ (w,\la^s)\in (L^1_\mu)^d\times \M^d} {-\div (w \mu+\la^s) = \diffquot{\sig}{h} \ ,\ \la^s \perp \mu}$. 
	Since the measure $\diffquot{\sig}{h}$ belongs to $\AEd$, the divergence condition in $\S_h$ implies that $w \mu+\la^s$ belongs to $\Fd$. By the locality property \eqref{Fdlocal}, we infer that
	$w \in \T_\mu$, while $\la^s$ is an element of $\Fd$ which satisfies $-\div \la^s= \diffquot{\sig}{h} + \div w\mu$. As such, \eqref{Beck} yields the lower bound
	$$\|\la^s\|_{\Md} \ \ge\ \Vert \diffquot{\sig}{h} + \div w\mu\Vert_{\KR}.$$
	Accordingly, for any given $u\in \Lip_1(\Rd) \cap \D(\Rd)$, we get the following inequalities:
	\begin{align*}
		\int |\la^s|
		&\ge \int \left(\frac{u(x+ h \sigma)-u(x)}{h} -\pairing{\nabla u,w}\right)\, d\mu \\
		&= \int \left(\frac{u(x+ h \sigma)-u(x)}{h} -\pairing{\nabla u,\sigma}\right)\, d\mu + \int \pairing{\nabla u,\sigma-w} \,d\mu\\
		& = \int \left( \int_0^1 \pairing {\nabla u(x+ s\, h\sigma(x))-\nabla u(x), \sigma(x)}\, ds\right) \, \mu(dx) + \int \pairing{\nabla u,\sigma-w} \,d\mu \\
		&\ge \int \pairing{\nabla u,\sigma-w} \,d\mu - \ \omega_u(h), \qquad \omega_u(h) := \int \min\Big\{2,\ \frac{h}{2}\,\Lip(\nabla u)\,|\sig|\Big\}\,|\sig|\,d\mu ,
	\end{align*}
	where we used that $|\nabla u(x+s\,h\,\sig)-\nabla u(x)|\le\min\{2, \Lip(\nabla u)\,sh|\sig|\}$. Note that $\omega_u(h)\to0$ as $h\to0$ by dominated convergence, since $\sig\in L^1_\mu$.
	Inserting the last inequality in \eqref{FhD}, we deduce that for every fixed $u\in \Lip_1\cap \D(\Rd)$, we have
	$$ F_h(\mu,\sigma) \ge \inf_{w\in \T_\mu} \int \left(|w-\sigma|+ \pairing{\nabla u,\sigma-w}\right) \, d \mu - \omega_u(h),$$
	where $\omega_u(h)$ does not depend on $w$. Thus by passing to the limit inf in $h\to 0$, we are led to the following lower bound:
	$$F_0^-(\mu,\sigma) \ge \inf_{w\in \T_\mu} \int \left(|w-\sigma|+ \pairing{\nabla u,\sigma-w}\right) \, d \mu.$$
	Since the constraint $w\in \T_\mu$ is local, namely $w(x)\in T_\mu(x)$ $\mu$ a.e., every competitor
	$w$ satisfies the pointwise inequality $|w(x)-\sig(x)|+\pairing{\nabla u(x),\sig(x)-w(x)} \ge
	m(\sig(x),\nabla u(x),T_\mu(x))$, where for every $(\sigma, z, T) \in \Rd\times\Rd \times \mathcal{G}_d$, we denote the infimum value function 
	$$ m(\sigma, z, T) := \inf_{w\in T} \{|w-\sigma|+ \pairing{z,\sigma-w} \}.$$
	Integrating and taking the infimum in $w$ we get
	$$ F_0^-(\mu,\sigma) \ge \int m(\sigma(x),\nabla u(x), T_\mu(x)) \, d\mu.$$
	
	Let us decompose $z= z_T + z_N$ where $z_T$, $z_N$ denote the orthogonal projections of $z$ on the subspace $T$ and its orthogonal $N=T^\perp$
	respectively. In the same way, we write $\sigma= \sigma_T+ \sigma_N$. It is convenient to rewrite the infimum problem in term of $\zeta:= w -\sigma_T$.
	After easy manipulations we obtain:
	$$ m(\sigma, z, T) = \inf_{\zeta\in T} \{ |\zeta-\sigma_N|- \pairing{z_T,\zeta}\}+ \pairing{z_N, \sigma_N}.$$
	Then, handling this finite dimensional convex program, we conclude that
	\begin{align*}
		m(\sigma, z, T) =
		\begin{cases} 
			|\sigma_N| \sqrt{1- |z_T|^2} + \pairing{z_N, \sigma_N} & \text{if } |z_T|\le 1, \\
			- \infty & \text{otherwise}. 
		\end{cases}
	\end{align*}
	We end up with the following integral inequality holding for every $u\in \Lip_1\cap \D(\Rd)$: 
	\begin{equation}\label{keylb}
		F_0^-(\mu,\sigma) \ge \int \left(|\sigma_N| \sqrt{1- |\nabla_\mu u|^2} + \pairing{\nabla u, \sigma_N}\right) \, d\mu. 
	\end{equation}
	To conclude this final step, we appeal now to \eqref{normal-approx} to get a sequence $(u_n)_n$ in
	$\Lip_1\cap \D(\Rd)$ such that 
	\begin{align*}
		u_n \weakstar 0,\qquad 
		\nabla u_n\to \frac{\sigma_N}{|\sigma_N|} \qquad
		\mu \text{ a.e. on } \{\sigma_N \neq 0\}.
	\end{align*}
	In particular, $\nabla_\mu u_n = P_\mu(\nabla u_n)$ converges to 0 $\mu$ a.e. on $\{\sigma_N \neq 0\}$.
	By applying \eqref{keylb} to $u=u_n$ and after passing to the limit by dominated convergence, we conclude that $ F_0^-(\mu,\sigma) \ge 2 \int |\sigma_N| \,d\mu$.
	The proof of the assertion \ref{MKi:form} of Theorem \ref{MK-infinitesimal} is now complete.
\end{proof}

\begin{proof}[Proof of the assertion \ref{MKi:diff} of Theorem \ref{MK-infinitesimal}]
	Let us fix $u\in\Lip(\Rd)$. Without loss of generality, we can assume that $\Lip(u)\le 1$. We begin by proving that $\Delta_h^\sig(u)\to 0$ in $L^1_\mu$ for every $\sig\in\T_\mu$ of the form $|\sig| = \one_B$, where $B$ is a Borel set of finite $\mu$-measure. Then, setting $\ov \la= \sig\, \mu$ and recalling \eqref{def:Dhu}, we are reduced to show that $\limsup_{h\to 0_+} \Psi_h(\ov \la) (u)= 0$. The condition $\sigma\in \T_\mu$ implies that
	$\ov \la \in \Fd$. Then, by the density of $\Nd$ in $\Fd$, we can assume that $\ov\la\in \Nd$. Indeed by \eqref{PsiLip}, we have for every $ \la\in \Nd$
	$$\limsup_{h\to 0_+} \Psi_h(\ov\la)(u) \le \limsup_{h\to 0_+} \Psi_h(\la)(u) + 3 \| \la- \ov\la\|.$$
	Thus, assuming now that $\ov \la\in \Nd$, we can apply Smirnov (Theorem~\ref{Smirnov}) in the same way as we did in the proof of the assertion \ref{MKi:form}: we get a decomposition of the form
	$$ \ov\la\ =\ \int \la_\gamma \, d\nu(\gamma)\ = \int \la \, dQ(\la) \ ,$$ 
	where $Q= \Lambda^\sharp(\nu)$ is a Borel measure concentrated on $\Nd$ which satisfies the conditions of the assertion \ref{propQ:Psi} of Lemma \ref{propQ}. In view of \eqref{subPsih},
	we deduce that:
	$$ \limsup_{h\to 0} \Psi_h(\ov\la)(u)\ = \limsup_{h\to 0} \int \Psi_h(\la_\gamma)(u) \, d\nu(\gamma).$$
	By \eqref{Psigala=0}, we know that $ \Psi_h(\la_\gamma)(u) \le 2 \, L(\gamma)$ while
	$ \Psi_h(\la_\gamma)(u) \to 0$. Therefore, recalling that $\int L(\gamma)\, d\nu(\gamma) <+\infty$, by dominated convergence, we conclude that
	$$ \Psi_h(\ov\la)(u)\ =\ \int \widehat{\Delta_h^\sig} (u) \, d\mu \ \to 0.$$
	By applying Fatou's Lemma, it follows that $\liminf_{h\to 0} \widehat{\Delta_h^\sig} (u) \to 0$ 
	$\mu$ a.e. Observing that the dependence in $h$ is monotone, we conclude that $\widehat{\Delta_h^\sig}(u)(x)$, 
	hence $\Delta_h^\sig(u)(x)$, do converge pointwise to $0$ at $\mu$ almost all $x\in \Rd$.
	It remains to remove the restriction on $|\sig|$. For $t>0$ one checks directly on \eqref{Delta-hsig*} that $\Delta_h^{t\sig}(u) = t \,\Delta_{h|t|}^{\sig}(u)$ with, so that, for every $\mu$-measurable $\theta \ge 0$ which is finite $\mu$ a.e.,
	$$\limsup_{h\to 0} |\Delta_h^{\theta\sig} (u)|(x)\ =\ \theta(x) \ \limsup_{h\to 0} \left|\Delta_h^{\sig} (u)(x)\right| ,$$
	whose right-hand side vanishes by the first part.
	Let now $\sig\in\T_\mu$ be arbitrary and put $B_k:=\{|\sig|>\frac1k\}$, so that $\mu(B_k)\le k\int|\sig|\,d\mu<+\infty$. The field $\sig_k := \frac{\sig}{|\sig|}\one_{B_k}$ is a bounded $\mu$-measurable selection of $T_\mu$ supported on a set of finite measure, hence belongs to $\T_\mu$ by \eqref{Tmu=}, and $|\sig_k| = \one_{B_k}$; moreover $\sig = \theta_k\,\sig_k$ on $B_k$ with $\theta_k := |\sig|$ finite $\mu$ a.e. Therefore $\Delta_h^\sig(u)\to0$ $\mu$ a.e.\ on $B_k$ for every $k$, hence $\mu$ a.e.\ on $\{\sig\neq0\}=\bigcup_k B_k$, and trivially on $\{\sig=0\}$. Finally the convergence also holds in $L^1_\mu$, by dominated convergence, since $|\Delta^\sig_h(u)|\le 2|\sig|\in L^1_\mu$.
	The assertion \ref{MKi:diff} of Theorem \ref{MK-infinitesimal} is proved. 
\end{proof}

\begin{proof}[Proof of the assertion \ref{MKi:full} of Theorem \ref{MK-infinitesimal}]
	We prove that differentiability along countably many tangent fields upgrades to differentiability along the whole subspace by an equicontinuity argument on the unit sphere. Since $\T_\mu$ is a closed subspace of the separable space $(L^1_\mu)^d$, pick a countable dense family $(\sig_k)\subset\T_\mu$; by the description of the essential union in Subsection~\ref{prel:multif}, $T_\mu(x)=\ov{\set{\sig_k(x)}{k\in\NN}}$ holds $\mu$ a.e. For $h>0$ and $v \in T_\mu(x)$ set
	\[
		g_h(v) \ :=\ \frac{u(x+h v)-u(x)}{h} - \pairing{\nabla_\mu u(x),v},
		\qquad\text{so that}\qquad g_h(t v) = t\,g_{t h}(v) \quad (t>0).
	\]
	What precedes says that $g_h(\sig_k(x))\to0$ as $h\to0^+$ for $\mu$ a.e.\ $x$. Discarding the countably many exceptional sets, fix $x$ at which this holds for every $k$ and $T_\mu(x)\neq\{0\}$, and let $S$ be the unit sphere of $T_\mu(x)$, which is compact. On $S$ the $g_h$ are Lipschitz with constant $2\Lip(u)$, independent of $h$, and by the homogeneity above they tend to $0$ on the dense subset $\set{\sig_k(x)/|\sig_k(x)|}{\sig_k(x)\neq0}$ of $S$. An equicontinuous family on a compact set which converges pointwise on a dense subset converges uniformly, so that $\sup_S|g_h|\to0$ as $h \to 0^+$. Finally, for every
	$v\in T_\mu(x)\setminus\{0\}$,
	\[
		\frac{u(x+v)-u(x)-\pairing{\nabla_\mu u(x),v}}{|v|}\ =\ g_{|v|}\Big(\frac{v}{|v|}\Big)
		\ \longrightarrow\ 0 \qquad \text{as } v\to 0,
	\]
	which is \eqref{eq:MKifull}.
\end{proof}

\section{Relations with other tangent bundles}\label{sec:otherbundles}

\subsection{About some variants}

Different tangent bundles to a measure $\mu$ can be constructed by choosing suitable subspaces $\W \subset (L^1_\mu(\Rd))^d$ to which one can associate the $\mu$-measurable multi-function $W_\mu(x):= \mu-\mathrm{ess} \; \bigcup \{\sigma ;\ \sigma\in \W \}$ detailed in Section~\ref{prel:multif}. By construction we have $\W \subset \set{ \sigma\in (L^1_\mu(\Rd))^d}{ \sigma(x)\in W_\mu(x) \ \text{$\mu$ a.e.}}$.
However considering $W_\mu(x)$ is merely useful when the latter inclusion is an equality. 
In virtue of Lemma \ref{local}, this is true if and only if $\W$ is closed and 
satisfies the stability condition \eqref{local}. This is the case when taking $\W$ to be $\T_\mu$ as in \eqref{def:Tmu}.
Other options are possible taking $\W$ to be one of the following subspaces:

\begin{enumerate}[left=0pt]
	\item $\S_\mu:= \left\{ \sigma \in (L^1_\mu(\Rd))^d : \sigma\, \mu \in \Nd \right\}$; 
	\item $\V_\mu:= \left\{ \frac{d \lambda}{d\mu} : \lambda \in \Nd\right\}$;
	\item $\T_\mu^{(p)}:= \left \{ \sigma \in (L^{p'}_\mu(\Rd))^d : \ -\div(\sigma\, \mu) \in L^{p'}_\mu(\Rd) \right\}$, with $1 < p < \infty$ and $p' = \frac{p}{p-1}$.
\end{enumerate}

Despite the fact that all theses spaces satisfy \eqref{local}, there are not closed subspaces of $(L^1_\mu(\Rd))^d$ as it is the case for $\T_\mu$.
However we may apply Proposition \ref{local} to their closure to obtain the equality 
$$\ov \W\ =\ \set{ \sigma\in (L^1_\mu(\Rd))^d}{ \sigma(x)\in W_\mu(x) \ \text{$\mu$ a.e.}}.$$
Accordingly we can consider the associated tangent bundles, in particular:
\begin{enumerate}[left=0pt]
	\item $S_\mu(x)$ (appearing in \cite{FraMant}) will be called the \emph{Smirnov} bundle; 
	\item $V_\mu(x)$ coincides with the \emph{Alberti-Marchese} decomposability bundle, this as a consequence of \cite[Corollary 6.5]{AM};
	\item $T_\mu^{(p)}$ is, for $p\in (1,\infty)$, the tangent bundle introduced in \cite{BBS}, with the motivation of defining the Sobolev space
	$H^p_\mu(\Rd)$.
\end{enumerate}

These bundles as related as follows.
\begin{proposition}\label{AM=MK} 
	Let $\mu\in \M_+(\Rd)$ be a Radon measure. All statements below are intended for $\mu$ almost $x\in \Rd$.
	\begin{enumerate}
		\item\label{item:VT} $V_\mu(x)= T_\mu(x)$;
		\item\label{ST} $S_\mu (x) \subset T_\mu(x)$ with possibly a strict inclusion;
		\item\label{Tp} If $\mu$ is finite with compact support, then $T_\mu^{(p)}(x)\subset S_\mu(x)$ while $T_\mu^{(p)}(x)$ is non decreasing with respect to $p\in (1,+\infty)$.
	\end{enumerate}
\end{proposition}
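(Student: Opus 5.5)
For \ref{item:VT}, the plan is to compare the two closures $\ov{\V_\mu}$ and $\T_\mu$ inside $(L^1_\mu)^d$. By Proposition~\ref{local}, applied to $\ov{\V_\mu}$ (which is $\D$-stable because $\theta\la\in\Nd$ for $\theta\in\D$ and $\la\in\Nd$), it suffices to prove $\ov{\V_\mu}=\T_\mu$.

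For the inclusion $\V_\mu\subset\T_\mu$, take $\la\in\Nd$ with Lebesgue decomposition $\la=\frac{d\la}{d\mu}\mu+\la^s$. Since $\Nd\subset\Fd$, the locality property \eqref{Fdlocal} applied with $\theta=\one_A$ (where $A$ carries $\mu$ and $|\la^s|(A)=0$) gives $\frac{d\la}{d\mu}\,\mu=\one_A\la\in\Fd$. Hence $\frac{d\la}{d\mu}\in\T_\mu$, and since $\T_\mu$ is closed, $\ov{\V_\mu}\subset\T_\mu$.

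For the converse, let $\sig\in\T_\mu$. Then $\sig\mu\in\Fd$, and by Proposition~\ref{fundamental}\ref{fundamental:clos} there are $\la_n\in\Nd$ with $\|\la_n-\sig\mu\|\to0$. The absolutely continuous parts then satisfy $\int|\frac{d\la_n}{d\mu}-\sig|\,d\mu\le\|\la_n-\sig\mu\|\to0$, so $\sig\in\ov{\V_\mu}$. One should also check that $V_\mu$ defined in \cite{AM} agrees with the essential union of $\V_\mu$; this is exactly \cite[Corollary 6.5]{AM}, quoted just before the statement, so the identification with the paper's $V_\mu$ is by definition.

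For \ref{ST}, the inclusion $\S_\mu\subset\T_\mu$ is immediate because $\Nd\subset\Fd$, and passing to closures gives $S_\mu\subset T_\mu$. For strictness I would use the example after Theorem~\ref{projthm}. Take $\mu=\H^1\res A$ and $\sig=(h,0)$ with $h\in L^1\setminus\mathrm{BV}$, e.g.\ $h=\one_C$ with $C$ a fat Cantor set. Membership in $\S_\mu$ requires $h$ to be BV, but this only shows $\S_\mu\subsetneq\T_\mu$ and says nothing about the bundles, since closures may coincide. So I need a measure whose Smirnov bundle is genuinely smaller. I would try $\mu$ concentrated on a set carrying no nontrivial normal currents yet approximable by them. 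A candidate is a purely unrectifiable-type construction, or an infinite sum of disjoint segments $\mu=\sum_k\H^1\res I_k$ accumulating so that $\mu$ is not locally finite in the way \S needs. I expect this counterexample to be the main obstacle and would look for it among measures where $\sig\mu\in\Fd$ but no nonzero $\theta\sig\mu$ has finite-mass boundary.

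For \ref{Tp}, take $\sig\in\T^{(p)}_\mu$. Since $\mu$ is finite with compact support, $L^{p'}_\mu\subset L^1_\mu$, so $\sig\in(L^1_\mu)^d$ and $-\div(\sig\mu)=g\mu$ with $g\in L^{p'}_\mu\subset L^1_\mu$. This is a finite measure of total mass zero (test against a cutoff equal to $1$ on the support), with finite first moment, so $\sig\mu\in\Nd$ and $\T^{(p)}_\mu\subset\S_\mu$. Monotonicity in $p$ follows because $p<q$ implies $q'<p'$, hence $L^{p'}_\mu\subset L^{q'}_\mu$ on a finite measure, so $\T^{(p)}_\mu\subset\T^{(q)}_\mu$; taking closures gives $T^{(p)}_\mu\subset T^{(q)}_\mu$ a.e.
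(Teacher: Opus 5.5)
Your arguments for (i), for the inclusion $S_\mu\subset T_\mu$ in (ii), and for (iii) are correct and essentially those of the paper. For $\V_\mu\subset\T_\mu$ you apply \eqref{Fdlocal} with $\theta=\one_A$, which is a slightly more direct version of the paper's detour through $m=\mu+|\la^s|$ and \eqref{disjunction}.

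\textbf{The gap is the ``possibly strict'' claim in (ii), which you do not prove.} You rightly note that the segment example only gives $\S_\mu\subsetneq\T_\mu$ as subspaces, not as bundles. But you never exhibit a measure with $S_\mu(x)\subsetneq T_\mu(x)$ on a set of positive measure, and your candidates would not work:
\begin{itemize}
\item For $\mu=\sum_k\H^1\res I_k$ with disjoint segments $I_k$ of directions $e_k$, consider fields $\f\, e_k$ on $I_k$, where $\f$ is smooth and compactly supported in the open segment. These already yield normal currents. Hence $S_\mu$ contains each segment direction and coincides with $T_\mu$.
\item For $\mu$ concentrated on a purely unrectifiable set, $T_\mu$ is itself trivial. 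By Smirnov, every normal current is carried by rectifiable curves, so $\V_\mu=\{0\}$ and $T_\mu=\{0\}$ by (i). There is nothing to separate.
\end{itemize}

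\textbf{The missing idea is that $S_\mu$, unlike $T_\mu$, is not local.} Restricting a measure to a set with empty interior destroys all normal currents, but flat chains survive. Concretely (Example~\ref{fat}), let $C\subset[0,1]$ be a fat Cantor set and $\nu=\L^1\res C$.
\begin{itemize}
\item By \eqref{restriction} and $T_{\L^1}=\R$, one has $T_\nu=\R$ $\nu$-a.e.
\item Suppose $\sig\nu\in\mathbf N_1(\R)$. Then $u:=\sig\one_C$ is a compactly supported $BV$ function vanishing a.e.\ on the dense open set $\R\setminus C$. Its right-continuous representative therefore vanishes identically. So $\sig=0$ $\nu$-a.e., and $S_\nu=\{0\}$.
\end{itemize}
This is exactly an instance of your abstract criterion: $\one_C\L^1\in\mathbf F_1(\R)$, but no nonzero $\theta\,\one_C\L^1$ has finite-mass boundary. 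It still has to be exhibited for the statement to be established.
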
 

\begin{proof} 
	By Lemma \ref{local}, the equality \ref{item:VT} is true if we show the following equality between subspaces of $(L^1_\mu(\Rd))^d$:
	\begin{equation}\label{Vmu=Tmu}
		\ov {\V_\mu} = \T_\mu \ .
	\end{equation}
	Let $\tau \in \V_\mu$, i.e. there exists $\lambda \in \M^d$ such that $\div \lambda \in \Mod$ and $\frac{d\lambda}{d\mu} =\tau$.
	Then we write $\lambda$ in the form $\lambda= \chi \, m$ where
	$$ m= \mu + | \lambda^s|\quad, \quad \chi = \begin{cases} \tau & \mu-{\rm a.e.} \\
		\frac {d\lambda^s}{d|\lambda^s|} & \lambda^s-{\rm a.e.} \end{cases},$$ 
	where $\lambda^s$ denotes the singular part of $\lambda$ with respect to $\mu$. 
	Since $\div(\chi \, m)$ belongs to $\Mod \subset \AEd$, we have $\chi(x)\in T_m(x)\ m$\ a.e..
	By applying \eqref{disjunction} to the decomposition $m = \mu +|\lambda^s|$, we infer that $T_m(x)= T_\mu(x)$ holds 
	$\mu$ a.e.. Combined with the equality $\chi=\tau$ holding $\mu$ a.e., we conclude that $\tau\in \T_\mu$, hence 
	$\ov{\V_\mu} \subset \T_\mu$ since the latter set is closed. 
	
	Conversely, let $\tau\in \T_\mu$. As $\lambda := \tau\, \mu$ belongs to $\Fd$, by the assertion \ref{fundamental:clos} of Proposition~\ref{fundamental}, there exists a sequence $(\lambda_n)$ in $\Nd$ such that $\Vert \lambda_n- \lambda\Vert_{\M^d}\to 0$.
	This means that the Lebesgue-Nikodym decomposition $\lambda_n= \tau_n \, \mu + \lambda_n^s$ satisfies
	\begin{equation}\label{approxN}
		\tau_n \to \tau \quad \text{in } L^1_\mu \quad, \quad \Vert \lambda_n^s \Vert_{\M^d}\to 0 .
	\end{equation}
	In other words, every tangent field is an $L^1_\mu$-limit of densities of normal $1$-currents. 
	By the definition of $\V_\mu$, we have $\tau_n \in \V_\mu$ whence $\tau \in \overline{\V_\mu}$ and $T_\mu \subset \overline{V_\mu}$.
	The desired equality \eqref{Vmu=Tmu} follows and the assertion \ref{item:VT} is established.
	
	The assertion \ref{ST} is straightforward from the inclusion $\S_\mu \subset \T_\mu$. The strict inclusion is shown in the example \ref{fat} below.
	
	For the assertion \ref{Tp}, note first that $\T_\mu^{(p)}\subset \T_\mu^{(q)}$ whenever $\mu(\Rd) < +\infty$ and $1< p \le q <\infty$, since $L^{p'}_\mu\subset L^{q'}_\mu$; this gives the monotonicity in $p$. Next, let $\sig\in\T_\mu^{(p)}$, so that $-\div(\sig\mu) = g\,\mu$ with $g\in L^{p'}_\mu\subset L^1_\mu$. Testing against $\f_n := \f(\cdot/n)$, where $\f\in\D(\Rd)$ equals $1$ on $B_1$, and letting $n\to\infty$ yields $\int g\,d\mu = \lim_n\int\pairing{\sig,\nabla\f_n}\,d\mu = 0$ by dominated convergence. Hence $g\mu\in\Mod$ and $\sig\mu\in\Nd$, i.e. $\sig\in\S_\mu$. This shows $\T^{(p)}_\mu\subset\S_\mu$, whence $T^{(p)}_\mu(x)\subset S_\mu(x)$ $\mu$-a.e. by Proposition~\ref{local}.
\end{proof}

\begin{remark}\label{nonlocal} 
	Amongst all tangent bundles appearing in Proposition~\ref{AM=MK}, only $T_\mu(x) = V_{\mu}(x)$ satisfies the crucial locality property \eqref{locality}, as checked in the 1d example \ref{fat} below (see also \cite{ARGB,FraMant}). This observation makes it suitable for comparison with the Preiss tangent space and ultimately for deriving rectifiability properties. 
\end{remark}

\begin{example}\label{fat} 
	Take $\mu$ to be the Lebesgue measure on $\R$ and $\nu= \mu\res C$ where $C$ is a fat compact Cantor subset ($\mu(C) >0$ and $C$ has empty interior). Then, as the property \eqref{locality} is true for $T_\mu$, $T_\nu(x)=\R$ holds $\nu$ a.e.. Take now $\sigma\in L^1_\mu$ such that $\div(\sigma \nu) = (\sigma \, \one_C)'$ is a finite measure. Then $u = \sigma \, \one_C$ can be identified to a compactly supported function in $BV(\R)$. Since $u$ vanishes on the complement of $C$, an open dense subset of $\R$, we deduce that $u=0$, hence $\sigma=0$ a.e. on $C$. Therefore, according to the definition of the Smirnov bundle $\S_\nu$ given above, $S_\nu(x)=\{0\}$ holds $\nu$ a.e.. The same degeneracy occurs for the bundles $T_\nu^{(p)}$ which are contained in $S_\nu$.
\end{example}

\subsection{The differentiability bundle}\label{nodiffsec}

Here we shortly explain why $V_{\mu}$ introduced before is also a differentiability bundle. Let us say that a function $u:\Rd\to\R$ is \emph{differentiable at $x$ with respect to} a linear subspace $W\subset\Rd$ if, for some linear form $L$ on $W$,
\begin{equation}\label{Wdiff}
	u(x+v)-u(x) \ =\ L(v) + o(|v|) \qquad \text{as } W\ni v\to 0 .
\end{equation}
In the remarkable paper \cite{AM}, Alberti and Marchese proved that every Lipschitz function is differentiable $\mu$ a.e. with respect to $V_\mu(x)$ and, in addition, that $V_\mu$ is the largest such bundle. This means that any bundle $W(x)$ such that \eqref{Wdiff} holds $\mu$-a.e. for every Lipschitz $u$ satisfies the inclusion $W(x) \subset V_{\mu}(x)$ for $\mu$-a.e. $x$. We recover the differentiability along $T_\mu$ in assertion \ref{MKi:full} of Theorem~\ref{MK-infinitesimal}, but its maximality so far only follows through the deep theory of \cite{AM} and Proposition~\ref{AM=MK}. In order to produce a self-contained proof of this maximality, we construct along any prescribed direction field which is not tangent to $T_\mu$, a Lipschitz function which is not differentiable on a set of arbitrarily large $\mu$-mass. The construction is short and uses only the approximation property \eqref{normal-approx} of $\N_\mu$; the result is weaker than \cite[Prop.~4.4 and 4.5]{AM}, which shows that the set of such functions is large in the sense of Baire.

\begin{proposition}\label{nodiff}
	Let $\mu \in \M_+(\Rd)$ be a finite measure and let $w\in(L^\infty_\mu)^d$ be a unit vector field such that $w(x)\notin T_\mu(x)$ for $\mu$-a.e. $x$. Then, for every $\eps>0$, there exist a compact set $K$ with $\mu(\Rd\setminus K)\le\eps$ and a Lipschitz function $f:\Rd\to\R$ such that the directional derivative $\partial_{w(x)}f(x)$ fails to exist at $\mu$ a.e. $x\in K$.
\end{proposition}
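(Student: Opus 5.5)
The plan is to build $f$ as a uniform limit of $1$-Lipschitz functions whose slope along $w$, seen at finer and finer scales, alternates between two values separated by a fixed gap on most of $K$. The only input from the paper is the normal approximation property \eqref{normal-approx} of Proposition~\ref{Nmu}, applied with $\sigma=w$.

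\textbf{Normal profiles.} Set $w_N:=P_\mu^\perp w$ and $\nu:=w_N/|w_N|$. By assumption $w_N\neq0$ $\mu$-a.e. Since $\mu$ is finite, after an Egorov/Lusin reduction there are $c_0>0$ and a compact $K_0$ with $\mu(\Rd\setminus K_0)\le\eps/2$ such that $|w_N|\ge 2c_0$ on $K_0$ and $w$ is continuous on $K_0$. Proposition~\ref{Nmu} gives $u_n\in\D\cap\Lip_1$ with $u_n\weakstar0$, i.e.\ $u_n\to0$ uniformly on compacts, and $\nabla u_n\to\nu$ $\mu$-a.e. Hence $\pairing{\nabla u_n,w}\to|w_N|$ $\mu$-a.e., and by Egorov this convergence is uniform on compacts $K_k\subset K_0$ whose complements have $\mu$-measure at most $\eps 2^{-k-2}$. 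Put $K:=\bigcap_k K_k$. Each $u_n$ is smooth, so at scales $h\le h(n)$ its difference quotient along $w(x)$ equals $\pairing{\nabla u_n(x),w(x)}$ up to a small error. Meanwhile $\sup_{B_R}|u_n|$ is as small as we like, so at any fixed larger scale $H$ its difference quotients are negligible.

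\textbf{Recursive construction.} Fix a $1$-Lipschitz mollification $\rho_k*F$ and a small constant $c\in(0,1)$. Starting from $F_0=0$, I would set
\begin{equation*}
	A_k:=(1-c)\,\rho_k*F_{k-1}+c\,(-1)^k u_{n_k},\qquad F_k:=\max\bigl(\min(A_k,\,F_{k-1}+\eta_k),\,F_{k-1}-\eta_k\bigr).
\end{equation*}
This clipping is the device that keeps the Lipschitz bound: a max or min of $1$-Lipschitz functions is $1$-Lipschitz, and $\Lip(A_k)\le1$. Moreover $|F_k-F_{k-1}|\le\eta_k$, so with $\sum_k\eta_k<\infty$ the limit $f$ exists and is $1$-Lipschitz.

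\textbf{Parameter choices.} The parameters are chosen inductively in the order $\rho_k$, then $n_k$, then $\eta_k$, then the scales $H_k>h_k$. Take $\rho_k$ so fine that $|\rho_k*F_{k-1}-F_{k-1}|\ll\eta_k$ on a large ball. Take $n_k$ so large that $\sup|u_{n_k}|\ll\eta_k$ there. Then $|A_k-F_{k-1}|<\eta_k$ on that ball, so no clipping occurs and $F_k=A_k$ is smooth near $K$. Choose $h_k$ below the smoothness scale of $F_k$. Then
\[
	\frac{f(x+h_kw)-f(x)}{h_k}\approx(1-c)\,\partial_w(\rho_k*F_{k-1})(x)+c\,(-1)^k|w_N(x)|,
\]
with the tail $\sum_{j>k}\eta_j/h_k$ made negligible by taking later $\eta_j$ tiny.

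The same quotient at scale $h_{k+1}$ carries the opposite sign in its second term. Its first term involves $\rho_{k+1}*F_k$, and since $\nabla F_k=(1-c)\nabla(\rho_k*F_{k-1})+c(-1)^k\nabla u_{n_k}$, one gets
\[
	\partial_w(\rho_{k+1}*F_k)\approx(1-c)\,\partial_w(\rho_k*F_{k-1})+c\,(-1)^k|w_N|
\]
on $K$. Writing $D_k$ for the quotient at scale $h_k$ and setting $s_k:=\partial_w(\rho_k*F_{k-1})$, this gives the recursion $s_{k+1}\approx(1-c)s_k+c(-1)^k|w_N|$ together with
\[
	D_k\approx s_{k+1},\qquad D_{k+1}\approx(1-c)s_{k+1}+c(-1)^{k+1}|w_N|,
\]
and therefore $D_k-D_{k+1}\approx c\,s_{k+1}+c(-1)^k|w_N|$.

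\textbf{Main obstacle.} The hard step is this bookkeeping: showing that $|D_k-D_{k+1}|$ stays bounded below on most of $K$. The recursion for $s_k$ is a contraction driven by an alternating forcing. Its values settle near $\pm\frac{c}{2-c}|w_N|$ with sign $(-1)^{k-1}$, so $s_{k+1}$ has the same sign as $(-1)^k|w_N|$. The two terms then reinforce each other, giving $|D_k-D_{k+1}|\gtrsim c\,|w_N|\ge2c\,c_0$ on $K$ up to errors. Given this gap, the difference quotients along $w(x)$ do not converge as $h\to0^+$, so $\partial_{w(x)}f(x)$ fails to exist at $\mu$-a.e.\ $x\in K$. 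I expect the real work to lie in organising the error terms uniformly on the Egorov sets. If this quantitative control fails, the fallback is the simpler comparison between a coarse scale $H_k$, at which the contribution of $u_{n_k}$ is invisible, and the fine scale $h_k$, using the alternating signs $(-1)^k$ to prevent the differences from being summable.
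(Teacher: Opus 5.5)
Your construction fails at the claim that no clipping occurs. Expanding,
\[
A_k-F_{k-1}=(1-c)\bigl(\rho_k*F_{k-1}-F_{k-1}\bigr)-c\,F_{k-1}+c\,(-1)^k u_{n_k},
\]
your choices of $\rho_k$ and $n_k$ control only the first and last terms. The middle term $-cF_{k-1}$ (already $-cF_1=c^2u_{n_1}$) is a low-frequency correction whose size is set by the earlier, larger amplitudes. Your tail estimate, however, forces $\eta_k\ll h_{k-1}$. (Your stated order, with $\eta_k$ chosen after $\rho_k$ and $n_k$, is also circular, since those choices are made relative to $\eta_k$.)

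No tuning of the parameters repairs this. Suppose $F_k=A_k$ held on your ball for every $k$. Then $\|F_k\|_\infty\le c\sum_{j\le k}(1-c)^{k-j}\|u_{n_j}\|_\infty\to0$, so $f\equiv0$ there. Equivalently, $\|F_k\|_\infty\le\sum_{j>k}\eta_j\ll h_k$, so every difference quotient of $F_k$ at scale $h_k$ is negligible. This contradicts $D_k\approx s_{k+1}$ with $|s_{k+1}|\approx c|w_N|/(2-c)$, so the two-cycle bookkeeping rests on an impossible premise. Where the clamp is active instead, $F_k=F_{k-1}\pm\eta_k$ and the new profile $c(-1)^ku_{n_k}$ is simply cut away.

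The contraction by $(1-c)$, which is what buys $\Lip(f)\le1$, is exactly what erases the oscillations created at earlier scales. Your fallback does not address this either. An unweighted alternating sum of the $u_{n_k}$ has no Lipschitz control off the Egorov set, where the gradients $\nabla u_{n_k}$ are not aligned.

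The missing idea is to give up $\Lip(f)\le1$ and let the alternation itself control the gradient near $K$, with cut-offs handling the region away from $K$. The paper proceeds as follows:
\begin{enumerate}
\item Extend $\sigma=w_N/|w_N|$ continuously.
\item Choose $v_r=u_k$ with $|\nabla v_r-\sigma|\le\eps_0 2^{-r}$ on a neighbourhood $K^{s_r}$ of $K$ and $\|v_r\|_\infty\le 2^{-r}\min\{s_{r-1},h_{r-1}\}$.
\item Set $f=\sum_r(-1)^{r+1}\chi_r v_r$, with cut-offs $\chi_r$ at scale $s_{r-1}$.
\end{enumerate}
Since all the $\nabla v_r$ approximate the same $\sigma$, near $K$ the partial sums of gradients telescope to $\mathbf{1}_{\{N\ \mathrm{odd}\}}\sigma$ up to $\eps_0$. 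They therefore stay bounded by $2$ while alternating between $\approx\sigma$ and $\approx0$. Away from $K$, at most one cut-off is in its transition layer, and the smallness of $v_r$ pays for $\nabla\chi_r$; this gives $\Lip(f)\le5$.

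At scale $h_\ell\le s_\ell/\sqrt2$, the terms $r\le\ell$ see the telescoped gradient and contribute $\approx\mathbf{1}_{\{\ell\ \mathrm{odd}\}}|w_N(x)|$. The terms $r>\ell$ contribute at most $2^{-\ell+1}$, because $\|v_r\|_\infty\le 2^{-r}h_\ell$. Your reading step via $\langle\nabla u_n,w\rangle\to|w_N|$ is fine here and could even replace the paper's reduction to normal increments. The part that must be rebuilt is the construction of $f$.
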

\begin{proof}
	We write $w_T:=P_\mu w$ and $w_N:=P_\mu^\perp w$ for the tangential and normal parts of $w$ and define 
	\begin{enumerate}
		\item the tangential displacement $t_h \coloneqq \mathrm{id}+hw_T$,
		\item the full displacement $z_h \coloneqq \mathrm{id}+hw = t_h + hw_N$,
		\item the normal increment
		\[
			N_hu:=\frac{u(z_h) -u(t_h)}{h}, \qquad\text{so that}\qquad |N_hu| \le \Lip(u)\,|w_N| .
		\]
	\end{enumerate}
	
	\medskip\noindent\textbf{Reduction to the normal fields.} Since $\mu$ is finite and $w\in(L^\infty_\mu)^d$, the field $w_T$ belongs to $\T_\mu$ by \eqref{Tmu=}. Splitting
	the difference quotient of $u$ along $w$ as $\Delta^w_h u  - N_hu + \Delta^{w_T}_{h} u$, we get from \ref{MKi:diff} of
	Theorem~\ref{MK-infinitesimal} that for every $u\in\Lip(\Rd)$, one gets $ \Delta^w_h  u  - N_hu  \to \pairing{\nabla_\mu u,w_T}$ in $L^1_\mu$. 
	We conclude that, at $\mu$ a.e.\ $x$,
	\begin{equation}\label{reduction}
	  \partial_{w(x)}u(x)\ \text{exists}\qquad\Longleftrightarrow\qquad \lim_{h\to0^+}N_hu(x)\ \text{exists}.
	\end{equation}
	It is therefore enough to construct $u\in\Lip(\Rd)$ whose normal increment oscillates on a compact
	set $K$ as in the statement.	
	
	\medskip\noindent\textbf{Set-up.}
	By assumption $w_N \neq 0$ $\mu$ a.e., so the unit normal field $\sigma := w_N/|w_N|$ is well defined. Fix $\delta \in (0,1]$ such that $\mu(\{|w_N|<\delta\})<\eps/2$, and introduce the smallness constant
	\[
		\eps_0\ :=\ \frac{\delta}{8}\ \le\ \tfrac18 .
	\]
	By Proposition~\ref{Nmu} there is a sequence $(u_k)_k$ of smooth $1$-Lipschitz functions converging weakly-$*$ to zero in $\Lip(\Rd)$, hence uniformly to zero on compact sets, and such that $\nabla u_k\to\sigma$ $\mu$ a.e.. By Egorov's and Lusin's theorems together with inner regularity there is a compact set $K\subset\{|w_N|\ge\delta\}$ with $\mu(\Rd\setminus K)\le\eps$ on which $\sigma$ is continuous and $\nabla u_k\to\sigma$ uniformly. By Tietze's theorem, followed by a projection onto the unit ball, we extend $\sigma$ to a continuous field on $\Rd$ (still denoted
	$\sigma$) with $|\sigma|\le1$. Writing $K^s:=\{\dist(\cdot,K)\le s\}$, we let $\omega$ be a modulus of
	continuity for $\sigma$ on the compact set $K^1$. Throughout, $\|\cdot\|_\infty$ denotes the supremum
	norm on $K^1$, so that $\|u_k\|_\infty\to0$.
	
	\medskip\noindent\textbf{Construction of a candidate non-differentiable function.}
	Set $s_0=h_0=1$ and define $s_r,h_r>0$ inductively: given $s_{r-1},h_{r-1}$, choose successively
	\begin{enumerate}[label={(\alph*)}]
	 \item\label{nd:v} a function $v_r:=u_k$ with $k$ so large that
	\[
		\|v_r\|_\infty\le 2^{-r}\min\{s_{r-1},h_{r-1}\}
		\qquad \text{and} \qquad
		\sup_K|\nabla v_r-\sigma| \le\eps_0 2^{-(r+1)}.
	\]
	This makes $v_r$ small and $\nabla v_r$ close to $\sigma$ on $K$, with an error decreasing geometrically.
	\item\label{nd:s} A scale $s_r\le s_{r-1}/4$ small enough so that the previous estimate still holds on $s_r$-enlargement of $K$ up to a factor 2, i.e.
	\[
		|\nabla v_r-\sigma|\le\eps_0 2^{-r}\quad\text{on } K^{s_r}.
	\]
	This is made possible by continuity of $\nabla v_r-\sigma$ and compactness of $K$.
	 \item\label{nd:h} A length scale $h_r$, at which the difference quotients will be read, such that 
	\[
		h_r\ \le\ \min \left\{ h_{r-1}\ , \ \frac{s_r}{{\sqrt{2}}} \right\}.
	\]
	\end{enumerate}
	We localise each $v_r$ at its own scale. Let $\chi_r$ be a smooth cut-off of $K^{s_{r-1}/2}$ supported in $K^{s_{r-1}}$, whose transition layer costs a gradient $\|\nabla\chi_r\|_\infty\le3/s_{r-1}$. By \ref{nd:v}, the amplitude of $v_r$ compensates this cost geometrically:
	\begin{equation}\label{nd:comm}
		 \|v_r\,\nabla\chi_r\|_\infty\ \le\ 3\cdot2^{-r}.
	\end{equation}
	Define the candidate $u$ and its associated partial sums
	\[
		u \coloneqq \sum_{r\ge1} \ (-1)^{r+1}\,\chi_r v_r, \qquad u_N:=\sum_{r\le N}(-1)^{r+1}\chi_rv_r. 
	\]
	Using \ref{nd:v} once more, the series above converge uniformly, and $u_N$ is $C^1$ for each $N$. 
	
	\medskip\noindent\textbf{Lipschitzianity.}
	We first estimate the gradient of $u_N$. To derive the bound on $K$, let $x\in K$. In particular $\chi_r\equiv1$ near $x$ for every $r \le N$, so that
	\[
		\nabla u_N(x)= \sigma(x) \mathbf 1_{\{N \; \text{odd}\}} + \sum_{r\le N}(-1)^{r+1} (\nabla v_r(x) - \sigma(x)) .
	\]
	Since $K\subset K^{s_r}$ for every $r$, property \ref{nd:s} gives $|\nabla u_N(x)|\le|\sigma(x)|+\eps_0 \sum_{r = 1}^{N} 2^{-r} \le 2$.
	Let us now compute the gradient outside of $K$. For this, let us choose $x \in \Rd \setminus K$ with $\dist(x,K) \le 1$. Let $M \in \mathbb{N}$ be such that $x \in K^{s_{M-1}} \setminus K^{s_{M}}$. In particular, we have $\chi_r\equiv1$ near $x$ for $r\le M-1$ and $\chi_r\equiv0$ near $x$ for $r\ge M+1$. Thus, near $x$ we get
	\begin{equation}\label{nd:loc}
	  u_N \ = \  (-1)^{M+1}\chi_{M} v_{M}\,\mathbf 1_{\{N\ge M\}}  \ + \ \sum_{r\, \le \, \min\{M-1, N\}}(-1)^{r+1}v_r.
	\end{equation}
	As $x\in K^{s_r}$ for $r \leqslant M-1$, the choice \ref{nd:s} yields
	\[
		\big|\sum_{r\le M-1}(-1)^{r+1}\nabla v_r(x)\big|\le|\sigma(x)|+\eps_0\le2.
	\]
	Adding $|\chi_M\nabla v_M(x)|\le1$ and \eqref{nd:comm}, we obtain $|\nabla u_N(x)|\le5$. Hence $\Lip(u_N)\le5$ for every $N$, and since $u_N\to u$ uniformly we conclude $\Lip(u)\le 5$. Finally, \eqref{nd:loc} with $N=\infty$ expresses $u$ near any $x\notin K$ as a finite sum of $C^1$
	functions, so that $u$ is of class $C^1$ on $\Rd\setminus K$.
	
	\medskip\noindent\textbf{Oscillation of the normal slope on $K$.}
	Fix $x\in K$ and $\ell\ge1$. Parametrizing the segment $[t_{{h_\ell}}(x),z_{{h_\ell}}(x)]$ by $p_{\tau} := t_{{h_\ell}}(x) + \tau\,{h_\ell}\,w_N(x)$ for $\tau \in [0,1]$, property \ref{nd:h} gives
	\begin{align*}
		|p_{\tau} - x|\ \le\ {h_\ell}(|w_T(x)|+|w_N(x)|)\ \le\ \sqrt{2} \, h_\ell\ \le\ s_\ell.
	\end{align*}
	Consequently, for every $r\le\ell$ the whole segment lies in $K^{s_r} \subset \{\chi_r\equiv1\}$, so that $N_h(\chi_rv_r)=N_hv_r$ there. By the fundamental theorem of calculus 
	\[
		\sum_{r\le \ell}(-1)^{r+1}N_{h}v_r(x)\;=\;\int_0^1\pairing{G_{\ell}(p_{\tau}),\,w_N(x)}\,d\tau,
		\qquad G_{\ell}:=\sum_{r\le \ell}(-1)^{r+1}\nabla v_r .
	\]
	Once more, by \ref{nd:s} we have $|G_\ell(p_{\tau}) -\mathbf 1_{\{\ell\ \mathrm{odd}\}}\sigma(p_{\tau})|\le\eps_0$, while $|\sigma(p_{\tau})-\sigma(x)|\le \omega(\sqrt{2}\, h_\ell)$ and $\pairing{\sigma(x),w_N(x)}=|w_N(x)|$ by the definition of $\sigma$. Therefore, the integral differs from $\mathbf 1_{\{\ell\ \mathrm{odd}\}}|w_N(x)|$ by at most $[\eps_0+\omega(\sqrt{2}\, h_\ell)]|w_N(x)|$. For the remaining indices $r>\ell$ we use $\|v_r\|_\infty \le 2^{-r} h_{r-1} \le 2^{-r} h_\ell$, which gives the bound $|N_{h_\ell}(\chi_rv_r)|\le2\|v_r\|_\infty/h_\ell\le2^{-r+1}$. Altogether, recalling
	$|w_N|\le 1$ and $\eps_0=\delta/8$,
	\[
	  |\,N_{h_\ell}u(x)-\mathbf 1_{\{\ell\ \mathrm{odd}\}}\,|w_N(x)|\,|
	  \ \le\ \frac{\delta}{8}+ \,\omega({\sqrt{2}}\, h_\ell)+2^{-\ell+1}
	  \ \le\ \frac{\delta}{4}
	  \qquad\text{for all }x\in K,
	\]
	the last inequality holding as soon as $\ell$ is large enough, since $h_\ell\to0$. As $|w_N|\ge\delta$ on $K$, we conclude that for every such $\ell$ and every $x\in K$
	\[
	N_{h_\ell} u(x)\ \begin{cases}
	\ \ge\ \tfrac34\delta & \text{if $\ell$ is odd},\\[2pt]
	\ \le\ \tfrac14\delta & \text{if $\ell$ is even}.
	\end{cases}
	\]
	Thus $\lim_{h\to0^+}N_hu(x)$ exists at no point of $K$, and by \eqref{reduction} the directional derivative $\partial_{w(x)}u(x)$ fails to exist $\mu$ a.e. $x\in K$.
\end{proof}

\begin{theorem}[$T_\mu$ is the largest differentiability bundle]\label{maximal}
	Let $\mu\in\M_+(\Rd)$ be a Radon measure and let $W:\Rd\to\Grass$ be a $\mu$-measurable map such that every Lipschitz function $u:\Rd\to\R$ is differentiable at $\mu$-a.e. $x$ with respect to $W(x)$. Then $W(x)\subset T_\mu(x)$ for $\mu$-a.e. $x$. 
\end{theorem}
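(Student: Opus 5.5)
We argue by contradiction, using Proposition~\ref{nodiff} as the key tool. Suppose the set $E:=\set{x}{W(x)\not\subset T_\mu(x)}$ has positive $\mu$-measure.

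\textbf{Step 1: choose a non-tangent direction field.} First we select a $\mu$-measurable unit vector field $w$ with $w(x)\in W(x)\setminus T_\mu(x)$ for $\mu$-a.e. $x\in E$. A natural choice is
\[
w(x):=\frac{P_\mu^\perp(x)e(x)}{|P_\mu^\perp(x)e(x)|},
\]
where $e(x)$ runs through a measurable basis of $W(x)$. Concretely, take a Castaing-type countable dense family of measurable selections of $W$ (as in Subsection~\ref{prel:multif}), and on $E$ pick the first one whose normal component $P_\mu^\perp$ is nonzero. This is measurable because $P_\mu$ and $W$ are. Since the statement is local, we may then replace $\mu$ by a finite measure $\mu\res B$ with $B\subset E$ bounded and $\mu(B)>0$. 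By the locality lemma \eqref{locality} and \eqref{restriction}, $T_{\mu\res B}=T_\mu$ a.e. on $B$. Differentiability with respect to $W$ $\mu$-a.e. also passes to $\mu\res B$. We extend $w$ arbitrarily outside $B$; since $w(x)\notin T_\mu(x)$ is only needed on $B$, we restrict all statements to $\mu\res B$.

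\textbf{Step 2: apply Proposition~\ref{nodiff}.} Apply it to the finite measure $\mu\res B$, the field $w$, and $\eps=\mu(B)/2$. This gives a compact $K$ with $(\mu\res B)(K)\ge \mu(B)/2>0$ and a Lipschitz $f$ whose directional derivative $\partial_{w(x)}f(x)$ fails to exist at $\mu$-a.e. $x\in K\cap B$.

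\textbf{Step 3: contradiction.} By hypothesis, $f$ is differentiable at $\mu$-a.e. $x$ with respect to $W(x)$. Since $w(x)\in W(x)$, the expansion \eqref{Wdiff} applied to $v=hw(x)$ with $h\to0^+$ gives $\frac{f(x+hw(x))-f(x)}{h}\to L_x(w(x))$. So $\partial_{w(x)}f(x)$ exists $\mu$-a.e., contradicting Step 2 on the positive-measure set $K\cap B$. Hence $\mu(E)=0$.

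The only delicate point is the measurable selection in Step 1, which is routine. All the analytic difficulty lies in Proposition~\ref{nodiff}, which is already established. If one prefers to avoid selections, an alternative is to choose $w$ from a fixed countable dense set of directions $e_j\in S^{d-1}$. One would then need $e_j\in W(x)$, which fails in general, so the Castaing selection of $W$ is the approach to follow.
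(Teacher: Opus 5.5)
\textbf{There is a concrete error in Step~1, and it breaks Step~3.} Your displayed choice $w(x)=P_\mu^\perp(x)e(x)/|P_\mu^\perp(x)e(x)|$ is the normalized \emph{normal} component of $e(x)$. So $w(x)\in N_\mu(x)$, but in general $w(x)\notin W(x)$. For instance, in $\R^2$ with $T_\mu(x)=\R e_1$ and $W(x)=\R(e_1+e_2)$, you get $w(x)=e_2$.

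Step~3 then fails at ``since $w(x)\in W(x)$''. Differentiability with respect to $W(x)$ says nothing about $\partial_{w(x)}f(x)$ for a direction outside $W(x)$. The function that Proposition~\ref{nodiff} produces for a purely normal field is only shown to be non-differentiable along that normal direction, so you get no contradiction. Note that Proposition~\ref{nodiff} only requires $w(x)\notin T_\mu(x)$. Passing to the normal part happens inside its proof, through $\sigma=w_N/|w_N|$ and the reduction \eqref{reduction}, so you must not do it beforehand.

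\textbf{The fix is to normalize the selection itself.} On $E$, let $j(x)$ be the first index with $P_\mu^\perp(x)e_{j}(x)\neq 0$. Such an index exists by density of the family in $W(x)$. Set $w:=e_{j(x)}/|e_{j(x)}|$; then $w(x)\in W(x)\setminus T_\mu(x)$, and Steps~2--3 go through verbatim.

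With this correction your argument is the paper's. The only remaining difference is the spanning family. The paper avoids Castaing selections by using the measurable vectors $P_W(x)e_i$, the projections of the canonical basis onto $W(x)$, which span $W(x)$. It splits $\{W\not\subset T_\mu\}$ into the sets $E_i=\{P_We_i\notin T_\mu\}$ and takes $w=P_We_i/|P_We_i|$ on a bounded piece of positive measure of one of them. Your closing remark, that fixed directions cannot be used because $e_j\notin W(x)$, overlooks this device: you do not need $e_j\in W(x)$, only its projection onto $W(x)$.
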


\begin{proof}
	Let $(e_i)_i$ be the canonical basis of $\Rd$ and $P_W(x)$ the orthogonal projection onto $W(x)$. Since $W(x)$ is spanned by the vectors $P_W(x)e_i$, the set $\{x : W(x)\not\subset T_\mu(x)\}$ is the union of the $\mu$-measurable sets $E_i:=\{x : P_W(x)e_i\notin T_\mu(x)\}$. Assume by contradiction that $\mu(E_i)>0$ for some $i$ and pick a bounded Borel set $E\subset E_i$ with $\mu(E)>0$. On $E$, the unit field $w:=P_We_i/|P_We_i|$ satisfies $w(x)\in W(x)\setminus T_\mu(x)$, and $T_{\mu\res E}=T_\mu$ $\mu$-a.e. on $E$ by \eqref{restriction}. Proposition~\ref{nodiff}, applied to the finite measure $\mu\res E$ and to $w$, provides a Lipschitz function $f$ and a compact set $K$ with $\mu(K\cap E)\ge\mu(E)-\eps>0$ such that $D_{w(x)}f(x)$ does not exist for $x\in K$. Since $w(x)\in W(x)$ on $E$, $f$ is not differentiable with respect to $W(x)$ at any $x\in K\cap E$, a contradiction.
\end{proof}

In view of assertion \ref{MKi:full} of Theorem~\ref{MK-infinitesimal}, $T_\mu$ is therefore the largest bundle along which every Lipschitz function is $\mu$-a.e. differentiable.

\subsection{Relation with Preiss tangent measures}

In the literature, blow-up techniques were initially developed to define tangent cones for rectifiable structures such as currents and varifolds (see, e.g., H. Federer \cite{Fed}, W. Allard \cite{Allard72}, and the recollection by L. Simon \cite{Simon}). The notion of tangent measures for a general Radon measure was introduced by D. Preiss \cite{Preiss87} (see also \cite{FraMant} for a comparison between Preiss' one and the Smirnov bundle):

\begin{definition}\label{tangent-preiss}
	Let $\mu \in \M_+(\Rd)$ be a Radon measure and $x_0\in\Rd$. The Preiss tangent space to $\mu$ at $x_0$, denoted $\text{Tan}(\mu,x_0)$, consists of all non zero Radon measures $\nu \in \M_+(\Rd)$ such that, for suitable sequences of reals $c_j >0$ and $r_j \to 0$, we have 
	\begin{equation}\label{def:tangentPreiss}
		\lim_{j\to \infty} c_j \int \f\left(\frac{x-x_0}{r_j}\right)\,d\mu = \int_{\Rd} \f(y) d\nu(y) 
		\quad 
		\text{ for every } \f \in C^0_c(\Rd).
	\end{equation} 
\end{definition}

\begin{remark}\label{ball-control} 
	By testing the convergence \eqref{def:tangentPreiss} against a continuous function $\f\in C^0(\Rd; [0,1])$ such that $\f=1$ on $B_R$ and $\spt(\f) \subset B_{R+\delta}$, we infer that the reals $c_j, r_j$ involved (they depend on $x_0$) are such that, for every $R>0$ and $\delta>0$:
	\begin{equation*}
		\nu(B_R) 
		\le \liminf_{j\to\infty}\, c_j \, \mu \left( B_{r_j \, R}\right) 
		\le \ \limsup_{j\to\infty}\, c_j \, \mu \left( B_{r_j \, R}\right) 
		\le \nu(B_{R+\delta})
		<+\infty.
	\end{equation*}
	Therefore, by sending $\delta\to 0$, it follows that for every $R$ in the complement of a countable subset of $(0,+\infty)$, there exists a constant $\kappa(x_0,R) >0$ such that 
	\begin{equation}\label{ball-esti}
		c_j \, \mu \left( B_{r_j \, R}\right) \ \to \kappa(x_0,R) \quad \text{as } j\to\infty.
	\end{equation}
	Note that the situation is much simpler in the case where $\mu$ satisfies the doubling condition around $x_0$, namely:
	$$ \limsup_{s\to 0_+} \frac{ \mu(B(x_0,2 s))}{ \mu(B(x_0,s))} < +\infty.$$ 
	In this case, it is not restrictive to impose that $c_j \mu(B_{r_j})= 1$.
\end{remark}

The main feature of $\text{Tan}(\mu,x_0)$ is that it is non empty positive cone of $\M_+(\Rd)$ $\mu$-a.e., closed under vague convergence and dilation invariant (see \cite[Thm 2.5]{Preiss87}). For those properties, no regularity assumptions on $\mu$ are required. In contrast, when some rectifiability property of $\mu$ is expected, the more restrictive notion of tangent space to $\mu$ can be used (see \cite{Simon, Mattila}).

\begin{definition}\label{tangentplane}
	The $k$-dimensional subspace $V$ of $\Rd$ is said to be tangent to $\mu$ at $x_0$ if there exists a constant $\theta>0$ (called the \emph{multiplicity}) such that
	\begin{equation}\label{def:tanplan}
		\lim_{h\to 0_+} \frac1{h^k}\int\f\left(\frac{x-x_0}{h}\right)\, d\mu = \theta\, \int_{V} \f(y) d\H^k (y) 
		\quad \text{ for every } \f\in C^0_c(\Rd).
	\end{equation} 
\end{definition}

\begin{remark}\label{localV} 
	The property \eqref{def:tanplan} implies that \eqref{def:tangentPreiss} holds with $\nu = \theta \, \H^k\res V$ being $c_j= r_j^{-k}$
	and $r_j$ any vanishing sequence. This means that the possibly large set $\text{Tan}(\mu,x_0)$ contains at least one flat measure (in general not unique). 
	As originally established by Preiss \cite{Preiss87} (see \cite[Thm 14.12]{Mattila} for an accessible exposition), if $\text{Tan}(\mu,x_0)$ consists only of flat measures, then these measures are concentrated on the same subspace $V$. Thus, if $\text{dim} \, V=k$, one has $\text{Tan}(\mu,x_0)= \{ \theta \, \H^k\res V\ :\ \theta \in \R_+\}$.
\end{remark}

A revealing connection between ${\rm Tan}(\mu,x_0)$ defined in \eqref{def:tangentPreiss} and $T_\mu(x_0)$ is summarized in the following Proposition which extends \cite{FraMant} where the Smirnov tangent bundle $S_\mu(x_0)$ is considered (see also \cite{Adolfo2020}); recall that, by Proposition~\ref{AM=MK}, $S_\mu(x_0)$ can be strictly contained in the MK-bundle $T_\mu(x_0) $. Beforehand we give a related result of convergence, whose proof relies on the density of normal currents in flat chains (assertion \ref{fundamental:clos} of Proposition~\ref{fundamental}).

\begin{lemma}\label{Conv-AEd}
	Let $\mu \in \M_+(\Rd)$ be a Radon measure and $\sig$ an element of $\T_\mu$. Then, for $\mu$ a.e. $x_0\in\Rd$, we have the following convergence as $h\to 0_+$:
	\begin{equation}\label{blow-div}
		f_h^{x_0} := \frac{h}{\mu(B(x_0,h))}\ T_h^{x_0}( -\div (\sig\, \mu)) \ \to 0 \quad \text{in $\D'(B_1)$} 
	\end{equation}
	where $T_h^{x_0}: x \mapsto \frac{x-x_0}{h}$.
\end{lemma}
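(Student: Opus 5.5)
The idea is to reduce the blow-up of $-\div(\sig\mu)$ to a differentiation-of-measures statement. We first approximate $\sig\mu$ in total variation by normal currents, using assertion~\ref{fundamental:clos} of Proposition~\ref{fundamental}. For a normal current, the divergence is a finite measure, and the factor $h$ in \eqref{blow-div} kills it. The approximation error is controlled by a weak-type (Besicovitch) maximal estimate.

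\emph{Unfolding the definition.} For $\f\in\D(B_1)$, the push-forward gives $\pairing{f_h^{x_0},\f}=\frac{h}{\mu(B(x_0,h))}\pairing{-\div(\sig\mu),\f\circ T_h^{x_0}}$. Since $\nabla(\f\circ T_h^{x_0})=h^{-1}(\nabla\f)\circ T_h^{x_0}$, this becomes, for any $\la\in\Md$ playing the role of $\sig\mu$,
\[
\pairing{f_h^{x_0},\f}=\frac{1}{\mu(B(x_0,h))}\int_{B(x_0,h)}\pairing{\nabla\f\big(\tfrac{x-x_0}{h}\big),\,d\la(x)} .
\]
Two consequences follow. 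First, $|\pairing{f_h^{x_0},\f}|\le\|\nabla\f\|_\infty\,\frac{|\sig|\mu(B(x_0,h))}{\mu(B(x_0,h))}$, which stays bounded as $h\to0$ at $\mu$-a.e. $x_0$ by the Besicovitch differentiation theorem. Hence the family $(f_h^{x_0})_h$ is bounded on each fixed $\f$. Second, by this bound and the density of a countable family $\{\f_m\}$ in $\D(B_1)$ for the $C^1$ norm, it suffices to prove $\pairing{f_h^{x_0},\f}\to0$ $\mu$-a.e. for each fixed $\f$.

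\emph{Normal part.} Pick $\la_n\in\Nd$ with $\|\la_n-\sig\mu\|\le4^{-n}$, by Proposition~\ref{fundamental}\ref{fundamental:clos}, and write $g_n:=-\div\la_n\in\Mod$. The contribution of $\la_n$ is, in the original form,
\[
\frac{h}{\mu(B(x_0,h))}\pairing{g_n,\f\circ T_h^{x_0}},
\]
whose modulus is at most $h\,\|\f\|_\infty\,|g_n|(B(x_0,h))/\mu(B(x_0,h))$. By Besicovitch differentiation, the upper derivative $\overline D_\mu|g_n|(x_0)$ is finite for $\mu$-a.e. $x_0$. Therefore this term tends to $0$ $\mu$-a.e.

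\emph{Error part and main obstacle.} The remaining term, coming from $\rho_n:=\sig\mu-\la_n$, is bounded by $\|\nabla\f\|_\infty\,|\rho_n|(B(x_0,h))/\mu(B(x_0,h))$. Its $\limsup$ as $h\to0$ is at most $\|\nabla\f\|_\infty\,\overline D_\mu|\rho_n|(x_0)$. The main obstacle is making this small off a null set, since the approximation holds only in mass and not pointwise. For this I would use the Besicovitch weak-type maximal inequality
\[
\mu\big(\{\overline D_\mu|\rho_n|>t\}\big)\le C_d\,\|\rho_n\|/t
\]
with $t=2^{-n}$, which gives $\mu(\{\overline D_\mu|\rho_n|>2^{-n}\})\le C_d2^{-n}$. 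Borel–Cantelli then yields, for $\mu$-a.e. $x_0$, that $\overline D_\mu|\rho_n|(x_0)\le2^{-n}$ for all large $n$. Combining with the normal part gives $\limsup_{h\to0}|\pairing{f_h^{x_0},\f}|\le\|\nabla\f\|_\infty2^{-n}$ for every large $n$, hence the limit is $0$. Intersecting the countably many full-measure sets (one per $n$ and per $\f_m$) concludes \eqref{blow-div}.
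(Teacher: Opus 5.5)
Your proof is correct and has the same skeleton as the paper's. You approximate $\sigma\mu$ in mass by normal currents $\lambda_n$, and the contribution of $-\mathrm{div}\,\lambda_n$ is killed by the factor $h$ together with the finiteness of the upper density of $|\mathrm{div}\,\lambda_n|$. The remainder is bounded by the ratio $|\sigma\mu-\lambda_n|(B(x_0,h))/\mu(B(x_0,h))$.

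The difference lies in how you make this remainder small at $\mu$-a.e. point. The paper takes an approximation with error $1/n$ and Lebesgue-decomposes $\lambda_n=w_n\mu+\lambda_n^s$. It then uses only qualitative differentiation facts: at $\mu$-a.e. $x_0$ the singular part has zero density and $x_0$ is a Lebesgue point of $|\sigma-w_n|$. This gives $\limsup_{h}|\langle f_h^{x_0},\varphi\rangle|\le \mathrm{Lip}(\varphi)\,|\sigma-w_n|(x_0)$, and extracting a subsequence with $w_n\to\sigma$ $\mu$-a.e. concludes.

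You instead treat $\rho_n=\sigma\mu-\lambda_n$ as a single measure, with no absolutely continuous/singular split. You combine the covering-based weak-type estimate $\mu(\{\overline D_\mu|\rho_n|>t\})\le C\|\rho_n\|/t$ with a summable rate $\|\rho_n\|\le 4^{-n}$ and Borel--Cantelli. This is the classical maximal-function route. It requires a quantitative approximation rate and the weak $(1,1)$ inequality, but it avoids the Lebesgue decomposition and the a.e. convergence of the densities $w_n$. The paper's route needs only the differentiation theorem and gives an explicit pointwise bound for each $n$.

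One minor point: your reduction to a countable $C^1$-dense family $\{\varphi_m\}$ is superfluous. Your exceptional sets (finiteness of $\overline D_\mu|g_n|$, the Borel--Cantelli set) do not depend on $\varphi$. Your bounds involve $\varphi$ only through $\|\varphi\|_\infty$ and $\|\nabla\varphi\|_\infty$. So a single full-measure set already gives convergence in $\mathcal D'(B_1)$, which is exactly how the paper proceeds.
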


\begin{proof}
	Since $\sig\, \mu$ belongs to $\Fd$, there exists $\la_n= w_n\, \mu + \la_n^s\in \Nd$ such that
	$$ \int |\sig-w_n|\, d\mu + \int |\la_n^s| \le \frac1{n} \quad,\quad |\la_n^s| \perp \mu.$$
	Up to extracting a subsequence, we can assume that $ w_n\to \sig$ $\mu$ a.e. as $n\to \infty$.
	Then setting $f_h^{x_0} = g_{h,n}^{x_0} + r_{h,n}^{x_0}$ where $g_{h,n}^{x_0} := \frac{h}{\mu(B(x_0,h))}\ T_h^{x_0}( -\div \la_n)$, we have
	\begin{align*}
		\pairing {r_{h,n}^{x_0}, \f} 
		&= \fint_{B(x_0,h)} \pairing{\sig-w_n, \nabla \f\left(\frac{x-x_0}{h}\right)}\, d\mu
		+ \frac{1}{\mu(B(x_0,h))} \pairing{\la_n^s,\nabla \f\left(\frac{x-x_0}{h}\right)} \\
		& \le \ \Lip(\f) \, \left(\fint_{B(x_0,h)}|\sig-w_n| \, d\mu + \,\frac{ |\la_n^s| (B(x_0,h))}{\mu(B(x_0,h))} \right)
	\end{align*} 
	for every $\f\in\D(B_1)$. On the other hand, since $\div \la_n$ is a measure:
	\begin{align*}
		\pairing {g_{h,n}^{x_0}, \f} &\le h\, \sup |\f| \, \frac{|\div \la_n|(B(x_0,h))}{\mu(B(x_0,h))}. 
	\end{align*} 
	Next we choose a set $B \in \B(\Rd)$ with $\mu(B)=0$ and such that, for every $x_0\in B^c$ and $n\in \NN^*$:
	\begin{gather*}
		\limsup_{h\to 0} \frac{|\div \la_n|(B(x_0,h))}{\mu(B(x_0,h))} < +\infty, 
		\qquad\quad
		\limsup_{h\to 0} \frac{|\la_n^s|(B(x_0,h))}{\mu(B(x_0,h))} = 0, \\ 
		\text{and} \qquad
		\lim_{h\to 0} \fint_{B(x_0,h)}|\sig-w_n| \, d\mu = |\sig(x_0)- w_n(x_0)|.
	\end{gather*} 
	Then gathering all, we infer that, for every $n$ and $x_0\in B^c$, it holds
	\begin{align*}
		\limsup_{h\to 0} \left|\pairing{f_h^{x_0}, \f}\right| \ \le\ \Lip(\f) \, |\sig-w_n|(x_0) \quad \forall \f\in \D(B_1).
	\end{align*}
	The convergence \eqref{blow-div} follows by the pointwise convergence $w_n\to \sig$ as $n\to\infty$.
\end{proof}

\begin{proposition}\label{MK-Preiss} 
	Let $\mu\in \M_+(\Rd)$ be a Radon measure. Then for $\mu$ a.e. $x_0\in \Rd $, any measure $\nu \in \text{Tan}(\mu,x_0)$ satisfies the equality 
	\begin{equation}\label{slice}
		\nu \ \ = \H^{k}\res T_\mu(x_0) \otimes \eta \qquad \text{in } \Rd,
	\end{equation}
	where $k=k(x_0)$ denotes the dimension of the $T_\mu(x_0)$ and $\eta$ is a Radon measure on $T_\mu^\perp(x_0)$. In particular, if $\mu$ admits a $k$-dimensional tangent space $V$ at $x_0$ in the sense of~\eqref{def:tanplan}, then 
	\begin{equation}\label{P<T}
		T_\mu(x_0) \ = \ V,
	\end{equation}
while in general~\eqref{slice} only yields $T_\mu(x_0) \subset V$.
\end{proposition}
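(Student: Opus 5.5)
The plan is to show that, at $\mu$-a.e. $x_0$, every tangent measure $\nu\in\mathrm{Tan}(\mu,x_0)$ has zero distributional derivative along every vector of $T_\mu(x_0)$. A measure on $\Rd$ whose derivatives $\partial_v\nu$ vanish for all $v$ in a $k$-dimensional subspace $T$ is invariant under translations along $T$. Such a measure disintegrates as $\H^k\res T\otimes\eta$ with $\eta$ a Radon measure on $T^\perp$; this is a standard fact, proved by mollifying in the $T$ directions.

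To obtain the invariance, fix a countable family $(\sig_k)\subset\T_\mu\cap(L^\infty_\mu)^d$ such that $T_\mu(x)=\ov{\{\sig_k(x)\}}$ $\mu$-a.e., as in Subsection~\ref{prel:multif}. Discard a $\mu$-null set so that, for every $k$ and every remaining $x_0$:
\begin{itemize}
\item Lemma~\ref{Conv-AEd} applies to $\sig_k$ at $x_0$;
\item $x_0$ is a Lebesgue point of $\sig_k$, meaning $\fint_{B(x_0,h)}|\sig_k-\sig_k(x_0)|\,d\mu\to0$.
\end{itemize}

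Now take $\nu\in\mathrm{Tan}(\mu,x_0)$ with sequences $c_j,r_j$, and fix $\f\in\D(B_R)$. By Remark~\ref{ball-control} we may choose $R$ with $c_j\mu(B_{r_jR})\to\kappa(x_0,R)\in(0,\infty)$. Lemma~\ref{Conv-AEd} is stated on $B_1$ with normalisation $\mu(B(x_0,h))$, but it applies verbatim on $B_R$ with $\mu(B(x_0,Rh))$, since the same argument works. It gives
\begin{equation*}
\frac{1}{\mu(B(x_0,Rr_j))}\int\pairing{\sig_k(x),\nabla\f\big(\tfrac{x-x_0}{r_j}\big)}\,d\mu\to0 .
\end{equation*}
Here $\frac{h}{\mu}\cdot\frac1h$ from the chain rule cancels exactly. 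By the Lebesgue point property, we may replace $\sig_k(x)$ by $\sig_k(x_0)$ up to an error bounded by $\|\nabla\f\|_\infty\fint_{B(x_0,Rr_j)}|\sig_k-\sig_k(x_0)|\,d\mu\to0$. Multiplying by $c_j\mu(B_{r_jR})\to\kappa$ and using \eqref{def:tangentPreiss} with the test function $\pairing{\sig_k(x_0),\nabla\f}$, we get $\int\pairing{\sig_k(x_0),\nabla\f}\,d\nu=0$. So $\partial_{v}\nu=0$ for $v=\sig_k(x_0)$, and by density and linearity for every $v\in T_\mu(x_0)$. This proves \eqref{slice}.

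For the second statement, suppose $\mu$ has a $k'$-dimensional tangent space $V$ at $x_0$. Then $\theta\H^{k'}\res V\in\mathrm{Tan}(\mu,x_0)$, and it must be invariant along $T_\mu(x_0)$. A flat measure on $V$ is invariant under translation by $v$ only if $v\in V$, so $T_\mu(x_0)\subset V$. The same argument applied to a general tangent measure gives this inclusion whenever the tangent measures are flat.

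For the reverse inclusion in \eqref{P<T}, I would first try the following. The sets where the tangent plane exists are $k'$-rectifiable pieces $E$. By locality \eqref{locality}, $T_\mu=T_{\mu\res E}$ on $E$. For rectifiable measures one shows $T_{\H^{k'}\res E}\supset\mathrm{Tan}^{k'}(E,x)$ a.e. This follows by producing normal currents carried by Lipschitz graphs, for example $\la=\tau\,\H^{k'}\res(\text{graph})$ built from coordinate curve families via Theorem~\ref{Smirnov}-type decompositions or from the $\mu\ll\Ld$ result of Section~\ref{sec:further} transported by Lipschitz push-forward.

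I expect this reverse inclusion to be the main obstacle. It requires rectifiability, which is not immediate from the existence of a tangent plane at a single point; a.e. existence is needed, via Preiss/Marstrand–Mattila. The statement itself acknowledges that in general only $T_\mu(x_0)\subset V$ holds. I would therefore state equality on the $\mu$-a.e. set where \eqref{def:tanplan} holds, and invoke the push-forward stability of $T_\mu$ together with the absolutely continuous case.
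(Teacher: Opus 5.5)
Your argument for \eqref{slice} is correct and is essentially the paper's. The paper tests \eqref{blow-div} with the $d$ rows of $P_\mu$ at a $\mu$-Lebesgue point of $P_\mu$, which gives $\div(P_\mu(x_0)\,\nu)=0$ in one stroke. You instead use a countable dense family $(\sig_k)\subset\T_\mu$ and close up by linearity and density. The final disintegration is standard either way: dyadic cubes in the paper, mollification along $T_\mu(x_0)$ or uniqueness of Haar measure in yours.

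The gap is in \eqref{P<T}. You correctly derive $T_\mu(x_0)\subset V$ from the invariance of $\theta\,\H^{k}\res V\in\mathrm{Tan}(\mu,x_0)$. You then treat $\dim V=k'$ as arbitrary and try to prove $V\subset T_\mu(x_0)$ via rectifiability (Marstrand--Mattila/Preiss), normal currents on Lipschitz graphs and push-forward. That program is only sketched and you explicitly leave it open, so as written the equality is not proved.

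None of this is needed. In the statement, $V$ is assumed to be $k$-dimensional with $k=k(x_0)=\dim T_\mu(x_0)$. The inclusion $T_\mu(x_0)\subset V$ between subspaces of equal dimension is therefore already the equality. This is why the paper's proof stops once \eqref{slice} is established. It is also why the statement says that in general, when $\dim V$ may differ from $\dim T_\mu(x_0)$, only the inclusion is claimed. The equality for a tangent plane of arbitrary dimension that you were aiming at is a strictly stronger assertion and not part of this proposition. Replace your last two paragraphs with this dimension count.
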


\begin{proof}
	Replacing $\mu$ by $\mu\res B_m$ does not change $T_\mu$ nor $\text{Tan}(\mu,x_0)$ for $x_0 \in B_m$. Hence, we may assume that $\mu$ is finite. Let $P_\mu^1,\dots,P_\mu^d \in \mathcal T_\mu$ be the rows of the orthogonal projection $P_\mu$. We fix a $\mu$-negligible Borel set $N$ such that, for every
	$x_0\in\Rd\setminus N$, the set $\text{Tan}(\mu,x_0)$ is non empty, the convergence \eqref{blow-div} holds at $x_0$ for $\sig=P_\mu^l$ and every $l\in\{1,\dots,d\}$, and $x_0$ is a $\mu$-Lebesgue point of $P_\mu$.

	Let $\nu\in \text{Tan}(\mu, x_0)$. Then there exist positive reals $c_j$ and $r_j\to 0$ such that \eqref{def:tangentPreiss} holds for any $\f\in\D(\Rd)$.
	Fix such an element $\f$ and a finite radius $R>0$ so large that $\spt(\f)\subset B_R$. In virtue of
	\eqref{ball-esti}, $\kappa_j:=c_j\,\mu\left(B(x_0,r_j\,R)\right)$ remains bounded and strictly positive.
	Thus, possibly after extracting a subsequence, there exists a finite constant $\kappa>0$ (depending on
	$R$) such that
	$$ 0< \kappa:= \limsup_j \kappa_j < +\infty.$$
	We claim that
	\begin{equation}\label{claimMKP}
	\partial_z \nu = 0 \quad \text{in the sense of distributions, for all $z \in V \coloneqq T_\mu(x_0)$}.
	\end{equation}
	Fix $z \in V$. Applying \eqref{blow-div} with $\sig=P_\mu^l$ and $h=R\,r_j$, we get for every
	$l\in\{1,\dots,d\}$:
	\begin{align*} 
		0 &= \lim_{j\to \infty} \fint_{B(x_0,R r_j)} \pairing{P_\mu^l(x), \nabla \f\left(\frac{x-x_0}{R r_j}\right)} \, \mu(dx) \\
		&= \lim_{j\to \infty} \fint_{B(x_0,R r_j)} \pairing{P_\mu^l(x_0), \nabla \f\left(\frac{x-x_0}{R r_j}\right)}\, \mu(dx) \\
		&= \frac1{\kappa(x_0,R)}\, \pairing{P_\mu^l(x_0), \int \nabla \f(y/R)\, \nu(dy)},
	\end{align*}
	where the second equality follows from $x_0$ being a $\mu$-Lebesgue point of $P_\mu$, and the
	third one from \eqref{def:tangentPreiss} applied to $\f_R=\f(\frac{\cdot}{R})$, taking into account
	\eqref{ball-esti}. As $\f$ is arbitrary in $\D(\Rd)$, the equality above yields $-\div(P_\mu(x_0) \, \nu) = 0$, which is equivalent to \eqref{claimMKP}.

	The fact that $\partial_z \nu = 0$ for all $v \in V \coloneqq T_\mu(x_0)$ implies that $\nu$ is invariant under translations by elements of $V$. Write $\Rd = V\times V^\perp$, fix a half-open unit cube $Q\subset V$ and set $\eta \in \M(V^\perp)$ by $\eta(B):=\nu(Q\times B)$. The $2^{jk}$ half-open dyadic subcubes of $Q$ of side $2^{-j}$ are disjoint, tile $Q$, and are translates of one another; by translation invariance they carry the same mass, so that $\nu(A\times B) = \H^k(A)\,\eta(B)$ whenever $A$ is a translate of such a subcube. Since every open subset of $V$ is a countable disjoint union of half-open dyadic cubes, additivity and regularity extend this to all Borel sets $A\subset V$, which is \eqref{slice}.
\end{proof}

\begin{corollary}
	Let $\mu$ be a $k$-rectifiable measure, that is of the form $\mu =\theta \, \H^k\res S$ where 
	$S$ is a countably $\H^k$ rectifiable subset of $\R^d$ and $\theta$ a positive $\H^k$ locally integrable function on $S$.
	Then, denoting $T_S(x)$ the classical tangent space (defined at $\H^k$ a.e. $x\in S$), the following equalities are true $\mu$ a.e.: 
	$$ T_\mu(x) = T_S(x), \quad \text{Tan}(\mu,x) = \{ c \H^k \res T_S(x)\ :\ c>0\}.$$
\end{corollary}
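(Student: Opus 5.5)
The corollary has two claims. For the first, I will show $T_\mu(x)=T_S(x)$ $\mu$-a.e. by proving both inclusions. For the second, I will then deduce the description of $\text{Tan}(\mu,x)$ from Proposition~\ref{MK-Preiss} together with the classical blow-up theory of rectifiable measures.

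\emph{Step 1: $T_\mu(x)\subset T_S(x)$.} By the classical rectifiability theory (see \cite{Mattila, Simon}), at $\H^k$-a.e. $x\in S$ the measure $\mu=\theta\,\H^k\res S$ admits the $k$-dimensional approximate tangent plane $V=T_S(x)$ with multiplicity $\theta(x)$, in the sense of Definition~\ref{tangentplane}. At such points Remark~\ref{localV} gives $\text{Tan}(\mu,x)=\{c\,\H^k\res T_S(x) : c>0\}$, which already proves the second claim. Proposition~\ref{MK-Preiss} then applies. Every $\nu\in\text{Tan}(\mu,x)$ is translation invariant along $T_\mu(x)$, and $\H^k\res T_S(x)$ is invariant only under translations by vectors of $T_S(x)$. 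Hence $T_\mu(x)\subset T_S(x)$; this is precisely the inclusion recorded after \eqref{P<T}.

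\emph{Step 2: $T_S(x)\subset T_\mu(x)$.} Here I use locality. Cover $S$, up to an $\H^k$-null set, by countably many disjoint Borel pieces $S_i\subset\Gamma_i$, where each $\Gamma_i$ is the image of a $C^1$ embedding $\phi_i:U_i\subset\R^k\to\Rd$ of an open set. By \eqref{restriction} and \eqref{locality}, it suffices to show $T_{\H^k\res\Gamma_i}(x)\supset T_{\Gamma_i}(x)$ for $\H^k$-a.e. $x\in\Gamma_i$, since $\mu\res S_i\ll\H^k\res\Gamma_i$ and $T_S=T_{\Gamma_i}$ a.e. on $S_i$. To do this, take a tangent field $\tau=d\phi_i(e_j)\circ\phi_i^{-1}$ multiplied by a cutoff $\psi\circ\phi_i^{-1}$ with $\psi\in\D(U_i)$. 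Then $\sigma\,\H^k\res\Gamma_i$ is the pushforward by $\phi_i$ of the normal current $\psi\,e_j\,J\phi_i^{-1}\,\L^k$ (after absorbing the Jacobian, which is continuous), up to controlling its divergence. Rather than computing divergences with a merely continuous Jacobian, I will argue differently. By Proposition~\ref{AM=MK}\ref{item:VT} together with Theorem~\ref{maximal} and Theorem~\ref{MK-infinitesimal}\ref{MKi:full}, $T_\mu$ is the maximal differentiability bundle. So it is enough to show that every Lipschitz $u$ is differentiable along $T_S(x)$ at $\mu$-a.e. $x$.

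\emph{Expected obstacle.} For $C^1$ graphs, tangential differentiability is easy through $u\circ\phi_i$ and Rademacher. The delicate point is that differentiability of $u\circ\phi_i$ on $\Gamma_i$ gives differentiability of $u$ only along the surface, not along the straight directions $x+v$ with $v\in T_S(x)$. These differ by $o(|v|)$ because $\phi_i$ is $C^1$, and $u$ is Lipschitz, so the error $u(x+v)-u(\phi_i(\phi_i^{-1}(x)+Dv))$ is $o(|v|)$. I expect to handle this with exactly the comparison used for curves in Lemma~\ref{Phigala}, where $R_h^\gamma$ controls the gap between the chord and the tangent line. Alternatively, if the maximality route seems heavy, I can use the Smirnov-type integral of curves: foliate $\phi_i(U_i)$ by the images of coordinate lines, so that $\lambda=\int\lambda_\gamma\,d\nu$ is a normal current with density in direction $d\phi_i e_j$ with respect to $\H^k\res\Gamma_i$. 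This shows these directions lie in $T_\mu$ via \eqref{Tmu=} and Lemma~\ref{propQ}\ref{propQ:loc}.
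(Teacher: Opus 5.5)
Your Step~1 is the whole of the paper's proof. There, the description of $\text{Tan}(\mu,x)$ and the existence of the approximate tangent plane $T_S(x)$ are taken as classical, and $T_\mu(x)=T_S(x)$ is read off from \eqref{P<T}. You are right, however, that the proof of Proposition~\ref{MK-Preiss} only shows that tangent measures are invariant along $T_\mu(x_0)$. That gives only $T_\mu(x)\subset T_S(x)$. The equality in \eqref{P<T} presupposes $\dim V=\dim T_\mu(x_0)$, which is exactly what is at stake here. Moreover, Preiss's example in Remark~\ref{k=d} shows that invariance of all blow-ups along a subspace cannot, by itself, place that subspace inside $T_\mu$. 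So your Step~2 genuinely supplies the inclusion $T_S(x)\subset T_\mu(x)$ that the paper's short argument leaves implicit, and it is correct.

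Theorem~\ref{maximal} alone suffices for this direction; Proposition~\ref{AM=MK} and assertion~\ref{MKi:full} of Theorem~\ref{MK-infinitesimal} play no role. Your ``expected obstacle'' is a one-line estimate and needs no analogue of Lemma~\ref{Phigala}. Suppose $x=\phi_i(y)$, $u\circ\phi_i$ is differentiable at $y$, and $v=D\phi_i(y)w$. Then $|w|\le C|v|$ because $D\phi_i(y)$ is injective, and $|x+v-\phi_i(y+w)|=o(|w|)$. Since $u$ is Lipschitz, $u(x+v)=u(x)+D(u\circ\phi_i)(y)\,w+o(|v|)$. The exceptional sets are handled by the local bi-Lipschitz property of $\phi_i$ and by $T_S=T_{\Gamma_i}$ holding $\H^k$-a.e.\ on $S_i$.

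The cost of your route is that it depends on the construction in Proposition~\ref{nodiff}. A lighter way to the same inclusion is the pushforward idea you abandoned, which Corollary~\ref{pushflat} makes rigorous without computing any divergence. Take $\mu_0=\L^k\res\overline{B}$ for a closed ball $\overline{B}\subset U_i$ on which $\phi_i$ is bi-Lipschitz. Let $f$ be a Lipschitz extension of $\phi_i|_{\overline{B}}$ and set $\nu=f_\#\mu_0$, which is equivalent to $\H^k\res\phi_i(\overline{B})$ by the area formula. Then $e_j\in\T_{\mu_0}$ by Proposition~\ref{topdim}. Since $\nabla_{\mu_0}f=D\phi_i$ a.e.\ on $\overline{B}$ and $f$ is injective there, $f_*e_j=(D\phi_i\,e_j)\circ\phi_i^{-1}\in\T_\nu$. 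This gives $T_{\Gamma_i}\subset T_\nu$ a.e., and $T_S\subset T_\mu$ then follows by locality.
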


\begin{proof}
	The second equality is classical and also the fact that $\mu$ admits $T_S(x)$ as a tangent space in the sense of \eqref{def:tanplan}. The first equality follows from the latter and~\eqref{P<T}.
\end{proof}

\begin{remark}\label{k=d} 
	By applying Proposition~\ref{MK-Preiss} in the case $k=d$, we deduce that the Preiss $\text{Tan}(\mu,x_0)$ reduces to $\set{ \theta \, \L^d}{\theta > 0}$ for $\mu$ almost all $x_0$ (the measure $\eta$ appearing in \eqref{slice} reduces to a scalar). Unfortunately it is not sufficient to recover that $\mu \ll \L^d$ as stated in Proposition \ref{topdim}. In fact, in \cite{Preiss87}, David Preiss constructed a singular measure $\mu\in \M_+(\R^d)$ with $\text{Tan}(\mu,x_0) = \set{\theta \mathcal L^d}{\theta > 0}$.
\end{remark}

\section{Further properties of MK bundles and tangential calculus}\label{sec:further}

\subsection{Decomposition into components of constant dimension}

Denote $P_{\mu}$ the tensor-valued function from $\Rd$ to symmetric tensors $\Sdd$ associated to the orthogonal projection on $T_\mu(x)$. If $\mu$ is multi-dimensional (as it is often the case in the analysis of mechanical structures \cite{BBS}), this integer-valued function allows to split $\mu$ into components of constant dimension:
\begin{equation}\label{splitmu}
	\mu = \sum_{k=0}^d \mu_k, \qquad \mu_k := \mu \, \res  \big\{\dim  T_\mu = k\big\} \ ,
\end{equation}

Note that the components $\mu_k$ that we obtain here are mutually singular. The top dimension $k=d$ is special since $\mu_d$ is absolutely continuous with respect to the Lebesgue measure $\Ld$. Indeed, by adapting to flat chains the structure theorem for $\A$-free measures of Philippis and Rindler \cite{PR2016}, we prove the following.

\begin{proposition}\label{topdim} 
	Let $\mu\in \M_+(\Rd)$ be a Radon measure. Then
	$$ \dim \, T_\mu(x) = d \;\; \text{ $\mu$ a.e.} \quad \iff \quad \mu\ll \L^d .$$
\end{proposition}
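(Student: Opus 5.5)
The plan is to prove the two implications separately. The easy direction ($\Leftarrow$) comes from locality. The hard direction ($\Rightarrow$) is an adaptation of the De Philippis--Rindler argument, with flat chains in place of normal currents.

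\emph{Proof of $\Leftarrow$.} Assume $\mu\ll\L^d$. The idea is to compute $T_{\L^d}$ first and then transfer it to $\mu$. For each canonical basis vector $e_i$, the constant field $e_i\one_{B_R}$ gives $e_i\one_{B_R}\L^d\in\Fd$. Indeed, its divergence is $-e_i\cdot\nu_{B_R}\,\H^{d-1}\res\partial B_R$, a balanced measure of finite first moment, so it lies in $\Mod\subset\AEd$. By \eqref{Tmu=}, this gives $e_i\in T_{\L^d}(x)$ for a.e.\ $x\in B_R$, for every $R$. Hence $T_{\L^d}=\Rd$ a.e. Since $\mu\ll\L^d$, \eqref{restriction} yields $T_\mu=T_{\L^d}=\Rd$ $\mu$-a.e.

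\emph{Proof of $\Rightarrow$.} Assume $T_\mu(x)=\Rd$ $\mu$-a.e. and write $\mu=\mu^a+\mu^s$ with $\mu^s\perp\L^d$. Suppose for contradiction that $\mu^s\ne0$. By \eqref{disjunction}, we still have $T_{\mu^s}=\Rd$ $\mu^s$-a.e. Localizing to a ball, we may assume $\mu^s$ is finite.

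For each $i$, the field $e_i$ then belongs to $\T_{\mu^s}$, so $e_i\mu^s\in\Fd$. By Proposition~\ref{fundamental}\ref{fundamental:clos}, there are $\la^i_n\in\Nd$ with $\|\la^i_n-e_i\mu^s\|\to0$. Passing to a subsequence as in the proof of Lemma~\ref{Conv-AEd}, we may take
\[
\la^i_n=w^i_n\mu^s+\la^{i,s}_n,\qquad w^i_n\to e_i \ \ \mu^s\text{-a.e.},\qquad \|\la^{i,s}_n\|\to0 .
\]
Stacking the $d$ currents gives a matrix-valued measure $\Lambda_n=(\la^1_n,\dots,\la^d_n)$ whose row-wise divergence $\div\Lambda_n$ is a finite measure. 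The polar of its $\mu^s$-part tends to the identity matrix.

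The key tool is now the De Philippis--Rindler theorem for the operator $\div$ acting row-wise on $\R^{d\times d}$. Its wave cone is the set of rank-deficient matrices. For a measure $\Lambda$ with $\div\Lambda$ a finite measure, the theorem gives $\frac{d\Lambda}{d|\Lambda|}(x)\in\{\det=0\}$ for $|\Lambda|^s$-a.e.\ $x$. Apply this to $\Lambda_n$ on a set of positive $\mu^s$-measure where $w^i_n$ is close to $e_i$, for instance by Egorov on a set $E$ with $\mu^s(E)>0$. There the matrix $(w^1_n,\dots,w^d_n)$ is close to $I$, hence invertible. But $\mu^s\ll|\Lambda_n|^s$ on $E$, and there the polar of $\Lambda_n$ is a positive multiple of $(w^i_n)_i$, since the $\la^{i,s}_n$ are singular to $\mu^s$. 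This polar must be rank-deficient, a contradiction. Thus $\mu^s=0$.

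\emph{Main obstacle.} The step I expect to be delicate is that De Philippis--Rindler requires the divergence to be a measure. This is exactly why I pass through the normal approximations $\Lambda_n$ rather than work with $e_i\mu^s$ directly, whose divergence is only in $\AEd$. The point to verify carefully is the polar computation: the $\mu^s$-absolutely continuous part of $|\Lambda_n|$ has density comparable to $|(w^i_n)_i|$, and its polar is $(w^i_n)_i$ normalized. This follows from the Lebesgue decomposition with respect to $\mu^s$, together with $\mu^s\perp\L^d$, which ensures that $\mu^s$ charges only the singular part of $\Lambda_n$. A fallback would be to reprove the needed rank-one (here: rank-deficiency) statement directly for $\AEd$-divergences by blow-up, as sketched in the section title. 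I would first try the approximation route above.
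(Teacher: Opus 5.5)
Your proof is correct and follows the same route as the paper. The easy direction uses $T_{\L^d}=\Rd$ together with locality. For the converse, you approximate the flat chains $e_i\mu^s$ by normal currents, stack them into a matrix-valued measure whose divergence is a measure, and apply De Philippis--Rindler with the cone of singular matrices.

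The only difference is how you pass to the limit. The paper observes that $\lambda\mapsto\int \dist\bigl(\frac{d\lambda}{d|\lambda|},\{\det=0\}\bigr)\,d|\lambda|^s$ is $1$-Lipschitz and vanishes on $\Nd^d$, hence on its closure; this gives De Philippis--Rindler for all flat chains. You instead argue by contradiction, using the Lebesgue decomposition with respect to $\mu^s$ and the a.e.\ convergence $w^i_n\to e_i$, which works just as well. Egorov is not even needed, since $\det(w^1_n,\dots,w^d_n)=0$ holds $\mu^s$-a.e.\ for every $n$.
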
 

\begin{proof} 
	Assume that $\mu\ll \L^d $. Then, by applying the property \eqref{restriction}, we infer that $T_\mu(x)= T_{\L^d} (x)$ holds $\mu$ a.e.. The latter set is easily seen to be equal to $\Rd$, since any smooth compactly supported vector field $\sigma$ belongs to $\T_{\L^d}$. Taking $\sigma(x) \equiv \overline{\sigma}$ in an open ball, we deduce that $\overline{\sigma} \in T_{\mu}(x)$ for any $\overline{\sigma} \in \Rd$ and $\mu$-a.e. $x \in \Rd$.
	
	Conversely, let $\Lambda \subset \R^{d \times d}$ be the cone of all singular matrices. The distance function $g = \dist(\cdot,\Lambda)$ is $1$-Lipschitz, positively $1$-homogeneous and vanishes exactly on $\Lambda$. Define 
	\[
		\Phi(\lambda) \coloneqq\ \int g\Bigl(\frac{d\lambda}{d|\lambda|}\Bigr)\,d|\lambda|^s , \qquad \lambda  \in \M(\R^d;\R^{d \times d}) .
	\]
	Here $|\lambda|^s$ is the part of $|\lambda|$ that is singular with respect to $\L^d$.
	We observe that $\Phi$ is $1$-Lipschitz: Indeed, for $\lambda_1,\lambda_2 \in \M(\R^d;\R^{d \times d})$, let $\pi = |\lambda_1| + |\lambda_2|$ and write $\lambda_1 = f_1 \pi$, $\lambda_2 = f_2 \pi$. Because $\Phi$ is positively $1$-homogeneous and $g$ is $1$-Lipschitz we get
	\[
		| \Phi(\lambda_1) - \Phi(\lambda_2) |
		\ \le\ \int|g(f_1)-g(f_2)|\,d\pi 
		\ \le\ \int|f_1 - f_2|\,d\pi^s 
		\ \le \ \|\lambda_1-\lambda_2\|_\M .
	\]
	The result of De Philippis and Rindler~\cite[Thm 1.1]{PR2016} is equivalent with the functional $\Phi$ vanishing on $\Nd^d$. Therefore, it suffices to show that $\Phi(P_\mu \, \mu) = 0$. The density of $\Nd$ in $\Fd$ applied row-wise to $P_\mu\,\mu$ yields a sequence $(\lambda_j)_j \in \Nd^d$ satisfying $\|P_\mu\, \mu - \lambda_j\|_\M \to 0$. In particular $\Phi(P_\mu\,\mu) = \lim_{j \to \infty} \Phi(\lambda_j) = 0$.  Hence $d\lambda/d|\lambda|$ is singular for any flat chain, and we deduce that $T_{\mu}$ is at most $(d-1)$-dimensional $\mu$-a.e..
\end{proof}

\begin{corollary}[De Philippis--Rindler for flat chains]\label{DPRF1}
	Let $T_1,\dots,T_d \in \Fd$ and $\mathbf T = (T_1,\dots,T_d) \in \mathcal M(\R^d; \R^{d \times d})$, then
	\begin{align*}
		\mathrm{rank} \left(\frac{d\mathbf T}{d|\mathbf T|}\right) < d \qquad |\mathbf T|^s\text{-almost everywhere}.
	\end{align*}
\end{corollary}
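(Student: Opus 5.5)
The corollary should follow from the proof of Proposition~\ref{topdim}. Let $\mathbf T=(T_1,\dots,T_d)$ be the matrix-valued measure whose rows are the flat chains $T_i\in\Fd$. The plan is to show $\Phi(\mathbf T)=0$, where
\[
\Phi(\lambda)=\int g\Bigl(\frac{d\lambda}{d|\lambda|}\Bigr)\,d|\lambda|^s ,\qquad g=\dist(\cdot,\Lambda),
\]
and $\Lambda\subset\R^{d\times d}$ is the cone of singular matrices. Since $g\ge 0$ vanishes exactly on $\Lambda$, the identity $\Phi(\mathbf T)=0$ says precisely that $\frac{d\mathbf T}{d|\mathbf T|}\in\Lambda$ holds $|\mathbf T|^s$-a.e. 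This is the stated rank condition.

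\textbf{Step 1: reduction to normal currents.} By assertion~\ref{fundamental:clos} of Proposition~\ref{fundamental}, each row can be approximated. For every $i$ there are $T_i^j\in\Nd$ with $\|T_i^j-T_i\|_{\Md}\to0$ as $j\to\infty$. Set $\lambda_j:=(T_1^j,\dots,T_d^j)\in\Nd^d$. Then $\|\lambda_j-\mathbf T\|_{\M}\to0$, since the total variation of a matrix measure is controlled by the sum of the total variations of its rows.

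\textbf{Step 2: the rank condition for normal currents.} For each $\lambda_j$, the matrix differential constraint that every row has measure divergence is of the type treated by De~Philippis and Rindler \cite[Thm~1.1]{PR2016}. That theorem gives that the polar of $\lambda_j$ lies in the wave cone $\Lambda$ at $|\lambda_j|^s$-a.e. point, so $\Phi(\lambda_j)=0$. This step needs care in one respect: $\div\lambda_j$ is only a measure rather than zero. I would handle this as is standard, by noting that \cite{PR2016} allows a right-hand side that is a measure, or by absorbing it, e.g. adding a coordinate so that each row becomes divergence-free in $\R^{d+1}$.

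\textbf{Step 3: passage to the limit.} The functional $\Phi$ is $1$-Lipschitz for the total variation norm, as shown in the proof of Proposition~\ref{topdim}. The key points are that the absolutely continuous part does not contribute and that $g$ is $1$-Lipschitz and positively $1$-homogeneous. Hence $\Phi(\mathbf T)=\lim_j\Phi(\lambda_j)=0$.

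The main obstacle is Step~2: justifying that \cite[Thm~1.1]{PR2016} applies with a nonzero measure divergence and that its wave cone for the row-divergence operator is exactly the set of singular matrices. The Lipschitz estimate in Step~3 is routine once a common dominating measure $\pi=|\lambda_1|+|\lambda_2|$ is used. There one must check that the singular part $\pi^s$ dominates the singular parts of both measures, which holds because $|\lambda_i|^s=\bigl(|\lambda_i|\bigr)\res\{\text{support of }\pi^s\}$.
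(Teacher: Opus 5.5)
Your argument is correct and is essentially the paper's. The paper deduces the corollary from Proposition~\ref{topdim}: the rows of the polar lie in $T_\mu$, so full rank forces $\dim T_\mu=d$. The proof of that proposition is exactly your $\Phi$-argument (density of $\Nd$ in $\Fd$, the $1$-Lipschitz bound, and \cite[Thm~1.1]{PR2016} on $\Nd^d$), so you have simply inlined it and bypassed the bundle.

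Two small cautions. In Step~2, rely on the measure-right-hand-side version of \cite{PR2016}, as the paper implicitly does. Your alternative of adding a coordinate gives nothing, because every $d\times(d+1)$ matrix has a nontrivial kernel, so the wave cone becomes the whole matrix space. In Step~3, the set on which $\pi^s$ is concentrated must be taken as a Lebesgue-null Borel set, not the topological support of $\pi^s$.
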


\begin{proof}
	Let $\mu = |\mathbf T|$ and write $T_j = \sigma_j |\mathbf T|$ so that $\sigma_j \in \mathcal T_\mu$ for all $j = 1,\dots,d$. 
	By the top-dimensional estimate it follows that $\mu_d \ll \mathcal L^d$, which is precisely the sought assertion.
\end{proof}

\begin{remark}[Dimensional estimates] Proposition~\ref{topdim} extends to the following relation between tangential dimension and dimensional estimates:
\begin{align*}
		 \dim \, T_\mu(x) \ge k \;\; \text{ $\mu$ a.e.} \quad \implies \quad \mu\ll \mathcal I^k \ll \mathcal H^k ,
\end{align*}
where $\mathcal{I}^k$ is the $k$-dimensional integral-geometric measure and $k \in \{1,\dots,d\}$. The proof follows verbatim the proof of sufficiency in the proposition above, by replacing \cite[Theorem 1.1]{PR2016} with its geometric version \cite[Theorem~1.3]{ADHR} and with $g = \dist(\cdot,\Lambda_k)$, where $\Lambda_k$ is the cone of matrices with rank less than $k$.
\end{remark}

\subsection{Generalized varifold and mean curvature}

Let us identify the Grassmannian manifold $\G_d$ of $\Rd$ with the subset of those symmetric tensors $P \in \Sdd$ which are projection matrices (i.e. those with eigenvalues in $\{0,1\}$). Following \cite{BBF1,BBF2}, we can associate every Radon measure $\mu\in \M_+(\Rd)$ with a generalized varifold. This measure is on the product $ \Rd \times \Sdd$ whose first marginal is $\mu$. This varifold $\mathbf{V}_\mu$ is defined by
\begin{align*}
	\pairing{\mathbf{V}_\mu, \f} \ :=\ \int \f(x,P_\mu(x)) \, d\mu 
	\qquad \text{ for every } \f \in C_0(\Rd \times \Sdd).
\end{align*}
Then \eqref{splitmu} induces the decomposition $\mathbf{V}_\mu = \sum_{k=1}^d \mathbf{V}_{\mu_k}$ where $\mathbf{V}_{\mu_k}$ fits with the classical notion of $k$-varifold in $\Rd$ when $\mu_k$ is a $k$-rectifiable measure (see \cite{Allard72,Simon}). Following \cite{BBF1,BBF2} (see also \cite{Buet} for related weak notions of curvature), the first variation of $\mathbf{V}_\mu$ allows to define a generalized mean curvature vector $H(\mu)$ by setting:
\begin{align*}
	H(\mu) := \div (P_\mu \mu)
\end{align*}
where the divergence is taken in the distributional sense in $\Rd$. Note that, since every row of the tensor $P_\mu$ belongs to $\T_\mu$, divergence above is component-wise in the Arens-Eells space. Therefore $H(\mu)$ can be identified as an element of $(\AEd)^d$. 

\subsection{Operations on normal and tangent fields}

In this section, we investigate the modification of tangent and normal spaces when the measure $\mu$ is pushed forward by a Lipschitz function. This generalizes the deformation of tangent spaces in manifolds. At first, we consider the dual operation. Let $f \in \Lip(\R^d;\R^m)$, $\mu$ be a Radon measure on $\R^d$, and $\nu \coloneqq f_{\#} \mu$. Define the (tangential) pullback of $\zeta \in L^{\infty}_{\nu}(\mathbb{R}^m; \mathbb{R}^m)$ by $f$ as
\begin{align*}
	(f^* \zeta)(x) \coloneqq (\nabla_\mu f(x))^t (\zeta \circ f)(x) \qquad \text{for } \mu\text{-a.e. } x \in \R^d.
\end{align*}

Here $\nabla_\mu f(x)$ identifies with the matrix whose $i^{\mathrm{th}}$ line is given by $\nabla_\mu f_i$ (see \eqref{def:nabla_mu}). Note that $f^*$ is a linear operator from $L^{\infty}_{\nu}(\mathbb{R}^m; \mathbb{R}^m)$ to $L^{\infty}_\mu(\Rd; \Rd)$ with norm bounded by $\Lip(f)$.

\begin{theorem}\label{res:pullback_stable}
	The pullback operator $f^*$ sends $\N_\nu$ on $\N_\mu$. As a consequence, 
	\begin{align}\label{pullback_stable:statement}
		\nabla_\mu f(x)(T_\mu) \subset T_\nu(f(x)) \qquad \text{for } \mu \text{-a.e. } x \in \R^d.
	\end{align}
\end{theorem}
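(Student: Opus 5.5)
We plan to prove that $f^*\zeta\in\N_\mu$ for every $\zeta\in\N_\nu$ by testing against tangent fields and using the integration by parts formula \eqref{byparts}, together with the characterization $\N_\mu=\T_\mu^\perp$ from \eqref{def:Nmu}. Fix $\zeta\in\N_\nu$ and $\sig\in\T_\mu$. We must show
\[
\int \pairing{f^*\zeta,\sig}\,d\mu=\int \pairing{\zeta\circ f,\ \nabla_\mu f\,\sig}\,d\mu=0 .
\]
The natural object is the push-forward current $\lambda:=f_\#(\sig\mu)\in\M(\R^m;\R^m)$, defined by $\pairing{\lambda,\psi}:=\int\pairing{\psi\circ f,\nabla_\mu f\,\sig}\,d\mu$ for $\psi\in C_0(\R^m;\R^m)$. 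Its total variation is at most $\Lip(f)\int|\sig|d\mu$, and $|\lambda|\ll\nu=f_\#\mu$ (when $\sig\in L^\infty_\mu$ this is immediate; in general one uses the density $\int(\cdot)\,|\nabla_\mu f\sig|d\mu$ pushed forward, which is absolutely continuous w.r.t. $\nu$). So $\lambda=\tau\nu$ for some $\tau\in L^1_\nu$, and the quantity above equals $\int\pairing{\zeta,\tau}\,d\nu$.

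The key step is to show $\lambda\in\mathbf F_1(\R^m)$, i.e. $-\div\lambda\in\text{\bf\AE}(\R^m)$. For $u\in\Lip(\R^m)$ (first smooth), the chain rule for the tangential gradient should give $\nabla_\mu(u\circ f)=(\nabla_\mu f)^t\nabla u\circ f$ $\mu$-a.e.; this can be derived from Theorem~\ref{MK-infinitesimal}\ref{MKi:diff}, since along $\sig\in\T_\mu$ both $f$ and $u\circ f$ have directional derivatives $\mu$-a.e. given by $\nabla_\mu f\,\sig$ and $\pairing{\nabla_\mu(u\circ f),\sig}$, and the classical chain rule for the $C^1$ function $u$ gives $\pairing{\nabla_\mu(u\circ f),\sig}=\pairing{\nabla u(f),\nabla_\mu f\,\sig}$. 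Then by \eqref{byparts},
\[
\pairing{-\div\lambda,u}=\int\pairing{\nabla u\circ f,\nabla_\mu f\,\sig}\,d\mu=\pairing{-\div(\sig\mu),u\circ f}.
\]
Now verify criterion \eqref{criterion}: if $\f_n\in\D(\R^m)$, $\f_n\to c$ pointwise with $\sup|\nabla\f_n|\le C$, then $\f_n\circ f\weakstar c$ in $\Lip(\R^d)$, so by \eqref{weak-star.crit} applied to $-\div(\sig\mu)\in\AEd$ (which kills constants), $\pairing{-\div\lambda,\f_n}\to0$. Hence $\lambda\in\mathbf F_1(\R^m)$, so by Proposition~\ref{res:firstprop} and locality \eqref{restriction}, $\tau\in\T_\nu$, giving $\int\pairing{\zeta,\tau}d\nu=0$. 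This proves $f^*\zeta\in\N_\mu$.

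For the consequence \eqref{pullback_stable:statement}, take a countable family $(\zeta_j)$ in $\N_\nu$ whose values are dense in $N_\nu(y)$ $\nu$-a.e. (e.g. $P_\nu^\perp e_i$ times cutoffs), so that $\zeta_j\circ f$ span $N_\nu(f(x))$ for $\mu$-a.e. $x$ (the $\nu$-null exceptional set pulls back to a $\mu$-null set). Then $(\nabla_\mu f)^t\zeta_j(f(x))\in N_\mu(x)$, i.e. $\pairing{\zeta_j(f(x)),\nabla_\mu f(x)v}=0$ for $v\in T_\mu(x)$, which means $\nabla_\mu f(x)v\in T_\nu(f(x))$. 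The main obstacle I expect is the chain rule identity $\nabla_\mu(u\circ f)=(\nabla_\mu f)^t\nabla u\circ f$: it requires $f$ only Lipschitz, and the step converting pointwise directional derivatives along all $\sig\in\T_\mu$ into equality of tangential vectors uses that both sides lie in $T_\mu(x)$ (the right side after projecting, noting $\nabla_\mu f$ rows lie in $T_\mu$) and the density of $\{\sig_k(x)\}$ in $T_\mu(x)$; I would first treat $u\in C^1$ and then pass to general $u$ only via smooth test functions, which suffices for the criterion.
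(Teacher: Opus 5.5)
Your proof is correct, but it takes a different route from the paper's. The two routes share the directional chain rule, derived from Theorem~\ref{MK-infinitesimal}\ref{MKi:diff} applied to $u\circ f$ and to the components of $f$.

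\emph{The paper's route.} It works on the dual side. Given $\zeta\in\N_\nu$, it uses \eqref{orthocrit} to write $\zeta$ as the $\nu$-a.e.\ limit of $\nabla u_n$ with $u_n\weakstar 0$. Then $u_n\circ f\weakstar 0$, so $\nabla_\mu(u_n\circ f)\weakstar 0$ by Theorem~\ref{weakgrad}. The chain rule converts $\int\pairing{\nabla_\mu(u_n\circ f),\sig}\,d\mu$ into $\int\pairing{\nabla u_n\circ f,\nabla_\mu f\,\sig}\,d\mu$. The fact that $f_*$ maps $\T_\mu$ into $\T_\nu$ (Corollary~\ref{pushflat}) is only deduced afterwards, by duality.

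\emph{Your route.} You go in the opposite direction and prove Corollary~\ref{pushflat} first, on the primal side. The boundary identity $\pairing{-\div\lambda,u}=\pairing{-\div(\sig\mu),u\circ f}$, combined with criterion \eqref{criterion}, shows $f_\#(\sig\mu)\in\mathbf F_1(\R^m)$. The theorem then follows by testing against $\zeta\in\N_\nu=\T_\nu^\perp$.

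\emph{What each buys.} Your approach never uses \eqref{orthocrit}. It therefore avoids Proposition~\ref{Nmu}, whose proof rests on the Fenchel duality of Lemma~\ref{dual-sigma} and on Theorem~\ref{projthm}. It also avoids the weak-$*$ continuity of $\nabla_\mu$. The cost is modest: you must build the push-forward measure and check $|\lambda|\ll\nu$. In exchange you obtain the commutation of boundary and push-forward explicitly. The paper's argument, by contrast, only manipulates test functions and never has to verify the Arens--Eells criterion on $\R^m$.

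\emph{Two simplifications.} First, $\tau\in\T_\nu$ holds directly by the definition \eqref{def:Tmu}; no locality or Proposition~\ref{res:firstprop} is needed. Second, the vector-valued chain rule you flag as the main obstacle is never used. Only the scalar identity $\pairing{\nabla_\mu(u\circ f),\sig}=\pairing{\nabla u\circ f,\nabla_\mu f\,\sig}$ along the fixed $\sig$ is needed, and you already derive it.

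Finally, you obtain \eqref{pullback_stable:statement} directly from the pointwise description \eqref{def:Nmu} with $\zeta=P_\nu^\perp e_i$. This is cleaner than the paper's Lebesgue-point argument, and the cutoffs are unnecessary because $P_\nu^\perp e_i$ is already bounded.
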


\begin{proof}
	Let $\zeta\in\N_\nu$. By \eqref{orthocrit}, we can choose a sequence $(u_n)_n \subset \D(\R^m)$ such that $u_n\weakstar 0$ in $\Lip(\R^m)$ and $\nabla u_n\to\zeta$ holds $\nu$-a.e. Since $u_n\to 0$ pointwise and $\Lip(u_n\circ f)\le\Lip(f)\Lip(u_n)$ is bounded, the sequence pulls back to $u_n\circ f\weakstar 0$ in $\Lip(\Rd)$. By the weak-star continuity of the tangential gradient (Theorem \ref{weakgrad}), this implies $\nabla_\mu(u_n\circ f)\weakstar 0$ in $(L^\infty_\mu)^d$. 
	
	For an arbitrary tangent field $\sig\in\T_\mu$, we might use assertion \ref{MKi:diff} of Theorem \ref{MK-infinitesimal} on each coordinate of $f$ to write
	\begin{align*}
		0
		&= \lim_{n\to\infty} \int\pairing{\nabla_\mu(u_n\circ f),\sig}\,d\mu 
		= \lim_{n\to\infty} \lim_{h \to 0} \int \dfrac{u_n(f(x + h \sig(x))) - u_n(f(x))}{h} \, d\mu \\
		&= \lim_{n\to\infty} \lim_{h \to 0} \int \pairing{\nabla u_n(f(x)),\dfrac{f(x + h \sig(x)) - f(x)}{h}} \, d\mu \\
		&= \lim_{n\to\infty} \int\pairing{\nabla u_n\circ f,\ \nabla_\mu f \ \sig}\,d\mu \ =\ \int\pairing{\zeta\circ f, \nabla_\mu f \,\sig}\,d\mu = \int \pairing{f^*\zeta, \sig}\,d\mu,
	\end{align*}
	The passage to the limit in the last line is justified by the dominated convergence theorem: $\nabla u_n\circ f\to\zeta\circ f$ holds $\mu$-a.e. (since $\nu=f_\#\mu$), $|\nabla u_n|$ is uniformly bounded and the majorant $|\nabla_\mu f \ \sig| \le \Lip(f)|\sig|$ belongs to $L^1_\mu$. 
	Since $\sig$ is arbitrary in $\T_\mu$, we infer that $f^*\zeta$ is orthogonal to $\T_\mu$. Hence the claimed inclusion $f^* (\N_\nu) \subset \N_\mu$. 
	
	To prove \eqref{pullback_stable:statement}, we localize. Let $g : \R^d \to \R^d$ be a $\mu$-measurable selection of $T_\mu$ and $\zeta = P_\nu^\perp a$ for $a \in \mathbb{R}^m$. For any measurable set $B\subset \mathbb{R}^d$, it holds that $\one_B g \in \T_\mu$ and 
	\begin{align*}
		f^* \zeta = (\nabla_\mu f)^t P_\nu^\perp a \circ f \in \N_\mu.
	\end{align*}
	The composition $P_\nu^\perp \circ f$ is measurable. The set $A$ of $\mu$-Lebesgue points of $g$, $\nabla_\mu f$ and $P_\nu^\perp \circ f$ is of full measure for $\mu$, and for each $x \in A$ and $B = \mathscr{B}(x,r)$ with $r > 0$, there holds
	\begin{align*}
		0
		&= \int \left<(\nabla_\mu f)^t P_\nu^\perp a \circ f, \one_B g\right> d\mu
		= \int_B \left<P_\nu^\perp(f(y)) a, \nabla_\mu f(y) g(y)\right> d\mu(y) \\
		&= \left<a, \int_B P_\nu^\perp(f(y)) \nabla_\mu f(y) g(y) d\mu(y)\right>.
	\end{align*}
	As $a \in \mathbb{R}^m$ is arbitrary, the integral vanishes. Dividing by $\mu(B)$ and passing to the limit as $r \to 0$, we get that $P_\nu^\perp(f(x)) \nabla_\mu f(x) g(x) = 0$, hence $\nabla_\mu f(x) g(x) \in T_\nu(f(x))$ on $A$. Since $g$ was arbitrary in $\T_{\mu}$, we conclude that $\nabla_\mu f(T_\mu) \subset T_\nu \circ f$ at $\mu$-a.e. point.
\end{proof}

The pullback allows to define a pushforward by duality, and to mirror the stability result of Theorem~\ref{res:pullback_stable} on the tangent bundle. Precisely, given $\sigma \in L^1_\mu(\R^d; \R^d)$ and $f \in \Lip(\R^d; \R^m)$ as before, there exists a unique vector field $f_* \sigma \in L^1_\nu(\R^m;\R^m)$ satisfying 
\begin{equation}\label{eq:push_duality}
	\int_{\R^m} \langle \zeta, f_* \sigma \rangle \, d\nu = \int_{\Rd} \langle f^* \zeta, \sigma \rangle \, d\mu 
	\qquad \text{for every } \zeta \in L^\infty_\nu(\R^m; \R^m).
\end{equation}

Explicitly, $f_* \sigma$ is the Radon-Nikodym derivative of the vector-valued measure $f_{\#} \left(\nabla_\mu f \sigma \mu\right)$ with respect to $\nu = f_{\#} \mu$. The duality formulation allows to recover the following result, which is the pendant of the stability of flat chains by (current) pushforward stated in \cite[4.1.14]{Fed}.

\begin{corollary}\label{pushflat} 
	The pushforward operator $f_*$ of $f \in \Lip(\R^d;\R^m)$ sends $\T_\mu$ on $\T_\nu$.
\end{corollary}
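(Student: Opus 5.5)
By the duality \eqref{eq:push_duality} and the characterization $\T_\nu = \N_\nu^\perp$ (Proposition~\ref{res:firstprop} together with \eqref{def:Nmu}), the plan is to test $f_*\sig$ against normal fields.

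Let $\sig\in\T_\mu$ and $\zeta\in\N_\nu$ be arbitrary. Theorem~\ref{res:pullback_stable} gives $f^*\zeta\in\N_\mu = \T_\mu^\perp$. Then \eqref{eq:push_duality} yields
\begin{equation*}
\int_{\R^m}\pairing{\zeta, f_*\sig}\,d\nu \;=\; \int_{\Rd}\pairing{f^*\zeta,\sig}\,d\mu \;=\; 0 .
\end{equation*}
So $f_*\sig$ is orthogonal to all of $\N_\nu$. We want to deduce $f_*\sig(y)\in T_\nu(y)$ for $\nu$-a.e. $y$.

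To localize, I would take $\zeta := \one_B\, P_\nu^\perp a$ with $a\in\R^m$ and $B$ Borel. Each such $\zeta$ lies in $\N_\nu$ because it is bounded and pointwise normal. The orthogonality then gives $\int_B \pairing{a, P_\nu^\perp f_*\sig}\,d\nu = 0$ for all $B$ and all $a$. Hence $P_\nu^\perp(y)\, f_*\sig(y) = 0$ $\nu$-a.e., that is, $f_*\sig(y)\in T_\nu(y)$ $\nu$-a.e. Since $f_*\sig\in (L^1_\nu)^m$, the equivalence \eqref{Tmu=} applied in $\R^m$ gives $f_*\sig\in\T_\nu$.

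Two preliminary points need checking.

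\emph{Well-posedness of $f_*\sig$.} The map $\zeta\mapsto\int\pairing{f^*\zeta,\sig}\,d\mu$ is a bounded linear functional on $(L^\infty_\nu)^m$, with norm at most $\Lip(f)\int|\sig|\,d\mu$. I would represent it directly as the Radon--Nikodym derivative of $f_\#\big((\nabla_\mu f)\sig\,\mu\big)$ with respect to $\nu$. This measure is absolutely continuous with respect to $\nu=f_\#\mu$ and has finite mass, so its density lies in $L^1_\nu$.

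\emph{Local finiteness.} The results of Section~3 are stated for Radon measures, so $\nu$ should be Radon. If $\mu$ is not finite, I would first reduce to $\mu\res B_R$ using the locality results \eqref{restriction} and \eqref{localgrad:statement}, then pass to the limit in $R$.

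I expect the main obstacle to be this reduction in the non-finite case. The cleanest route is to assume $\mu$ is finite, or at least that $f_\#\mu$ is Radon, as is implicit in the definition of $\nu$. Beyond that, the argument is a direct consequence of Theorem~\ref{res:pullback_stable}.
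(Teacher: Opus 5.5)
Your proposal is correct and follows the paper's proof. You test $f_*\sig$ against an arbitrary $\zeta\in\N_\nu$, use Theorem~\ref{res:pullback_stable} to get $f^*\zeta\in\N_\mu=\T_\mu^\perp$, and conclude from \eqref{eq:push_duality} that $f_*\sig$ is orthogonal to $\N_\nu$. Your localization with $\zeta=\one_B\,P_\nu^\perp a$ just makes explicit the step $\N_\nu^{\perp}=\T_\nu$ that the paper compresses into ``as $\zeta$ is arbitrary''. Your finiteness caveat matches the paper's implicit standing assumption that $\nu=f_\#\mu$ is Radon.
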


\begin{proof}
	Let $\sigma \in \T_\mu$ and $\zeta \in \N_{\nu}$. By Theorem~\ref{res:pullback_stable}, $f^* \zeta \in \N_\mu$, so that
	\begin{align*}
		\int \pairing{\zeta, f_* \sigma} d\nu 
		= \int \pairing{f^* \zeta, \sigma} d\mu
		= 0.
	\end{align*}
	As $\zeta$ is arbitrary, we conclude that $f_* \sigma \in \T_\nu$.
\end{proof}

As a corollary, one obtains an inclusion of tangent bundles constructed from tensor products. In fact, in this case, one can even get the following identity.

\begin{proposition}[Tensor products]\label{app:product}
	Let $d = p+q$ and $\mu \in \M_+(\R^p)$, $\nu \in \M_+(\R^q)$ be finite Radon measures. The tangent bundle of the product measure is the product of the tangent bundles:
	\begin{align*}
		T_{\mu\otimes\nu}(x,y) \ = \ T_\mu(x)\times T_\nu(y) \qquad \mu \otimes \nu \text{-a.e. } (x,y) \in \Rd.
	\end{align*}
\end{proposition}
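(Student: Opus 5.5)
We prove the two inclusions separately, writing $\pi_1(x,y)=x$ and $\pi_2(x,y)=y$ for the coordinate projections and $m:=\mu\otimes\nu$.

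\emph{Inclusion $\subset$.} Since $\mu,\nu$ are finite, $(\pi_1)_\# m = \nu(\R^q)\,\mu$ and $(\pi_2)_\# m = \mu(\R^p)\,\nu$. The tangent bundle is invariant under multiplication of the measure by a positive constant, because $\T_{t\mu}=\T_\mu$. We apply Theorem~\ref{res:pullback_stable} to $f=\pi_1$. For a linear map, $\nabla_m \pi_1 = P_m$ followed by the first-coordinate projection, since $\nabla_m$ of a smooth function is $P_m\nabla$. We obtain $\pi_1(P_m(x,y)v)\in T_\mu(x)$ for every $v$, and in particular $\pi_1(T_m(x,y))\subset T_\mu(x)$ for $m$-a.e. $(x,y)$. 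The same argument with $\pi_2$ gives $\pi_2(T_m(x,y))\subset T_\nu(y)$. Since any $w\in T_m(x,y)$ satisfies $w=(\pi_1 w,\pi_2 w)$, it follows that $T_m(x,y)\subset T_\mu(x)\times T_\nu(y)$.

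\emph{Inclusion $\supset$.} It suffices to show that fields of the form $(\sigma(x),0)$ with $\sigma\in\T_\mu$ belong to $\T_m$; the symmetric argument handles $(0,\tau(y))$. By the description of the essential union in Subsection~\ref{prel:multif} and by linearity, these fields then span $T_\mu(x)\times T_\nu(y)$ $m$-a.e. We use a tensor product of currents. Take $\lambda=\sigma\mu\in\Fd$ (on $\R^p$) and consider $\Lambda:=\lambda\otimes\nu$, an $\R^p$-valued measure embedded in $\R^d$ via $v\mapsto(v,0)$. Its distributional divergence is $(\div_x\lambda)\otimes\nu$. When $\lambda\in\mathbf N_1(\R^p)$, this is a finite balanced measure with finite first moment provided $\nu$ has one. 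If $\nu$ lacks a finite first moment, we first restrict to $\nu\res B_R$ and use locality \eqref{locality} together with $m\res(\R^p\times B_R)=\mu\otimes(\nu\res B_R)$. In that case $\Lambda\in\Nd$. For general $\lambda\in\Fd$, we approximate by $\lambda_n\in\mathbf N_1(\R^p)$ in total variation, using assertion \ref{fundamental:clos} of Proposition~\ref{fundamental}. Then $\|\lambda_n\otimes\nu-\lambda\otimes\nu\|\le\nu(\R^q)\|\lambda_n-\lambda\|$, and since $\Fd$ is closed we conclude $\Lambda\in\Fd$. As $\Lambda=(\sigma(x),0)\,m$, this gives $(\sigma,0)\in\T_m$.

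\emph{Expected difficulty.} The main technical point is the divergence computation and the moment and balance conditions in the step $\lambda\otimes\nu\in\Nd$. We verify it by testing against $\varphi\in\D(\R^d)$: Fubini gives $\langle\Lambda,\nabla\varphi\rangle=\int\langle\lambda,\nabla_x\varphi(\cdot,y)\rangle\,d\nu(y)=\int\langle-\div\lambda,\varphi(\cdot,y)\rangle\,d\nu(y)$. For the total variation estimate we use $|\lambda\otimes\nu|=|\lambda|\otimes\nu$. Alternatively, one can avoid moments entirely by checking criterion \eqref{criterion} directly with dominated convergence in $y$, as in the proof of assertion \ref{propQ:Psi} of Lemma~\ref{propQ}. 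This route seems the cleanest and is the one I would try first.
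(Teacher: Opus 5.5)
Your proof is correct. The inclusion $\subset$ is the paper's argument: you push $\mu\otimes\nu$ forward by $\pi_1,\pi_2$, use $\T_{c\mu}=\T_\mu$ and $\nabla_m\pi_1=\pi_1\circ P_m$, and apply Theorem~\ref{res:pullback_stable}.

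For $\supset$, your main written argument takes a different route. You first treat $\lambda\in\mathbf N_1(\R^p)$. There $-\div(\lambda\otimes\nu)=(-\div\lambda)\otimes\nu$ is an explicit balanced measure, with finite first moment once $\nu$ is truncated to $B_R$ and \eqref{locality} is invoked. You then reach $\lambda=\sigma\mu\in\mathbf F_1(\R^p)$ through three steps:
\begin{itemize}
\item density of normal currents in flat chains (Proposition~\ref{fundamental}\ref{fundamental:clos});
\item the bound $\|(\lambda_n-\lambda)\otimes\nu\|=\nu(\R^q)\|\lambda_n-\lambda\|$;
\item closedness of $\mathbf F_1(\R^d)$.
\end{itemize}

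The paper instead uses exactly the route you list as your alternative. Fubini gives $\langle-\div((\tau,0)\,\mu\otimes\nu),\varphi\rangle=\int\langle-\div(\tau\mu),\varphi(\cdot,y)\rangle\,d\nu(y)$. For $\varphi_n\weakstar 0$, the integrand tends to $0$ for each $y$ and is bounded by $\|\div(\tau\mu)\|_{\KR}\sup_n\Lip(\varphi_n)$. Dominated convergence, which needs only $\nu(\R^q)<\infty$, then verifies \eqref{criterion} directly.

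The direct route is shorter. It needs neither the density theorem nor any first-moment bookkeeping or truncation. Your route instead reduces everything to an explicit divergence computation for normal currents. It shows that stability of flat chains under taking a product with a finite measure follows from the elementary fact for normal currents, at the cost of those extra steps. The final spanning step, via a countable generating family and the fact that $\mu$-null sets $N$ give $\mu\otimes\nu$-null sets $N\times\R^q$, is the same in both proofs.
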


\begin{proof}
	\emph{Inclusion $\subset$.} Let $\pi_1(x,y) \coloneqq x$ and $\pi_2(x,y) \coloneqq y$ be the coordinate projections, so that $(\pi_1)_\#(\mu\otimes\nu) = \nu(\R^q)\,\mu$ and $(\pi_2)_\#(\mu\otimes\nu) = \mu(\R^p)\,\nu$. Since scaling a measure by a positive constant does not change its bundle ($\T_{c\,\mu} = \T_\mu$ for $c>0$), these two pushed-forward measures have bundles $T_\mu$ and $T_\nu$ respectively. Moreover, the tangential gradient of the linear maps $\pi_i$ can be computed using \eqref{def:Amu} as $\nabla_\mu \pi_i = P_\mu \nabla \pi_i$. Hence $\nabla_\mu \pi_i(T_\mu) = \nabla \pi_i(T_\mu) = \pi_i(T_\mu)$. Applying Theorem~\ref{res:pullback_stable}, we obtain
	\[
	\pi_1\big(T_{\mu\otimes\nu}(x,y)\big)\subset T_\mu(x),
	\qquad
	\pi_2\big(T_{\mu\otimes\nu}(x,y)\big)\subset T_\nu(y)
	\qquad \mu\otimes\nu\text{-a.e.}
	\]
	The inclusion $T_{\mu\otimes\nu}(x,y) \subset T_\mu(x)\times T_\nu(y)$ follows immediately.
	
	\noindent \emph{Inclusion $\supset$.} Conversely, let $\tau\in\T_\mu$ and set $\sig(x,y) \coloneqq (\tau(x),0)$. For any smooth test function $\f\in\D(\Rd)$, Fubini's theorem gives
	\[
	\pairing{-\div(\sig\,\mu\otimes\nu), \f} \ =\ \int_{\R^q} \pairing{-\div(\tau\mu),\ \f(\cdot,y)}\, d\nu(y).
	\]
	If $\f_n \weakstar 0$ in $\Lip(\Rd)$, then for each fixed $y$, $\f_n(\cdot,y)\weakstar 0$ in $\Lip(\R^p)$, meaning the inner bracket tends pointwise to $0$. It is moreover uniformly bounded by $\Vert\div(\tau\mu)\Vert_{\KR}\,\sup_n\Lip(\f_n)$ by Proposition~\ref{completion}. As $\nu$ is finite, dominated convergence yields $\pairing{-\div(\sig\,\mu\otimes\nu),\f_n}\to0$, and the continuity criterion \eqref{criterion} implies $\sig\in\T_{\mu\otimes\nu}$, i.e., $(\tau(x),0)\in T_{\mu\otimes\nu}(x,y)$ almost everywhere. Choosing a countable generating family $(\tau_k)\subset\T_\mu$ such that $T_\mu(x) = \mathrm{span}\set{\tau_k(x)}{k\in\NN}$ $\mu$-a.e. (as in Subsection~\ref{prel:multif}) and intersecting the exceptional sets, we obtain $T_\mu(x)\times\{0\}\subset T_{\mu\otimes\nu}(x,y)$ a.e.. The symmetric argument yields $\{0\}\times T_\nu(y)\subset T_{\mu\otimes\nu}(x,y)$, and the conclusion follows since $T_{\mu\otimes\nu}(x,y)$ is a vector subspace.
\end{proof}

\subsection{Tangential chain rule}

The computation in the proof of Theorem~\ref{res:pullback_stable} used a chain rule of the form $\pairing{\nabla_\mu(u_n\circ f),\sig} = \pairing{\nabla u_n\circ f,\ \nabla_\mu f \ \sig}$ for a smooth $u$ and $\sig \in \T_\mu$. The following result shows that the chain rule holds with tangential gradients.

\begin{theorem}[Tangential chain rule]\label{lipchain:new}
	Let $g \in \Lip(\R^m;\R^k)$. Then, for every $\sig\in\T_\mu$,
	\begin{equation}\label{lipchain:id}
		\nabla_\mu(g\circ f)(x)\,\sig(x)\ =\ \nabla_\nu g(f(x))\,\nabla_\mu f(x)\,\sig(x) \qquad \mu\text{-a.e.}.
	\end{equation}
\end{theorem}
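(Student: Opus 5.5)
We will deduce \eqref{lipchain:id} from the pointwise directional differentiability of assertion \ref{MKi:diff} of Theorem~\ref{MK-infinitesimal}, applied once on $\mu$ and once on $\nu=f_\#\mu$. We work componentwise in $g$, so we may assume $k=1$. Fix $\sig\in\T_\mu$. By assertion \ref{MKi:diff} applied to $g\circ f\in\Lip(\Rd)$, at $\mu$-a.e. $x$ we have
\[
\pairing{\nabla_\mu(g\circ f)(x),\sig(x)} = \lim_{h\to0^+}\frac{g(f(x+h\sig(x)))-g(f(x))}{h}.
\]
Applying the same assertion to each coordinate of $f$ gives
\[
f(x+h\sig(x)) = f(x)+h\,\nabla_\mu f(x)\sig(x)+o(h)
\]
at $\mu$-a.e. $x$. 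Since $g$ is Lipschitz, the $o(h)$ error contributes only $o(h)$ to the values of $g$. Hence the limit above equals
\[
\lim_{h\to0^+}\frac{g\big(f(x)+h\,\tau(x)\big)-g(f(x))}{h}, \qquad \tau:=\nabla_\mu f\,\sig,
\]
provided this latter limit exists.

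It therefore suffices to show that $g$ is differentiable at $y=f(x)$ in the direction $\tau(x)\in\R^m$, with derivative $\pairing{\nabla_\nu g(f(x)),\tau(x)}$, for $\mu$-a.e. $x$. Theorem~\ref{res:pullback_stable} gives $\tau(x)\in T_\nu(f(x))$ for $\mu$-a.e. $x$. We then invoke assertion \ref{MKi:full} of Theorem~\ref{MK-infinitesimal} for the measure $\nu$. There is a $\nu$-negligible set $Z\subset\R^m$ off which $g$ is differentiable along the whole subspace $T_\nu(y)$, with differential $\pairing{\nabla_\nu g(y),\cdot}$. Since $\nu=f_\#\mu$, the preimage $f^{-1}(Z)$ is $\mu$-negligible. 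Thus for $\mu$-a.e. $x$ the point $y=f(x)$ avoids $Z$, and because $\tau(x)\in T_\nu(y)$, the expansion \eqref{eq:MKifull} gives
\[
g(y+h\tau(x)) = g(y)+h\pairing{\nabla_\nu g(y),\tau(x)}+o(h).
\]
Combining this with the previous paragraph yields \eqref{lipchain:id}.

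The main obstacle is measure-theoretic bookkeeping rather than analysis. First, the directional field $\tau$ lives on $\R^d$ and is not of the form $\xi\circ f$, so assertion \ref{MKi:diff} on $\nu$, which concerns fields in $\T_\nu$, cannot be applied directly. This is exactly why we pass through the full-subspace differentiability \ref{MKi:full}: it holds simultaneously for all directions of $T_\nu(y)$ at good points $y$. Second, we must check that the exceptional sets transfer correctly. Both the inclusion $\tau(x)\in T_\nu(f(x))$ and the good-point set for $\nu$ are defined only up to null sets, and we use that $\mu$-measurable representatives of $T_\nu\circ f$ and $\nabla_\nu g\circ f$ are well defined $\mu$-a.e. because $\nu=f_\#\mu$. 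Intersecting the countably many full-measure sets (one per coordinate of $f$ and of $g$) then concludes the argument.
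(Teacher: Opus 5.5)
Your argument is correct, and it takes a genuinely different route from the paper.

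\textbf{The paper's route.} The paper never works pointwise on the target. It approximates $g$ by $g_k\in\D(\R^m)$ with $g_k\weakstar g$. For $\tau=\phi\sig$ with $\phi\in L^\infty_\mu$, it writes the integrated smooth chain rule
\[
\int\pairing{\nabla_\mu(g_k\circ f),\tau}\,d\mu=\int\pairing{\nabla g_k,f_*\tau}\,d\nu .
\]
It then uses Corollary~\ref{pushflat} ($f_*\tau\in\T_\nu$) to replace $\nabla g_k$ by $\nabla_\nu g_k$. Next it passes to the weak-star limit on both sides via Theorem~\ref{weakgrad}. Finally it localizes, using that $\phi$ is arbitrary.

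\textbf{How each handles the key obstacle.} The obstacle you identified is that $\nabla_\mu f\,\sig$ lives on $\Rd$ and is not of the form $\xi\circ f$. The paper handles it by pushing the field forward to a genuine tangent field $f_*\tau$ of $\nu$. This suffices there because the identity is only ever tested in integrated form. You handle it instead with two ingredients:
\begin{enumerate}
\item the full-subspace differentiability of assertion~\ref{MKi:full} of Theorem~\ref{MK-infinitesimal} for $\nu$, which controls all directions of $T_\nu(y)$ at once;
\item the pointwise inclusion \eqref{pullback_stable:statement}.
\end{enumerate}
Your transfer of exceptional sets through $\nu=f_\#\mu$ is also sound.

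\textbf{What each buys.} Your route makes \eqref{lipchain:id} literally the chain rule for directional derivatives, with both sides identified as pointwise limits of difference quotients. The cost is the extra compactness/equicontinuity step that upgrades assertion~\ref{MKi:diff} to assertion~\ref{MKi:full}. The paper's route uses only assertion~\ref{MKi:diff} (through the smooth chain rule) plus the weak-star continuity of $\nabla_\nu$. It thus stays inside the duality/closability framework and mirrors the pushforward of flat chains.
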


\begin{proof}
	Arguing componentwise, we may assume $g$ is scalar. By Remark \ref{Lipdual}, we can choose $g_k\in\D(\R^m)$ such that $g_k\weakstar g$ in $\Lip(\R^m)$. As $g_k\circ f\weakstar g\circ f$ in $\Lip(\Rd)$, the weak-star continuity of the tangential gradient (Theorem \ref{weakgrad}) ensures that
	\begin{equation}\label{doubleweak}
		\nabla_\nu g_k \weakstar \nabla_\nu g \quad \text{in } L^\infty_\nu, \qquad \text{and} \qquad \nabla_\mu (g_k \circ f)\weakstar \nabla_\mu (g \circ f) \quad \text{in } L^\infty_\mu.
	\end{equation}
	Fix $\phi \in L^\infty_\mu$ and define the tangent field $\tau \coloneqq \phi \, \sig \in \T_\mu$. Then, arguing as in Theorem~\ref{res:pullback_stable},
	\begin{equation}\label{eq:chain_proof_1}
		\int \pairing{\nabla_\mu(g_k\circ f),\tau}\,d\mu 
		\ = \ \int \pairing{\nabla g_k \circ f,\nabla_\mu f \, \tau}\,d\mu
		\ = \ \int \pairing{\nabla g_k, f_*\tau} d\nu.
	\end{equation}
	We now pass to the limit as $k \to \infty$. On the target space, since $f_*\tau \in \T_\nu \subset (L^1_\nu)^m$ by Corollary \ref{pushflat}, the weak-star convergence of $\nabla g_k$ yields:
	\[
	\lim_{k\to\infty} \int \pairing{\nabla g_k, f_*\tau}\,d\nu \ =\ \int \pairing{\nabla_\nu g, f_*\tau}\,d\nu \ =\ \int \pairing{\nabla_\nu g \circ f, \nabla_\mu f \, \tau}\,d\mu.
	\]
	On the source space, the weak-star convergence of $\nabla_\mu(g_k \circ f)$ implies the left-hand side of \eqref{eq:chain_proof_1} converges to $\int \pairing{\nabla_\mu(g\circ f), \tau} d\mu$. Equating the limits and recalling that $\tau = \phi \sig$, we get
	\[
		\int \pairing{\nabla_\mu(g\circ f), \sig} \phi \,d\mu \ =\ \int \pairing{\nabla_\nu g \circ f, \nabla_\mu f \ \sig} \phi \, d\mu.
	\]
	Since $\phi \in L^\infty_\mu$ is arbitrary, the identity holds $\mu$-a.e..
\end{proof}

\subsection{The case of bi-Lipschitz maps}

By the inclusion \eqref{pullback_stable:statement}, there always holds
\begin{equation}\label{app:rank}
	\dim T_{f_\#\mu}(f(x)) \ \ge\ \dim(\nabla_\mu f(T_\mu(x))) = \mathrm{rank}\big(\nabla_\mu f(x)\big) \qquad \mu\text{-a.e..}
\end{equation}
We show that if $f$ is bi-Lipschitz on its image, then equality holds. The Example~\ref{app:counterex} below illustrates that injectivity is not sufficient. 

\begin{proposition}\label{app:bilip}
	Let $\mu\in\M_+(\Rd)$ be finite and let $f \in \Lip(\Rd;\R^m)$ with $m \geqslant d$. Assume that $f$ is injective, and that its left inverse $f^{-1} : f(\Rd) \to \Rd$ is Lipschitz. Then, denoting $\nu \coloneqq f_\#\mu$, we have
	\begin{align}\label{bilip:statement}
		\nabla_\mu f(x)(T_\mu(x)) = T_\nu(f(x))
		\qquad \text{for } \mu\text{-a.e. } x \in \mathbb{R}^d.
	\end{align}
\end{proposition}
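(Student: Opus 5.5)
The inclusion $\nabla_\mu f(x)(T_\mu(x))\subset T_\nu(f(x))$ is already given by \eqref{pullback_stable:statement} of Theorem~\ref{res:pullback_stable}, so only the reverse inclusion remains. The plan is to apply the same theorem to the inverse map. Set $g:=f^{-1}$ on $f(\Rd)$ and extend it to a Lipschitz map $\tilde g\in\Lip(\R^m;\Rd)$, using Kirszbraun's theorem (or componentwise McShane extensions, at the cost of a factor $\sqrt d$ in the constant). Since $\tilde g\circ f=\mathrm{id}$ on $\Rd$, we have $\tilde g_\#\nu=\mu$. Theorem~\ref{res:pullback_stable}, applied to $\tilde g$ and the measure $\nu$, then gives
\[
\nabla_\nu\tilde g(y)\big(T_\nu(y)\big)\subset T_\mu(\tilde g(y)) \qquad \nu\text{-a.e. } y,
\]
which, because $\nu=f_\#\mu$, means $\nabla_\nu\tilde g(f(x))\big(T_\nu(f(x))\big)\subset T_\mu(x)$ for $\mu$-a.e. $x$.

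Next we compose the two maps using the tangential chain rule, Theorem~\ref{lipchain:new}, with the roles reversed: the base measure is $\nu$, the inner map is $\tilde g$, the outer map is $f$, and $\tilde g_\#\nu=\mu$. For every $\tau\in\T_\nu$ this gives $\nabla_\nu(f\circ\tilde g)\,\tau=\nabla_\mu f(\tilde g)\,\nabla_\nu\tilde g\,\tau$ $\nu$-a.e. Since $f\circ\tilde g=\mathrm{id}$ on $f(\Rd)$, which carries $\nu$, the locality \eqref{localgrad:statementu} of Lemma~\ref{localgrad} (applied componentwise to $f\circ\tilde g$ and the identity) gives $\nabla_\nu(f\circ\tilde g)=\nabla_\nu\mathrm{id}=P_\nu$. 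Hence $\tau=P_\nu\tau=\nabla_\mu f(\tilde g)\,\nabla_\nu\tilde g\,\tau$ $\nu$-a.e. for every $\tau\in\T_\nu$.

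To conclude, take a countable family $(\tau_k)\subset\T_\nu$ with $T_\nu(y)=\overline{\{\tau_k(y)\}}$ $\nu$-a.e., as in Subsection~\ref{prel:multif}. Evaluating the previous identity at $y=f(x)$, each $\tau_k(f(x))$ equals $\nabla_\mu f(x)\,v_k$, where $v_k:=\nabla_\nu\tilde g(f(x))\,\tau_k(f(x))$ lies in $T_\mu(x)$ by the first paragraph. Thus $T_\nu(f(x))$ is contained in the image of the linear map $\nabla_\mu f(x)$ restricted to $T_\mu(x)$; that image is closed, being a finite-dimensional subspace. Combined with the known inclusion, this gives \eqref{bilip:statement}.

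The main obstacle is bookkeeping of the exceptional null sets. The identities hold $\nu$-a.e. in $y$ and are then transferred to $\mu$-a.e. $x$ through $f$; this transfer is legitimate precisely because $\nu=f_\#\mu$ and $f$ is injective. A second point to check is that Theorem~\ref{res:pullback_stable} and the chain rule apply to the target space $\Rd$ of $\tilde g$ with source $\R^m$, which is just a relabelling of dimensions. One should also confirm that finiteness of $\mu$ guarantees that $\nu$ is Radon and that constant fields are integrable, so that the pullback argument in Theorem~\ref{res:pullback_stable} goes through.
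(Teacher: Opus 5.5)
Your proposal is correct and follows essentially the paper's route. Both arguments take a Kirszbraun extension $g$ of $f^{-1}$, use locality together with the tangential chain rule on $f\circ g=\mathrm{id}$ to obtain $\tau=\nabla_\mu f(g)\,\nabla_\nu g\,\tau$ for tangent $\tau$, and combine this with the inclusion of Theorem~\ref{res:pullback_stable}. The only difference is minor: you place the preimages in $T_\mu$ by applying Theorem~\ref{res:pullback_stable} to $g$, which yields the reverse inclusion directly. The paper instead uses that $\nabla_\mu f(x)$ annihilates $N_\mu(x)$ and concludes by comparing dimensions.
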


\begin{proof}
	Let $g \in\Lip(\R^m;\Rd)$ be a Lipschitz extension of $f^{-1}$, which exists by Kirszbraun's theorem. Then $f \circ g = \mathrm{id}$ at $\nu$ almost any point. Passing to the $\nu$-tangential gradient in the latter equality thanks to \eqref{localgrad:statementu}, and using the chain rule of Theorem~\ref{lipchain:new}, we get
	\begin{align*}
		P_\nu(y)
		= \nabla_\nu (\mathrm{id})(y)
		= \nabla_\nu (f \circ g)(y)
		= \nabla_\mu f(g(y)) \nabla_\nu g(y).
	\end{align*}
	Since $P_\nu(y)(w) = w$ for any $w \in T_\nu(y)$, we deduce that $\nabla_\mu f(g(y))$ is surjective on $T_\nu(y)$. Hence, using that $\nabla_\mu f(g(y))(N_\mu) = \{0\}$, there holds 
	\begin{align*}
		\dim(\nabla_\mu f(g(y))(\Rd))
		= \dim(\nabla_\mu f(g(y))(T_\mu)) 
		\geqslant \dim(T_\nu(y))
		\qquad \text{at } \nu \text{ a.e. } y.
	\end{align*}
	As $\mu = g_{\#} \nu$ and $\nu = f_{\#} \mu$, integrating against $\nu$ yields 
	\begin{align*}
		0
		\leqslant \int \left[\dim(\nabla_\mu f(g(y))(T_\mu)) - \dim(T_\nu(y))\right] d\nu 
		= \int \left[\dim(\nabla_\mu f(T_\mu)) - \dim(T_\nu(f(x)))\right] d\mu.
	\end{align*}
	However, in virtue of Theorem~\ref{res:pullback_stable}, we know that $\nabla_\mu f(x)(T_\mu(x)) \subset T_\nu(f(x))$ holds  $\mu$ almost everywhere. The wished equality follows.
\end{proof}

To conclude, we construct an injective Lipschitz function which is not bi-Lipschitz, and for which the equality \eqref{bilip:statement} does not hold.

\begin{example}\label{app:counterex}
	Let $E\subset[0,1]$ be a compact set with empty interior and $\Ld(E)>0$ (a \emph{fat} Cantor set), and define
	\[
		f(t) \ \coloneqq\ \int_0^t \one_{E^c}(s)\,ds, \qquad t\in\R.
	\]
	Then $f$ is $1$-Lipschitz and, since $E$ has empty interior, $f(b)-f(a) = \Ld([a,b]\setminus E)>0$ for $a<b$. Thus, the map $f$ is strictly increasing, hence injective. Take $\mu \coloneqq \Ld\res[0,1]$, so that $T_\mu(x) = \R$ for every $x$ by Proposition~\ref{topdim}. Denote $\nu \coloneqq f_\#\mu$. 
	
	Since $f' = \one_{E^c}$ a.e., the area formula gives $\Ld(f(E)) \le \int_E |f'| = 0$, while injectivity gives $\nu(f(E)) = \mu(E) = \Ld(E)>0$. Thus $\nu\res f(E)$ is a non-zero measure singular with respect to $\Ld$. If the set $A \coloneqq \set{y}{\dim T_\nu(y) = 1}$ met $f(E)$ in positive $\nu$-measure, then by \eqref{locality} and Proposition~\ref{topdim} the measure $\nu\res(A\cap f(E))$ would be both absolutely continuous with respect to $\Ld$ and concentrated on the $\Ld$-null set $f(E)$, hence zero. Therefore
	\[
		\dim T_\nu(f(x)) = 0 < 1 = \dim T_\mu(x) \qquad \text{for } \mu\text{-a.e. } x\in E.
	\]
	Consistently with \eqref{app:rank}, the tangential differential degenerates here: $\nabla_\mu f = f' = 0$ holds $\Ld$-a.e. on $E$.
\end{example}

\subsection{Weaver derivation}\label{sec:weaver}

We point a connection with the analysis on metric spaces already noticed in \cite{lucicCharacterisationUpperGradients2021} for the tangent space therein. Let $\Lip_b$ stand for Lipschitz and bounded functions. The tangential gradient map $\nabla_\mu : \Lip_b(\R^d) \to (L^\infty_\mu)^d$ is a derivation in the sense of Weaver, i.e. satisfies
\begin{enumerate}
	\item \emph{Weak-star continuity:} $\nabla_\mu u_n \weakstar \nabla_\mu u$ whenever $u_n \weakstar u$ in $\Lip_b(\R^d)$;
	\item \emph{Leibniz rule:} $\nabla_\mu(uv) = u \nabla_\mu v + v \nabla_\mu u$ for all $u,v \in \Lip_b(\R^d)$.
\end{enumerate}

\begin{proof} 
	The weak-star continuity is a direct consequence of Lemma~\ref{closable} and the linearity of $\nabla_\mu$.
	The Leibniz rule follows by approximation with test functions and the smooth Leibniz rule: if $u_n \weakstar u, v_n \weakstar v$ in $\Lip_b(\R^d)$, then by dominated convergence $u_n\nabla_\mu v_n \weakstar u\nabla_\mu v$ and $v_n\nabla_\mu u_n \weakstar v\nabla_\mu u$ in $(L^\infty_\mu)^d$. On the other hand, $\nabla_\mu(u_nv_n) \weakstar \nabla_\mu(uv)$, and
	\[
		\nabla_\mu(u_nv_n) = P_\mu (v_n \nabla u_n + u_n \nabla v_n) = v_n \nabla_\mu u_n + u_n \nabla_\mu v_n \weakstar v\nabla_\mu u + u \nabla_\mu v
		\]
	in $(L^\infty_\mu)^d$. Uniqueness of the weak-star limit yields the assertion.
\end{proof}

\end{document}